\newtheorem{maintheorem}{Theorem}
\theoremstyle{plain}
        \newtheorem{theorem}{Theorem}[section]
        \newtheorem*{theorem*}{Theorem}
        \newtheorem*{maintheorem*}{Main Theorem}
        \newtheorem*{conj*}{Conjecture}
        \newtheorem{lemma}[theorem]{Lemma}
        \newtheorem{corollary}[theorem]{Corollary}
        \newtheorem{proposition}[theorem]{Proposition}
\theoremstyle{definition}
        \newtheorem{definition}[theorem]{Definition}
        \newtheorem*{definition*}{Definition}
\theoremstyle{remark}
        \newtheorem{rem}[theorem]{Remark}
        \newtheorem*{claim}{Claim}
\renewcommand{\AA}{{\mathcal A}}
\newcommand{\blowup}{{\operatorname{blow}}}
\newcommand{\filled}{{\mathcal K}}
\newcommand{\id}{\operatorname{id}}
\newcommand{\TT}{{\mathcal T}}
\newcommand{\N}{{\mathbb N}}
\newcommand{\Z}{{\mathbb Z}}
\newcommand{\C}{{\mathbb C}}
\newcommand{\disk}{{\mathbb D}}
\renewcommand{\phi}{\varphi}
\newcommand{\hide}[1]{}
\newcommand{\comment}[1]{\marginpar{#1}}
\renewcommand{\comment}[1]{}
\renewcommand{\sp}{{\ \ }}
\newcommand{\per}{{\operatorname{per}}}
\newcommand{\Fat}{{\mathcal F}}
\newcommand{\Jul}{{\mathcal J}}
\newcommand{\UU}{{\mathcal U}}
\newcommand{\orb}{{\operatorname{orb}}}
\newcommand{\diam}{{\operatorname{diam}}}
\newcommand{\GC}{\mathcal{G}}
\newcommand{\KC}{{\mathcal K}}
\newcommand{\post}{\operatorname{post}}
\newcommand{\CC}{{\mathcal C}}
\newcommand{\DD}{{\mathcal D}}
\newcommand{\NL}{{\mathfrak N}}
\newcommand{\PC}{{\mathcal P}}
\newcommand{\MC}{{\operatorname{MultiCurve}}}
\newcommand{\Curve}{{\operatorname{Curve}}}
\newcommand{\psMC}{{\operatorname{psMultiCurve}}}
\newcommand{\psCurve}{{\operatorname{psCurve}}}
\newcommand{\bi}{{\operatorname{bi}}}
\newcommand{\wC}{{\widehat \C}}
\newcommand{\intr}{{\operatorname{int}}}
\newcommand\lift[2]{{\uparrow}_{#1}^{#2}}
\newcommand\selfmap{\righttoleftarrow}
 \newcommand{\cro}{{\operatorname{dec}}}
  \newcommand{\Th}{{\operatorname{Th}}}
  \newcommand{\Sie}{{\operatorname{Sie}}}
  \newcommand{\mesh}{{\operatorname{mesh}}}
\title[A canonical decomposition of postcritically finite rational maps]{A canonical decomposition of \\ postcritically finite rational maps\\ and their maximal expanding quotients}
\author{Dzmitry Dudko}
\address {Mathematics Department, Stony Brook University, NY 11794, USA}
\email{dzmitry.dudko@stonybrook.edu}
\author{Mikhail Hlushchanka}
\address{Mathematisch Instituut, Universiteit Utrecht,
 3508 TA Utrecht, The Netherlands}
\email{m.hlushchanka@uu.nl}
\author{Dierk Schleicher}
\address {Aix--Marseille Universit\'{e} and CNRS, UMR 7373, Institut de Math\'{e}atiques de Marseille, 163 Avenue de Luminy Case 901, 13009 Marseille, France}
\email{dierk.schleicher@univ-amu.fr}
\thanks{This project was supported by the Advanced Grant ``HOLOGRAM'' of the European Research Council.}
\begin{document}

\begin{abstract}

We provide a natural canonical decomposition of postcritically finite rational maps with non-empty Fatou sets based on the topological structure of their Julia sets. The building blocks of this decomposition are maps where all Fatou components are Jordan disks with disjoint closures (\emph{Sierpi\'{n}ski maps}), as well as those where any two Fatou components can be connected through a countable chain of Fatou components with common boundary points (\emph{crochet} or \emph{Newton-like maps}). 

We provide several alternative characterizations for our decomposition, as well as an algorithm for its effective computation. We also show that postcritically finite rational maps have dynamically natural quotients in which all crochet maps are collapsed to points, while all Sierpi\'{n}ski maps become small spheres; the quotient is a \emph{maximal expanding cactoid}. The constructions work in the more general setup of B\"{o}ttcher expanding maps, which are metric models of postcritically finite rational maps.

\end{abstract}



\maketitle
\setcounter{tocdepth}{1}

\tableofcontents

\section{Introduction}
The dynamics of a rational map is controlled in a very strong sense by its critical orbits. If all the critical orbits are finite, then the map is called \emph{postcritically finite} (PCF). PCF maps are like rational points of the parameter space and have been in the focus of intense research in holomorphic dynamics.

It is often convenient to abstract from the complex structure and consider rational maps as topological branched coverings.  In the PCF setting we naturally obtain a branched self-covering $f\colon (S^2,A)\selfmap$ on the marked sphere $(S^2, A)$, where $A\subset S^2$ is a finite invariant set containing all the critical values of $f$.  
Topological maps are more amenable to classifications and applicable to many surgeries, such as decompositions and amalgams. 

The \emph{Thurston fundamental theorem of complex dynamics} characterizes those PCF branched coverings of $S^2$ that are ``realized'' by rational maps \cite{DH_Th_char}. Roughly speaking, it says that $f\colon (S^2,A)\selfmap$ is isotopic to a rational map if and only if $f$ does not admit a collection of disjoint annuli violating the Gr\"otzsch inequality.  A \emph{Thurston obstruction} is an invariant multicurve composed of the core curves of such annuli.  The proof of Thurston's theorem is based on a fixed-point argument.  The map $f\colon (S^2,A)\selfmap$ naturally defines a pullback map $\sigma_f\colon \TT_A\selfmap$ on the Teichm\"uller space of $(S^2,A)$ (the space of complex structures). It follows that $f$ is isotopic to a rational map if and only if $\sigma_f$ has a fixed point.  Somewhat similar ideas were used by Thurston in his work on geometry of $3$-manifolds and surface homeomorphisms.

Treating rational maps as topological closely links complex dynamics to the theory of mapping class groups. Just like homeomorphisms,  PCF branched coverings of $S^2$ can be decomposed by cutting the sphere along invariant multicurves. The general decomposition theory was developed by Pilgrim  \cite{Pilgrim_Comb}, who also introduced the first \emph{canonical decomposition} of $f\colon(S^2,A)\selfmap$ along the Thurston obstruction consisting of curves that get shorter  under iteration of the pullback map $\sigma_f$.  These curves cut the sphere into ``small'' maps of three types  \cite{SelingerPullback}: homeomorphisms, double covers of torus endomorphisms, rational maps.  Recently, the \emph{canonical Levy decomposition} was introduced in \cite{BD_Exp} as the smallest Levy multicurve such that all small maps in the decomposition are either homeomorphisms or Levy-free maps with $\deg >1$.  It follows from \cite{SelingerPullback, SY_Decid} that the canonical Levy decomposition is a subdecomposition of the Pilgrim canonical decomposition.


Mapping class groups naturally appear in the conjugacy problem (also known as the Thurston equivalence) between branched coverings of the sphere. As it is shown in the Bartholdi-Nekrashevych solution of the Hubbard twisting rabbit problem \cite{BarNekr_Twist},  the main difficulty is to understand how homeomorphisms interact with branched coverings.   
Computational theory developed in~\cite{BD_Dec} allows to effectively reduce the original conjugacy problem to the conjugacy and centralizer problems between small maps of some canonical decomposition.  In particular, the conjugacy problem for PCF branched coverings of the sphere  is decidable \cite{BD_Algo}.  However, the current methods of finding canonical decompositions are not effective and do not have any complexity estimates.

\subsection{Invariants in complex dynamics} There are many invariants characterizing different classes of PCF rational maps. Perhaps the most well-known is the \emph{Hubbard tree} of a polynomial. It is a finite planar tree in the core of the filled-in Julia set \cite{DH_Orsay}. Two polynomials are conjugate if and only if their Hubbard trees are planar conjugate.  This allows to provide a \emph{combinatorial classification} of all PCF polynomial maps \cite{BFH_Class,Poirier}.  A complex polynomial can also be described using invariant (or periodic in the subhyperbolic case) spiders \cite{HS_Spider}. This approach leads to the Poirier notion of supporting rays \cite{Poirier}. Quadratic polynomials can also be described using kneading sequences and internals addresses. All these invariants can be converted into each other.

Given two polynomials, we can topologically glue them together along the circle at infinity and obtain a branched covering of the sphere, called the \emph{formal mating} of the polynomials \cite{Douady_Mating,Meyer_Mating}.  If the formal mating is equivalent to a rational map, then the resulting rational map is conjugate to the \emph{geometric mating} -- the gluing of the polynomial filled-in Julia sets along their boundaries parameterized by external angles. The mating operation attracts a lot of attention \cite{MatingQuestions} and is well-understood in the quadratic case due to M. Rees, Tan Lei, and M. ~Shishikura \cite{Rees_1,TanLeiMatings,Shishikura_Rees}. In general, the mating does not respect the structures of polynomial Julia sets, and rational maps often can be unmated in many ways \cite{M_unmating}.  However, if one of the polynomials is sufficiently simple (for example,  it is the Basilica or a generalized Rabbit), then its Hubbard tree gives a useful invariant of the mating with strong parameter implications.

Newton maps arise in the root-finding problem and form the biggest  well-understood non-polynomial class of rational maps. A Newton map  has a unique repelling fixed point at infinity; every other fixed point is attracting. The immediate attracting basins of fixed points meet at infinity; the associated internal rays form the \emph{channel diagram} that is the beginning of the puzzle theory for Newton maps \cite{ClassNewtonFixed, NewtonRigidity}.  The preimages of the channel diagram together with embedded Hubbard trees (for the renormalizable parts of Newton dynamics) provide the basis of the combinatorial classification of PCF Newton maps \cite{RussellDierk_Class}.

The Dehn-Nielsen-Baer Theorem states that every homeomorphism $f\colon (S^2,A)\selfmap$ is uniquely characterized up to isotopy by the induced pushforward $f_*\colon\pi_1(S^2,A)\selfmap$ viewed as an outer automorphism of $\pi_1(S^2,A)$. The theorem was extended to non-invertible branched coverings by Kameyama \cite{KameyamaThEq} and independently by Nekrashevych~\cite{Nekra}. The group theoretical data arising from $f\colon (S^2,A)\selfmap$ is conveniently described by the $\pi_1(S^2,A)$-\emph{biset} of $f$. There is a bijection between branched coverings $f\colon (S^2,A)\selfmap$ considered up to isotopy rel $A$ and sphere $\pi_1(S^2,A)$-bisets considered up to biset-isomorphism, see ~\cite[Theorem 2.8]{BD_Dec}. There are algorithms to compute the Hubbard trees and spiders out of bisets of polynomials; their implementations are available in the computer algebra system GAP \cite{GAP_IMG}.

\subsection{Expanding maps and quotients} Expansion is one of the key properties of PCF rational maps. It follows from the Schwarz lemma that $f\colon (\wC,A)\selfmap$ expands the hyperbolic metric on $\wC\setminus A$. (If $|A|=2$, then $f(z)= z^d$ expands the Euclidean metric of the cylinder $\C\setminus \{0\}$.)  If some of the points from $A$ are in the Julia set of $f$, then it is more convenient to consider the minimal hyperbolic (or Euclidean) orbifold $(\wC, \orb_f)$ of $f$. Periodic attracting cycles of $f$ are the only removed points in $(\wC,\orb_f)$; everywhere else $f$ is expanding.

A map $f\colon (S^2,A)\selfmap$ is called \emph{B\"ottcher expanding} if it admits an expanding metric mimicking the above expansion of PCF rational maps: $f$ is expanding everywhere except at removed critical cycles where $f$ is attracting and where $f$ has a \emph{B\"ottcher normalization} (i.e.,  it is locally conjugate to $z\mapsto z^d$). By~\cite{BD_Exp},  a non-Latt\`{e}s map $f\colon (S^2,A)\selfmap$ is isotopic to a B\"ottcher expanding map if and only if $f$ does not possesses a Levy obstruction. Other expanding sphere maps can be obtained by collapsing Fatou attracting basins of $f$, see~\cite[Proposition 1.1]{BD_Exp}.

For an expanding map $f\colon (S^2,A)\selfmap$, there is a natural notion of the Julia $\Jul(f)$ and Fatou $\Fat(f)$ sets. The case $\Jul(f)=S^2$ was studied in relation to the Cannon conjecture and quasi-symmetric geometry \cite{HP_Expanding, HP_Dimension,THEBook}. If $\Jul(f)=S^2$, then we say that $f\colon(S^2,A)\selfmap$ is a \emph{totally} expanding map.  More generally, we may consider \emph{totally topologically expanding} maps $g\colon X\selfmap $  on a compact metrizable space $X$,  see Section \ref{subsec:top_exp} for the definition. 

 Suppose that $f\colon(S^2,A)\selfmap$ is a B\"ottcher expanding map with non-empty Fatou set.  Let us denote by $\sim_{\Fat(f)}$ the smallest closed equivalence relation on $S^2$ generated by identifying all points in every Fatou component of $f$. The quotient space $S^2/\sim_{\Fat(f)}$ is a \emph{cactoid},  that is, a continuum composed of countably many spheres and segments pairwise intersecting in at most one point. The map $f$ naturally descends to a continuous map $\overline{f}\colon S^2/\sim_{\Fat(f)}\selfmap$ on the cactoid.  Then the corresponding (monotone) quotient map $\pi_{\overline f}\colon S^2 \to S^2/\sim_{\Fat(f)} $ provides a semi-conjugacy from $f$ to $\overline f$. It is naturally characterized by the following result. 

\begin{maintheorem}
\label{thm: max exp quotient}
Let $f\colon(S^2,A)\selfmap$ be a B\"ottcher expanding map with $\Fat(f)\not=\emptyset$. Then the induced map $\bar f\colon S^2/\sim_{\Fat(f)}\selfmap$ is the maximal totally expanding quotient. That is, any other semi-conjugacy $\pi_g\colon S^2\to Y$ from $f$ to a totally topologically expanding map $g\colon Y\selfmap$ factorizes through $\pi_{\overline f}\colon S^2\to S^2/{\sim_{\Fat(f)}}$:

\begin{center}
\begin{tikzcd}[row sep=tiny]
                         & S^2\arrow{dl}{\pi_{\overline f}}  \arrow{dd}{\pi_{g}}\arrow[loop right]{l}{f} \\
  S^2/\sim_{\Fat(f)} 
  \arrow[loop left]{l}{\overline{f}} 
  \arrow{dr}&              \\
  &Y \arrow[loop right]{l}{g}
\end{tikzcd}
\end{center}
\end{maintheorem}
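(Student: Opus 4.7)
The plan is to reduce the theorem to a single key statement: any semi-conjugacy $\pi_g\colon S^2\to Y$ from $f$ to a totally topologically expanding $g\colon Y\selfmap$ is constant on every Fatou component of $f$. Once this is established, the factorization is formal. Since $Y$ is Hausdorff, the relation $R=\{(x,y)\in S^2\times S^2 : \pi_g(x)=\pi_g(y)\}$ is a closed equivalence relation on $S^2$, and by the key statement $R$ identifies all points within each Fatou component; by the minimality built into the definition of $\sim_{\Fat(f)}$, this forces $\sim_{\Fat(f)}\subseteq R$. Hence $\pi_g$ descends to a continuous map $h\colon S^2/{\sim_{\Fat(f)}}\to Y$ with $h\circ\pi_{\overline f}=\pi_g$, and a routine diagram chase using surjectivity of $\pi_{\overline f}$ then promotes $h$ to a semi-conjugacy $h\circ\overline f=g\circ h$.

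The heart of the argument is showing that $\pi_g$ collapses each Fatou component. Fix a Fatou component $U$, two points $x_1,x_2\in U$, and a path $\gamma\subset U$ from $x_1$ to $x_2$. There exist $m\geq 0$ and $p\geq 1$ such that $U':=f^m(U)$ is periodic with $f^p(U')=U'$; let $c'\in U'$ be the B\"ottcher center of this periodic component. Using that $f^p|_{U'}$ is topologically conjugate to $z\mapsto z^d$ on the disk, $f^{m+np}(\gamma)$ Hausdorff-converges to $\{c'\}$ as $n\to\infty$. By uniform continuity of $\pi_g$ on the compact sphere, the set
\[
g^{m+np}\bigl(\pi_g(\gamma)\bigr)\;=\;\pi_g\bigl(f^{m+np}(\gamma)\bigr)
\]
has diameter tending to $0$ in $Y$. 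Applying the expansion hypothesis on $g$ (see the next paragraph) then forces the connected set $\pi_g(\gamma)$ itself to have vanishing diameter, so $\pi_g(x_1)=\pi_g(x_2)$, which is the claim.

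The main obstacle, and the only place where the hypothesis on $g$ is used, is to extract the following preimage-shrinking consequence from whichever formulation of ``totally topologically expanding'' is adopted in the paper: for every $\varepsilon>0$ there exists $N$ such that for every $n\geq N$ and every $y\in Y$, each connected component of $g^{-n}(y)$ has diameter at most $\varepsilon$. Applied to a small connected set $F\subset Y$ by enclosing it in a single ball of comparable diameter, this gives that each connected component of $g^{-n}(F)$ also has diameter at most $\varepsilon$ for $n\geq N$. Plugging $F=g^{m+np}(\pi_g(\gamma))$ into this statement, and noting that the connected set $\pi_g(\gamma)$ is contained in a single connected component of $g^{-(m+np)}(F)$, yields $\diam(\pi_g(\gamma))\leq\varepsilon$ for arbitrary $\varepsilon>0$, as required. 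Extracting this uniform preimage-shrinking from the expansion axioms is routine in the B\"ottcher-expanding setting, but it is where any genuine technicality lives; the rest of the proof is a clean packaging of continuity, of the fact that Fatou components of a B\"ottcher expanding map are precisely (pre)periodic superattracting basins, and of the universal property of the quotient $\pi_{\overline f}$.
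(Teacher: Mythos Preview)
Your argument for the factorization half is essentially correct and close to how the paper handles it: the paper's Claim in the proof of Theorem~\ref{thm:MEQ} shows that any semi-conjugacy to a totally topologically expanding map collapses each Fatou component, using the same idea (attraction to the center combined with the shrinking of pullbacks under $g$). The only difference is cosmetic: the paper pulls back a small open set around $\pi_g(c_F)$ and observes that the pullbacks exhaust the Fatou component while their $\pi_g$-images shrink, whereas you push a path forward and then pull back. Either works.

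However, you have left out the main content of the theorem. The statement asserts that $\bar f\colon S^2/{\sim_{\Fat(f)}}\selfmap$ \emph{is} a totally topologically expanding map, and hence the maximal such quotient; what you have shown is only that every totally expanding quotient factors through $\pi_{\bar f}$. Without also proving that $\bar f$ is totally expanding, your conclusion is compatible with there being no totally expanding quotients at all, or with $\bar f$ failing to be expanding while still sitting above every genuine expanding quotient. Establishing the expansion of $\bar f$ is precisely the hard part: the paper builds it via the entire apparatus of Sections~\ref{sec:clusters}--\ref{sec: cactoid maps} (the Crochet Algorithm produces the multicurve $\CC_\cro$, the collapsing data induces an expanding cactoid correspondence by Lemma~\ref{lem:bar iota:exp}, and Theorem~\ref{thm:MEQ} then identifies the resulting cactoid with $S^2/{\sim_{\Fat(f)}}$). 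None of this is ``routine packaging''; in particular, one must understand why the quotient has no hidden Fatou-like pieces on which expansion could fail, and this is exactly what the crochet decomposition and the description of fibers in Lemma~\ref{lem: fiber_descr} provide.
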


\subsection{Crochet decomposition} We say that a B\"ottcher expanding map $f\colon(S^2,A)\selfmap$ is a \emph{crochet map} if there is a connected forward-invariant zero-entropy graph containing $A$. Polynomials, Newton maps, matings where one of the polynomials has a zero-entropy Hubbard tree are examples of crochet maps.  The following result provides various characterizations of crochet maps.

\begin{maintheorem}
\label{thm:crochet_inro}
Let $f\colon(S^2,A)\selfmap$  be a B\"ottcher expanding map with a non-empty Fatou set. Then the following are equivalent:
\begin{enumerate}[label=\text{(\roman*)},font=\normalfont,leftmargin=*]
\item $f$ is a crochet map, that is,  there is a connected forward-invariant zero-entropy graph $G$ containing $A$; 
\item $S^2/\sim_{\Fat(f)}$ is a singleton;
\item every two points in $A$ may be connected by a path $\alpha$ with $\alpha\cap\Jul(f)$ being countable.
\end{enumerate}
\end{maintheorem}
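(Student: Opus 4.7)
My plan is to establish the cycle (i)$\Rightarrow$(iii)$\Rightarrow$(ii)$\Rightarrow$(i). The central external tool is Sierpi\'{n}ski's classical theorem that every countable connected Hausdorff space has at most one point; the closing implication (ii)$\Rightarrow$(i) will have to be powered by the structural results of the paper and is the main obstacle of the proof.

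For (i)$\Rightarrow$(iii) I would first argue that $G\cap\Jul(f)$ is countable. The restriction $f|_G$ is a self-map of a finite topological graph with zero topological entropy, and in this setting the structure theory of zero-entropy graph self-maps (equivalently, the absence of horseshoes for $f|_G$) prevents $G\cap\Jul(f)$ from containing a perfect subset; being a closed subset of $G$, it must then be countable. Any two $a,b\in A$ are joined by an arc $\alpha$ in the finite connected graph $G$, and $\alpha\cap\Jul(f)\subset G\cap\Jul(f)$ is countable as required.

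For (iii)$\Rightarrow$(ii) I choose for each pair $a,b\in A$ a path $\alpha_{a,b}$ as in (iii) and set $\alpha:=\bigcup_{a,b\in A}\alpha_{a,b}$, a connected compact subset of $S^2$ containing $A$ with $\alpha\cap\Jul(f)$ countable. Since $A$ is forward-invariant, $A\subset f^{-n}(A)\subset f^{-n}(\alpha)$ for every $n\geq 0$, so the backward saturation
\[
\tilde\alpha:=\bigcup_{n\geq 0}f^{-n}(\alpha)
\]
is connected. Its Julia part equals $\bigcup_n f^{-n}(\alpha\cap\Jul(f))$, a countable union of countable sets, and $\tilde\alpha$ meets only countably many of the countably many Fatou components of $f$. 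Thus the image of $\tilde\alpha$ under the quotient map $q\colon S^2\to S^2/{\sim_{\Fat(f)}}$ is a countable connected subset of the Hausdorff cactoid, which by Sierpi\'{n}ski's theorem is forced to be a single point $p_0$. B\"ottcher expansion makes the full backward orbit of any Julia point of $\tilde\alpha$ dense in $\Jul(f)$, whence $\overline{\tilde\alpha}\supset\Jul(f)$ and continuity of $q$ gives $q(\Jul(f))=\{p_0\}$. Finally, every Fatou component $U$ satisfies $[U]=q(\partial U)\subset q(\Jul(f))=\{p_0\}$, so $q$ is constant and $S^2/{\sim_{\Fat(f)}}$ is a singleton.

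The step (ii)$\Rightarrow$(i) is the main obstacle and is where I expect to invoke the canonical decomposition machinery developed elsewhere in the paper together with Theorem~\ref{thm: max exp quotient}. A singleton quotient means that the maximal totally expanding quotient of $f$ is trivial, so no Sierpi\'{n}ski pieces can appear in any reasonable decomposition of $f$. Concretely, I would first realize the singleton equivalence class of $A$ by chains of Fatou components sharing boundary points; straightening these chains inside the postcritically marked Fatou components, using B\"ottcher rays as canonical internal spines, should produce a finite graph $G_0\supset A$ with $G_0\cap\Jul(f)$ finite. An iterated pull-back-and-isotopy construction, stabilized by the expanding metric, would then yield a finite forward-invariant graph $G\supset A$. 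Zero entropy of $f|_G$ should follow because its edges live inside invariantly attached Fatou components (whose B\"ottcher dynamics $z\mapsto z^d$ contributes no entropy on a one-dimensional skeleton), while the finite set $G\cap\Jul(f)$ is preperiodic with no branching to support positive entropy. The delicate point I anticipate is producing an honestly \emph{finite} graph out of what is a priori a countably accumulating system of Fatou-chains, and certifying zero entropy of the resulting graph self-map; this is where the Sierpi\'{n}ski-versus-crochet dichotomy and the effective algorithm announced in the abstract become indispensable.
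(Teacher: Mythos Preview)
Your cycle (i)$\Rightarrow$(iii)$\Rightarrow$(ii)$\Rightarrow$(i) is sound in outline and genuinely different from the paper's scheme, which proves (i)$\Rightarrow$(ii),(iii) directly and then establishes $\neg$(i)$\Rightarrow\neg$(ii),$\neg$(iii) via the Crochet Algorithm. Your implication (iii)$\Rightarrow$(ii) is the most interesting deviation: the paper never proves this implication directly, whereas you obtain it elegantly by pushing the backward-saturated path system $\tilde\alpha$ to the cactoid, observing that its image is countable and connected, and collapsing it to a point. This buys you a clean topological argument independent of the graph $G$. One correction: the statement you call ``Sierpi\'{n}ski's theorem'' is false for general Hausdorff spaces (countable connected Hausdorff spaces exist, e.g.\ the Golomb topology); what you need is that a countable connected \emph{metric} space is a singleton, which holds here since the cactoid $S^2/\!\sim_{\Fat(f)}$ is metrizable.

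Your step (i)$\Rightarrow$(iii) has a gap. You assert that zero entropy of $f|_G$ (absence of horseshoes) forces $G\cap\Jul(f)$ to have no perfect subset, but this does not follow from graph-map structure theory alone: zero-entropy graph maps can have uncountable invariant sets, and $G\cap\Jul(f)$ is defined via the ambient dynamics of $f$, not via the non-wandering set of $f|_G$. The paper sidesteps this entirely by adopting the standing convention (Section~4.1) that every $0$-entropy graph intersects Fatou components along internal rays; then $G$ is literally a finite union of (pre)periodic internal rays and $G\cap\Jul(f)$ is the finite set of landing points. You should either invoke this normalization or supply the missing argument, which would have to combine zero entropy of $f|_G$ with metric expansion of $f$ on $\Jul(f)$ to produce a horseshoe from any Cantor subset of $G\cap\Jul(f)$.

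For (ii)$\Rightarrow$(i) you correctly identify that the paper's decomposition machinery is indispensable; the paper's argument is exactly the contrapositive you anticipate: if $f$ is not crochet, the Crochet Algorithm (Corollary~\ref{cor: cro multicurve exists}) produces a nonempty $\CC_{\cro}$, and Lemma~\ref{lem:cro-mult-decomp} forces either a Sierpi\'{n}ski piece or a primitive bicycle, giving a nontrivial sphere or segment in the cactoid. Your sketch of a direct construction of $G$ from Fatou chains would essentially re-derive the cluster theory of Section~4, so deferring here is appropriate.
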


We say that a  B\"ottcher expanding map  $f\colon(S^2,A)\selfmap$ is a \emph{Sierpi\'{n}ski map} if its Julia set is homeomorphic to the standard Sierpi\'{n}ski carpet,  that is,   the Fatou set $\Fat(f)$ is non-empty,  Fatou components have pairwise disjoint closures, and the closure of every Fatou component is a Jordan domain.  The following easily follows from Whyburn's characterization \cite{Why}, Moore's theorem \cite{Moore,WhyBook}, and \cite[Section~4.5]{BD_Exp}.


\begin{proposition}
\label{thm:sierpinski}
Let $f\colon(S^2,A)\selfmap$ be a a B\"ottcher expanding map with $\Fat(f)\not=\emptyset$. Then the following are equivalent:
\begin{enumerate}[label=\text{(\roman*)},font=\normalfont,leftmargin=*]
\item $f$ is a Sierpi\'{n}ski map, i.e., $\Jul(f)$ is homeomorphic to the standard Sierpi\'{n}ski carpet;
\item $S^2/\sim_{\Fat(f)}$ is a sphere and $\sim_{\Fat(f)}$ is trivial on $A$;
\item every connected periodic zero-entropy graph is homotopically trivial rel.  $A$. 
\end{enumerate}
\end{proposition}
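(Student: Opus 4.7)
The plan is to deduce all three equivalences from: (a) the structural result \cite[Section~4.5]{BD_Exp}, which says that for a B\"ottcher expanding map each Fatou component is a Jordan disk whose diameter tends to zero along any sequence of distinct components, and each periodic Fatou component contains exactly one postcritical point (its center) while preperiodic components contain none; (b) Moore's decomposition theorem, which gives $S^2/{\sim}\cong S^2$ whenever the equivalence classes of $\sim$ form an upper semi-continuous partition of $S^2$ into non-separating continua; and (c) Whyburn's topological characterization of the standard Sierpi\'nski carpet as a $1$-dimensional, connected, locally connected planar continuum without local cut points whose complement is a countable union of open Jordan disks of shrinking diameter with pairwise disjoint closures.

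For (i)$\Rightarrow$(ii): if $\Jul(f)$ is a Sierpi\'nski carpet then all Fatou components are Jordan disks with pairwise disjoint closures, so the classes of $\sim_{\Fat(f)}$ are either points of $\Jul(f)$ or closed Jordan disks---all non-separating continua. Moore's theorem yields $S^2/\sim_{\Fat(f)}\cong S^2$, and the description of $A$ given in (a) makes $\sim_{\Fat(f)}$ trivial on $A$. Conversely, for (ii)$\Rightarrow$(i): by (a) the Fatou components are already Jordan disks of shrinking diameter. Since the Hausdorff quotient $S^2/\sim_{\Fat(f)}$ is a sphere and distinct Fatou components descend to distinct points, their closures must be pairwise disjoint. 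The Julia set is connected and locally connected (via expansion), and any local cut point of $\Jul(f)$ would descend to a cut point of the sphere $S^2/\sim_{\Fat(f)}$, which is impossible; Whyburn's criterion then gives (i).

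For (i)$\Rightarrow$(iii): the absence of local cut points in the carpet, combined with the expansion of $f$, prevents any arc in $\Jul(f)$ from being contained in a forward-invariant graph, since its iterates would spread across the carpet and produce exponentially growing combinatorial complexity. Hence any connected periodic zero-entropy graph $G$ lies, up to isotopy rel $A$, in the union of finitely many closed Fatou components, each containing at most one point of $A$, so $G$ is homotopically trivial rel $A$. Conversely, if (i) fails then by (ii) either two Fatou closures meet or $\sim_{\Fat(f)}$ is non-trivial on $A$; in the first case the adjacency graph of a periodic cycle of touching Fatou components yields a connected periodic graph carrying zero entropy (a finite graph rotated by the first-return dynamics) which is homotopically non-trivial rel~$A$, and in the second case an arc inside one Fatou component joining two points of $A$ supplies the obstruction. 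The main obstacle in the whole argument is the no-local-cut-point step in (ii)$\Rightarrow$(i); once that is in place, the rest is bookkeeping from the classical topological results and \cite[Section~4.5]{BD_Exp}.
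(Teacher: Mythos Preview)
Your proposal is essentially the same as the paper's: the paper's proof is a single sentence stating that the result follows from Whyburn's characterization, Moore's theorem, and \cite[Section~4.5]{BD_Exp}, and you invoke exactly these three ingredients. You are supplying the details the paper omits.

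A couple of your details are loose enough to warrant tightening. In the direction (ii)$\Rightarrow$(i), the assertion that ``distinct Fatou components descend to distinct points'' is what you need to prove, not something you can assume; you should argue that if two Fatou closures touched, the resulting $\sim_{\Fat(f)}$-class would propagate under backward iteration and the quotient could not remain a $2$-sphere. In (i)$\Rightarrow$(iii), the phrase ``iterates would spread across the carpet and produce exponentially growing combinatorial complexity'' is not a proof: a forward-invariant graph does not spread under iteration. The clean route via \cite[Section~4.5]{BD_Exp} is the Levy-arc characterization of Sierpi\'nski carpets (no non-trivial periodic arc between Fatou centers), and a homotopically non-trivial connected periodic zero-entropy graph would yield such an arc. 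Finally, in your $\neg$(i)$\Rightarrow\neg$(iii) case analysis, ``an arc inside one Fatou component joining two points of $A$'' is misleading since a Fatou component contains at most one point of $A$; what you mean is an internal ray from the center to a marked boundary point, or more generally a chain of internal rays through touching Fatou components, which then gives the non-trivial zero-entropy graph.
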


  Let us write $f\colon(S^2,A,\CC)\selfmap$ for a branched covering $f\colon(S^2,A)\selfmap$ with an invariant multicurve $\CC=f^{-1}(\CC)$. Then $\CC$ splits $(S^2,A)$ into finitely many spheres marked by $A$ and $\CC$. Every small sphere of $(S^2,A,\CC)$ is either periodic or preperiodic under $f$; the first return map along a periodic cycle determines the type of maps in the cycle. The inverse operation is an \emph{amalgam}  producing a global map out of small maps and the gluing data \cite{Pilgrim_Comb}. We remark that decompositions and  amalgams are topological operations and the resulting objects are unique up to isotopy. If $f\colon(S^2,A,\CC)\selfmap$ is expanding, then every small sphere of $f\colon(S^2,A,\CC)\selfmap$ has the associated \emph{small Julia set} in $\Jul(f)$. Small Julia sets may intersect and may even coincide with the global Julia set $\Jul(f)$ (for example for matings).

The next theorem is the main result of this paper providing the \emph{crochet canonical decomposition} of expanding maps into crochet and Sierpi\'{n}ski maps.

\begin{maintheorem}
\label{thm:cro decomp}
Let $f\colon(S^2,A)\selfmap$ be a B\"ottcher expanding map with $\Fat(f)\not=\emptyset$.  There is a unique canonical invariant multicurve $\CC_\cro$ whose small maps are Sierpi\'{n}ski and crochet maps such that for the quotient map $\pi_{\overline{f}}\colon S^2\to S^2/{\sim_{\Fat(f)}}$ the following are true:
\begin{enumerate}[label=\text{(\roman*)},font=\normalfont,leftmargin=*]
\item  small Julia sets of Sierpi\'{n}ski maps project onto spheres;
\item small Julia sets of crochet maps project to points;
\item  different crochet Julia sets project to different points in $S^2/{\sim_{\Fat(f)}}$.
\end{enumerate}
\end{maintheorem}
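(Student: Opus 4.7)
The plan is to read off $\CC_\cro$ from the cactoid quotient $\pi_{\overline f}\colon S^2\to X := S^2/\sim_{\Fat(f)}$ provided by Theorem~\ref{thm: max exp quotient}. I would first analyze the structure of $X$ under $\bar f\colon X\selfmap$: since $\bar f$ is totally expanding on a cactoid, I expect $\bar f$ to send sphere components of $X$ to sphere components as branched covers and to send the tree (segment-and-cut-point) parts to tree parts; a standard expansion argument should then give only finitely many periodic sphere cycles, with every sphere component being preperiodic to one, as well as finitely many orbits of cut points of $X$.

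To construct $\CC_\cro$, I would take a small simple closed curve $\gamma_p$ in $S^2$ encircling $\pi_{\overline f}^{-1}(p)$ for every cut point $p\in X$ that either bounds a periodic or preperiodic sphere component or is needed to separate two distinct ``crochet nuclei'' (fibres $\pi_{\overline f}^{-1}(q)$ whose union would otherwise leave a small Julia set spanning multiple points of $X$). Let $\CC_\cro$ be the collection of isotopy classes of such curves. Finiteness, non-peripherality, and essentiality are built into the construction, and $f^{-1}(\CC_\cro)$ is isotopic to $\CC_\cro$ because $f$ covers $\bar f$ under $\pi_{\overline f}$ and hence permutes cut points. I would then verify that a small $(S^2,A,\CC_\cro)$-piece whose interior projects onto a sphere component $S\subset X$ has small Julia set projecting onto $S$, so by Proposition~\ref{thm:sierpinski} applied to the first-return map on a periodic sphere cycle it is a Sierpi\'{n}ski map, yielding (i); and that a small piece projecting into the tree part has small Julia set contained in a single fibre $\pi_{\overline f}^{-1}(q)$ by construction, so by Theorem~\ref{thm:crochet_inro} it is crochet, yielding (ii). Condition (iii) then follows because distinct crochet pieces are arranged to land on distinct points.

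For uniqueness, any invariant $\CC'$ with the claimed properties must separate each sphere-preimage along curves isotopic to our $\gamma_p$ at its attachment cut points (forced by (i) and Proposition~\ref{thm:sierpinski}), and must cut so as to distinguish all pairs of distinct crochet Julia sets (forced by (iii)); these two requirements pin down each isotopy class, giving $\CC'=\CC_\cro$. I expect the main obstacle to be the calibration of exactly which cut points of $X$ must contribute to $\CC_\cro$: too few cuts would allow two crochet Julia sets to merge, violating (iii) or (ii), while too many would slice a Sierpi\'{n}ski piece and destroy (i). Resolving this requires a careful local analysis at each cut point $p$ of the connectedness of $\pi_{\overline f}^{-1}(p)$ and of the induced dynamics on each complementary piece, and it is here that the sphere-and-tree structure of the cactoid is used most delicately.
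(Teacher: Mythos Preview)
Your approach is conceptually the reverse of the paper's: you want to read $\CC_\cro$ off the cactoid $X=S^2/\sim_{\Fat(f)}$, whereas the paper first constructs $\CC_\cro$ by an explicit algorithm on Fatou clusters (Section~\ref{sec: existence}), then builds an expanding cactoid model from the collapsing data of $\CC_\cro$ (Section~\ref{sec: cactoid maps}), and only then identifies that model with $S^2/\sim_{\Fat(f)}$ (Theorem~\ref{thm:MEQ}). This makes your invocation of Theorem~\ref{thm: max exp quotient} circular in the paper's logical structure: the assertion that $\bar f\colon X\selfmap$ is totally expanding is exactly what is proved via $\CC_\cro$, and I do not see an independent route to it. Without expansion you lose the finiteness of periodic sphere cycles and cut-point orbits that your construction needs, and you cannot guarantee that $X$ decomposes cleanly into spheres and a tree part in the dynamically finite way you assume.

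Even granting Theorem~\ref{thm: max exp quotient}, there is a second gap. To certify that a small first-return map is Sierpi\'nski or crochet you appeal to Proposition~\ref{thm:sierpinski} and Theorem~\ref{thm:crochet_inro}, but those characterize the small map via \emph{its own} Fatou quotient $\widehat S/\sim_{\Fat(\widehat f)}$, not via the image of its small Julia set in the global $X$. Passing between the two requires knowing how small Julia sets sit inside $\Jul(f)$ and how their Fatou-touching patterns relate to the global ones; in the paper this is the content of the formal amalgam machinery (Theorem~\ref{thm:Form amalg} and Lemma~\ref{lem: fiber_descr}), and it is not something one gets for free from the cactoid picture alone. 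Relatedly, your treatment of the segment part of $X$ is too vague: the curves in $\CC_\cro$ coming from bicycles (rather than from boundaries of Sierpi\'nski pieces) correspond to Cantor families of arcs in $X$, not to single cut points, and ``encircling a fibre'' does not by itself pick out the correct finite multicurve there. The calibration problem you flag is real, but it sits on top of these more basic missing ingredients.
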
  
  
  \subsection{Bicycles and Sierpi\'{n}ski maps} Let $\CC$ be a multicurve consisting of periodic (up to homotopy) curves such that $\CC$ is maximally strongly connected: for all $\gamma,\delta\in\CC$ an iterated preimage of $\gamma$ is homotopic to $\delta$ and $\CC$ can not be enlarge while keeping this property.  We call $\CC$ a \emph{bicycle} if its curves replicate: there is an $n\ge 1$ such that every $\gamma\in \CC$ is homotopic to at least two components of $f^{-n}(\gamma)$.


The crochet decomposition can now be characterized as follows:
  
\begin{maintheorem}
\label{thm:Sierp bicycle}
Let $f\colon(S^2,A)\selfmap$ be a B\"ottcher expanding map with $\Fat(f)\not=\emptyset$. Then maximal Sierpi\'{n}ski small maps are well-defined. If two bicycles have a positive geometric intersection number, then these bicycles are within a small Sierpi\'{n}ski map.   

The crochet multicurve $\CC_\cro$ is generated by the boundaries of maximal Sierpi\'{n}ski small maps and the remaining bicycles. 
\end{maintheorem}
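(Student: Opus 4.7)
The plan is to combine the crochet decomposition of Theorem~\ref{thm:cro decomp} with the Sierpi\'{n}ski characterization of Proposition~\ref{thm:sierpinski} and the maximal expanding quotient of Theorem~\ref{thm: max exp quotient}. For well-definedness of maximal Sierpi\'{n}ski small maps, I consider the dual graph of $\CC_\cro$: vertices are small maps, edges are curves of $\CC_\cro$, and each vertex carries a Sierpi\'{n}ski or crochet label. I would define a \emph{maximal Sierpi\'{n}ski submap} as the amalgam (in the sense of Pilgrim) of a maximal forward-invariant connected set of Sierpi\'{n}ski vertices, glued along the curves of $\CC_\cro$ they share. To verify that the amalgam is itself Sierpi\'{n}ski, I would check criterion (iii) of Proposition~\ref{thm:sierpinski}: any connected periodic zero-entropy graph in the amalgam, restricted to each constituent piece, must be homotopically trivial rel the marked set, and the gluing curves (being part of $\CC_\cro$) ensure that no new non-trivial zero-entropy graph can arise from spanning between pieces.

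For the intersection statement, I would argue that two bicycles with positive geometric intersection cannot sit inside the same crochet small map. Indeed, by Theorem~\ref{thm:crochet_inro}(i), such a crochet piece admits a connected forward-invariant zero-entropy graph $G$ containing its postcritical set, and by Theorem~\ref{thm:cro decomp}(ii) its entire small Julia set collapses to a single point under the semi-conjugacy of Theorem~\ref{thm: max exp quotient}. Any bicycle curve in the crochet piece can therefore be isotoped into a ribbon neighborhood of $G$, where the zero-entropy condition together with the replication property forces the combinatorics to be essentially ``radial'' around $G$. Any two such bicycles are then isotopable to disjoint curves, contradicting positive intersection. A curve of a bicycle also cannot cross a boundary curve of $\CC_\cro$ essentially, since a boundary curve is itself preperiodic under pullback and such a crossing would destroy the replication property on one side. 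Consequently two intersecting bicycles must lie in a common small Sierpi\'{n}ski map, and then in a common maximal one by the first part.

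For the final characterization of $\CC_\cro$, every curve of $\CC_\cro$ either lies on the boundary of a maximal Sierpi\'{n}ski submap or separates two adjacent small maps at least one of which is crochet. In the latter case, the canonicity in Theorem~\ref{thm:cro decomp} forces such a curve to be part of a bicycle, since otherwise the curve could be removed from $\CC_\cro$ without spoiling the Sierpi\'{n}ski--crochet dichotomy of the remaining decomposition. Conversely, any bicycle not contained in some maximal Sierpi\'{n}ski submap must contribute curves to $\CC_\cro$, as the crochet small maps carry no non-trivial bicycles by the zero-entropy rigidity exploited in the previous step. The main obstacle will be making the non-crossing argument precise: formalizing how ``replication inside a zero-entropy invariant graph'' forbids transversely intersecting bicycles is the technical heart of the proof, and I expect to carry it out by combining the combinatorics of replicating strongly-connected multicurves with the cactoid structure of $S^2/{\sim_{\Fat(f)}}$.
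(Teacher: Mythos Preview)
Your approach attempts to derive everything from the finished object $\CC_\cro$ (via Theorem~\ref{thm:cro decomp}) together with the quotient cactoid, whereas the paper's proof proceeds \emph{constructively} through the Crochet Algorithm.  The key step in the paper is Lemma~\ref{lem: max_sierp_bicycles}: during each recursive stage of the algorithm one builds $0$-entropy graphs inside the maximal clusters of touching Fatou components, and one observes directly that neither a small Sierpi\'{n}ski Julia set nor a curve of a bicycle can cross such a graph.  Iterating this through all stages shows at once that every Sierpi\'{n}ski small sphere (for \emph{any} invariant multicurve $\DD$) sits inside a Sierpi\'{n}ski sphere of $\CC_\cro$, and every bicycle of $\DD$ is either in $\CC_\cro$ or inside a Sierpi\'{n}ski sphere of $\CC_\cro$.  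All three assertions of the theorem are then immediate consequences.

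Your proposal has two genuine gaps relative to this.  First, you interpret ``maximal Sierpi\'{n}ski small maps are well-defined'' as amalgamating adjacent Sierpi\'{n}ski vertices in the dual graph of $\CC_\cro$; but the statement really asserts maximality with respect to \emph{all} invariant multicurves, i.e.\ that no larger Sierpi\'{n}ski piece exists in any decomposition.  Your amalgamation argument does not address this, and the verification via Proposition~\ref{thm:sierpinski}(iii) that an amalgam of Sierpi\'{n}ski pieces is again Sierpi\'{n}ski is not carried out (and is not needed in the paper's route).  Second, the ``non-crossing'' step you flag as the technical heart---that a crochet piece admits no non-trivial bicycle---is exactly what the paper avoids by working stage-by-stage: at each stage the $0$-entropy graph is explicit and the non-crossing is a direct entropy/replication count, whereas proving it \emph{a posteriori} from the mere existence of a spanning $0$-entropy graph (your ``radial'' sketch) would require an additional argument you have not supplied.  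The fix is to abandon the top-down approach and follow the algorithm.
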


Let us say that a B\"ottcher expanding map $f\colon (S^2,A)\selfmap$ with $\Fat(f)\not=\emptyset$ is \emph{Sierpi\'{n}ski-free} if it does not contain any small Sierpi\'{n}ski maps with respect to any invariant multicurve. In this case $\CC_\cro$ is generated by all bicycles of $f$.

A topological space $X$ is a \emph{dendrite} if it is a locally connected continuum that contains no simple closed curves.

\begin{maintheorem}\label{thm: sierp-free-intro}
Let $f$ be a B\"ottcher expanding map with $\Fat(f)\not=\emptyset$. 
Then the following are equivalent:
\begin{enumerate}[label=\text{(\roman*)},font=\normalfont,leftmargin=*]
\item none of the small maps in the decomposition of $f$ along the crochet multicurve $\CC_\cro$ is a Sierpi\'{n}ski map;
\item $S^2/{\sim_{\Fat(f)}}$ is a dendrite;
\item the decomposition of $f$ with respect to every invariant multicurve $\CC$ does not produce a Sierpi\'{n}ski small map.
\end{enumerate}
\end{maintheorem}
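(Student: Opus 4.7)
The plan is to prove the equivalences via the cycle (iii) $\Rightarrow$ (i) $\Rightarrow$ (ii) $\Rightarrow$ (iii), using Theorems \ref{thm:cro decomp} and \ref{thm:Sierp bicycle} as black boxes. The implication (iii) $\Rightarrow$ (i) is immediate since $\CC_\cro$ is itself an invariant multicurve.

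For (i) $\Rightarrow$ (iii) I argue by contrapositive. If some invariant multicurve produces a Sierpi\'{n}ski small map $g$, then by Theorem \ref{thm:Sierp bicycle} maximal Sierpi\'{n}ski small maps are well-defined, and the boundary of the maximal Sierpi\'{n}ski small map containing $g$ lies in $\CC_\cro$. Hence the $\CC_\cro$-decomposition itself produces a Sierpi\'{n}ski small map, contradicting (i). For (ii) $\Rightarrow$ (iii) the same contrapositive yields a Sierpi\'{n}ski small map $h$ in the $\CC_\cro$-decomposition; by Theorem \ref{thm:cro decomp} (i) its small Julia set projects to a sphere in $Q := S^2/{\sim_{\Fat(f)}}$, contradicting that $Q$ is a dendrite (dendrites contain no simple closed curves and in particular no embedded $2$-sphere).

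The main work is (i) $\Rightarrow$ (ii). Assume every small map in the $\CC_\cro$-decomposition is crochet. The quotient $Q$ is already known to be a cactoid, i.e., a countable union of spheres and arcs meeting pairwise in at most one point; to obtain a dendrite it therefore suffices to show that $Q$ has no spherical components. I intend to establish that the spherical components of $Q$ are in bijection with the Sierpi\'{n}ski small maps of the $\CC_\cro$-decomposition. One direction is provided by Theorem \ref{thm:cro decomp} (i). For the converse, any sphere $\Sigma \subset Q$ lifts under the quotient map $\pi_{\overline f}$ to an $f$-forward-invariant Sierpi\'{n}ski-carpet-like continuum in $S^2$, which by Theorem \ref{thm:Sierp bicycle} is the small Julia set of a maximal Sierpi\'{n}ski sub-dynamics whose bounding multicurve lies in $\CC_\cro$. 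Under hypothesis (i) no such sub-dynamics exists, so $Q$ has no spheres and is therefore a dendrite.

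The main obstacle is the converse direction of this bijection, namely identifying $\pi_{\overline f}^{-1}(\Sigma)$ as the small Julia set of a genuine small map within the $\CC_\cro$-decomposition. This requires a careful analysis of the interaction between the Fatou-quotient $\pi_{\overline f}$ and the multicurve decomposition along $\CC_\cro$, together with the maximality part of Theorem \ref{thm:Sierp bicycle} to guarantee that the boundary of the lifted Sierpi\'{n}ski continuum is indeed contained in $\CC_\cro$. Once this correspondence is in hand, each implication above falls out with only bookkeeping.
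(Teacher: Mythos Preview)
Your implications (iii)$\Rightarrow$(i), (i)$\Rightarrow$(iii), and (ii)$\Rightarrow$(iii) are fine. In fact your route for (ii)$\Rightarrow$(iii) (via $\neg$(iii)$\Rightarrow\neg$(i) using Theorem~\ref{thm:Sierp bicycle}, then Theorem~\ref{thm:cro decomp}(i) to produce a sphere in $Q$) is a legitimate alternative to the paper's argument, which instead builds an auxiliary expanding cactoid $Y$ from the hypothetical Sierpi\'{n}ski piece and invokes the universal property of Theorem~\ref{thm: max exp quotient} to factor $S^2\to Y$ through the dendrite $Q$, a contradiction.

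The genuine gap is in (i)$\Rightarrow$(ii). You correctly isolate the issue: you need that every sphere in $Q$ arises from a Sierpi\'{n}ski small map in the $\CC_\cro$-decomposition, but this is \emph{not} a consequence of Theorems~\ref{thm:cro decomp} and~\ref{thm:Sierp bicycle} as black boxes. Your proposed fix---lifting a sphere $\Sigma\subset Q$ to $\pi_{\overline f}^{-1}(\Sigma)$ and recognising it as a carpet via Theorem~\ref{thm:Sierp bicycle}---does not work: there is no a~priori reason $\pi_{\overline f}^{-1}(\Sigma)$ is a Sierpi\'{n}ski carpet, nor is it forward-invariant (spheres of $Q$ are permuted, not fixed), and Theorem~\ref{thm:Sierp bicycle} speaks about small maps of multicurve decompositions, not about arbitrary continua in $S^2$. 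You would essentially have to rebuild the structure theory of the quotient to push this through.

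The paper avoids this entirely. It uses the explicit description of $S^2/{\sim_{\Fat(f)}}$ as the inverse limit $X_\infty$ of the finite cactoids $X_n$ constructed in Section~\ref{sec: cactoid maps} from the collapsing data of $\CC_\cro$. Under hypothesis (i) the index set $I_\circ$ of Sierpi\'{n}ski spheres is empty, so each $X_n$ consists only of segments and points (no sphere components), hence is a finite tree; the inverse limit of trees under the monotone maps $\bar\iota$ is then a dendrite. This is a one-line argument once the Section~\ref{sec: cactoid maps} machinery and Theorem~\ref{thm:MEQ} are in place, and it is the step you are missing.
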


\begin{figure}[t]
  \centering
  \begin{subfigure}[b]{0.30\textwidth}
    \begin{overpic}
      [height=4cm, tics=10,
      ]{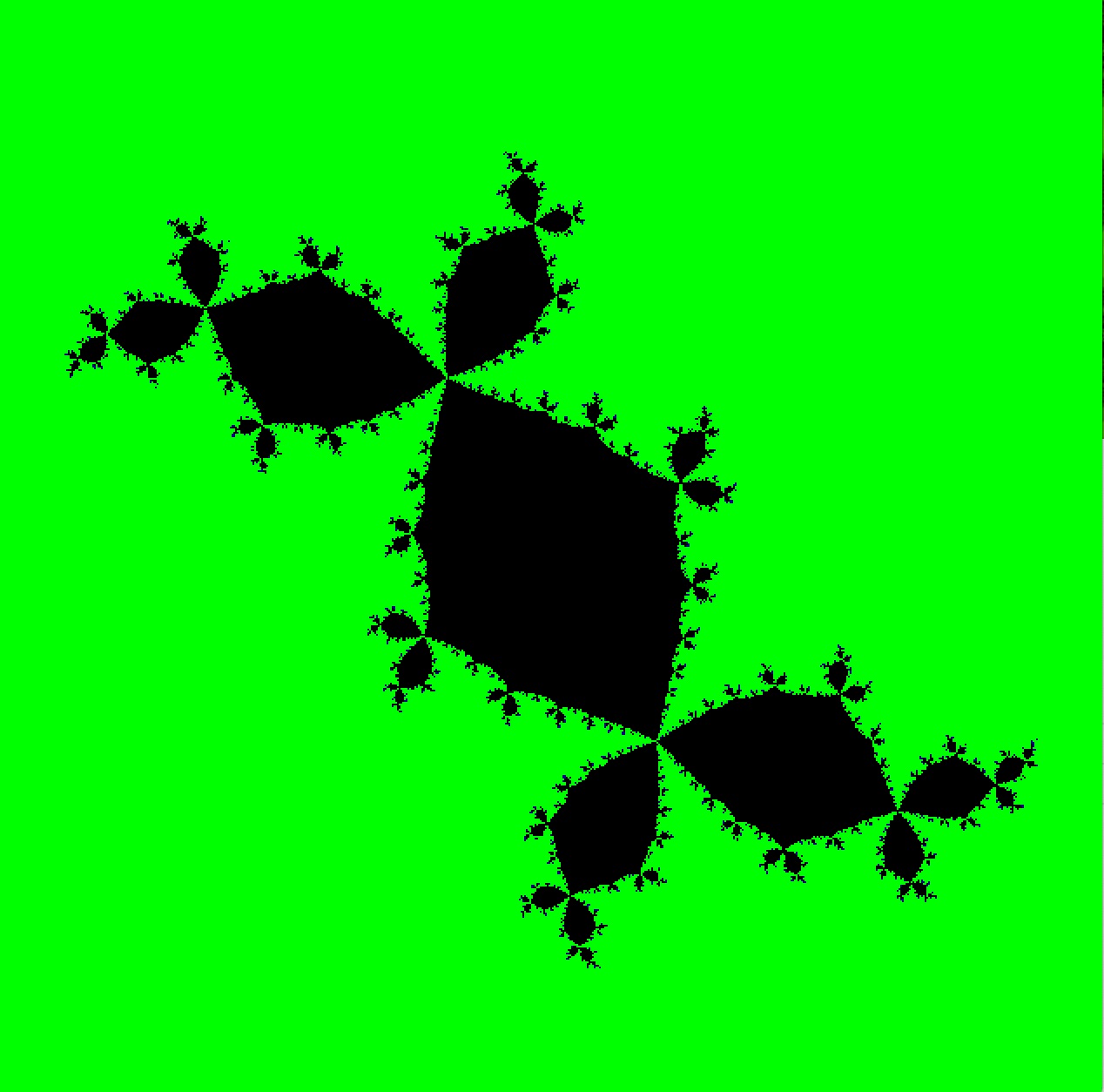}
    \put(45,-10){\footnotesize $(a)$}
    \end{overpic}
  \end{subfigure}
  \hfill
  \begin{subfigure}[b]{0.33\textwidth}
    \begin{overpic}
      [height=4cm, tics=10,
      ]{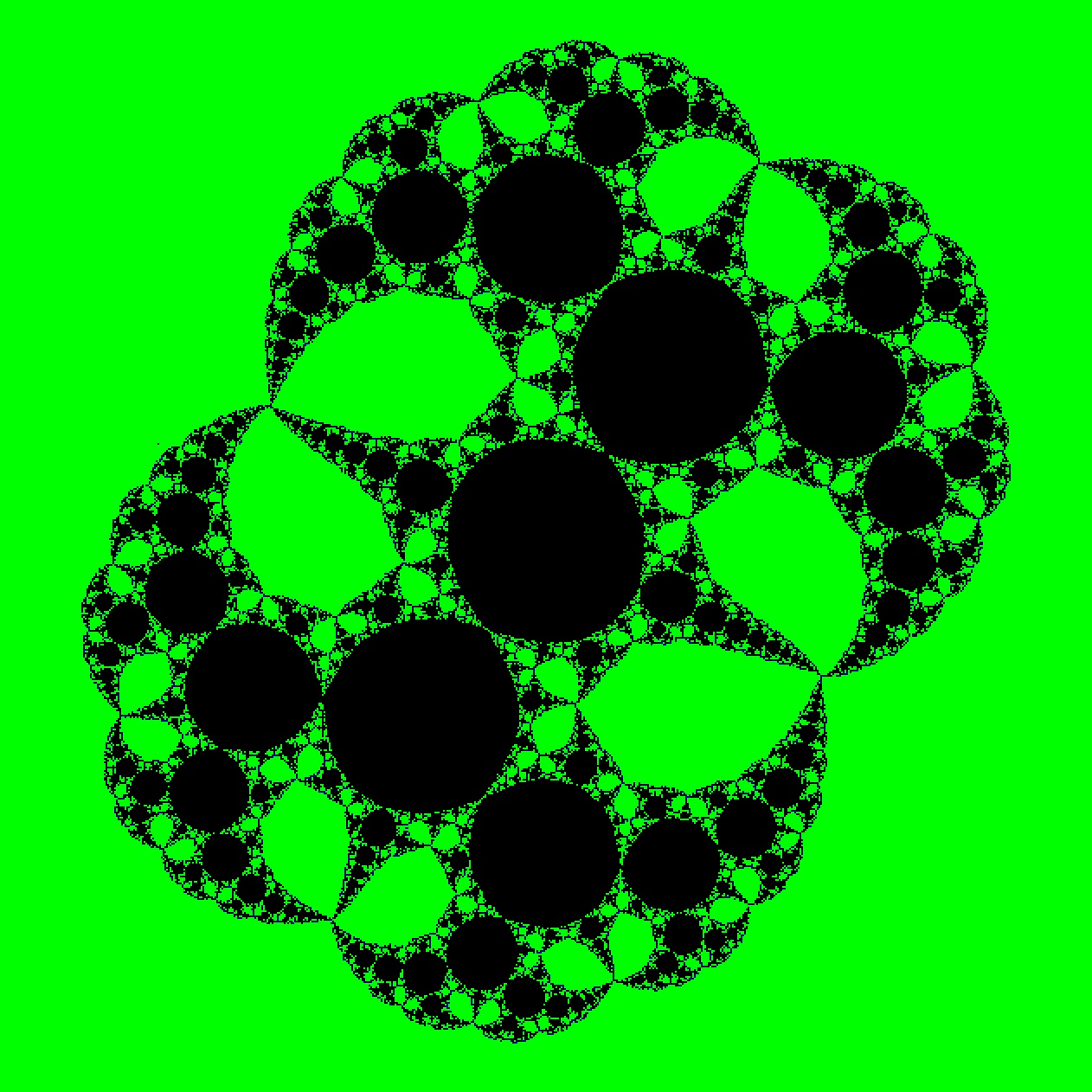}
          \put(45,-10){\footnotesize $(b)$}
        \end{overpic}
  \end{subfigure}
    \begin{subfigure}[b]{0.33\textwidth}
    \begin{overpic}
      [height=4cm, tics=10,
      ]{Sierpinski.png}
          \put(45,-10){\footnotesize $(c)$}
        \end{overpic}
  \end{subfigure}\\
  \vspace{0.5cm}\mbox{}\\
   \hspace{0.6cm}
   \begin{subfigure}[b]{0.30\textwidth}
    \begin{overpic}
      [height=4cm, tics=10,
      ]{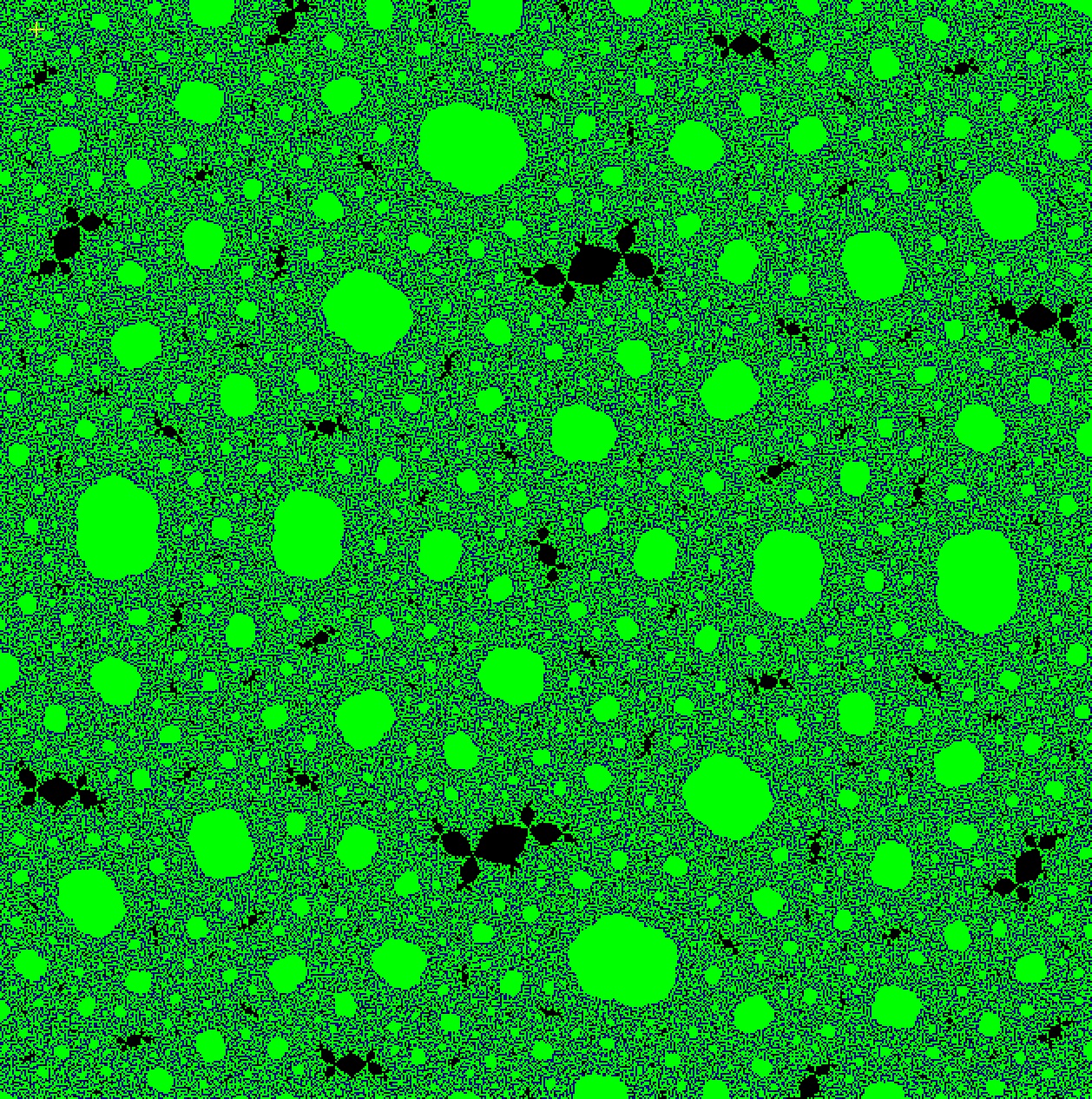}
    \put(45,-10){\footnotesize $(d)$}
    \end{overpic}
  \end{subfigure}
   \hspace{1cm}
 \begin{subfigure}[b]{0.33\textwidth}
    \begin{overpic}
      [height=4cm, tics=10,
      ]{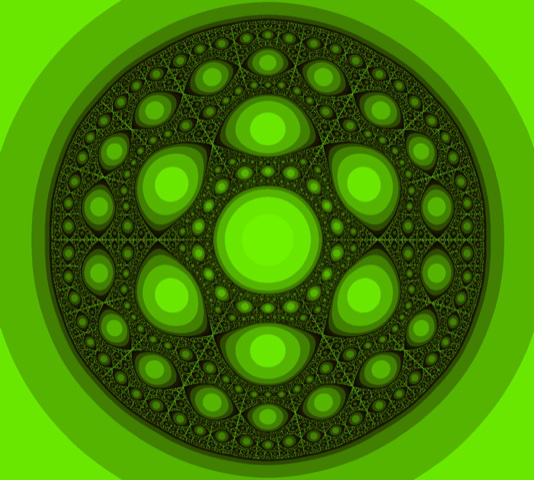}
          \put(45,-10){\footnotesize $(e)$}
        \end{overpic}
  \end{subfigure}

          \mbox{}\\
  \caption{Julia sets of PCF rational maps.}
\label{fig:Julia-sets}
\end{figure}

For a B\"ottcher expanding map $f\colon (S^2,A)\selfmap$ with $\Fat(f)\not=\emptyset$, let $\CC_\Sie$ denote the multicurve generated by the boundaries of maximal Sierpi\'{n}ski small maps. Then $\CC_\Sie\subset \CC_\cro$ and both of these multicurves encode topological features of $\Jul(f)$:
\begin{itemize}
\item maximal Sierpi\'{n}ski small maps correspond to small spheres of $S^2/\sim_{\Fat(f)}$;
\item small Sierpi\'{n}ski-free maps of $f\colon (S^2,A,\CC_\Sie)\selfmap$ correspond to dendrites of $S^2/\sim_{\Fat(f)}$ that share at most one point with any small sphere; 
\item bicycles in $\CC_\cro\setminus \CC_\Sie$ correspond to arcs in $S^2/\sim_{\Fat(f)}$ that share at most one point with any small sphere;
\item small crochet maps correspond to points in $S^2/\sim_{\Fat(f)}$.
\end{itemize}
Let us remark that the above properties are already visible in $\Jul(f)$, see Figure~\ref{fig:Julia-sets}. Here, (a),(b) are Julia sets of crochet maps: any Fatou component may be connected to another one by a (countable) chain of touching Fatou components, that is, $S^2/\sim_{\Fat(f)}$ is a singleton.  The Julia set in (c) is a Sierpi\'{n}ski carpet: Fatou components are Jordan domains with disjoint closures.  The  Julia set in (d) corresponds to a tuning, and its canonical decomposition returns (a) and (c). Note that the quotient $S^2/\sim_{\Fat(f)}$ is a sphere for (c) and (d), but $\sim_{\Fat(f)}$ is not trivial on the postcritical set for (d).  For the Julia set in (e), the quotient $S^2/\sim_{\Fat(f)}$  is a segment with the quotient dynamics $\overline f\colon S^2/\sim_{\Fat(f)}\selfmap$ of a Chebychev polynomial.  The Fatou component of infinity corresponds to a small crochet map.  We also see a Cantor set of Jordan curves in $\mathcal{J}(f)$ separating infinity and the Fatou component in the center.

\subsection{Crochet Algorithm}\label{ss: crochet_algo_intro}

The proofs of Theorem~\ref{thm:cro decomp} and~\ref{thm:Sierp bicycle} give an algorithm to effectively compute the crochet decomposition:
\begin{enumerate}
\item Compute maximal clusters of touching Fatou components and their boundary multicurve.\label{cr alg:st 1}
\item Decompose the map with respect to the boundary multicurve of the clusters.\label{cr alg:st 2}
\item Iterate Steps~\ref{cr alg:st 1} and~\ref{cr alg:st 2} for each small map until all small maps are crochet and Sierpi\'{n}ski.
\item Glue small crochet maps that correspond to the same point in $S^2/\sim_{\Fat(f)}$.
\end{enumerate} 

All steps in the Crochet Algorithm can be performed symbolically with the input being a sphere biset of $f\colon(S^2,A,\CC)\selfmap$. 

\subsection{Applications of the crochet decomposition} The crochet decomposition appears to be a useful tool in complex dynamics. Below we briefly discuss several natural applications in quite different contexts.  

Let $f$ be a B\"ottcher expanding map with non-empty Fatou set,  and suppose that $\CC_{\Th}$ is the Pilgrim canonical obstruction and $\CC_\cro$ is the canonical crochet multicurve of $f$.  Using \cite[Theorem 3.2]{PT}, see also \cite[Theorem 7.6]{ParkLevy}, it follows that up to isotopy each curve $\gamma\in \CC_{\Th}$ either belongs to $\CC_{\cro}$ or to a Sierpi\'{n}ski small sphere (wrt.\ the decomposition along $\CC_\cro)$.  Since  $\CC_\cro$ can be efficiently computed,  \cite{BD_Exp} implies that the problem of detecting obstructions for Thurston maps is reduced to efficient localization of Levy multicurves for arbitrary Thurston maps and Thurston obstructions for B\"ottcher expanding maps $f$ with $\Jul(f)$ being the whole sphere $S^2$ or a Sierpi\'{n}ski carpet. 

In \cite{ParkThesis}, I.~Park studies the \emph{Ahlfors regular conformal dimension} (for short, ARConfDim) of the Julia sets of crochet maps. In particular, he proves that $\operatorname{ARConfDim}(\Jul(f))=1$ for a PCF hyperbolic rational map $f$ if and only if $f$ is a crochet map. Our work (in particular, Theorems \ref{thm:cro decomp} and \ref{thm:Sierp bicycle}) provides some ingredients for the proof of one of the directions. I.~ Park also conjectures that for a B\"ottcher expanding map $f$ with non-empty Fatou set  the crochet decomposition provides a lower bound on  $\operatorname{ARConfDim}(\Jul(f))$ as the maximum of the ARConfDim's of the small Julia sets with respect to $\CC_\cro$ and an extremal quantity $Q(\CC_\cro)$ associated with the invariant multicurve $\CC_\cro$ (see \cite[Section~7.5]{Dylan_Positive} for the definition). We refer the reader to \cite{CanaryMinsky,CarrascoConfDim} for similar results in the context of geometric group theory.


It is conjectured that the \emph{iterated monodromy groups} (for short, IMGs) of PCF rational maps are \emph{amenable}. Theorem \ref{thm:crochet_inro} and \cite[Theorem 7.1]{NPT_Amenability}
provide the necessary ingredients for application of the amenability criterion from \cite{Jusch_Nekr_Salle} resulting in the following partial result: the IMGs of PCF crochet rational maps are amenable; c.f. \cite[Corollary 7.2]{NPT_Amenability}. This generalizes the previous amenability results from \cite{AmenabilityInBoundedGroups, H_Thesis}.

\subsection{Organization of this paper} The paper is organized as follows.  In the next section we review background.  In particular, we discuss decomposition theory for Thurston maps in Section \ref{ss: dec and amal}.  In Section \ref{sec: geom amal}, we introduce the geometric amalgam operation for  B\"ottcher expanding maps, which generalizes the notion of geometric mating.  In Section \ref{sec:clusters} we introduce clusters induced by touching Fatou components and use them to build $0$-entropy invariant graphs.  In Section \ref{sec: existence} we introduce the notion of a crochet multicurve, which gives the crochet decomposition,  and provide the Crochet Algorithm that constructs this curve (with a proof).  In Section \ref{sec: cactoid maps}, we describe how a B\"ottcher expanding maps with an invariant multicurve (satisfying certain natural conditions) generates a cactoid with an expanding dynamics on it. In Section \ref{sec: cannonicity}, we prove canonicity of crochet decomposition and provide alternative characterizations.

\subsection{Acknowledgments}
We are grateful to a number of colleagues for helpful and inspiring discussions during the time when we worked on the 
project, in particular Laurent Bartholdi, Mario Bonk, Andr\'{e} de Carvalho, Kostya Drach, Misha Lyubich, Curtis McMullen, 
Daniel Meyer, Volodia Nekrashevych, Insung Park, Kevin Pilgrim, Bernhard Reinke, and Dylan Thurston.
We also thank the Mathematical Sciences Research Institute in Berkeley, California, where the authors were in residence 
during the Spring semester of 2022, for its hospitality, support, and stimulating research environment.

\section{Background}
\subsection{Notations and conventions}
\label{ss:Not Conv}
 The sets of positive integers, integers, and complex numbers are denoted by $\N$, $\Z$, and $\C$, respectively. We use the notation $\disk \coloneq\{z \in \C\colon |z|<1\}$ for the open unit disk in $\C$, $\widehat{\C}\coloneq\C\cup\{\infty\}$ for the Riemann sphere, and $S^2$ for a topological $2$-sphere.

Let $X$ be a topological space. A curve (or a path) in $X$ is the image of a continuous function from a closed interval of the real numbers into $X$. By default, every curve $\alpha$ is parametrized by $[0,1]$. As common, we will use the same notation for the curve and its parametrizing function, so that $\alpha(0)$ and $\alpha(1)$ denote the starting and ending point of $\alpha$, respectively.  If $\alpha$ and $\beta$ are two paths in $X$ such that $\alpha(1)=\beta(0)$, then $\alpha\# \beta$ denotes their concatenation; i.e., the path $\alpha\# \beta$ first runs through $\alpha$ and then through $\beta$.





Let $A\subset S^2$ be a finite set.  We refer to the pair $(S^2, A)$ as a \emph{marked sphere}, and to the points in $A$ as \emph{marked points}. 

A \emph{sphere map} $f\colon (S^2,C) \to (S^2,A)$ between marked spheres is an orientation-preserving branched covering map between the underlying spheres such that $A$ contains $f(C)$ as well as all critical values. If $\alpha$ is a path in $S^2\setminus A$ starting at $x$, then $\alpha\lift{f}{y}$ denotes the unique lift of $\alpha$ starting at $y\in f^{-1}(x)$.

A \emph{Thurston map} is a self-map $f\colon (S^2,A)\selfmap$ with topological degree $\deg(f)\geq 2$. Note that since $A$ contains all the critical values of $f$ and $f(A)\subset A$,  it follows that each critical point of $f$ has finite orbit, that is, $f$ is \emph{postcritically finite}.

A \emph{forgetful map} $i\colon (S^2,C)\to (S^2,A)$ is the identity map that forgets points in $C\setminus A$; in particular $i(C)\supset A$. Note that a non-trivial forgetful map is \emph{not} a sphere map. More generally, a \emph{forgetful monotone map} $i\colon (S^2,C)\to (S^2,A)$ with $C\supset A$ is a continuous monotone map that is homotopic to $\id$ rel. $A$. 

Curves in $(S^2,A)$ will be frequently considered up to homotopy rel.\ $A$ and the endpoints: for two curves $\alpha_0$ and $\alpha_1$ in $S^2$ we write $\alpha_0\simeq_A \alpha_1$ if there is a path of curves $\alpha_t\colon [0,1]\to S^2$ such that $\alpha^{-1}_t(A)$, $\alpha_t(0)$, and $\alpha_t(1)$ are constant (i.e., do not depend on $t$).

By a \emph{nice curve} in $(S^2,A)$ we mean a curve 
$\beta\colon [0,1]\to S^2$ with $\beta(t)\not\in A$ for each $t\in (0,1)$.  
If a nice curve $\beta$ starts at $x\in S^2\setminus A$ and $f\colon (S^2,A)\selfmap$ is a Thurston map, then the lift $\beta\lift{f^n}{y}$ is well-defined for each $n$ and $y\in f^{-n}(x)$.

\subsection{B\"ottcher expanding maps}
\label{ss:B exp maps}

Let $f\colon (S^2,A)\selfmap$ be a Thurston map. We
denote by $A^\infty\subset A$ the forward orbit of periodic critical points of $f$, and by $\per(a)$ the period of each $a\in A^\infty$. 

\hide{ A
\emph{non-torus} map is a map that is not doubly covered by a torus
endomorphism.}

The map $f$ is \emph{metrically expanding} if there exists a forward-invariant subset $A'\subset A^\infty$ and  a length metric on
  $S^2\setminus A'$ such that
\begin{enumerate}[label=\text{(\roman*)},font=\normalfont,leftmargin=*]
 \item for every non-trivial rectifiable curve
   $\gamma\colon[0,1]\to S^2\setminus A'$ the length of every lift of
   $\gamma$ under $f$ is strictly less than the length of $\gamma$;
   and 
 \item\label{defn:MetrExp:ExpCond} $f$ admits \emph{B\"{o}ttcher normalization} at all $a\in A'$: the first return map of $f$ at $a$ is locally
   conjugate to $z\mapsto z^{\deg_a(f^{\per(a)})}$.\label{defn:MetrExp:NormCond}
 \end{enumerate}
In this case, we say that $f$ is metrically expanding rel.\ $A'$. We note that points in $A'$ are cusps or, equivalently, at infinite distance in the metric.  If $A'=A^\infty$, then $f\colon (S^2,A)\selfmap$ is called a
 \emph{B\"ottcher expanding map}.

Metrically expanding maps $f$ admit natural definitions of Fatou and Julia sets. We summarize the relevant notions and properties below; the reader is referred to \cite{BD_Exp} for a more detailed discussion. 

In the following, let $f$ be a metrically expanding map rel.\ to a forward-invariant subset $A'\subset A^\infty$. 
The \emph{Julia set} $\Jul(f)$ of $f$ is the closure of the set of repelling periodic points of $f$; 
the complement $\Fat(f)\coloneq S^2\setminus \Jul(f)$ is the \emph{Fatou set} of $f$. Equivalently, $\Fat(f)$ is the set of points attracted by $A'$.  A \emph{Fatou component} of $f$ is a connected component of $\Fat(f)$. Every Fatou component $F$ is an open topological disk. Moreover, $F$ is eventually periodic, i.e., $f^n(F)=f^m(F)$ for some $n>m\geq 0$.

Condition~\eqref{defn:MetrExp:NormCond} implies that periodic (and hence preperiodic) Fatou components enjoy B\"ottcher coordinates: for each Fatou component $F$ there exists a homeomorphism $\psi_F\colon \disk \to F$ such that $(\psi^{-1}_{f(F)}\circ f|_F \circ \psi_F)(z)= z^{\deg(f|_F)}$ for every $z\in \disk$. Every map $\psi_F$ extends to a continuous map $\psi_F\colon \overline \disk \to \overline{F}$. The image $\{\psi_F(re^{2\pi i\theta})\colon r\in [0,1]\}$ of a radius in $\overline{\disk}$ is called the \emph{internal ray} of angle $\theta\in[0,1)$ in $F$. The image $\{\psi_F(re^{2\pi i\theta})\colon \theta\in [0,1)\}$ of a circle in $\disk$ concentric about $0$ is called the \emph{equipotential} of height $r\in (0,1)$ in $F$. Finally, the image $\psi_F(0)$ is called the \emph{center} of $F$.


Fix a small $r\in (0,1)$. For every $a\in A'$, let $U_a$ to be the open disk around $a$ bounded by the equipotential of height $r$ in the Fatou component centered at $a$. Set $W\coloneq \bigcup _{a\in A'} U_a$. Note that $f(W)\subset W$.




Consider a nice curve $\alpha$ in $(S^2,A)$ parameterized by $[0,1]$. Assume that $\alpha$ is not in $W$, and set  $t_0^\alpha\coloneq\inf\{t\in[0,1]\colon \alpha(t)\notin W\}$ and $t_1^\alpha\coloneq\sup\{t\in[0,1]\colon \alpha(t)\notin W\}$. We define the  \emph{truncated length} of $\alpha$ as \[| \alpha |_{\simeq} \coloneq \inf\{\text{length of}\sp \alpha'| [t^{\alpha'}_0,t^{\alpha'}_1] \colon  \alpha'\simeq_A \alpha \}.\]
Note that if $\alpha$ starts and ends outside $W$ then $|\alpha|_\simeq$ coincides with the minimal length within the homotopy class of $\alpha$.

The following lemmas are immediate.


\begin{lemma}
\label{lmm:TrLength:shrinks}
There are constants $\lambda>1$ and $C>0$ such that the following holds. Let $\alpha$ be a nice curve connecting a point in $S^2\setminus W$ to a point in $A'\cup S^2\setminus W$. Then the truncated length of every lift of $\alpha$ under $f^n$ is at most $\frac 1{\lambda^n} |\alpha|_{\simeq}+C$.\qed
\end{lemma}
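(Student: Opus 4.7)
The plan is to combine uniform expansion of $f$ on compact sets with the B\"ottcher normalization near $A'$, exploiting that $f(W)\subset W$ so that lifts of curves in $S^2\setminus W$ remain in $S^2\setminus W$. First I would establish a uniform expansion factor: compactness of $S^2\setminus W$ as a subset of $S^2\setminus A'$, combined with strict expansion there, yields $\lambda>1$ such that every lift of every rectifiable curve in $S^2\setminus W$ contracts by $1/\lambda$, and iteration gives $1/\lambda^n$ under $f^n$.

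Given $\eps>0$, I would next choose a near-minimizing competitor $\alpha^*\simeq_A\alpha$ for $|\alpha|_\simeq$, structured so that the middle portion $\alpha^*|_{[0,t_1^{\alpha^*}]}$ lies in $S^2\setminus W$ with length at most $|\alpha|_\simeq+\eps$, and, if $\alpha(1)=a\in A'$, the tail $\alpha^*|_{[t_1^{\alpha^*},1]}$ is a radial arc in $U_a$ from $\partial U_a$ to $a$ (in B\"ottcher coordinates). Such an $\alpha^*$ exists because any winding around a marked point in $A'$ can be performed along $\partial U_a$ rather than through $U_a$ at strictly lower length. Lifting $\alpha^*$ under $f^n$ from the prescribed base point yields $\tilde\alpha^*\simeq_A\tilde\alpha$, and by the previous step the lifted main portion sits in $S^2\setminus W$ with length at most $(|\alpha|_\simeq+\eps)/\lambda^n$.

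It remains to bound the contribution of the lifted tail to $|\tilde\alpha|_\simeq$. If $\tilde\alpha^*(1)\in A'$, the tail stays inside some $U_b\subset W$ and contributes nothing to the truncated length, giving the bound with $C=0$. The only genuine case is when $\tilde\alpha^*(1)$ is a strict preimage of $a$, namely the center of a preperiodic Fatou component $F$ with $f^k(F)=F_a$ for minimal $k\ge 1$ (where $F_a$ denotes the Fatou component of $a$); then the lifted tail lies in the preimage disk of $U_a$ inside $F$, which is disjoint from $W$ and therefore contributes. The main step is a uniform length bound on this tail: one replaces it by a geodesic in $F$ whose length is at most the $\rho$-diameter of this preimage disk. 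This diameter is uniformly bounded over all such $F$ and all $n$ — the outer shell of $F$ (the lift under $f^k$ of the fixed compact shell $F_a\setminus U_a$) contracts by $1/\lambda^k$, while near the center of $F$ the metric is regular because strict preimages of $A'$ are not cusps of $\rho$. Calling this uniform bound $C$ and letting $\eps\to 0$ gives $|\tilde\alpha|_\simeq\le|\alpha|_\simeq/\lambda^n+C$. The uniform diameter bound for the preimage disks in deep preperiodic Fatou components is the crux of the argument; every other step is routine given the B\"ottcher expanding framework.
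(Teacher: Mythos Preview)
The paper declares this lemma immediate and supplies no argument beyond the \qed, so there is no proof to compare against; your outline is the natural strategy and your treatment of the tail contribution (the additive $C$ coming from the finitely many Fatou components centred at points of $f^{-1}(A')\setminus A'$, each compactly contained in $S^2\setminus A'$) is correct.

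There is, however, a genuine gap at the step where you arrange the near-minimizer $\alpha^*$ to have its main portion in $S^2\setminus W$. Your justification---that winding along $\partial U_b$ is strictly cheaper than winding inside $U_b$---is false for the hyperbolic metric on $S^2\setminus A'$: a curve winding $m$ times around a cusp $b$ does so most cheaply by diving toward the cusp, at cost $\approx 2\log m$, versus $\approx m\cdot\mathrm{length}(\partial U_b)$ along the boundary. In fact this breaks the lemma \emph{as literally stated} whenever some $c\in A'$ has $\deg_c f=1$. For the Basilica $f(z)=z^2-1$ with $A'=\{0,-1,\infty\}$ and the hyperbolic metric, a curve $\alpha$ winding $m$ times around $0$ has $|\alpha|_\simeq\approx 2\log m$; one of its $f$-lifts passes through $-1$, where $\deg_{-1}f=1$, so the lifted excursion still winds $m$ times around $-1\in A$ and $|\tilde\alpha|_\simeq\approx 2\log m$ as well---no choice of $\lambda>1$ and $C$ satisfies the bound uniformly in $m$. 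What rescues the paper's only use of the lemma (in the proof of Lemma~\ref{lem:bubble ray land}) is that the curves there are pseudo-isotopic into a fixed invariant graph and hence wind a uniformly bounded number of times around each cusp; under that hypothesis your pushing-out step costs only a bounded additive error and the rest of your proof goes through verbatim.
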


\begin{lemma}
\label{lmm:TrLength:fin many}
For every $M>0$ and every $x,y\in A'\cup S^2\setminus W$, there are at most finitely many homotopy classes of curves connecting $x$ and $y$ with truncated length at most $M$. \qed
\end{lemma}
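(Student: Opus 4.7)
The plan is a covering-space argument combined with proper discontinuity. Let $\pi\colon \widetilde X\to S^2\setminus A$ be the universal cover carrying the pulled-back length metric (with cusps over $A'$ and ordinary punctures over $A\setminus A'$), and let $G=\pi_1(S^2\setminus A)$ act as deck transformations. Fix lifts $\widetilde x,\widetilde y$ of $x,y$; the $\simeq_A$-classes of nice curves from $x$ to $y$ are in natural bijection with the orbit $G\cdot\widetilde y$, a class being encoded by the endpoint of a lift starting from $\widetilde x$.

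Given a class $[\alpha]$ with $|\alpha|_\simeq\le M$, I would pick a representative whose length on its truncated interval is at most $M+1$. The corresponding lift from $\widetilde x$ to some $g\widetilde y$ then places $g\widetilde y$ within metric distance $M+1$ of the horoball $\widetilde U_{\widetilde x}$ lying over $U_x$ (or of $\widetilde x$ itself when $x\in S^2\setminus W$), with a symmetric truncation at the $y$-end. In the case $x,y\in S^2\setminus W$ this already concludes the argument: the $(M+1)$-neighbourhood of $\widetilde x$ is relatively compact in $\widetilde X$, and since $G$ acts properly discontinuously and has compact quotient on $S^2\setminus W$, only finitely many deck translates of $\widetilde y$ fall inside it.

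The step I expect to be the main obstacle is the cusp case $x\in A'$ (and symmetrically $y\in A'$). Here the cyclic parabolic subgroup $\Pi_x\subset G$ fixing $\widetilde x$ preserves $\widetilde U_{\widetilde x}$ and acts cocompactly on its horocycle $\partial\widetilde U_{\widetilde x}$, so the entire orbit $\Pi_x\cdot\widetilde y$ lies at a single truncated distance from $\widetilde U_{\widetilde x}$. Finiteness therefore forces one to count cosets in $\Pi_x\backslash G$ (or double cosets $\Pi_x\backslash G/\Pi_y$ when $y\in A'$) rather than elements of $G$; equivalently, one must check that the winding of $\alpha$ inside $U_x$ around the cusp $x$ is absorbed by representatives with canonical (B\"ottcher-radial) approach at the cusp endpoint, so that it does not separate $\simeq_A$-classes in the relevant count. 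Once this bookkeeping is in place, combining cocompactness of the $\Pi_x$-action on $\partial\widetilde U_{\widetilde x}$ with proper discontinuity of $G$ on $\widetilde X$ yields that only finitely many cosets $\Pi_x g$ place $g\widetilde y$ inside the $(M+1)$-neighbourhood of $\widetilde U_{\widetilde x}$, which is the content of the lemma.
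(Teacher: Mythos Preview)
Your approach is correct and at bottom the same as the paper's: the universal-cover/proper-discontinuity argument for $x,y\in S^2\setminus W$ is exactly what the paper leaves implicit after its reduction. The step you flag as ``the main obstacle'' is in fact immediate and constitutes the paper's entire written proof. Since $\overline U_a$ is a closed disk meeting $A$ only in $a$, there is a \emph{unique} homotopy class rel $A$ of nice paths in $\overline U_a$ from $a$ to any given point of $\partial U_a$ --- any winding around $a$ can be contracted through the endpoint $a$ itself --- so a curve from $a\in A'$ to $y$ is determined, up to $\simeq_A$, by its truncation at $\partial U_a$. Concatenating with an arc of $\partial U_a$ to a fixed basepoint $x_0\in\partial U_a$ changes the truncated length by at most the circumference of $\partial U_a$ and reduces directly to the case $x_0,y\in S^2\setminus W$; no horoballs, parabolic stabilisers, or double-coset bookkeeping are needed.
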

\begin{proof}
For every $a\in A'$ and $x\in \partial U_a$, there is a unique up to homotopy path in $\overline U_a$ connecting $a$ and $x$. This reduces the statement to the case $x,y\in S^2\setminus W$.
\end{proof}

\newcommand{\MM}{{\mathcal M}}
\subsection{Topologically expanding maps} \label{subsec:top_exp}
Let $\MM',\MM$ be compact metrizable topological spaces with $\MM'\subset\MM$, and $f\colon \MM'\to \MM$ be a continuous map (e.g., a branched covering of finite degree). 
Suppose $\UU$ is a collection of open sets in $\MM$. We denote by $\mesh(\UU)$ the supremum of all diameters of connected components of sets in $\UU$ (with respect to some fixed metric on $\MM$ generating the topology of $\MM$).  The \emph{pull-back} of $\UU$ by $f^n$ is defined as 
\[f^{*n}(\UU)\coloneq \{\text{$V$: $V$ is a component of $f^{-n}(U)$,  for some $U\in \UU$}\}.\]
The partial self-cover  $f\colon \MM'\to \MM$  is called \emph{topologically expanding} if there exists a finite open cover $\UU$ of $\MM$ such that $\mesh(f^{*n}(\UU))\to 0$ as $n\to \infty$. The \emph{Julia set} of $f$ is the set of points in $\MM'$ that do not escape $\MM'$ under iteration of $f$. 
If $\MM'=\MM$, then we say that $f\colon \MM\selfmap$ is \emph{totally topologically expanding}.


Consider now a Thurston map $f\colon (S^2,A)\selfmap$ and let $A'$ be a forward-invariant subset of $A^\infty$. We call $f$ \emph{topologically expanding} (\emph{rel.\ $A'$}) if there exist compact $\MM'\subset \MM\subset S^2$  with a topologically expanding branched covering map $f \colon \MM'\to \MM$, such that every connected component $U$ of $S^2\setminus \MM$ is a disk containing a unique point $a\in A'$, all points in $U$ are attracted to the orbit of $a$, and the first return map of $f$ at $a$ is locally conjugate to $z\mapsto z^{\deg_a(f^{\per(a)})}$. Every topologically expanding Thurston map $f\colon (S^2,A)\selfmap$ is obtained from a B\"ottcher expanding map $\widetilde{f}\colon (S^2,A)\selfmap$ that is isotopic to $f$ by collapsing grand orbits of those Fatou components that are attracted towards $A\cap \big(\Fat(\widetilde{f}) \setminus \Fat(f)\big)$, see \cite[Theorem A and Proposition 1.1]{BD_Exp}. 

\subsection{Monotone maps between spheres} 
Recall that a map $\tau \colon X\to Y$ between topological spaces is called \emph{monotone} if $\tau^{-1}(y)$ is connected for every $y\in Y$. Every monotone self-map $\iota\colon S^2\selfmap$ arises as a uniform limit of homeomorphisms \cite{MonotoneMaps}.  In particular, the Dehn-Nielsen-Baer Theorem is applicable for monotone maps: two monotone maps $i_1,i_2\colon (S^2,A)\selfmap$ are homotopic if and only if their pushforwards $i_{1,*},i_{2,*}\colon \pi_1(S^2,A)\selfmap$ induce the same elements of the outer automorphism group of $\pi_1(S^2,A)$. Below we will review the pullback argument in the setting of topologically expanding maps.

\subsubsection{Uniform convergence} Suppose $\UU_n = \{U_{n,k}\}_k$ are finite covers of $S^2$ by open connected sets such that $\lim_{n\to\infty}\mesh(\UU_n)=0$, that is,  the maximal diameter of $U_{n,k}$ tends to $0$ as $n\to \infty$. 
Then a sequence of homeomorphisms $i_m\colon S^2\selfmap$ \emph{converges uniformly} if
\[ \forall n'>1\sp \exists N, M \ge 1 \sp \forall n\ge N\sp  \forall U_{n,k}\in \UU_n\sp \forall m\ge M\]
the image $i_m(U_{n,k})$ is within a component of $\UU_{n'}$. In this case, the limiting self-map $\lim _m i_m\colon S^2\selfmap$ exists and is a monotone map.

\subsubsection{Pullback argument}  
\label{sss:PullbackArg:S^2}
Let $f\colon(S^2,A)\selfmap$ be a Thurston map with B\"ottcher normalization at a forward-invariant set $A'\subset A^\infty$, i.e.,  the first return map at every $a\in A'$ is locally conjugate to $z\mapsto z^{\deg_a(f^{\per(a)})}$. If $g \colon(S^2,A)\selfmap$ is a topologically expanding map rel.\ $A'$ such that $f$ and $g$ are isotopic rel.\ $A$, then there is a continuous monotone map $\iota\colon (S^2,A)\to (S^2,A)$ semi-conjugating $f$ to $g$.  Moreover, $\iota$ is unique if $|A|\ge 3$ (as follows from \cite[Corollary 4.30]{BD_III}).  The map $\iota$ is constructed as follows 
(c.f. \cite{IshiiSmillie},\cite[Section 11.1]{THEBook}, and \cite[Section 4.5]{BD_Exp}).

Let $i_0, i_1\colon (S^2,A)\to (S^2,A)$ be homeomorphisms isotopic to $\id$ rel.\ $A$ that witness an equivalence of $f$ and $g$; i.e., $i_0\circ f= g\circ i_1$ and there is an isotopy \[h_{0}\colon (S^2,A)\times [0,1] \to (S^2,A), 
\sp\sp \text{with} \sp h_{0,0}=i_0 \sp \text{and} \sp h_{0,1}=i_1.\]
(Here and in the following, we adopt the convention that if $h_n$ is a homotopy then $h_{n,t}$ denotes the time-$t$ map $h_n(\cdot, t)$.) Up to modification of $i_0$, we may assume that $i_0,i_1,h_0$ respect B\"{o}ttcher coordinates (of $f$ and $g$) at $A'$, so that $i_0=i_1=h_{0,t}$ for each $t\in[0,1]$ in a small neighborhood of each $a\in A'$.

By lifting, we may inductively define homeomorphisms $i_{n+1}\colon (S^2,A)\to (S^2,A)$ and isotopies $h_{n}$ between $i_{n}=h_{n,0}$ and $i_{n+1}=h_{n,1}$. That is, the isotopy $h_{n+1}$ is the lift of $h_{n}$ (so that $h_{n+1,0}=h_{n,1}=i_{n+1}$), and we have the following infinite commutative diagram:
\[
{\begin{tikzpicture}
  \matrix (m) [matrix of math nodes,row sep=4em,column sep=7em,minimum width=2em]
  {
    \cdots \rightarrow (S^2,  A) &  (S^2,  A) & (S^2,  A) \\
    \cdots \rightarrow  (S^2,  A) &(S^2,  A) & (S^2,A) \\};
  \path[-stealth]
    (m-1-1) edge node [right] {$h_{2,t}$} (m-2-1)
    (m-1-2) edge node [right] {$h_{1,t}$} (m-2-2)
    (m-1-3) edge node [right] {$h_{0,t}$} (m-2-3)

      (m-1-1)  edge     node [above] {$f $} (m-1-2)
             (m-1-2)    edge  node [above] {$f $} (m-1-3)
 
    (m-2-1.east|-m-2-2) edge node [above] {$g$}
          (m-2-2)
        (m-2-2.east|-m-2-3) edge node [above] {$g$}
          (m-2-3);
\end{tikzpicture}}.
\]

A \emph{track} of $h_{n}$ is the curve $\gamma$ defined by \[\gamma(t) \colon t\mapsto h_{n,t}(x),\sp t\in [0,1],\]
for some fixed $x\in S^2$. Clearly, tracks of $h_{n+1}$ are lifts of tracks of $h_{n}$ under $g$. Tracks for concatenations $h_{n}\#h_{n+1}\#\dots \#h_{n+k}$ of isotopies are defined in a similar way.


We assume that $g$ is topologically expanding with respect to a finite open cover $\UU_0$ of $X_0$, where $X_0$ is the sphere $S^2$ without a small forward-invariant neighborhood of $A'$ (where $i_0,i_1,h_0$ respect B\"{o}ttcher coordinates). That is, we have $\lim_{n\to\infty}\mesh(\UU_n)=0$, where $\UU_n\coloneq g^{*n}(\UU_0)$
is the pull-back of $\UU_0$ by $g^n$. Note that $\UU_n$ is an open cover of $X_n\coloneq g^{-n}(X_0)$.

For a curve $\gamma\colon [0,1]\to X_n$, we denote by $|\gamma|_n$ the \emph{$n$-combinatorial length} of $\gamma$ with respect to $\UU_n=\{U_{n,k}\}_k$: 
\[|\gamma|_n\coloneq \min \{L\colon  \gamma=\gamma_1\#\gamma_2\#\dots \#\gamma_L \text{ so that }\gamma_j\subset U_{n,k(j)}\}.\] Since $\lim_{n\to\infty}\mesh(\UU_n)=0$, there are $\lambda>1, m\ge 1, C>0$ such that $|\gamma|_0\le \frac 1 \lambda|\gamma|_m+C$ for all $\gamma\subset X_m$. Lifting under $g^n$ we obtain 
\begin{equation}
\label{eq:topol expans}
    |\gamma|_{n}\le \frac 1 \lambda |\gamma|_{n+m} +C \sp\sp\sp\text{for all }n\ge 1 \text{ and }\gamma\subset X_{n+m}.
\end{equation} 
By uniform continuity, there is a constant $T'>0$ that bounds from above all the tracks of $h_0, \dots, h_{m-1}$ with respect to $|\ |_0$. 
By lifting and applying a geometric series argument to~\eqref{eq:topol expans}, we obtain a uniform $T\ge1 $ such that the $0$-combinatorial length of all the tracks of $h_0\#h_{1}\#\dots \#h_{k}$ is bounded by $T$ for all $k\geq0$. Lifting under $g^s$, we obtain that $T$ also bounds the $s$-combinatorial length of all the tracks of $h_s\#h_{s+1}\#\dots \#h_{s+k}$ for all $s,k$. Since $\lim_{n\to\infty}\mesh(\UU_n)=0$, the homotopy $h_s\#h_{s+1}\#\dots \#h_{s+k}$ converges uniformly to the identity and $i_n$ converges uniformly to a monotone map $\iota\colon S^2\to S^2.$

\subsection{Multicurves on marked spheres}\label{ss:MC on marked sph}

Let $(S^2,A)$ be a marked sphere. We consider simple closed curves $\gamma$ on $(S^2,A)$, that is, $\gamma\subset S^2\setminus A$.  Such a curve $\gamma$ is called \emph{essential} if both components of $S^2\setminus \gamma$ contain at least two marked points.  Otherwise,  the curve $\gamma$ is called \emph{peripheral}.


A \emph{multicurve} on $(S^2,A)$ is a collection $\CC$ of essential simple closed curves on $(S^2,A)$ that are pairwise disjoint and non-isotopic. Here and elsewhere, in the context of simple closed curves on $(S^2,A)$ isotopies are always considered rel.\ $A$ (unless specified otherwise). It would be also convenient to consider (multi)curves defined up to isotopy.

If $\CC'$ is a finite set of pairwise disjoint simple closed curves on $(S^2,A)$, then $\MC(\CC')$ is the multicurve obtained from $\CC'$ by identifying isotopic curves and removing non-essential curves.

Let $K$ be a compact connected subset of $S^2$. Then every connected component $U$ of $S^2\setminus K$ is an open topological disk, and so we may choose a homeomorphism $\rho\colon U\to \disk$. For $r\in (0,1)$, let $E^r=\{z\colon |z|=r\}\subset \disk$ be the circle of radius $r$ centered at the origin.  Note that $\rho^{-1}(E^r)$ is a simple closed curve in $S^2$ whose isotopy class rel.\ $A$ is constant for $r$ close to $1$. Moreover, this isotopy class does not depend on the choice of the homeomorphism $\rho$. 
We define $\Curve(K\mid U)$ to be such a unique up to isotopy curve $\rho^{-1}(E^r)$. 
Furthermore, we set 
\[ \MC(K)\coloneq \MC \left( \bigcup_{U\subset S^2\setminus K} \Curve(K\mid U) \right),\]
where the union is taken over all connected components $U$ of $S^2\setminus K$. 

For a compact, but not necessarily connected, set $K\subset S^2$, we set 
\[\MC(K) \coloneq \MC\left(\bigcup_{K'\subset K}  \MC(K')\right),\]
where the union is taken over all connected components of $K$. The multicurve $\MC(K)$ is also defined for a countable union of pairwise disjoint compact sets. 


We will write $\Curve_A(\cdot)$ and $\MC_A(\cdot)$ if we wish to underline that the (multi)curves are considered on $(S^2,A)$. 

\begin{lemma}
\label{lem:MC:lifting}
Suppose $f\colon(S^2,C)\to (S^2,A)$ is a sphere map and $K\subset S^2$ is compact. Then
\[\MC_C( f^{-1}(K)) = \MC_C\big(f^{-1}\big(\MC_A(K) \big)\big).\] 
\end{lemma}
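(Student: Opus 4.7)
The plan is to prove the equality of multicurves by establishing a surjective correspondence of isotopy classes in both directions via the branched covering structure of $f$.

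First I would reduce to the case $K$ connected. For a general compact $K$ with component decomposition $K=\bigsqcup_i K_i$, one has $f^{-1}(K)=\bigsqcup_i f^{-1}(K_i)$ and the components of $f^{-1}(K)$ are exactly the components of the $f^{-1}(K_i)$. Combined with the definition $\MC(K)=\MC\bigl(\bigcup_i \MC(K_i)\bigr)$ for disconnected compact sets and the fact that $f^{-1}$ commutes with unions, this reduces the equality to each connected $K_i$ separately.

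Now assume $K$ is connected, and let $K''_1,\dots,K''_m$ be the components of $f^{-1}(K)$. The key geometric fact is: for each component $U$ of $S^2\setminus K$ and each component $V$ of $f^{-1}(U)$, $V$ is also a component of $S^2\setminus f^{-1}(K)$ (by a direct maximality argument using that the $f^{-1}(U')$ are pairwise disjoint and open), $\partial V$ lies in a unique $K''_j$, and $f|_V\colon V\to U$ is a branched cover of open disks. Choosing the representative $\gamma_U=\Curve_A(K\mid U)\subset U$ inside a thin annular neighborhood of $\partial U$ disjoint from $A$, the lift $f^{-1}(\gamma_U)\cap V$ is a single simple closed curve near $K''_j$ from the $V$-side (the annular neighborhood lifts to an annular collar of $\partial V$ in $V$, since there are no critical values between $\gamma_U$ and $\partial U$). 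Letting $V'$ be the component of $S^2\setminus K''_j$ containing $V$, this preimage curve and $\Curve_C(K''_j\mid V')$ are both small pushes of $K''_j$ into the $V'$-side; they cobound an annular region in $V'$ disjoint from $C$, so they are isotopic on $(S^2,C)$. Conversely, for every pair $(K''_j,V')$ one picks any component $V$ of $V'\cap(S^2\setminus f^{-1}(K))$ adjacent to $K''_j$ (at least one exists since $K''_j=\partial V'$ is covered by boundaries of such $V$'s); then $V$ is a component of $f^{-1}(U)$ for $U=f(V)$, and its preimage curve matches $\Curve_C(K''_j\mid V')$. Since $\MC_C$ depends only on isotopy classes on $(S^2,C)$ (identifying isotopic curves and removing peripherals), this surjective correspondence on isotopy classes yields the equality of multicurves.

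The main obstacle is the careful bookkeeping when $V'\supsetneq V$ (i.e., when $V'$ contains additional components of $f^{-1}(K)$ inside it): one must verify that the annular region between the preimage curve and $\Curve_C(K''_j\mid V')$ stays in a small neighborhood of $K''_j$ on the $V'$-side, so that it avoids both $C$-points on $K''_j$ itself and $C$-points in the interior components of $V'$. This localization is achieved by choosing $\gamma_U$ close enough to $\partial U$, so that its preimage in $V$ lies in an arbitrarily thin collar of the portion of $\partial V$ inside $K''_j$.
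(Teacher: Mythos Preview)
Your overall strategy matches the paper's: reduce to connected $K$, then show that up to isotopy the components of $f^{-1}(\Curve_A(K\mid U))$ are exactly the curves $\Curve_C(K'\mid U')$ for suitable pairs $(K',U')$. However, there is a genuine gap in your bookkeeping. You claim that for a component $V$ of $f^{-1}(U)$ the map $f|_V\colon V\to U$ is a branched cover of open \emph{disks}, that $\partial V$ lies in a \emph{unique} component $K''_j$ of $f^{-1}(K)$, and that $f^{-1}(\gamma_U)\cap V$ is a \emph{single} simple closed curve. All three claims fail whenever $U$ contains critical values of $f$. For instance, with $f(z)=z^2$ and $K=\{1\}$, the unique complementary component $U=S^2\setminus\{1\}$ contains the critical values $0,\infty$; then $V=f^{-1}(U)=S^2\setminus\{1,-1\}$ is an annulus, $\partial V=\{1,-1\}$ meets \emph{both} components of $f^{-1}(K)$, and $f^{-1}(\gamma_U)\cap V$ consists of two circles, not one.

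The fix is exactly what the paper does: parametrize not by $V$ but by pairs $(K',U')$ with $K'$ a component of $f^{-1}(K)$ and $U'$ a component of $S^2\setminus K'$ such that $f(U')$, restricted to a small neighborhood of $K'$, lands in $U$. Each such pair corresponds to one \emph{end} of some $V$; since $\gamma_U$ can be chosen with no critical values between it and $\partial U$, the thin collar of that end carries exactly one component of $f^{-1}(\gamma_U)$, and that component is isotopic rel $C$ to $\Curve_C(K'\mid U')$. Your converse direction and your discussion of the case $V'\supsetneq V$ then go through with this corrected parametrization.
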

\begin{proof}
It is sufficient to prove the identity for connected $K$. Let $U$ be a connected component of $S^2\setminus K$.  Then up to isotopy rel.\ $C$ we have the following equality
\[f^{-1}(\Curve_A(K\mid U)) =\bigcup_{K',U'} \Curve_C(K'\mid U'),\]
where the union is taken over all components $K'$ of $f^{-1}(K)$ and all the components $U'$ of $S^2\setminus K'$ such that 
the $f$-image of $U'$ intersected with a sufficiently small neighborhood of $K'$ is within $U$. 
The assertion of the lemma now follows from the definitions. 
\end{proof}

\subsubsection{Pseudo-multicurves}\label{sss:psMC} By a \emph{pseudo-multicurve} $\CC$ on $(S^2,A)$ we mean a collection of pairwise disjoint and pairwise non-isotopic non-trivial simple closed curves on $S^2\setminus A$; i.e.~peripheral curves are allowed. Naturally, pseudo-multicurves are considered up  to isotopy. Pseudo-multicurves will only appear in the proof of Theorem~\ref{thm: max_clusters_exist}.

 Allowing peripheral curves, we define $\psCurve(K\mid U)$ and $\psMC(K)$ in the same way as $\Curve(K\mid U)$ and $\MC(K)$. Similar to Lemma~\ref{lem:MC:lifting}, we have 
\begin{equation}
\label{eq:sss:psMC}
\psMC_C( f^{-1}(K)) = \psMC_C\big(f^{-1}\big(\psMC_A(K) \big)\big),
\end{equation}
where peripheral curves are allowed.

\subsection{Invariant multicurves}
\label{ss: inv_multicurves}
In the following, let $f\colon (S^2,A)\selfmap$ be a Thurston map.  We say that a multicurve $\CC$ on $(S^2,A)$ is \emph{invariant} (under $f$) if the following holds:
\begin{enumerate}[label=\text{(\roman*)},font=\normalfont,leftmargin=*]
\item $f^{-1}(\CC)\subset \CC$,  which means that each essential component of $f^{-1}(\CC)$ is isotopic to a curve in $\CC$. 
\item $\CC\subset f^{-1}(\CC)$,  which means that each curve in $\CC$ is isotopic to a component of $f^{-1}(\CC)$.
\end{enumerate}
In other words, the multicurve $\CC$ is invariant if $\MC_A(f^{-1}(\CC))=\CC$.
We will use the notation $f\colon (S^2,A, \CC)\selfmap$ for a Thurston map  $f\colon (S^2,A)\selfmap$ with an invariant multicurve $\CC$. 

Suppose $\CC'$ is any multicurve on $(S^2,A)$.  If $\CC'\subset f^{-1}(\CC')$, then there is a unique invariant multicurve $\CC$ \emph{generated} by $\CC'$, which is given by the intersection of all invariant multicurves containing $\CC'$.

Let $\CC$ be an invariant multicurve. We consider the following directed graph $\Gamma=\Gamma_\CC$: its vertex set is $\CC$,  and for every curve $\gamma\in \CC$ and every essential component $\widetilde \gamma$ of $f^{-1}(\gamma)$ we add a directed edge in $\Gamma$ from $\gamma$ to the curve $\delta \in \CC$ that is isotopic to $\widetilde\gamma$.

Two vertices in $\Gamma$ are said to be \emph{strongly connected to each other} if there is a walk in $\Gamma$ between them in each direction. Clearly, this defines an equivalence relation on $\CC$.  An equivalence class $\CC'\subset \CC$ is then called a \emph{strongly connected component} of $\CC$, and the induced subgraph $\Gamma[\CC']$ is called  a \emph{strongly connected component} in $\Gamma$. Note that a singleton without a self-loop is never a strongly connected component in $\Gamma$.

Strongly connected components are partially ordered: if $\CC'$ and $\CC''$ are two strongly connected components of $\CC$, we write $\CC'\prec\CC''$ if there is a (directed) walk in $\Gamma$ from a curve in $\CC'$ to a curve in $\CC''$.  A strongly connected component $\PC\subset \CC$ is called a \emph{primitive component} of $\CC$ if it is minimal for the partial order $\prec$.  It immediately follows from the definitions that the multicurve $\CC$ is \emph{generated} by its primitive components in the following sense: for each curve $\gamma\in \CC$ there is a primitive component $\PC\subset \CC$ and an iterate $n$ such that $\gamma$ is isotopic to a component of $f^{-n}(\PC)$.

A strongly connected component $\CC'\subset \CC$ is called a \emph{bicycle} if for every $\gamma,\delta\in \CC'$ there exists an $n\in \N$ such that at least two (directed) walks of length $n$ join $\gamma$ and $\delta$ in in $\Gamma$.  Otherwise, we call $\CC'$ a \emph{unicycle}. 


\subsection{Decompositions and amalgams of Thurston maps} 
\label{ss: dec and amal}

We briefly review the decomposition theory of Thurston maps developed by K.~Pilgrim~\cite{Pilgrim_Comb}. The algebraic version of this theory was introduced in~\cite{BD_Dec}.


Consider a Thurston map $f\colon(S^2,A,\CC)\selfmap$. Up to homotopy, we may write $f$ as a correspondence
\begin{equation}
\label{eq:corr:f_i}(S^2,A,\CC)\overset i\leftarrow(S^2,f^{-1}(A),f^{-1}(\CC))\overset
f\to(S^2,A,\CC).
\end{equation}
Here, the map $f$ is the same as the original map $f$, but it is considered now as a covering (i.e., we remove the marking sets from the domain and target). 
The map~$i$, identifying the domain and target marked spheres, is specified as follows. It first forgets all points in $f^{-1}(A)\setminus A$ and all curves in $f^{-1}(\CC)$ that
are not isotopic rel.\ $A$ to curves in $\CC$. Then it squeezes all annuli between
the remaining curves in $f^{-1}(\CC)$ that are isotopic, and maps them to the corresponding curve in
$\CC$. This uniquely defines $i$ as a monotone 
map on $S^2$ up to homotopy rel.\ $A$.



A \emph{small sphere} $S_z$ of $(S^2,A,\CC)$ is a connected component of $S^2\setminus \CC$. Viewing holes in $S_z$ as punctures, we obtain a sphere $\widehat S_z$ marked by the respective subset $A_z$ of $A\cup \CC$. With a slight abuse of terminology, we will refer to this marked sphere as a small sphere of $(S^2,A,\CC)$ as well. Similarly, we introduce small spheres of $(S^2,f^{-1}(A),f^{-1}(\CC))$.

A small sphere $S_z$ of $(S^2,f^{-1}(A),f^{-1}(\CC))$ is called
\begin{enumerate}[label=\text{(\roman*)},font=\normalfont,leftmargin=*]
\item \emph{trivial}, if $S_z$ is homotopic rel.\ $A$ to 
a point or peripheral curve in $(S^2,A)$;
\item \emph{annular}, if $S_z$ is homotopic rel.\ $A$ to a curve in $\CC$;
\item \emph{essential}, otherwise.
\end{enumerate}

For every small sphere $S_z$ of $(S^2,f^{-1}(A),f^{-1}(\CC))$ there is a unique small sphere $S_{f(z)}$ of $(S^2,A,\CC)$ such that $f\colon S_z\setminus f^{-1}(A)\to S_{f(z)}\setminus A$ is a covering. Filling-in holes, we view the latter as a sphere map $f\colon (\widehat  S_z,A_z)\to  (\widehat S_{f(z)},A_{f(z)})$.

Essential small spheres of $(S^2,f^{-1}(A),f^{-1}(\CC))$ may be canonically identified with small spheres of $(S^2,A,\CC)$. Namely, for every small sphere $S_z$ of $(S^2,A,\CC)$ there is a unique small sphere $S_{i^*(z)}$ of $(S^2,f^{-1}(A),f^{-1}(\CC))$ such that $S_{i^*(z)}$ is homotopic to $S_z$ rel.\ $A$ (the corresponding homotopy fills in the holes in $S_{i^*(z)}$ associated with $f^{-1}(\CC)\setminus \CC$). This induces a forgetful map $(\widehat S_{i^*(z)}, A_{i^*(z)})\to  (\widehat S_z,A_z)$; its inverse $(\widehat S_z,A_z) \to  (\widehat S_{i^*(z)}, A_{i^*(z)})$ is a sphere map. 

The composition 
\begin{equation}
\label{eq:small sphere map}
\widehat S_z\to \widehat S_{i^*(z)} \to \widehat S_{f(z)} \sp\sp\text{ where } f(z)\coloneq f (i^* (z)) 
\end{equation}
is well-defined (see \cite[Lemma 4.9]{BD_Dec}) and is called a \emph{small (non-dynamical) sphere map} of $f\colon(S^2,A,\CC)\selfmap$. The small sphere map $(\widehat S_z, A_z)\to  (\widehat S_{f(z)}, A_{f(z)})$ is unique up to homotopy rel.\ the marked points.

Note that~\eqref{eq:small sphere map} naturally induces a map on the small spheres of $(S^2,A,\CC)$, which we still denote by $f$ for simplicity. Clearly, every small sphere of $(S^2,A,\CC)$ is either \emph{periodic} or \emph{strictly preperiodic}. Moreover, there are only finitely many periodic cycles of small spheres. A \emph{small (self-)map} of $(S^2,A,\CC)$ is the first return map $\widehat f\coloneq f^k \colon (\widehat S_z, A_z) \selfmap$ along such a periodic cycle (with some choice of a base small sphere $\widehat S_z$). Each such small map is either a homeomorphism or a Thurston map. 


By the \emph{decomposition} of $f\colon(S^2,A,\CC)\selfmap$ (along the invariant multicurve $\CC$) we mean 
either
\begin{itemize}
\item  the collection of small sphere maps (a non-dynamical decomposition); or
\item  
the collection of small self-maps, one per every periodic cycle of small spheres.
(a dynamical decomposition).
\end{itemize}

The converse procedure is called \emph{amalgam}. It takes as input a collection of small sphere maps $(f\colon \widehat S_z\to \widehat S_{f(z)})_z$, as well as an appropriate ``gluing data'',  and outputs a global map $f\colon(S^2,A,\CC)\selfmap$; see \cite[\S3]{Pilgrim_Comb}.



\section{Formal amalgam}
\label{sec: geom amal}
%
%
%
%
%
%
%
%
%
%
%

 \subsection{Expanding quotients}
Let $f\colon(S^2,A)\selfmap$ be a Thurston map with B\"ottcher normalization at each point in $A^\infty$. We denote by $\Jul_f$ the set of points in $S^2$ that are not attracted by $A^\infty$.  We also fix a base metric on $S^2$ that induces the given topology on $S^2$.

Two points $x,y \in \Jul_f$ are called \emph{homotopy equivalent} if there is an $M>0$ such that for every $n\ge 0$ the points $f^n(x)$ and $f^n(y)$ can be connected by a nice curve~$\ell_n$ (see Section \ref{ss:Not Conv}) with $|\ell_n|\le M$ such that $\ell_n\lift{f^n}{x}$ ends at $y$. Moreover, we say that $x,y$ are \emph{strongly homotopy equivalent} if the curves $\ell_n$ additionally satisfy  $\ell_n\simeq_{A} \ell_{n+1}\lift{f}{f^n(x)}$ for each $n
\geq 0$.

\begin{proposition}
\label{prop:hom equiv}
Let $f\colon(S^2,A)\selfmap$ be a B\"ottcher normalized map and $\bar f\colon(S^2,A)\selfmap$ be a B\"ottcher expanding map isotopic to $f$.  Suppose that $h\colon (S^2,A)\to (S^2,A)$ is a continuous monotone map with $h\simeq_A \id$ that provides a semi-conjugacy between $f$ and $\bar f$ (see Section \ref{sss:PullbackArg:S^2}). Then $h^{-1}(\Jul_{\bar f})=\Jul_f$ and the following are equivalent for $x,y\in \Jul_f$:
\begin{itemize}
\item $h(x)=h(y)$;
\item $x,y$ are homotopy equivalent;
\item $x,y$ are strongly homotopy equivalent.
\end{itemize}
Moreover, the constant $M$ in the definition of the strong homotopy equivalence can be taken to be uniform over all equivalence classes. 
\end{proposition}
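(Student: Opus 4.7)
The plan is to prove the proposition in four parts, using the pullback construction of the semi-conjugacy $h$ from Section~\ref{sss:PullbackArg:S^2} as the main technical tool. For Part~1 ($h^{-1}(\Jul_{\bar f})=\Jul_f$), each $i_m$ in the pullback respects the B\"ottcher coordinates near $A^\infty$ and each isotopy $h_m$ fixes a small neighborhood of $A^\infty$ pointwise; the uniform limit $h$ is therefore a local homeomorphism on a neighborhood of $A^\infty$, fixing each $a\in A^\infty$. Combined with $h\circ f^n=\bar f^n\circ h$, this identifies the basins of $A^\infty$ under $f$ and under $\bar f$, hence $\Fat_f=h^{-1}(\Fat_{\bar f})$. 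Part~2 (strong HE $\Rightarrow$ HE) is immediate from the definitions.

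For Part~3 (HE $\Rightarrow h(x)=h(y)$), given witness curves $\ell_n$ with $|\ell_n|\le M$, set $\alpha_n:=\ell_n\lift{f^n}{x}$, a curve from $x$ to $y$; then $h(\alpha_n)=h(\ell_n)\lift{\bar f^n}{h(x)}$ is a curve from $h(x)$ to $h(y)$. Uniform continuity of $h$ bounds $|h(\ell_n)|$ in the base metric, and since the endpoints of $h(\ell_n)$ lie in $\Jul_{\bar f}$ (which is bounded away from $W$), an inexpensive homotopy pushing the curve out of the cusp neighborhoods $U_a$ shows that the truncated length $|h(\ell_n)|_\simeq$ is also uniformly bounded, by some $M''$. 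Lemma~\ref{lmm:TrLength:shrinks} then gives $|h(\alpha_n)|_\simeq\le \lambda^{-n}M''+C$; for $n$ large, the lift is short enough that it stays in $\Jul_{\bar f}$ far from $W$, so the additive correction becomes unnecessary and $|h(\alpha_n)|\le \lambda^{-n}M''\to 0$, forcing $h(x)=h(y)$.

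For Part~4 ($h(x)=h(y)\Rightarrow$ strong HE with uniform $M$), let $\tau_m(z)$ denote the track at $z$ of the isotopy $h_m$ (from $i_m(z)$ to $i_{m+1}(z)$), satisfying the uniform bound $|\tau_m(z)|_m\le T$ in $m$-combinatorial length, and form the telescoping $\Gamma_n(z):=\tau_n(z)\#\tau_{n+1}(z)\#\cdots$ from $i_n(z)$ to $h(z)$. Set $\beta_n:=\Gamma_n(x)\#\overline{\Gamma_n(y)}$ and $\ell_n:=i_0^{-1}(g^n\beta_n)$. The identity $g\tau_{m+1}(z)=\tau_m(f(z))$ (which follows from $g\circ h_{m+1,t}=h_{m,t}\circ f$) gives inductively $g^n\beta_n=\beta_0(f^n(x),f^n(y))$, a curve of $0$-combinatorial length $\le 2T$; hence $|\ell_n|\le M$ uniformly in $x,y,n$ via uniform continuity of $i_0^{-1}$. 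Uniqueness of $g^n$-lifts forces $i_n(\ell_n\lift{f^n}{x})=\beta_n$, so $\ell_n\lift{f^n}{x}$ ends at $y$. The strong compatibility $\ell_n\simeq_A\ell_{n+1}\lift{f}{f^n(x)}$ follows from the telescoping identity $\beta_n=\tau_n(x)\#\beta_{n+1}\#\overline{\tau_n(y)}$, combined with the fact that the pullback isotopy $h_n$ is rel~$f^{-n}(A)$ (proved inductively: on the discrete fibers of $f$, the lift of a time-constant isotopy under the $f/g$-correspondence is forced to be time-constant).

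The main obstacle is Part~4, particularly the verification of the strong HE compatibility with endpoints $f^n(x), f^n(y)$ held fixed: the telescoping $\beta_n=\tau_n(x)\#\beta_{n+1}\#\overline{\tau_n(y)}$ separates extra track pieces at the beginning and end of the curve, and the rel~$f^{-n}(A)$ property of $h_n$ is precisely what allows these pieces, after projection by $f^n$, to be absorbed into an endpoint-fixing isotopy rel~$A$. The uniformity of $M$ across all equivalence classes is then automatic from the uniform track bound $T$, which is an intrinsic feature of the pullback construction and does not depend on the points chosen.
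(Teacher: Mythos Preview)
Your Part~4 takes a genuinely different route from the paper. The paper proves $h(x)=h(y)\Rightarrow$ strong HE geometrically: it passes to the universal orbifold cover $\rho\colon\disk\to(S^2,A,\orb_f)$ and first establishes (its Lemma~\ref{lem:const:M}) a uniform bound $\diam_\rho(h^{-1}(z))\le M$ for all $z\in\Jul_{\bar f}$, obtained by covering $\Jul_{\bar f}$ by disks meeting $A$ in at most one point and collapsing each. Given $x,y\in Z=h^{-1}(z)$, one connects $x,y$ by a curve $\gamma$ in a very small neighborhood of $Z$, pushes $f^n(\gamma)$ to a geodesic $\tilde\ell_n$ in a fixed lift $\widetilde Z_n$ of $Z_n=h^{-1}(\bar f^n(z))$, and sets $\ell_n=\rho(\tilde\ell_n)$; the strong compatibility is automatic because every $\ell_n\lift{f^n}{x}$ is homotopic to the same $\gamma$. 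Your approach instead stays entirely inside the pullback construction: you manufacture $\ell_n$ directly from the telescoped isotopy tracks $\Gamma_n(x)\#\overline{\Gamma_n(y)}$ and read off both the uniform bound (from the track bound $T$) and the compatibility (from the telescoping identity together with the fact that $h_n$ is constant on $f^{-n}(A)$). Your argument is correct; it is more combinatorial and avoids any appeal to orbifold geometry, at the price of some bookkeeping with the lifted isotopies. One small point to tighten: $|\beta_0|_0\le 2T$ bounds the $0$-combinatorial length and hence the \emph{diameter}, not the arclength, of $\beta_0$ (which is an infinite concatenation); you should pass to a bounded-length representative in the same homotopy class before applying $i_0^{-1}$.

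Your Part~3, however, has a gap. Lemma~\ref{lmm:TrLength:shrinks} gives $|h(\alpha_n)|_\simeq\le\lambda^{-n}M''+C$, which tends to $C$, not to $0$; the assertion that ``the additive correction becomes unnecessary'' once the lift is short is circular, and the lift certainly need not stay in $\Jul_{\bar f}$. The clean fix is to abandon truncated length here and use topological expansion directly: push $h(\ell_n)$ out of $W$ (its endpoints $\bar f^n(h(x)),\bar f^n(h(y))$ lie in $\Jul_{\bar f}\subset X_0$) to a curve of bounded $0$-combinatorial length~$L$; its $\bar f^n$-lift then has $n$-combinatorial length $\le L$, hence diameter $\le L\cdot\mesh(\UU_n)\to 0$, forcing $h(x)=h(y)$. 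The paper dispatches this direction in one line (``$h(x),h(y)$ are homotopy equivalent rel $\bar f$ and we obtain $h(x)=h(y)$''), leaving exactly this expansion argument implicit.
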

\begin{proof}
Since $\Jul_f$ and $\Jul_{\bar f}$ are non-escaping sets, we have $h^{-1}(\Jul_{\bar f})=\Jul_f$.

Clearly, strong homotopy equivalence implies homotopy equivalence.

Suppose that $x,y$ are homotopy equivalent rel.\ $f$. Then $h(x),h(y)$ are homotopy equivalent rel.\ $\bar f$ and we obtain  $h(x)=h(y)$.

Before proving converse, let us introduce some additional terminology. We will assume that $f$ has hyperbolic orbifold. (The case when $f$ has parabolic orbifold will follow in a similar way with some natural modifications.) Fix a universal orbifold covering map $\rho\colon \disk\to (S^2,A,\orb_f)$. For $X\subset S^2$, we define $\diam_\rho (X)\in [0,\infty]$ to be the diameter (with respect to the hyperbolic metric on $\disk$) of a connected component of $\rho^{-1}(X)$. 

\begin{lemma}
\label{lem:const:M}
There is an $M>0$ such that $\diam_\rho\big(h^{-1}(z)\big) \le M$ for every $z\in \Jul_{\bar f}$.  
\end{lemma}
\begin{proof}
Let us choose closed topological disks $V_1,\dots,V_s$ such that \[\Jul_{\bar f}\subset \bigcup _{i=1}^s V_i\sp\sp \text{ and} \sp\sp |V_i\cap A|\le 1\sp \text{ for all }i.\] 

Choose a monotone $\tau_i\colon (S^2,A)\to (S^2,A)$ such that $\tau_i(V_i)=\{v_i\}$ and $\tau_i\mid S^2\setminus V_i$ is injective.

Since $\tau_i\circ h\colon(S^2,A)\to (S^2,A)$ is monotone, \[M_i\coloneq \diam_\rho \big(h^{-1}\circ \tau^{-1}_i(v_i)\big)<\infty .\] Then $M\coloneq\max_{1\le i\le s} M_i$ provides the desired bound.
\end{proof}

Suppose $x,y\in Z\coloneq h^{-1}(z)$ for some $z\in \Jul_{\bar f}$. We will prove that $x,y$ are strongly homotopy equivalent with the constant $M$ from Lemma~\ref{lem:const:M}. 

Consider \[f^n(x),f^n(y)\in  Z_n\coloneq f^n(Z)=h^{-1}\circ \bar f^n(z)\sp\sp\sp \text{ for  }n\ge 0.\]
Fix a connected component $\widetilde Z_n$ of $\rho^{-1}(Z_n)$. Choose a very small open neighborhood $U_n$ of $Z_n$, and denote by $U$ the component of $f^{-n}(U_n)$ containing $Z$. Connect $x,y$ by a curve $\gamma$ in $U$. Since $U_n$ is a very small neighborhood of $Z_n$, the curve $\gamma_n\coloneq f^n(\gamma)$ has a lift $\widetilde \gamma_n$ connecting two lifts of $f^n(x),f^n(y)$ in $\widetilde Z_n$. Let us homotope $\tilde \gamma_n$ into a geodesic $\tilde \ell_n$. Then $\ell_n\coloneq \rho(\tilde \ell_n)$ connects $f^n(x)$, $f^{n}(y)$ and has length at most $M$. By construction, $\ell_n$ has a lift $\ell$ homotopic to $\gamma$ rel $A,\orb_f$. 
\end{proof}

\subsection{Formal amalgams}\label{ss:FA} In this subsection, we extend the notion of a formal mating to amalgams; compare with the notion of ``trees of correspondences'' from \cite[\S6.2]{BD_Exp}. This will allow us to relate the Julia set of an amalgam with the Julia sets of its small maps.

Let $\widetilde S$ be a finite (disjoint) union of topological spheres marked by a finite set~$A$. We assume that $f\colon (\widetilde S,A)\selfmap$ is B\"ottcher expanding: it expands a length metric on $\widetilde S\setminus A^\infty$, where $A^\infty$ is the forward orbit of periodic critical points, and the first return map at all $a\in A^\infty$ is conjugate to $z\mapsto z^{\deg_a(f^{\per(a)})}$.

Given a forward-invariant set $A_\blowup\subset A^\Fat\coloneq A \cap \Fat(f)$, let 
\begin{equation}
\label{eq:sph map}
 f_\blowup\colon (\widetilde S_\blowup,A\setminus A_\blowup )\dashrightarrow (\widetilde S_\blowup,A\setminus A_\blowup )
\end{equation} 
be the partial branched covering obtained by blowing up every point $a\in A_\blowup$ into a closed circle $\delta_a$. More precisely:
\newcommand{\indet}{{\operatorname{indet}}}
\begin{itemize}
\item 
$\widetilde S_\blowup$ is a (disjoint) union of spheres with boundary components $\Delta_\blowup =(\delta_a)_{a\in A_\blowup}$ together with a monotone map 
\begin{equation}
\label{eq:rho_blowup}
    \rho_\blowup\colon (\widetilde S_\blowup,A\setminus A_\blowup )\to (\widetilde S, A)
\end{equation}    
such that $\rho_\blowup| \widetilde S_\blowup \setminus \Delta_\blowup$ is injective and  $\rho_{\blowup}(\delta_a)=a$  for every $a\in A_\blowup$;
\item $\rho_\blowup$ semi-conjugates $f_\blowup$ to $f$ on $\widetilde S_\blowup \setminus  A_\indet$, where
\[ A_\indet\coloneq  f^{-1}(A_\blowup)\setminus A_\blowup. \]
\end{itemize} 
Note that the map $f_\blowup$ is not defined on $A_\indet$. At the same time, the map $f_\blowup$ is uniquely defined on each boundary circle $\delta_a, a\in A_\blowup $, by continuity.

An \emph{annular map} is a partial covering map of the form
\begin{equation}
\label{eq:ann map}
f\colon \AA'\to \AA\sp\sp \text{ with }\sp \AA'\subset \AA \sp\sp\text{ and }\sp\partial \AA'\supset \partial \AA
\end{equation}
where $\AA $ and $\AA'$ are finite unions of closed annuli and circles (i.e., degenerate annuli). The map $f$ is \emph{expanding} if it expands a length metric on $\AA$.

Consider a Thurston map $f\colon (S^2,A,\CC)\selfmap$, where $A$ contains all the critical points. We say that $f$ is a \emph{formal amalgam of expanding maps} if it is obtained by
\begin{itemize}
 \item gluing a blown up B\"ottcher expanding  map~\eqref{eq:sph map} with an expanding annular map~\eqref{eq:ann map}, 

 \item fixing identification of the marked sets:
 \begin{equation}
 \label{eq:FA:marked sets}
   \nu\colon \sp [A\setminus  A_\blowup]_{\text{ of $f_\blowup$ \eqref{eq:sph map} }}  \xrightarrow{\text{gluing identification}} A _{\text{ of $f$}} \ ,
 \end{equation}
 \item by redefining the resulting map in a small neighborhood of  $A_\indet$ so that $f$ is a branched covering respecting~\eqref{eq:FA:marked sets} and so that $f$ has local B\"ottcher normalization around every $A^\infty_\text{ of $f$ }\cap \nu(\AA_\indet\cap A)$.
\end{itemize}
By construction, $\nu$ respects the dynamics on $A\setminus (A_\blowup\cup A_\indet)$ but $\nu$ may change the dynamics on $A\cap A_\indet$. The adjustment of $f$ in a small neighborhood of $A_\indet$ is unique up to isotopy  (such a neighborhood contains at most one critical point, which is necessary in $A\setminus A_\blowup$); see \cite[\S4]{Pilgrim_Comb}. By construction, a formal amalgam has B\"ottcher normalization. 
 
 Naturally, we view each component of $\widetilde S_\blowup$ and $\AA$ as a subset of $S^2$. Then $\Delta_\blowup = \partial \AA$. Our convention is that 
\begin{itemize}
    \item $\CC=\MC(\AA)$ is the multicurve induced by $\AA$.
\end{itemize}
We also note that primitive unicycles of $\CC$ give rise to circle components (i.e., degenerate annuli) of $\AA$. The relation between marked sets of $f_\blowup$ and the resulting blowup is stated in Lemma~\ref{lem:A:relat f f_blowup}.

\hide{\subsection{Formal amalgam}
\label{ss:FormAmalg 2}
Suppose $f\colon (S^2,A)\selfmap $ is a B\"ottcher expanding map and $A^\Fat\coloneq A\cap \Fat$.  Given a forward-invariant set $A_{\blowup}\subset A^\Fat$, let \begin{equation}
\label{eq:sph map}
 f_\blowup\colon (S_\blowup,A\setminus A_\blowup )\dashrightarrow (S_\blowup,A\setminus A_\blowup )
\end{equation} be the map obtained by blowing up every point $a\in A_\blowup$ into a closed circle $\delta_a$. More precisely:
\begin{itemize}
\item $S_\blowup$ is a sphere with boundary components $\Delta_\blowup =(\delta_a)_{a\in A_\blowup}$ together with a monotone map \[\rho_\blowup\colon (S_\blowup,A\setminus A_\blowup )\to (S^2,A),\sp\sp\sp \rho(\delta_a)=a\sp \sp \text{ for }a\in A_\blowup\]
\item $\rho_\blowup$ semi-conjugates $f_\blowup$ to $f$ away from a small neighborhood of $\rho^{-1}_\blowup \big(f^{-1}(A_\blowup)\setminus A_\blowup\big)$;
\item in a small neighborhood of every point in $\rho^{-1}_\blowup \big(f^{-1}(A_\blowup)\setminus A_\blowup\big)$, the map $f_\blowup$ is partially defined and is a homeomorphism on its domain of definition.
\end{itemize} 
We call~\eqref{eq:sph map} an \emph{expanding sphere-like map}. It is uniquely defined up to homotopy rel $\Delta_\blowup \cup (A\setminus A_\blowup)$.  
Similarly, a sphere-like map $f\colon (\widetilde S,A)\selfmap $ is defined for a union $\widetilde S$ of finitely many small spheres with boundaries.  

An \emph{annular map} is a partial covering map of the form
\[ f\colon \AA'\to \AA\sp\sp \text{ with }\sp \AA'\subset \AA \sp\sp\text{ and }\sp\partial \AA'\supset \partial \AA\]
where $\AA $ and $\AA'$ are finite unions of closed annuli and circles (i.e.~degenerate annuli). The map $f$ is \emph{expanding} if it expands a Riemannian metric. 

We say a Thurston map $f\colon (S^2,A)\selfmap$ is a \emph{formal amalgam of expanding maps} if it is obtained by gluing a B\"ottcher expanding sphere-like map with an annular map. Up to homotopy, the map is uniquely defined if the gluing data is fixed \cite{Pilgrim_Comb,BD_Exp}. The following lemma is then immediate.}

\begin{lemma}
\label{lem:FormAmalg}
Assume  $f\colon (S^2,A,\CC)\selfmap$ is a Levy-free map not doubly covered by a torus endomorphism, where $A$ contains all the critical points of $f$. Then $f$ is isotopic to a formal amalgam $f_{\text{FA}}\colon (S^2,A,\CC)\selfmap$ of \eqref{eq:sph map} and \eqref{eq:ann map} where every (possibly degenerate) annulus of $\AA$ is homotopic to a unique component of $\CC$ and vice versa.
\end{lemma}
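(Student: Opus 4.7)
The plan is to start with the topological decomposition of $f$ along $\CC$ recalled in Section~\ref{ss: dec and amal}, upgrade its sphere-map ingredients to be B\"ottcher expanding via Theorem A of~\cite{BD_Exp}, and then reassemble everything into a formal amalgam. Because $A$ already contains all critical points of $f$, cutting $(S^2,A)$ along $\CC$ produces a disjoint union of small spheres $\widetilde S = \bigsqcup_z \widehat S_z$ marked by $\widetilde A = A\sqcup(\CC\cap\widetilde S)$, together with a tubular-neighborhood system $\AA$ of $\CC$ on which $f$ restricts to an annular covering of the form~\eqref{eq:ann map}. By construction, every component of $\AA$ is homotopic to a unique curve of $\CC$ and vice versa; primitive unicycles of $\CC$ correspond to degenerate (circle) components of $\AA$. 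The complementary restriction of $f$ assembles into the small non-dynamical sphere maps $\widetilde f\colon(\widetilde S,\widetilde A)\selfmap$.

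On each periodic cycle of small spheres, the first-return map $\widehat f_z\colon(\widehat S_z,A_z)\selfmap$ inherits both Levy-freeness and the property of not being doubly covered by a torus endomorphism from $f$: any Levy multicurve in $(\widehat S_z,A_z)$ pushes forward to a Levy multicurve in $(S^2,A,\CC)$, and any torus-double-cover structure would pull back to one on a suitable iterate of $f$. Theorem~A of~\cite{BD_Exp} then provides, for each cycle with $\deg \widehat f_z\ge 2$, a B\"ottcher expanding representative isotopic to $\widehat f_z$ rel.\ $A_z$; degree-one cycles must, by Levy-freeness, be trivial (the marked sphere has at most three points on which the return map is isotopic to the identity) and require no change. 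Distributing each isotopy among the non-dynamical arrows around the cycle, and pulling the resulting B\"ottcher metric back from every periodic cycle to its strictly preperiodic predecessors, I obtain a B\"ottcher expanding map $\widetilde f\colon(\widetilde S,\widetilde A)\selfmap$ in the original isotopy class. The added markings $A_\blowup := \widetilde A\setminus A = \CC\cap\widetilde S$ are forward invariant (since $\CC$ is invariant) and, by construction of the B\"ottcher normalizations, lie in $\widetilde A\cap\Fat(\widetilde f)$ as required by~\eqref{eq:sph map}.

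Finally, I blow up $A_\blowup$ to obtain $f_\blowup\colon (\widetilde S_\blowup, A)\dashrightarrow(\widetilde S_\blowup, A)$ as in~\eqref{eq:sph map} and glue with the annular map from the decomposition via the tautological identification $\nu=\id$ on $A$; the standard local adjustment on a small neighborhood of $A_\indet = f^{-1}(A_\blowup)\setminus A_\blowup$ (as in~\cite[\S4]{Pilgrim_Comb}) yields a branched covering $f_{\text{FA}}\colon(S^2,A,\CC)\selfmap$ with B\"ottcher normalization. Since every modification was performed within the appropriate isotopy class, $f_{\text{FA}}$ is isotopic to $f$ rel.\ $A$, and the required bijective correspondence between the components of $\AA$ and the curves of $\CC$ is built into the construction. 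The delicate step is the middle one: I must choose the B\"ottcher expanding representatives on the various periodic small-sphere cycles compatibly with the annular dynamics on $\AA$, so that the boundary markings genuinely become attracting periodic orbits of $\widetilde f$ with B\"ottcher coordinates matching those dictated by the annular map on the opposite side of each gluing circle. This compatibility is what certifies $A_\blowup\subset \widetilde A\cap\Fat(\widetilde f)$ and lets the blow-up and gluing procedure close up to a globally defined branched cover.
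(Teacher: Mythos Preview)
Your approach matches the paper's: decompose along $\CC$, upgrade the small-sphere first-return maps to B\"ottcher expanding via \cite{BD_Exp}, and reassemble. Two points need correction.

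First, your argument that no small first-return map is doubly covered by a torus endomorphism (``any torus-double-cover structure would pull back to one on a suitable iterate of $f$'') does not work: a torus cover of a small sphere has no natural lift to a torus cover of $(S^2,A)$. The correct reason (implicit in the paper's one-line assertion) is that when $\CC\neq\emptyset$ every periodic small sphere carries at least one boundary marking coming from $\CC$, and Levy-freeness forces the first-return degree on the corresponding curve to exceed~$1$; hence that marked point is a periodic critical point of the small map, which a Latt\`es map cannot have. (When $\CC=\emptyset$ the hypothesis on $f$ itself is used directly.)

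Second, you glue with ``the annular map from the decomposition'' but never arrange for it to be \emph{expanding}, which the definition of a formal amalgam requires of~\eqref{eq:ann map}. The paper explicitly isotopes $f$ on $\intr(\AA)$ to achieve this; again this is possible precisely because Levy-freeness gives degree $>1$ on every periodic annulus cycle. Your final paragraph about matching B\"ottcher coordinates across the gluing circles is more care than is strictly needed: once both the blown-up sphere map and the annular map are specified, the boundary dynamics on each $\delta_a$ is a circle covering of the correct degree on both sides, and the identification is made by choosing compatible parametrizations.
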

\begin{proof}
   We can thicken $\CC$ into a finite union $\AA$ of closed annuli and circles and we can isotope $f$ so that 
   \[\AA'\coloneq f^{-1}(\AA)\setminus \{\text{peripheral components of $f^{-1}(\AA)$}\}\]
   satisfies $\AA'\subset \AA$ and $\partial \AA'\supset \partial \AA$. Define $\widetilde S'\coloneq S^2\setminus \AA$ and compactify $\widetilde S'$ into $\widetilde S_\blowup$ by adding boundary circles. Since $f$ is Levy-free and not doubly covered by a torus endomorphism, so are the first return maps on periodic small spheres of the induced map $f_\blowup$ on $(\widetilde S_\blowup,A)$. Therefore, we can isotope $f_\blowup$ into an expanding map on $(\widetilde S_\blowup,A)$ satisfying~\eqref{eq:sph map}. After that, we isotope $f$ on $\intr(\AA)$ so that the induced map \eqref{eq:ann map} is expanding. 
\end{proof}

\subsubsection{Non-escaping sets}
\label{sss:NonEscSets}

Let us  denote by $\filled_{\widetilde S}$ the set of points in $\widetilde S\setminus \Delta_\blowup$ that do not escape into the Fatou components around $\Delta_\blowup$ under the iteration of \eqref{eq:sph map}. Also, let $\Jul_\AA$ be the non-escaping set for the map \eqref{eq:ann map}; it is a collection of simple closed curves isotopic to $\CC$. 

\begin{lemma}
\label{lem:A:relat f f_blowup} In a formal amalgam $f\colon (S^2,A,\CC)\selfmap$, every periodic cycle of $f\mid A$ is either within $\filled_{\widetilde S}$ or contains a point in $\nu \big( [A\cap A_\indet ]_{\text{ of $f_\blowup$ \eqref{eq:sph map} }}\big)$, see~\eqref{eq:FA:marked sets}. These possibilities are mutually excluded.     
\end{lemma}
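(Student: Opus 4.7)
The plan is to exploit the dichotomy provided by the construction of the formal amalgam: the gluing identification $\nu$ intertwines $f$ and $f_\blowup$ on $A\setminus(A_\blowup\cup A_\indet)$, so the dynamics of $f|_A$ can only differ from that of $f_\blowup$ at the indeterminacy set $A_\indet$. Let $O\subset A$ be a periodic cycle of $f|_A$ and set $\widetilde O\coloneq \nu^{-1}(O)\subset A\setminus A_\blowup$. I would split the argument by whether $\widetilde O\cap A_\indet$ is empty or not.

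If $\widetilde O\cap A_\indet=\emptyset$, then $f_\blowup$ is defined along $\widetilde O$ and agrees with $f|_O$ via $\nu$, so $\widetilde O$ is itself an $f_\blowup$-periodic cycle in $\widetilde S\setminus \Delta_\blowup$. I would then verify that such a cycle automatically lies in $\filled_{\widetilde S}$: by the B\"ottcher normalization of $f$ at each $a\in A_\blowup$, the map $f_\blowup$ is radially attracting on a neighborhood of every boundary circle $\delta_a$, so the Fatou basins around $\Delta_\blowup$ are genuinely attracting — forward orbits in them accumulate on $\Delta_\blowup$ and these basins contain no periodic orbit other than the circles themselves. A periodic cycle at positive distance from $\Delta_\blowup$ therefore does not escape into them, which is exactly $\widetilde O\subset \filled_{\widetilde S}$.

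If instead $\widetilde O\cap A_\indet\neq\emptyset$, then any $a$ in this intersection satisfies $a\in A\cap A_\indet$ (since $\widetilde O\subset A\setminus A_\blowup\subset A$), and hence $\nu(a)\in O$ lies in $\nu(A\cap A_\indet)$, giving the second alternative. For the mutual exclusion, recall that $f_\blowup$ is explicitly undefined on $A_\indet$, while membership in $\filled_{\widetilde S}$ requires a well-defined, non-escaping forward $f_\blowup$-orbit; so $\widetilde O\subset \filled_{\widetilde S}$ precludes $\widetilde O\cap A_\indet\neq \emptyset$. The main obstacle I anticipate is the first-case claim that every $f_\blowup$-periodic cycle in $\widetilde S\setminus \Delta_\blowup$ automatically lies in $\filled_{\widetilde S}$; this should follow from the explicit local $z\mapsto z^d$ model, which realizes each Fatou ``disk'' adjacent to $\delta_a$ as a topologically trivial attracting basin that can contain no periodic orbit except $\delta_a$ itself.
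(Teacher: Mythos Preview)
Your argument is correct and follows essentially the same line as the paper's (very terse) proof: cycles of $f|_A$ that avoid $\nu(A_\indet)$ are genuine $f_\blowup$-cycles in $\widetilde S\setminus A_\blowup$ and hence lie in $\filled_{\widetilde S}$, while cycles meeting $\nu(A_\indet)$ are the ``new'' ones created by the redefinition and cannot lie in $\filled_{\widetilde S}$. Your only slightly soft step is the mutual-exclusion justification---rather than appealing to $f_\blowup$ being undefined on $A_\indet$, it is cleaner to note that each $a\in A_\indet$ already sits in the Fatou basin of $f(a)\in A_\blowup$ (for the original map on $\widetilde S$), so $a\notin\filled_{\widetilde S}$ by definition.
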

\begin{proof} By construction, periodic points of $f\mid (A\cap \filled_{\widetilde S})$ are identified with periodic points of $f_\blowup \mid A\setminus A_\blowup$ under~\eqref{eq:FA:marked sets}. The new periodic point can be created only through redefining the dynamics on $A_\indet$. By construction, the new periodic points are not in $\filled_{\widetilde S}$. 
\end{proof}

Consider a point $a\in A_\blowup$ and an internal ray $I$ inside the Fatou component of $f$ centered at $a$. An \emph{internal ray between $\filled_{\widetilde S}$ and $\AA$} is the closure of $\rho_\blowup^{-1}(\intr(I))$. It is a closed arc connecting the boundary of the Fatou component around $\delta_a$ for $f_\blowup$ and $\delta_a$.



\subsubsection{Iterating formal amalgams} \label{sss:FA:n}Consider a formal amalgam $f\colon (S^2,A,\CC)\selfmap$ of~\eqref{eq:sph map} and~\eqref{eq:ann map}. Define 
\begin{equation} \widetilde S^n_\blowup \coloneq f^{-n}(\widetilde S_\blowup)\subset S^2 \sp\sp\text{ and }\sp\sp \AA^n \coloneq f^{-n}(\AA)\subset S^2.
\end{equation}
By construction, every component of $\widetilde S^n_\blowup $ and $\AA^n$ is within a component of either $\widetilde S_\blowup$ or of $ \AA$. 
Set $\AA'^n\coloneq f^{-1}(\AA^n)\cap \AA^n$. We view
\begin{equation}
\label{eq:FA:iteration} f|\widetilde S^n_\blowup\colon  \widetilde S^n_\blowup\dashrightarrow  \widetilde S^n_\blowup\sp\sp\sp\text{ and }\sp\sp\sp f| \AA'^n \colon \AA'^n\to \AA^n
\end{equation}
as a blown-up B\"ottcher expanding map and an expanding annular map. This allows to represent $f\colon (S^2,f^{-n} (A),f^{-n}(\CC))\selfmap$ as a formal amalgam of maps in~\eqref{eq:FA:iteration}. Clearly, components of $\widetilde S^n_\blowup$ are parametrized by small spheres of $(S^2,f^{-n} (A),f^{-n}(\CC))$ and annuli of $\AA^n$ are parametrized by $\CC^n\coloneq f^{-n}(\CC)$. We denote by $\filled_{\widetilde S^n}, \Jul_{\AA^n}$ the non-escaping sets of maps in~\eqref{eq:FA:iteration}; they are iterated preimages of $\filled_{\widetilde S}, \Jul_{\AA}$. Clearly,
\[ \filled_{\widetilde S^n}\subset \filled_{\widetilde S^m}\sp\sp\text{ and }\sp\sp\Jul_{\AA^n}\subset \Jul_{\AA^m} \sp\sp\text{ for }n\le m.\]
Note also that for $n\leq m$, $\Jul_{\AA^m} \setminus \Jul_{\AA^n}$ contains only peripheral curves rel. $f^{-n}(A)$.

\subsubsection{Gluing $\filled_{\widetilde S}$ and $\Jul_{\AA}$}
Following \cite[Definition 6.4]{BD_Exp}, a \emph{pinching cycle} connecting $x,y\subset \filled_{\widetilde S}\cup \Jul_\AA$ is a simple arc  $I_1\# I_2\#\dots  \#I_k$ formed by a concatenation of internals rays $I_s$, where each $I_s$ connects a point in $\filled_{\widetilde S^n}$ to a point in $\partial \AA^n\subset \Jul_{\AA^n}$ (see Section~\ref{sss:NonEscSets}) for some $n\ge 0$.

\begin{theorem}
\label{thm:Form amalg}
let $f\colon(S^2,A)\selfmap$ be a formal amalgam of a blown-up B\"{o}ttcher expanding map \eqref{eq:sph map} 
and an expanding annular map \eqref{eq:ann map}. 
Suppose that $\tau$ is a semi-conjugacy from  $f\colon(S^2,A)\selfmap$ to a B\"ottcher expanding map $\bar f\colon(S^2,A)\selfmap$ as in Section \ref{sss:PullbackArg:S^2}.  Then \[\tau(x)=\tau(y)\sp\sp\sp\text{ for }\sp x,y\in \filled_{\widetilde S}\cup \Jul_\AA \] if and only if there is a pinching cycle of internal rays connecting $x$ and $y$.

Moreover, there is an $N\in \N$ such that $\tau| \filled_{\widetilde S}\cup \Jul_\AA$ identifies at most $N$ points.
\end{theorem}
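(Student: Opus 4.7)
My plan is to reduce $\tau(x)=\tau(y)$ to the strong homotopy equivalence of Proposition~\ref{prop:hom equiv}: there exist nice curves $\ell_n$ with $|\ell_n|_\simeq \le M$ connecting $f^n(x),f^n(y)$, with $\ell_n\simeq_A \ell_{n+1}\lift{f}{f^n(x)}$. The strategy is then to match such families with pinching cycles of internal rays by using the amalgam decomposition~\eqref{eq:FA:iteration} and the parallel expansion properties of the sphere-level map~\eqref{eq:sph map} and the annular map~\eqref{eq:ann map}.

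For the \emph{if} direction, given a pinching cycle $P=I_1\#\dots\#I_k$, I take $\ell_n:=f^n(P)$. Each $I_s$ is the $\rho_\blowup$-lift of an internal ray in a Fatou component of the sphere-level map at some iteration level $n_s$; forward iteration sends internal rays to internal rays (within eventually periodic Fatou components) with a uniformly bounded truncated length, since only the portion outside the B\"ottcher neighborhoods $U_a$ contributes to $|\cdot|_\simeq$. Summing over the $k$ pieces gives $|\ell_n|_\simeq\le kC$ uniformly, and the compatibility of pullbacks is automatic, so Proposition~\ref{prop:hom equiv} yields $\tau(x)=\tau(y)$.

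For the \emph{only if} direction, start with the curves $\ell_n$ provided by Proposition~\ref{prop:hom equiv}. Using~\eqref{eq:FA:iteration}, subdivide each $\ell_n$ at the points where it crosses $\partial \AA^n$ into maximal arcs lying either in a single component of $\widetilde S^n_\blowup$ (viewed via $\rho_\blowup$) or in a single component of $\AA^n$. The sphere-level map is B\"ottcher expanding, so Lemma~\ref{lmm:TrLength:shrinks} applied to the sphere pieces forces each sphere-part arc of $\ell_n$ either to shrink in truncated length or to degenerate along the cusp structure of the blown-up Fatou component, i.e.\ to collapse onto an internal ray of~\eqref{eq:sph map}; an analogous statement holds for the annular-part arcs under the annular expansion. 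By Lemma~\ref{lmm:TrLength:fin many}, only finitely many homotopy classes of short arcs are available between given endpoints, so a combinatorial pigeonhole argument produces a limiting pattern of internal rays and degenerate points, which assembles into a pinching cycle $P$ connecting $x$ and $y$ as in \cite[Definition 6.4]{BD_Exp}.

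For the uniform bound $N$, Lemma~\ref{lem:const:M} already shows that every fiber $\tau^{-1}(z)$ has uniformly bounded hyperbolic diameter. Consequently, any pinching cycle between two points of such a fiber has combinatorial length bounded in terms of $M$ and the (finite) combinatorics of the formal amalgam; Lemma~\ref{lmm:TrLength:fin many} then bounds the number of homotopy classes of such cycles, hence the cardinality of $\tau^{-1}(z)\cap(\filled_{\widetilde S}\cup \Jul_\AA)$, uniformly in $z$. The main obstacle will be the only-if direction: reconciling segments of $\ell_n$ that correspond to different iteration levels $n_s$ in the final pinching cycle, which requires careful bookkeeping with the hierarchy $\filled_{\widetilde S^n}\subset\filled_{\widetilde S^m}$ and $\Jul_{\AA^n}\subset \Jul_{\AA^m}$ from Section~\ref{sss:FA:n}, together with a compactness argument ensuring that the limiting segments concatenate into an actual pinching cycle rather than a degenerate configuration.
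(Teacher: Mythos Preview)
Your \emph{if} direction is essentially the paper's argument: push the pinching cycle forward, get $\ell_n=f^n(P)$ with uniformly bounded truncated length, and apply Proposition~\ref{prop:hom equiv}.

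For the \emph{only if} direction you have the right skeleton but you cut at the wrong level. You subdivide $\ell_n$ along $\partial\AA^n$, which makes the number of pieces grow with $n$ and creates exactly the ``bookkeeping obstacle'' you flag at the end. The paper instead puts each $\ell_n$ in minimal position with $\partial\AA$ at level~$0$. Because the strong homotopy equivalence gives $\ell_n\simeq_A \ell_{n+1}\lift{f}{f^n(x)}$ and $\partial\AA$ is backward-compatible under $f$, the decomposition $\ell_n=T^{(n)}_1\#I^{(n)}_1\#\cdots\#T^{(n)}_k\#I^{(n)}_k$ has a \emph{fixed} number $k$ of pieces, with each $T^{(n)}_j,I^{(n)}_j$ the $f$-lift of $T^{(n+1)}_j,I^{(n+1)}_j$. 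Now expansion on the sphere and annular pieces forces every $T^{(0)}_j$ (an $n$-fold lift of $T^{(n)}_j$, which has length $\le M$) to be homotopically trivial, so the $I^{(0)}_j$ concatenate into a pinching cycle at $x,y$. No pigeonhole or compactness is needed once you work at level~$0$.

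Your argument for the uniform bound $N$ has a genuine gap. Lemma~\ref{lmm:TrLength:fin many} bounds homotopy classes of short curves between \emph{fixed} endpoints; it does not bound the number of \emph{distinct endpoints} in a fiber. The paper's route is different: first, since internal rays have length bounded below, the uniform constant $M$ from Proposition~\ref{prop:hom equiv} bounds the number $k$ of rays in any pinching cycle; second, for each underlying expanding map there is a uniform bound on how many internal rays of a given Fatou component can land at a single Julia point (this is extracted from the finiteness of the nucleus in a suitable biset basis). Combining these two bounds gives $N$. Your Lemma~\ref{lem:const:M}/\ref{lmm:TrLength:fin many} argument would need to be replaced or supplemented by a landing-multiplicity bound of this kind.
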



\begin{proof}
If there is a pinching cycle $I_1\# I_2\# \dots \# I_k$ of internal rays between $x, y\in \Jul_{\widetilde S} \cup \Jul_\AA$, then $\ell_n\coloneq  f^n(I_1)\#f^n(I_2)\#\dots \# f^n(I_k)$ is a pinching cycle between $f^n(x)$ and $f^n(y)$. The curves $\ell_n$ define a homotopy equivalence between $x$ and $y$. By Proposition~\ref{prop:hom equiv}, $\tau(x)=\tau(y)$.

Suppose that $\tau(x)=\tau(y)$.  By Proposition~\ref{prop:hom equiv}, the points $x,y$ are strongly homotopically equivalent. Consider a system of curves $\ell_n$ realizing the homotopy equivalence between $x,y$. By putting $\ell_n$ into the minimal position with $\partial \AA$, we can decompose every $\ell_n$ as \[\ell_n = T^{(n)}_1\#I^{(n)}_1\#T^{(n)}_2\#I^{(n)}_2 \dots T^{(n)}_k\#I^{(n)}_k \]
such that
\begin{itemize}
\item $T^{(n)}_j$ is a curve within a small sphere or a small annulus;
\item $I^{(n)}_1$ is a curve connecting a point in $\filled_{\widetilde S}$ and a point in $\Jul_\AA$; 
\item $T^{(n)}_j$ and $I^{(n)}_j$ are lifts of $T^{(n+1)}_j$ and $I^{(n+1)}_j$ respectively. 
\end{itemize}
By the expansion, we may assume that $I^{(n)}_j$ are internal rays between small spheres and annuli while $T^{(n)}_j$ are trivial. This proves the first part of the theorem.

Since the length of internal rays between small spheres and annuli are bounded below, the constant $M$ from Proposition~\ref{prop:hom equiv} determines the maximal size of pinching cycle of internal rays. It remains to show that for every expanding map $g\colon(S^2,C)\selfmap$ there is $N_g$ that bounds the number of internal rays landing at any point of $x\in\Jul_g$. By replacing $f$ with its iterate and replacing $x$ with its iterated image, we may obtain that $x$ is on the boundaries of only fixed Fatou components. For every such fixed Fatou component $F$, we can choose a basis $X_F$ for the biset of $g$ so that the associated symbolic presentation of $\Jul_g$ includes the standard parametrization of $\partial F$ by internal rays. Then the number of internal rays of $F$ landing at $x$ is bounded by the nucleus in the basis $X_F$. This implies the required existence of $N_g$. 
\end{proof}

\subsubsection{Gluing $\cup \filled_{\widetilde S^n}$ and $\cup \Jul_{\AA^n}$}\label{sss:glunig of S^n A^n}
Following the setup from Section  \ref{sss:FA:n}, for $\gamma\in \CC^n$, we denote by $\AA^n_\gamma$ the annulus in $\AA^n$ homotopic to $\gamma$ rel.\ $f^{-n}(A)$. For a strongly connected component $\Sigma$ of $\CC$ (see Section \ref{ss: inv_multicurves}), we set $\AA_\Sigma\coloneq \displaystyle \bigcup_{\gamma\in \Sigma} \AA_\gamma$ and denote by $\Jul_\Sigma$ the non-escaping set of $f\colon \AA_\Sigma\cap f^{-1}(\AA_\Sigma)\to \AA_\Sigma$. If $\Sigma$ is a unicycle, then $\Jul_\Sigma$ is a finite periodic cycle of simple closed curves. If $\Sigma$ is a bicycle, then $\Jul_\Sigma$ is a \emph{Cantor bouquet of simple closed curves}: topologically, $\Jul_\Sigma$ is a direct product between a Cantor set and $\mathbb{S}^1$. In all cases, every curve in $\Jul_\Sigma$ is isotopic to a unique curve in $\Sigma$. For $\gamma\in \Sigma$, we also write $\Jul_{\Sigma, \gamma}\coloneq \Jul_\Sigma\cap \AA_\gamma$.


It is easy to see that for $\delta$ in $\CC^n$, the set $\Jul_{\AA^n}\cap \AA_\delta$ is the union of iterated preimages of the $\Jul_{\Sigma, \gamma}$ over all trajectories realizing the condition $\delta \subset f^{-n}(\gamma)$ (up to homotopy), where $\gamma$ is a periodic curve in a strongly connected component $\Sigma$.

We say that 
\begin{itemize}
 \item a simple closed curve $\beta$ in $\Jul_{\AA^n}$ is a \emph{neighbor} to a component $\filled_{\widetilde S^n, i}$ of $\filled_{\widetilde S^n}$ if $\beta $ is one the boundary of the component $\widetilde S_{\blowup, i}^n$ containing $\filled_{\widetilde S^n, i}$;
\item two curves $\beta_1,\beta_2$ in $\Jul_{\AA^n}$ are \emph{neighbors} if they have a common neighboring component $\filled_{\widetilde S^m, i}$ for some $m\ge n$.
\item two components $\filled_{\widetilde S^n, i}$, $\filled_{\widetilde S^n, j}$ are \emph{neighbors} if they are neighbors to a common curve in $\Jul_{\AA^n}$;
\item a curve $\beta$ in $\Jul_{\AA^n}$ is a \emph{neighbor of a neighbor} to a component $\filled_{\widetilde S^n, i}$ if they have a common neighboring component $\filled_{\widetilde S^m, j}$. 
\end{itemize}
We remark that the notion of neighbors is independent of the embedding $\Jul_{\AA^n}\subset \Jul_{\AA^m}$ and of viewing small spheres of $f\colon (S^2,f^{-n}(A),f^{-n}(\CC))\selfmap$ as small spheres of $f\colon (S^2,f^{-m}(A),f^{-m}(\CC))\selfmap$ for $m\ge n$; i.e., neighbors remain neighbors if $n$ is increased.

We say that a curve $\beta$ in $\Jul_{\AA^n}$ is \emph{buried} if it does not have any neighboring component $\filled_{\widetilde S^m ,i}$ for all $m\ge n$.

Let $\tau$ be the semi-conjugacy from $f$ to $\bar f$ from Theorem~\ref{thm:Form amalg}. This theorem implies that for every buried curve $\beta$ in $\Jul_{\AA^n}$, the map $\tau|\beta$ is injective and $\tau^{-1}(\tau(\beta)) = \beta$. In particular, the image $\tau(\beta)$ is disjoint from $\tau(\filled_{\widetilde S^m}\cup (\Jul_{\AA^m}\setminus \beta))$ for all $m\ge n$. 
The gluing of neighbors is described by the pinching cycle condition in Theorem~\ref{thm:Form amalg}. 

\begin{lemma}
\label{lem:TotDisc} Every connected component of $Y\coloneq \displaystyle S^2\setminus \bigcup_{n\ge 0}  \tau\big(\filled_{\widetilde S^n} \cup \Jul_{\AA^n}\big)$ is either a singleton or the closure of a Fatou component; this closure is a closed Jordan disk. This Fatou component is in the attracting basin of cycles intersecting $\nu \big( [A\cap A_\indet ]_{\text{ of $f_\blowup$ \eqref{eq:sph map} }}\big)$, see Lemma~\ref{lem:A:relat f f_blowup}.

The set $Y$ consists of points whose $\bar f$-orbits do not enter $\tau\big(\filled_{\widetilde S} \cup \Jul_{\AA}\big)$.
\end{lemma}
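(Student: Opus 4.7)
I plan to establish part~(3) first, which provides a dynamical characterization of $Y$, and then deduce~(2) and~(1) from the expansion properties of $\bar f$.

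Setting $X\coloneq \filled_{\widetilde S}\cup\Jul_\AA$, the iteration setup of Section~\ref{sss:FA:n} identifies $\filled_{\widetilde S^n}\cup\Jul_{\AA^n}$ with $f^{-n}(X)$, and the semi-conjugacy $\tau\circ f=\bar f\circ\tau$ immediately gives $\tau(f^{-n}(X))\subseteq \bar f^{-n}(\tau(X))$, so any point whose $\bar f$-orbit avoids $\tau(X)$ already lies in $Y$. For the converse direction of~(3), suppose $p\in Y$ but $\bar f^n(p)=\tau(x)$ with $x\in X$; I will derive a contradiction. Writing $p=\tau(q)$ (by surjectivity of $\tau$), the connected $\tau$-fiber $\tau^{-1}(\tau(x))$ contains both $x$ and $f^n(q)$. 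Using Proposition~\ref{prop:hom equiv} together with the pinching-cycle control from Theorem~\ref{thm:Form amalg}, plus the observation that $\tau(X)$ is disjoint from the type~(b) Fatou components of $\bar f$ (those in the basin of $\nu([A\cap A_\indet])$), I argue that $\tau^{-1}(\tau(x))\subseteq \bigcup_k f^{-k}(X)$. This forces $f^n(q)\in f^{-k}(X)$ for some $k\ge 0$, hence $p\in \tau(f^{-(n+k)}(X))\subseteq V$, contradicting $p\in Y$.

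Having~(3), part~(2) follows immediately: if $p\in Y\cap\Fat(\bar f)$ then the $\bar f$-orbit of $p$ avoids the $\tau$-images of every Fatou component of $f$ with cycle inside $\filled_{\widetilde S}$ (i.e.\ the type~(a) components), so $p$ lies in the basin of an attracting cycle which, by Lemma~\ref{lem:A:relat f f_blowup}, must contain a point of $\nu([A\cap A_\indet])$.

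For~(1), B\"ottcher expansion of $\bar f$ together with \cite{BD_Exp} implies that every Fatou component of $\bar f$ is a Jordan domain whose closure is a closed Jordan disk. By~(2), any non-singleton connected component of $Y$ must contain the closure of at least one type~(b) Fatou component of $\bar f$. A potential merging of two such closures inside $Y$ would require a Julia point of $\bar f$ lying on both boundaries; by the buried-curve discussion preceding the lemma, any such shared boundary point is the $\tau$-image of a pinching cycle linking neighboring $\filled_{\widetilde S^m}$ and $\Jul_{\AA^m}$, and thus lies in $V=S^2\setminus Y$. Hence distinct type~(b) Fatou component closures remain separated in $Y$, and every other component of $Y$ is a single point of $\Jul_{\bar f}$ whose $\bar f$-orbit misses $\tau(X)$. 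The main technical obstacle will be the fiber-control step in~(3), ensuring no $\tau$-fiber over a point of $\tau(X)$ extends into a type~(b) Fatou basin; once this is in place, parts~(1) and~(2) follow from the standard structure of Fatou components of B\"ottcher expanding maps.
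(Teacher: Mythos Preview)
Your approach inverts the paper's: you try to work downstairs in $\bar f$ and pull information back, while the paper works upstairs in the formal amalgam $f$ and pushes down via $\tau$. This inversion creates real gaps.

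The fiber-control step in your part~(3) is the heart of the matter and is not justified. You claim $\tau^{-1}(\tau(x))\subseteq \bigcup_k f^{-k}(X)$ for $x\in X$, citing Theorem~\ref{thm:Form amalg} and Proposition~\ref{prop:hom equiv}. But Theorem~\ref{thm:Form amalg} only describes identifications \emph{between points already in} $X=\filled_{\widetilde S}\cup\Jul_\AA$; it says nothing about points of $\Jul_f\setminus \bigcup_k f^{-k}(X)$ that might share a $\tau$-fiber with some $x\in X$. Proposition~\ref{prop:hom equiv} gives homotopy equivalence, but you have not explained why homotopy equivalence to a point of $X$ forces eventual entry into $X$. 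This is essentially what the lemma asserts, so the argument is circular as written.

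In part~(1) you assert that ``every Fatou component of $\bar f$ is a Jordan domain'' by general B\"ottcher expansion. This is false: for instance $z^2+i$ is a B\"ottcher expanding map whose basin of infinity is bounded by a dendrite, not a Jordan curve. The paper obtains the Jordan property only for the specific type~(b) components, by observing that no non-trivial Levy arc can start at $A^\infty\cap \nu([A\cap A_\indet])$; this is a feature of how these ``new'' cycles arise in the amalgam, not a general fact. Separately, your step ``any non-singleton component of $Y$ must contain the closure of a type~(b) Fatou component'' does not follow from~(2) alone; you need some reason why a non-singleton component meets $\Fat(\bar f)$ at all.

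The paper's route avoids all of this by analyzing the \emph{preimage} set $\tau^{-1}(Y)$ directly: a point lies there iff its $f$-orbit passes infinitely often through the Fatou components of $S^2\setminus\filled_{\widetilde S}$ associated with $A_\indet$, so each component $V$ of $\tau^{-1}(Y)$ is a nested intersection of compactly contained disks, each containing at most one point of $A$. Then Proposition~\ref{prop:hom equiv} collapses $V$ to a singleton when the orbit of $V$ misses $A^\infty$, and to $\overline{F'}$ (a single Fatou closure) otherwise; the absence of Levy arcs at those particular cycles gives the Jordan property. You should redo the argument upstairs along these lines.
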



\begin{proof}
By construction, the set $Y$ is the $\tau$-image of the set $X$ of points in $S^2$ whose $f$-orbits never enter $\filled_{\widetilde S}\cup \Jul_{\AA}\cup \{\text{Fatou components around $\Delta_\blowup$}\}$. Equivalently, $x\in X$ if and only if the orbit of $x$ passes infinitely many times through Fatou components of $S^2\setminus \filled_{\widetilde S}$ associated with $A_\indet$. We obtain that every component $V$ of $X$ is a nested intersection of compactly contained disks. Therefore, $f^n(V)$ is a continuum containing at most one point in $A$ for all $n\ge 0$. If the orbit of $V$ is disjoint from $A^\infty$, then all points in $V$ are homotopically equivalent and $\tau(V)$ is a singleton by Proposition~\ref{prop:hom equiv}. If $f^n(V)$ intersects $A^\infty$, then $\tau(V)$ contains the closure of a Fatou component $F'$ and, since $f^n\colon V\to f^n(V)$ is a cyclic branched covering (i.e., it is topologically $z\mapsto z^D$), every point in $V$ is homotopically equivalent to a point in $\partial \tau^{-1}(F')$. By Proposition~\ref{prop:hom equiv}, $\tau(V)=\overline F'$. Clearly, there are no non-trivial Levy arcs starting at $A^\infty \cap \nu \big( [A\cap A_\indet ]_{\text{ of $f_\blowup$ \eqref{eq:sph map} }}\big)$, see~\eqref{eq:FA:marked sets}. Therefore, $\tau(V)=\overline F'$ is a Jordan disk.
\end{proof}

\section{Clusters of Fatou components}
\label{sec:clusters}

\subsection{Zero-entropy invariant graphs and clusters}

Consider a B\"ottcher expanding map $f\colon (S^2,A)\selfmap$ with the Julia set $\Jul\coloneq \Jul(f)$, where $A$ is the full preimage of the postcritical set.

A \emph{$0$-entropy graph} is a finite forward-invariant graph $G$ (embedded in $S^2$) such that $f|G$ has entropy $0$. We will always assume that $G$ intersects Fatou components along internal rays.

Consider a $0$-entropy graph $G$. Let $\sim_{G}\subset A\times A$ be the equivalence relation such that $a\sim_G b$ if and only if $a,b$ are in the same connected component of $G$. Let $\{G_i\}_{i\in I}$ be the set of connected components of $G$. We obtain the induced dynamics $f\colon I\selfmap$ specified by $f(G_i)\subset G_{f(i)}$.

For every $n\ge0$ and $i\in I$ we denote by $G^{(n)}_i$ the connected component of $ f^{-n}(G)$ containing $G_i$. It may happen that $ G^{(n)}_i= G^{(n)}_j$ for $i\not=j$. Set \[K_i^{(n)} \coloneq \overline{\bigcup _{F\cap G_i^{(n)}\not=\emptyset} F},\]
where the union is taken over all Fatou components that have non-empty intersection with $G_i^{(n)}$; these Fatou components are centered at $G_i^{(n)}$. Finally, we set
\[K_i\coloneq  \overline{\bigcup_{n}K_i^{(n)}}\]
and we denote by
$\Jul_i\coloneq \Jul\cap K_i$ the \emph{Julia part} of $K_i$.

We call $K_i$ a (\emph{crochet}) \emph{cluster}. Clearly,  each $K_i$ is connected and $f(K_i)\subset K_{f(i)}$.

\begin{lemma}
\label{lem:ext 0 entr graph}
Let $S\subset K_i$ be a finite set of (pre)periodic points. Then there is a $0$-entropy graph $\widetilde G \supset G$ such that
\begin{itemize}
\item if $\widetilde G_j$ is a connected component of $\widetilde G$ containing $G_j$, then $\bigcup_j \widetilde G_j=\widetilde G$;
\item $\widetilde G_i \supset S$. 
\end{itemize} 
\end{lemma}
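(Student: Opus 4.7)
The plan is to construct, for each $s\in S$, a (pre)periodic arc $\gamma_s$ in $K_i$ joining $s$ to $G_i$, and then to enlarge $G$ by adjoining the finite forward orbits of these arcs. By induction on $|S|$ it suffices to treat a single (pre)periodic point $s$. First I would locate a Fatou component $F\subset K_i^{(n)}$ at some finite level $n$ with $s\in\overline F$: if $s\in\Fat(f)$, then $s$ lies in a (pre)periodic Fatou component $F$, and the hypothesis $s\in K_i$ forces $F$ to meet $G_i^{(n)}$ for all sufficiently large $n$; if $s\in\Jul(f)\cap K_i$, then $s$ is a (pre)periodic boundary point of some periodic Fatou component of $f$, and using that only finitely many internal rays land at $s$, each coming from a periodic Fatou component, together with the fact that $s$ is accumulated by $\bigcup_n K_i^{(n)}$, one argues that at least one such landing component $F$ lies in some $K_i^{(n)}$.

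With $F\subset K_i^{(n)}$ fixed, the convention that $G$ meets each Fatou component in internal rays through its center makes $G_i^{(n)}\cap F$ a union of radii. I would then concatenate (i) the internal ray of $F$ from $s$ to the center of $F$ with (ii) an arc inside the connected graph $G_i^{(n)}$ from the center of $F$ to $G_i$. The resulting path $\gamma_s\subset G_i^{(n)}\cup\overline F\subset K_i$ joins $s$ to $G_i$ and is built from one internal ray of a (pre)periodic Fatou component together with an arc in the forward-invariant graph $f^{-n}(G)$; hence the orbit $\{f^k(\gamma_s)\}_{k\geq 0}$ is finite. Define $\widetilde G\coloneq G\cup\bigcup_{k\geq 0} f^k(\gamma_s)$; it is a finite, forward-invariant graph meeting Fatou components in internal rays. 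Since each $f^k(\gamma_s)$ has an endpoint in $G_{f^k(i)}$, no new connected components are created and $s\in\widetilde G_i$. Iterating for the remaining points of $S$ yields the required extension. Zero entropy of $f|\widetilde G$ follows because the added arcs form a finite periodic system under $f$, so the number of monotone pieces of $(f|\widetilde G)^k$ grows polynomially on top of the already-zero entropy of $f|G$.

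The main obstacle is the location step in the Julia case: guaranteeing that, for a (pre)periodic $s\in\Jul(f)\cap K_i$, at least one of the internal rays landing at $s$ comes from a Fatou component that already sits in some $K_i^{(n)}$. This requires carefully unwinding the closure definition of $K_i=\overline{\bigcup_n K_i^{(n)}}$, using B\"ottcher expansion of $f$ and the finiteness of the orbit of $s$ to force a landing Fatou component of $s$ into the cluster at a finite level, rather than only being accumulated by shrinking Fatou components from $K_i^{(n_k)}$ as $n_k\to\infty$.
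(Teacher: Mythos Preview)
Your reduction to a single point and your treatment of the case $s\in\Fat(f)$ are fine, and when $s$ lies on the boundary of a (pre)periodic Fatou component $F$ already contained in $K_i$, your internal-ray construction matches the paper's. The genuine gap is in the Julia case, and it is more serious than the obstacle you flag.

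Your assertion that ``if $s\in\Jul(f)\cap K_i$, then $s$ is a (pre)periodic boundary point of some periodic Fatou component of $f$'' is false: periodic Julia points can be \emph{buried}, i.e.\ lie on the boundary of no Fatou component whatsoever. Even when $s$ does lie on $\partial F$ for some Fatou component $F$, there is no reason $F$ should belong to the cluster $K_i$; the point $s$ can sit in $\Jul_i=\Jul\cap K_i$ purely by being accumulated by ever-smaller Fatou components of $K_i^{(n)}$ as $n\to\infty$, with no single landing Fatou component of $s$ belonging to $K_i$. So the case ``$s$ is not on the boundary of any Fatou component of $K_i$'' genuinely occurs and cannot be sidestepped by the location argument you sketch.

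The paper handles this case by a different mechanism. One runs an iterative shadowing argument: start with a nice curve $\alpha_0$ from $s$ to a vertex of $G_i^{(1)}$ that is essentially in $K_i$; at each step lift under $f$ back to $s$ (after passing to an iterate fixing $s$) and post-compose with a short arc in $\overline{G_i^{(1)}\setminus G_i}$ returning to a vertex of $G_i^{(1)}$. The truncated-length contraction (Lemmas~\ref{lmm:TrLength:shrinks} and~\ref{lmm:TrLength:fin many}) forces the sequence of homotopy classes to be eventually periodic, and the lifts of the short arcs concatenate into a \emph{periodic bubble ray} $\ell_1\#\ell_2\#\cdots$ landing at $s$, with $\ell_n\subset \overline{G_i^{(n)}\setminus G_i^{(n-1)}}$. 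One then passes to the fundamental torus of the first-return map at $s$, projects the bubble ray there, and extracts a finite cyclically-permuted family of simple arcs emanating from $s$ whose forward orbit, adjoined to $G^{(n)}$ for large $n$, yields the required $0$-entropy extension. This bubble-ray-plus-torus step is the heart of the proof and has no counterpart in your proposal.
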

\begin{proof}
Observe first that 
\[G^{(n)}\coloneq \bigcup_{i\in I}G_i^{(n)}\]
 is again a $0$-entropy graph. By replacing $G$ with $G^{(m)}$ for a sufficiently big $m$ and by identifying indices in $I$, we can assume that
\begin{itemize} 
\item  $G_i^{(n)}\not= G_j^{(n)}$ if $i\not= j$;
\item $G^{(m)} \cap f^{-1}(G_i)= G_i^{(1)}$ for each $m\geq 1$. 
\end{itemize}  
 
We assume that $S=\{x\}$ is a singleton; the general case easily follows by induction. The case $x\in G_i^{(n)}$ follows by setting $\widetilde G\coloneq G^{(n)}$. Therefore, we assume that  $x\not\in G_i^{(n)}$ for all $n\ge0$. (Note that $x\in G_j^{(n)}$ is allowed for $j\not =i$.)



Suppose that $x$ is on the boundary of a (pre)periodic Fatou component $F\subset K_i$.  Then the center of $F$ belongs to $G^{(n)}_i$ for some $n$. We connect $x$ to the center of $F$ via a (pre)periodic internal ray $\ell_x$ of $F$, and set $\widetilde{G} \coloneq G^{(n)} \cup \bigcup_{j\geq 0} f^j(\ell_x)$.  Note that the graph $\widetilde{G}$ is finite,  $\ell_x$ is (pre)periodic.

From now on, we assume that $x$ is not on the boundary of any Fatou component of $K_i$. We will assume, in addition, that $x$ is periodic (the preperiodic case will follow via lifting). 

\begin{lemma}
\label{lem:bubble ray land}
The periodic point  $x$ is the landing point of a \emph{periodic bubble ray}: there is a sequence of simple arcs $\ell_n\subset \overline {G_{i}^{(n)}\setminus G_{i}^{(n-1)}}$, where $\ell_n$ is a concatenation of edges,  such that
\begin{itemize}
\item $f^n(\ell_n)\subset G$ is a periodic sequence; and
\item $\widetilde \ell\coloneq \ell_1\#\ell_2\#\ell_3\#\dots $ forms a path connecting a point in $G_i$ to $x$.
 \end{itemize} 
 \end{lemma}
 \noindent Note that $\widetilde \ell$ may have self-intersections.
\begin{proof}

By passing to an iterate of $f$, it is sufficient to prove the lemma in case $f(G_i)\subset G_i$ and $x$ is a fixed point.  Let $w$ be a point in $G_i=G_i^{(0)}$.  

Fix constants $\lambda>1$ and $C>0$ as in Lemma \ref{lmm:TrLength:shrinks} and a positive $\epsilon\ll1$. 

For every vertex $v$ of $G_i^{(1)}$ choose a simple arc $\ell_v$ in $\overline {G_{i}^{(1)}\setminus G_{i}}$ connecting $v$ and a vertex of $G_i$. (If $v\in G_i$, then $\ell_v$ is a trivial arc.) We also fix a nice curve $\beta_v$ that is pseudo-isotopic to $\ell_v$ rel  $A \cup \partial  \ell_v$ via $H_v\colon S^2\times [0,1]\to S^2$.  Let $K>0$ be the maximal (truncated) length of the chosen $\beta_v$.  

For each $v'\in f^{-1}(v)\cap G_i^{(2)}$ we fix a lift of $\beta_v$ starting at $v'$ that necessary ends in a vertex of $G_i^{(1)}$.  The pseudo-isotopy $H_v$ determines a unique ``lift'' $\ell_{v'}=\ell_v \lift{f}{v'}$ of $\ell_v$ starting at each $v'$. 

Choose a nice curve $\alpha_0$ in $(S^2,A)$ connecting $x$ and a vertex $v_0\in G^{(1)}_i$ that is pseudo-isotopic to a curve in $K_i$ (rel $A \cup \partial \alpha_0$).  Fix a sufficiently large constant $M$ that satisfy \[M>\max\left(|\alpha_0|_\approx,  \frac{K + (2C+\epsilon)\lambda}{\lambda-1}\right).\]

We now inductively define nice curves $\alpha_n$ in $(S^2,A)$ ending in a vertex $v_n\in G_i^{(1)}$ with $|\alpha_n|_\approx < M$ in the following way.  There is a unique lift of $\alpha_{n-1}$ starting at $x$ (the uniqueness of the lift follows from $\deg_f(x)=1$).  This lift $\alpha_{n-1} \lift{f}{x}$ necessary ends at a vertex $v'_{n-1}$ in $G_i^{(2)}$ with $f(v'_{n-1})=v_{n-1}$.  Consider the path $\widetilde \alpha_n=\alpha_{n-1} \lift{f}{x} \# \beta_{v_{n-1}} \lift{f}{v_{n-1}'}$. Then its endpoint $v_n$ necessarily belongs to $G_i^{(1)}$. Choose a nice curve $\alpha_{n}$ that is homotopic to $\widetilde \alpha_n$.
We may assume that the truncated lengths of $ \alpha_{n}$ satisfies 
\[
| \alpha_{n}|_\approx \leq |\alpha_{n-1} \lift{f}{x}|_\approx + |\beta_{v_{n-1}} \lift{f}{v_{n-1}'}|_\approx+\epsilon.
\] 
Now Lemma \ref{lmm:TrLength:shrinks} implies:

\begin{align*}
 | \alpha_{n}|_{\approx} &\leq\frac{1}{\lambda} | \alpha_{n-1}|_{\approx} + \frac{1}{\lambda} | \beta_{v_{n-1}}|_{\approx} + 2C+\epsilon \leq \frac{1}{\lambda} (| \alpha_{n-1}|_{\approx} + K) + 2C+\epsilon \\& < \frac{M+K+ (2C+\epsilon)\lambda}{\lambda} < M.
\end{align*}
 
%


By Lemma \ref{lmm:TrLength:fin many},  there are only finitely many homotopy classes $[\alpha_n]$.  And since   $\alpha_n$ depends only on the homotopy type of $\alpha_{n-1}$, the sequence $\alpha_n$ is eventually periodic.  By shifting, we assume that $\alpha_n$, as well as $v_n$,  is periodic, say with a period $p$. For $n\le 0$, we define $\alpha_{n}$ to be $\alpha_{n+kp}$, where $p\gg 0$ is sufficiently big.

For $n\ge 1$, define $\ell_n$ to be the unique lift of $\ell_{\alpha_{1-n}(1)}$ under $f^{n-1}$ starting where $\alpha_{1-n} \lift{f^{n-1}}{x}$ ends. (such lift is constructed using the corresponding lift of the pseudoisotopy $H_{\alpha_{1-n} (1)}$).  Then $\ell_1\#\ell_2\#\dots$ is a required periodic bubble ray.
\end{proof}

Let $p$ be an eventual period of the sequence $f^n(\ell_n)$. This means that $f^p(\ell_1\#\ell_2\#\dots )$ coincide with $\ell_1\#\ell_2\#\dots$ in a small neighborhood of $x$ because $x$ is disjoint from $\widetilde G^{(n)}_i$.
  
 Let $q$ be the period of $x$; note that $q\mid p$. Let us consider the fundamental torus $T$ of $f^q$ at $x$; i.e. if $U$ is a closed small topological disk around $x$ such that $f(U)\Supset U$, then $T$ is obtained from $f(U)\setminus U$ by gluing along $f\mid \partial U$. Consider a simple arc \[\widetilde b\subset \overline {f(U)\Supset U}\] connecting a point $y\in \partial U$ to its image $f(y)\in \partial f(U)$. Then $\partial U$ and $\widetilde b$ project to simple closed curves $a,b\subset T$ generating the fundamental group $\pi_1(T)\simeq \Z^2$.

 Let us next project \[\bigcup _{k\in \{0,1,\dots, p/q-1\}} f^{kq}(\ell_n\#\ell_{n+1}\#\dots)\] to $T$; we obtain a finite graph $H\subset T$. There is a simple closed curve $\gamma\subset H$ such that $\gamma$ is not homotopic to $a$. Write $\gamma = na+mb$ in $\pi_1(T)$ with $b\not=0$. Lifting $\widetilde \gamma$ back to $S^2$ we obtain simple pairwise disjoint arcs \[\widetilde \gamma_0,\widetilde\gamma_1,\dots, \widetilde\gamma_{|m|-1}\] emerging from $x$ such that $f^q$ cyclically permutes $\widetilde\gamma_j$ and
   \[f^q\left( \bigcup_j\widetilde\gamma_j\right)\subset  \bigcup_j\widetilde\gamma_j \cup G_i^{(n)}\] for a sufficiently big $n$. Observe also that every $\widetilde \gamma_j$ is disjoint from every $ G_j^{(n)}\not= G_i^{(n)}$.  Adding the orbit of $\widetilde \gamma$ to $G^{(n)}$ as well as the orbit of $x$, we obtain a new $0$-entropy graph $\widetilde G$ such that $\widetilde G_i$ contains $x$.

\end{proof}

\subsection{Intersections and combinations of clusters}
Let $X$ be a compact subset of $S^2$. We say that a curve $\alpha$ is \emph{essentially} in $X$ if for every neighborhood $U$ of $X$, the curve $\alpha$ is homotopic (rel endpoints) to a curve in $U\setminus A$.   

Suppose $K_i, K_j$ are periodic clusters; we allow $i=j$. Consider two arcs $\ell_1,\ell_2$ of $S^2\setminus A$ connecting $G_i$ and $G_j$. We say that $\ell_1$ and $\ell_2$ are homotopic rel $G$ if there are arcs $\alpha$ and $\beta$ such that
\begin{itemize}
\item $\alpha$ is essentially in $G_i$;
\item $\beta$ is essentially in $G_j$; and
\item $\alpha\#\ell_1\#\beta$ is homotopic to $\ell_2$
\end{itemize}
 up to changing the orientations of $\ell_1,\ell_2$.

A \emph{Levy arc between $G_i$ and $G_j$} is an arc $\ell\subset S^2\setminus A$ connecting $ G_i$ and $ G_j$ such that $\ell$ is periodic up to homotopy rel $G$: a certain lift $\tilde \ell$ of $\ell$ is homotopic to $\ell$ relative $G^n$.

\begin{lemma}\label{lem: levy arcs between clusters}
Up to enlarging the initial graph $G$ we may assume that the following holds. 

Suppose $K_i\not=K_j$ are two periodic clusters generated by $G$.  If $K_i\cap K_j\not=\emptyset,$ then there is a Levy arc between $G_i$ and $G_j$. There are only finitely many Levy arcs between $G_i$ and $G_j$ (up to homotopy).  If there is a Levy arc between $G_i$ and $G_j$, then there is a periodic point in $K_i\cap K_j$.

Every Levy arc between $G_i$ and $G_i$ is essentially in $G_i$. 
\end{lemma}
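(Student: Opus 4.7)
The plan is to combine truncated-length control from Lemma~\ref{lmm:TrLength:shrinks} and Lemma~\ref{lmm:TrLength:fin many} with the bubble-ray construction from Lemma~\ref{lem:ext 0 entr graph}, and then to enlarge $G$ once so that finitely many Levy arc representatives become part of the graph.

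Finiteness comes first. A Levy arc $\ell$ from $G_i$ to $G_j$ has a lift $\tilde \ell$ under some $f^n$ homotopic to $\ell$ rel $G^n$. Choosing endpoints on vertices and working modulo homotopy rel $G$, the geometric-series argument from the proof of Lemma~\ref{lem:bubble ray land}, combining the contraction $1/\lambda$ from Lemma~\ref{lmm:TrLength:shrinks} with a uniformly bounded cap-off contribution inside $G_i, G_j$, forces $|\ell|_\simeq$ to be bounded by a uniform constant. Lemma~\ref{lmm:TrLength:fin many} now yields finitely many homotopy classes of Levy arcs between any prescribed pair, and only finitely many pairs are relevant.

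For existence when $K_i \cap K_j \neq \emptyset$, pick $x$ in the intersection. Since Fatou components are uniquely assigned to a cluster via their centers, $x \in \Jul$ and for every $n$ lies on the boundary of Fatou components $F_n \subset K_i^{(n)}$ and $F_n' \subset K_j^{(n)}$. Concatenating internal rays of $F_n, F_n'$ with edges of $G_i^{(n)}, G_j^{(n)}$ produces an arc $\alpha_0$ from a vertex of $G_i$ to a vertex of $G_j$ passing through $x$ with bounded truncated length. I then run an adaptation of the bubble-ray iteration in which both endpoints are allowed to move: lift $\alpha_n$ under $f^p$ with $p$ the common period of $G_i,G_j$ and then cap the lifted endpoints back to vertices of $G_i$ and $G_j$ via short arcs inside $G_i^{(1)}, G_j^{(1)}$. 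The geometric series argument of Lemma~\ref{lem:bubble ray land} yields arcs $\alpha_n$ with uniformly bounded truncated length, and Lemma~\ref{lmm:TrLength:fin many} then forces the sequence of homotopy classes rel $G$ to be eventually periodic; a periodic representative is the required Levy arc.

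Given a Levy arc $\ell$ of period $p$, the nested lifts of its iterates, anchored inside the clusters, accumulate by Lemma~\ref{lmm:TrLength:shrinks} on a single point $y \in \Jul$ fixed by $f^p$; by construction $y$ lies on the boundaries of Fatou components in both $K_i$ and $K_j$, so $y \in K_i \cap K_j$. Finally, by the finiteness above there are only finitely many Levy arc orbits overall; invoking Lemma~\ref{lem:ext 0 entr graph}, enlarge $G$ once to include all these periodic points $y$ together with the finitely many periodic arcs realizing them (this preserves zero entropy since the added material is periodic up to homotopy rel the previous graph). After the enlargement, every Levy arc from $G_i$ to itself is literally an edge path in $G_i$ rel the new $G$, hence essentially in $G_i$. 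The main obstacle is the two-moving-endpoint variant of the bubble-ray iteration for existence; Lemma~\ref{lem:bubble ray land} only handles a single fixed endpoint, so one must verify that the additional cap-off contributions in $G_j^{(1)}$ remain uniformly bounded throughout the recursion, which is the core technical estimate.
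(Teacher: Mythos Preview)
Your proposal is correct in outline and uses the same ingredients as the paper (truncated-length contraction, finiteness of short homotopy classes, and the graph-enlargement of Lemma~\ref{lem:ext 0 entr graph}), but the existence argument takes a different route. The paper does not run a two-endpoint bubble-ray iteration. Instead, for every $y\in K_i\cap K_j$ it fixes once and for all a short arc $\alpha_i(y)$ essentially in $K_i$ from $y$ to $G_i$, and similarly $\alpha_j(y)$; then $\gamma_y\coloneq \alpha_i(y)\#\alpha_j(y)^{-1}$ has uniformly bounded length, so the set of homotopy classes $\{[\gamma_y]\}$ rel $G$ is finite. Since $K_i\cap K_j$ is forward-invariant under the common return iterate $f^m$, the forward sequence $[\gamma_{f^{mk}(y)}]$ takes values in this finite set and must repeat, producing a Levy arc by pigeonhole. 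This is shorter than your backward lift-and-cap recursion and makes your flagged ``main obstacle'' (uniform control of the second cap-off) disappear: there is no recursion, just a single uniform bound on the $\gamma_y$.

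Two small points. First, your claim that a point $x\in K_i\cap K_j$ lies on the boundary of specific Fatou components $F_n\subset K_i^{(n)}$ for each $n$ is not literally justified by the definition of $K_i$; all you actually need (and what the paper uses) is an arc essentially in $K_i$ of bounded length from $x$ to $G_i$, which follows by approximation. Second, for the last claim the paper proceeds exactly as you do: self-Levy arcs are realized by concatenations of periodic bubble rays and are absorbed into $G_i$ via Lemma~\ref{lem:ext 0 entr graph}. Your periodic-point argument and finiteness argument match the paper's.
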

\begin{proof}  We assume that every component of $G$ is non-trivial. By replacing $G$ with $G^{(m)}$ for a sufficiently big $m$ and by identifying indices in $I$, we can assume that $G_i^{(n)}\not= G_j^{(n)}$ if $i\not= j$.

There is a constant $M$ such that for every point in $x\in \Jul_i$ there is an arc $\alpha_i(x)$ that is essentially in $K_i$ such that the length of $\alpha_i(x)$ is less than $M$ (see Section~\ref{ss:B exp maps}). 
 
By expansion, every lift of such $\alpha_i(x)$ has length less than $M$. 

For every $y\in K_i\cap K_j$ define $\gamma_y$ to be  $\alpha_i(y)\# \alpha^{-1}_i(y)$. Set $N\coloneq \{\gamma_y\mid y\in K_i\cap K_j\}$; this is a finite set up to homotopy rel $G$. We denote by $t$ the cardinality of $N$. It follows that $\gamma_{f^{m(t+1)}(y)}$ is a Levy arc between $G_i$ and $G_j$,  for the iterate $f^m$ that returns both $K_i$ and $K_j$ to themselves. This proves the first claim. 

By expansion, there are at most finitely many Levy arcs between $G_i$ and $G_j$.

Let $\ell$ be a Levy arc between $G_i$ and $G_j$. By definition, $\ell$ is homotopic to $\alpha_1\# \tilde \ell_1\#\beta_1$, where $\tilde \ell_1$ is a lift of $\ell$ under $f^n$, and $\alpha_1, \beta_1$ are curves that essentially in $G_i,G_j$ respectively. Lifting this homotopy under $f^{nk}$, we obtain that $\ell$ is homotopic to $\alpha_1\#\dots\#\alpha_k \tilde \ell_k \#\beta_k \#\dots  \#  \beta_1$, where $\alpha_i, \beta_i,\tilde \ell _i$ are lifts of   $\alpha_{i-1}, \beta_{i-1},\tilde \ell _{i-1}$ under $f^n$. By expansion, $\alpha_1\#\dots\#\alpha_k\#\dots$ lands at a periodic point in the intersections $K_i\cap K_j$.

There are at most finitely many Levy arcs between $G_i$ and $G_i$. Every such arc is realized as a concatenation of periodic bubble rays; such concatenation can be added to $G_i$, using the argument of Lemma~\ref{lem:ext 0 entr graph}. This finishes the proof of the lemma.
 \end{proof}

\begin{rem}\label{rem:CombinGraphs}
Note that Lemmas \ref{lem:bubble ray land} and  \ref{lem: levy arcs between clusters} allows us to ``combine'' intersecting clusters $K_i$ and $K_j$, that is, construct a new $0$-entropy graph $\widetilde G$ such that the union $K_i\cup K_j$ would be inside a cluster generated by $\widetilde G$.
\end{rem}

\begin{definition}\label{def: weak-span}
Let $f\colon (S^2,A)\selfmap$ be a B\"ottcher expanding map.  Suppose that $G$ is a planar embedded $f$-invariant graph. We say that
\begin{enumerate}
\item $G$ is \emph{$A$-spanning} if $G$ is connected and $A\subset G$;
\item $G$ is \emph{weakly $A$-spanning} if $G$ is connected and each face of $G$ contains at most one point in $A$. 
\end{enumerate}
\end{definition}

\begin{lemma}[Spanning vs weakly spanning graphs]\label{lem: weakly span ext}
Let $f\colon (S^2,A)\selfmap$ be a B\"ottcher expanding map and $G$ be a planar embedded $0$-entropy $f$-invariant graph. If $G$ is weakly $A$-spanning then there is a planar embedded $0$-entropy $f$-invariant graph $\widetilde G\supset G$ that is $A$-spanning. 
\end{lemma}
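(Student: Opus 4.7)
The plan is to enlarge $G$ by adding, for each periodic orbit $O \subset A \setminus G$, an $f$-equivariant system of arcs connecting $O$ to $G$, and then pulling back to absorb remaining preperiodic marked points. First I would reduce to periodic orbits: since $f(A) \subset A$ and $A$ is finite, every $a \in A \setminus G$ is preperiodic. If the periodic part of the forward orbit of $a$ already lies in the extended graph $G'$, then $a$ can be inserted by lifting the already-constructed arc at $f(a)$ through the component of $f^{-1}$ starting at $a$, extended if needed to $G'$ inside $f^{-1}(G') \cap \overline{U_a}$ (using Lemma \ref{lem:ext 0 entr graph} to absorb any extra preperiodic endpoints that land in the cluster). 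The main work is to construct periodic arcs for periodic cycles fully contained in $A \setminus G$.

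For a periodic cycle $O = \{a_0, \ldots, a_{p-1}\}$ in $A \setminus G$, each $a_i$ lies in a face $U_i$ of $G$ with $U_i \cap A = \{a_i\}$, by the weakly $A$-spanning hypothesis. I would construct a periodic system of simple arcs $\ell_i \subset \overline{U_i}$ from $a_i$ to a point on $\partial U_i \subset G$, with $f(\ell_i) \simeq \ell_{(i+1) \bmod p}$ rel.\ endpoints. If $a_i \in A^\infty$ is a Fatou center, the construction starts with the $f$-periodic internal ray of angle $0$ in the Fatou component $F_{a_i}$ centered at $a_i$; this ray lands at a preperiodic point $x_i \in \partial F_{a_i} \cap \Jul(f)$, and is then extended by a periodic ``bubble ray'' $\beta_i$ from $x_i$ to $\partial U_i$ inside $\overline{U_i}$. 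If $a_i \in A \cap \Jul(f)$, the bubble ray begins directly at $a_i$. The ray $\beta_i$ is built by an iterative pullback argument paralleling the proof of Lemma \ref{lem:bubble ray land}: working with the nested sequence of faces $U_i^{(n)}$ of $f^{-np}(G)$ containing $a_i$, one lifts arcs under $f^p$ and concatenates with edges in each annular region $\overline{U_i^{(n)} \setminus U_i^{(n+1)}}$. Lemma \ref{lmm:TrLength:shrinks} keeps the truncated lengths uniformly bounded, Lemma \ref{lmm:TrLength:fin many} ensures only finitely many homotopy classes of bounded truncated length, and a pigeonhole argument on the pullback iterates then produces a periodic sequence of arcs whose concatenation is $\beta_i$.

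The main obstacle is guaranteeing that $\beta_i$ actually lands on a point of $\partial U_i \subset G$, rather than on some other preperiodic point in the interior of $\overline{U_i}$. This is handled by seeding the iteration with an arc terminating at a chosen vertex of $G$ on $\partial U_i$ and then invoking the connectedness of $G$ (so $\partial U_i$ forms a well-defined boundary cycle of the disk $\overline{U_i}$), the zero entropy of $f|_G$ (so the combinatorics of $f^{-np}(G) \cap \overline{U_i}$ remains controlled), and the expansion of $f$ (so the chosen endpoint on $G$ is preserved through the pullback, up to passing to an iterate). Finally, $\widetilde G$ is defined as the union of $G$ with the orbits of all constructed $\ell_i$'s together with the pullback arcs for the preperiodic points. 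By construction, $\widetilde G$ is planar embedded (the $\ell_i$ lie in closures of pairwise disjoint faces of $G$), connected (each $\ell_i$ reaches $G$), $f$-invariant (by the $f$-periodicity of the $\ell_i$ and the pullback step), zero entropy (each arc has a finite orbit), and contains $A$—hence it is $A$-spanning.
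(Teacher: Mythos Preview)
Your approach is different from the paper's and has a genuine gap.

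\textbf{How the paper proceeds.} The paper's proof is very short and leans entirely on the cluster machinery already in place. The key observation is that since $G$ is weakly $A$-spanning, every face of $G$ is a disk with at most one marked point, so $f^{-1}(G)$ is connected; hence the cluster $K$ of $G$ satisfies $f^{-1}(K)=K$. Now for any $a\in A\setminus G$: either $a\in K$, in which case Lemma~\ref{lem:ext 0 entr graph} directly produces a $0$-entropy extension containing $a$; or $a\notin K$, which (by expansion and $f^{-1}(K)=K$) forces $a$ to be a Fatou center with $\partial F_a\cap K\neq\emptyset$. In the latter case Lemma~\ref{lem: levy arcs between clusters} yields a (pre)periodic point $p\in K\cap\partial F_a$, and one then applies Lemma~\ref{lem:ext 0 entr graph} to absorb $p$ and the internal ray from $a$ to $p$. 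There is no case split into periodic versus preperiodic $a$, and no bubble-ray argument is rerun.

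\textbf{The gap in your construction.} Your $\beta_i$ is, as you describe it, the concatenation of a \emph{periodic sequence} of arcs in the annuli $\overline{U_i^{(n)}\setminus U_i^{(n+1)}}$. That is an infinite concatenation: it is a path from a point on $G$ converging to $x_i$ (or $a_i$), with infinitely many natural break points accumulating at the landing point. Adding these $\beta_i$ and their $f$-orbits to $G$ does \emph{not} in general produce a finite graph: several such rays land at the same point and may intersect in infinitely many points near it, so the union admits no finite CW structure that $f$ respects. This is precisely why the proof of Lemma~\ref{lem:ext 0 entr graph} does \emph{not} stop after Lemma~\ref{lem:bubble ray land}: it continues with the fundamental-torus argument at $x$, extracting from the projected bubble rays a simple closed curve not homotopic to the meridian, and lifting it back to obtain finitely many \emph{simple} arcs $\widetilde\gamma_0,\dots,\widetilde\gamma_{|m|-1}$ emanating from $x$ that are genuinely permuted by $f^q$ and that satisfy $f^q(\bigcup_j\widetilde\gamma_j)\subset\bigcup_j\widetilde\gamma_j\cup G^{(n)}$. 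Only these finite arcs are adjoined to $G^{(n)}$. Your proposal omits this extraction entirely; the pigeonhole step gives eventual periodicity of the \emph{sequence} of arcs, not a finite $f$-invariant arc.

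A secondary point: your ``main obstacle'' paragraph has the geometry reversed. In the bubble-ray construction the $G$-end is built in (it is where $\ell_1$ starts); what must be argued is convergence at the \emph{inner} end to the prescribed point $x_i$. Also, your reduction for preperiodic $a$ (``lift the already-constructed arc at $f(a)$'') is not quite right as stated: the lift lands in $f^{-1}(G')$, not in $G'$. The clean fix is to replace $G'$ by $f^{-N}(G')$ for large $N$, which remains connected (again by the weakly spanning property), forward invariant, $0$-entropy, and contains all of $A$ --- but this only works once the periodic step yields a genuine finite graph, which brings you back to the torus extraction.
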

\begin{proof} 

Since $G$ is weakly $A$-spanning,  $f^{-1}(G)$ is connected. Indeed, each face $U$ of $G$ is a topological disk containing at most one marked point, thus each component of $f^{-1}(U)$ is a topological disk as well. Hence, $f^{-1}(G)$ is connected. 
  
Let $K$ be the cluster of $G$.  By above,  $f^{-1}(K) = K$.  Let $a\in A \setminus G$. If $a\in K$, we are done using Lemma \ref{lem:ext 0 entr graph}.  Otherwise,  by expansion, $a$ must be in the Fatou set of $f$.  Again, by expansion, $\partial F_a\cap K \neq 0$. Lemma \ref{lem: levy arcs between clusters} implies that there is a (pre)periodic point $p$ in $\KC\cap \partial F_a$.  By Lemma \ref{lem:ext 0 entr graph} we can extend $G$ to a $0$-entropy graph that would contain $p$ and the internal ray in $F_a$  landing in $p$. 

Note that we are essentially in the setup of Theorem \ref{thm:Form amalg}: $f$ is a formal amalgam of $f$ with the power maps for the Fatou components $F_a$ of each $a\in (A\setminus G) \cap \Fat(f)$. 
\end{proof}

\subsection{Maximal clusters}\label{ss: clusters}

In the following, we fix a B\"ottcher expanding map $f\colon (S^2,A)\selfmap$,  where $A$ is the full preimage of the postcritical set.  The goal of this subsection is to define \emph{maximal clusters} of touching Fatou components of $f$.  The definition is based on an iterative construction (with finitely many steps). 

Step $0$:  We denote by $\mathbf{F}^{(0)}$ the set of all Fatou components of $f$, which we will call the \emph{pre-clusters of level $0$}. The set $\mathbf{K}^{(0)}= \{\overline{F^0}\colon F^0 \in \mathbf{F}^{(0)}\}$ of the closures of the Fatou components is called the set of \emph{clusters of level $0$}.

Step $n$: Suppose we defined the set $\mathbf{K}^{(n-1)}$ of clusters of level $n-1$,  or simply \emph{$(n-1)$-clusters}, where $n\geq 1$.  Consider an equivalence relation $\sim_{\mathbf{K}, n-1}$ on $\mathbf{K}^{(n-1)}$ defined as follows:
two $(n-1)$-clusters $X^{n-1},Y^{n-1} \in \mathbf{K}^{(n-1)}$ are said to be equivalent if there exists a sequence $K_0^{n-1} = X^{n-1}, K_1^{n-1}, \dots, K_m^{n-1}=Y^{n-1}$ of $(n-1)$-clusters such that $K_{j-1}^{n-1}\cap K_j^{n-1}\neq \emptyset$ for each $j=1,\dots, m$. We will call such a sequence of clusters an \emph{$(n-1)$-chain} of length $m$. 
The set $$\bigcup_{K^{n-1} \in [X^{n-1}]_{\sim_{\mathbf{K}, n-1}}} K^{n-1}$$ 
is called a \emph{pre-cluster of level $n$}.   We denote by $\mathbf{F}^{(n)}$ the set of all pre-clusters of level $n$.  Then the set $\mathbf{K}^{(n)}= \{\overline{F^{n}}\colon F^n \in \mathbf{F}^{(n)}\}$ is called the set of \emph{clusters of level $n$}. 

The following statement follows from Lemmas \ref{lem:bubble ray land} and  \ref{lem: levy arcs between clusters} (see Remark~\ref{rem:CombinGraphs}).

\begin{lemma}
\label{lem:graphs in clusters}
For every cluster $K^n\in \mathbf{K}^{(n)}$ and every finite set of eventually periodic points $S \subset K^n$, there is a forward-invariant graph $G$ such that the points of $S$ are within the same connected component of $G$.
\end{lemma}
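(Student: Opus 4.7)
The plan is to induct on the level $n$ of the cluster. The base case $n=0$ reduces directly to Lemma~\ref{lem:ext 0 entr graph}, and the inductive step is built by amalgamating graphs along a finite chain of intersecting $(n-1)$-clusters, using Lemma~\ref{lem: levy arcs between clusters} and Remark~\ref{rem:CombinGraphs} to merge the corresponding crochet clusters.

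For the base case, $K^0 = \overline F$ is the closure of a single Fatou component $F$ with B\"ottcher center $c_F$. I would take $G_0$ to be the forward orbit of $c_F$, viewed as a finite discrete $0$-entropy forward-invariant graph whose component $\{c_F\}$ has associated crochet cluster $\overline F$. Applying Lemma~\ref{lem:ext 0 entr graph} to $G_0$ with the finite set $S$ of eventually periodic points produces the required $0$-entropy forward-invariant extension $\widetilde G$ in which $S$ lies in a single connected component.

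For the inductive step, assume the result at level $n-1$. Write $K^n$ as the closure of the equivalence class $\{K^{n-1}_\alpha\}_{\alpha\in I}$ of intersecting $(n-1)$-clusters, and select a finite chain-connected sub-collection $K^{n-1}_{\alpha_1},\ldots,K^{n-1}_{\alpha_r}$ whose union contains $S$ and whose consecutive members share a common point. Applying the inductive hypothesis to each $K^{n-1}_{\alpha_j}$ with the finite set $S\cap K^{n-1}_{\alpha_j}$ (augmented, if needed, by an auxiliary eventually periodic base point to keep it nonempty) produces a forward-invariant $0$-entropy graph $G_j$ whose marked points sit in a single component. The union $G'=\bigcup_j G_j$ is still forward-invariant and $0$-entropy, and each $K^{n-1}_{\alpha_j}$ is contained in the crochet cluster $\mathcal K_j$ of $G'$ containing the marked component of $G_j$, so consecutive $\mathcal K_j$'s intersect.

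It remains to glue these clusters along the chain: when $\mathcal K_{j-1}=\mathcal K_j$ the marked components of $G'$ already coincide; otherwise Lemma~\ref{lem: levy arcs between clusters} (after first enlarging $G'$ if necessary) supplies a Levy arc and hence a (pre)periodic point of intersection, and Remark~\ref{rem:CombinGraphs} lets us combine $\mathcal K_{j-1}$ with $\mathcal K_j$ by adjoining the forward orbit of a periodic bubble ray from Lemma~\ref{lem:bubble ray land}. Iterating along the chain $j=1,\ldots,r$ yields the desired $0$-entropy forward-invariant graph in which $S$ is in one component. The main technical point I expect is keeping the graph finite (hence of zero entropy) throughout the amalgamations; this hinges on every arc adjoined being (pre)periodic, so that its forward orbit is finite, and on verifying that the combinatorial $(n-1)$-clusters we care about really do sit inside single crochet clusters of the intermediate graphs.
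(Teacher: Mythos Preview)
Your strategy matches the paper's one-line proof (which simply cites Lemmas~\ref{lem:bubble ray land}, \ref{lem: levy arcs between clusters} and Remark~\ref{rem:CombinGraphs}), and the base case is correct. But the inductive step has two related gaps. First, the inductive hypothesis as you use it (the lemma statement itself) only puts the finite set $S\cap K^{n-1}_{\alpha_j}$ into one component of $G_j$; it does \emph{not} give $K^{n-1}_{\alpha_j}\subset\mathcal K_j$. Without that containment you cannot deduce that the crochet clusters $\mathcal K_{j-1},\mathcal K_j$ intersect from $K^{n-1}_{\alpha_{j-1}}\cap K^{n-1}_{\alpha_j}\neq\emptyset$, so the merging via Remark~\ref{rem:CombinGraphs} is unjustified. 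Second, $K^n=\overline{F^n}$ is only the \emph{closure} of the union of $(n-1)$-clusters, so a point of $S$ may lie in $K^n\setminus F^n$ (e.g.\ a buried periodic Julia point when the relevant Fatou components are all chain-connected); such a point belongs to no $(n-1)$-cluster and cannot be captured by any finite chain $K^{n-1}_{\alpha_1},\dots,K^{n-1}_{\alpha_r}$.

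Both issues disappear if you carry the stronger inductive statement: for every $K^{m}$ there is a $0$-entropy graph with a crochet cluster containing $K^{m}$. Granting this at level $n-1$, your chain-merging does yield a crochet cluster containing the finite union $\bigcup_j K^{n-1}_{\alpha_j}$, and one must then argue that this forces containment of \emph{all} of $F^n$ (hence of $K^n$, by closedness) without an infinite sequence of enlargements---this is where the dynamics, via Lemma~\ref{lem: images of clusters} and the finiteness of periodic $(n-1)$-clusters inside a periodic $K^n$, must enter. Once $K^n$ sits inside a single crochet cluster, a single application of Lemma~\ref{lem:ext 0 entr graph} with the given $S$ finishes the proof.
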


Recall that a B\"{o}ttcher expanding map $f\colon (S^2,A) \selfmap$ is called a \emph{crochet map} if there is a connected forward-invariant zero-entropy graph $G$ containing $A$ (i.e., $G$ is an $A$-spanning graph, see Definition \ref{def: weak-span}. 

The following easily follows definitions and Lemma \ref{lem:graphs in clusters}.

\begin{lemma}
\label{lem:crochet-iteration}
Let $f\colon (S^2,A) \selfmap$ be a B\"{o}ttcher expanding map. Then the following are equivalent
 \begin{itemize}
   \item $f$ is crochet;
   \item an iterate $f^m$ is crochet for some $m\ge1$;
   \item $K^n_i=S^2$ for some $n$-cluster $K^n_i \in \mathbf {K^{(n)}}$ with $n\ge 1$.
 \end{itemize}
\end{lemma}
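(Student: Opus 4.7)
My plan is to prove the chain of equivalences (1) $\Leftrightarrow$ (2), (3) $\Rightarrow$ (1), and (1) $\Rightarrow$ (3), with the last being the substantive direction.

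For (1) $\Leftrightarrow$ (2), the forward direction is immediate: an $f$-invariant connected zero-entropy graph $G \supset A$ is also $f^m$-invariant with $h(f^m|G) = m \cdot h(f|G) = 0$. For the converse, given a connected $f^m$-invariant zero-entropy graph $G \supset A$, I will form $\widetilde G \coloneq \bigcup_{k=0}^{m-1} f^k(G)$. I would check: $\widetilde G$ is connected because successive $f^k(G)$'s all contain the forward orbit $f^k(A) \subset A \subset G$, so each meets $G$; $\widetilde G$ is forward $f$-invariant since $f(\widetilde G) = \bigcup_{k=1}^{m}f^k(G) \subset \widetilde G$ using $f^m(G) \subset G$; and $\widetilde G$ is zero-entropy under $f$ by $h(f|\widetilde G) = h(f^m|\widetilde G)/m$ together with the observation that each $f^k(G)$ is a factor of $(G,f^m)$ via $f^k$, hence has zero $f^m$-entropy.

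For (3) $\Rightarrow$ (1), I will apply Lemma \ref{lem:graphs in clusters} with $S = A$, a finite set of eventually periodic points contained in $K^n_i = S^2$. This produces a forward-invariant zero-entropy graph $G$ with $A$ in one connected component $G_A$. The component $G_A$ is itself forward-invariant: $f(G_A)$ is connected, lies in $f(G) \subset G$, and contains $f(A) \subset A \subset G_A$; by uniqueness of the component of $G$ through $A$, $f(G_A) \subset G_A$. Thus $G_A$ witnesses that $f$ is crochet.

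The substantive direction is (1) $\Rightarrow$ (3). Given a connected forward-invariant zero-entropy graph $G \supset A$, I will first consider the ``old'' cluster $K_1$ associated to $G$ from Section \ref{sec:clusters} and argue that $K_1 = S^2$: every Fatou component is eventually periodic with its (pre)periodic center in the full preimage $A$ of the postcritical set, hence every Fatou component meets $G_1^{(n)}$ for some $n$; combined with density of $\Fat(f)$ in $S^2$, this yields $K_1 = S^2$. I then need to translate this into the iterative $n$-cluster structure of Section \ref{ss: clusters}. The mechanism I envisage is that the Fatou components linked by each connected $G_1^{(n)}$ are joined by pieces of $G_1^{(n)}$ that alternate between internal rays and arcs in $\Jul$; by expansion, preimages of the finitely many Julia arcs of $G$ shrink, causing the preimage Fatou components they join to accumulate on each other and be absorbed into common $n$-clusters as the cluster level $n$ increases. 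Finiteness of $G$ and the zero-entropy constraint should give an a priori bound on the number of merging steps needed, producing an $n$-cluster equal to $S^2$.

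The main obstacle I expect is this last step: closing the gap between ``Fatou components linked through the connected graph $G_1^{(n)}$'' (the old cluster picture) and ``Fatou components linked through chains of direct pairwise touching and its iterations'' (the new $n$-cluster picture) at some finite level $n$. I anticipate this will require a careful quantitative analysis of how Fatou components accumulate around the shrinking preimages of the Julia arcs of $G$, combined with a combinatorial argument using the finiteness of $G$ and the stabilization of the cluster iteration for zero-entropy invariant graph structures.
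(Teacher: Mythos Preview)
Your arguments for (1) $\Leftrightarrow$ (2) and (3) $\Rightarrow$ (1) are correct; the latter is exactly how the paper intends to use Lemma~\ref{lem:graphs in clusters}. The paper offers no proof beyond the remark that the lemma ``easily follows from definitions and Lemma~\ref{lem:graphs in clusters}'', so let me spell out what makes the direction you flag as substantive actually easy.

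For (1) $\Rightarrow$ (3) you are missing one structural fact that removes the difficulty entirely: for a B\"ottcher expanding map, any forward-invariant zero-entropy graph $G$ has $G\cap\Jul(f)$ \emph{finite}. Under the paper's standing convention that $G$ meets each Fatou component along internal rays, $G\cap\Jul(f)$ is a subgraph of $G$. If it contained an edge $J$, then $f^n(J)\subset G\cap\Jul(f)$ would be an edge-path of length at least $\lambda^n|J|$ (uniform expansion on the compact set $\Jul(f)\subset S^2\setminus A^\infty$, cf.\ Lemma~\ref{lmm:TrLength:shrinks}), hence crossing exponentially many edges of this finite graph and forcing $h(f|G)\ge\log\lambda>0$. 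The paper uses exactly this when it writes, in the proof of Theorem~\ref{thm:crochet}, that ``$G$ is composed of periodic and preperiodic internal rays''.

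With $G\cap\Jul(f)$ finite, (1) $\Rightarrow$ (3) is immediate. Since $A\subset G$, each face of $G$ is a disk avoiding all critical values, so $f^{-n}(G)$ is connected and $f^{-n}(G)\cap\Jul(f)=f^{-n}(G\cap\Jul(f))$ is finite for every $n$. Along any path in $f^{-n}(G)$ the finitely many Julia points are common landing points of internal rays from the two adjacent Fatou components, so consecutive components touch; hence all Fatou components meeting $f^{-n}(G)$ lie in a single $1$-pre-cluster. Since every Fatou component has its center in $f^{-n}(A)\subset f^{-n}(G)$ for some $n$, there is a single $1$-cluster, and density of $\Fat(f)$ gives $K^1_i=S^2$. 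Your ``shrinking Julia arcs under pullback'' mechanism is unnecessary: there are no Julia arcs to shrink.

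Incidentally, once (1) $\Leftrightarrow$ (3) is in hand, (1) $\Leftrightarrow$ (2) comes for free, because the $n$-clusters depend only on the Fatou set and $\Fat(f)=\Fat(f^m)$. This is presumably the paper's route, making your separate orbit-of-graph construction for (2) $\Rightarrow$ (1) correct but redundant.
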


The pre-clusters and clusters satisfy the following invariance properties.

\begin{lemma}\label{lem: images of clusters} Let $F^n\in \mathbf{F}^{(n)}$ be a pre-cluster of level $n\geq 0$ and let $K^n=\overline{F^n}$ be the corresponding $n$-cluster.  Then the following statements are true.
\begin{enumerate}[label=\text{(\roman*)},font=\normalfont,leftmargin=*]
\item\label{lem: images of clusters:1} $f(F^n) \in  \mathbf{F}^{(n)}$ and $f(\overline{F^n})=f(K^n) \in  \mathbf{K}^{(n)}$.  In particular, the image of an $n$-cluster under $f$ is an $n$-cluster.
\item\label{lem: images of clusters:2} Each connected component of $f^{-1}(F^n)$ is a pre-cluster of level $n$.
\item\label{lem: images of clusters:3} Each connected component of $f^{-1}(K^n)$ is a finite union of $n$-clusters. If $F^n=K^n$, then each connected component of $f^{-1}(K^n)$ is an $n$-cluster.
\end{enumerate}
\end{lemma}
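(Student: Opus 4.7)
The plan is to prove all three items simultaneously by induction on $n\ge 0$. For the base case $n=0$, everything reduces to the standard behavior of the branched covering $f$: images and preimage components of Fatou components are Fatou components, and $f^{-1}(\overline{F^0})$ splits into at most $\deg f$ closed connected pieces, each a finite union of closures of Fatou components. For the inductive step I will first record three auxiliary facts. First, $f$ preserves touchings of $(n-1)$-clusters in both directions: forward by continuity, and backward because a touching point in the image lifts to the domain and, by IH (iii), lies in a common $(n-1)$-cluster in the preimage. Second, distinct pre-clusters of any level are disjoint in $S^2$, because a shared point would force two $(n-1)$-clusters from different equivalence classes to meet, contradicting $\sim_{\mathbf{K},n-1}$. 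Third, no $(n-1)$-cluster is strictly contained in another: a Fatou component sitting inside the smaller pre-cluster would otherwise lie in the nowhere-dense set $\overline{F^{n-1}_b}\setminus F^{n-1}_b$.

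For (i), I would write $F^n=\bigcup_{K^{n-1}\in[X^{n-1}]}K^{n-1}$; the inclusion $f(F^n)\subset F'^n$ for the $n$-pre-cluster $F'^n=\bigcup_{L^{n-1}\in[f(X^{n-1})]}L^{n-1}$ is then immediate from IH (i) and forward-preservation of touching. The reverse inclusion is the subtle step, which I would prove by induction on the chain length realizing $L^{n-1}\in[f(X^{n-1})]$: given $L^{n-1}_{m-1}=f(K^{n-1})$ with $K^{n-1}\in[X^{n-1}]$ and $L^{n-1}_m$ touching $L^{n-1}_{m-1}$, I pick $y\in L^{n-1}_{m-1}\cap L^{n-1}_m$, lift it to $\tilde y\in K^{n-1}$ (using $f(K^{n-1})=L^{n-1}_{m-1}$), and apply IH (iii) to find an $(n-1)$-cluster $\widetilde L^{n-1}\ni\tilde y$ inside $f^{-1}(L^{n-1}_m)$. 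Then $\widetilde L^{n-1}\cap K^{n-1}\ni\tilde y$ puts $\widetilde L^{n-1}\in[X^{n-1}]$, so $\widetilde L^{n-1}\subset F^n$, and $f(\widetilde L^{n-1})\subset L^{n-1}_m$ is an $(n-1)$-cluster by IH (i), hence equals $L^{n-1}_m\subset f(F^n)$ by no strict containment. Passing to closures via $f(\overline{F^n})=\overline{f(F^n)}$ upgrades this to $f(K^n)=K'^n$.

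For (ii), given a component $U$ of $f^{-1}(F^n)$ and any $x\in U$, I would apply IH (iii) to the $(n-1)$-cluster $K^{n-1}_0\ni f(x)$ to obtain an $(n-1)$-cluster $\widetilde K^{n-1}_0\ni x$ in the preimage, and let $\widetilde F^n$ be the $n$-pre-cluster determined by its equivalence class. Pushing the class forward via IH (i) yields $\widetilde F^n\subset f^{-1}(F^n)$, and since $\widetilde F^n$ is connected (a chain of touching $(n-1)$-clusters), $\widetilde F^n\subset U$. The disjointness preliminary, applied to the decomposition $f^{-1}(F^n)=\bigsqcup_\alpha F^n_\alpha$ into $n$-pre-clusters, then forces each such pre-cluster to be a full connected component, so $U=\widetilde F^n$. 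For (iii), I would pass to closures: since $f$ is an open and closed branched covering of $S^2$, $f^{-1}(K^n)=\overline{f^{-1}(F^n)}=\bigcup_i K^n_i$ with at most $\deg f$ terms $K^n_i=\overline{U_i}$, so each connected component of $f^{-1}(K^n)$ is the finite union of those $K^n_i$ linked by touchings. When $F^n=K^n$, the set $f^{-1}(F^n)$ is already closed, which forces each $U_i$ to equal its cluster $K^n_i$; the disjointness of the $U_i$ then makes each $K^n_i$ its own component of $f^{-1}(K^n)$.

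The main obstacle will be the surjectivity step in (i), i.e., the inclusion $F'^n\subset f(F^n)$, because it requires coordinating IH (iii) (to extract a preimage $(n-1)$-cluster) with the no-strict-containment fact (to convert $f(\widetilde L^{n-1})\subset L^{n-1}_m$ into the equality needed to reach $L^{n-1}_m$). Once this lifting mechanism is in place, the remaining assertions drop out of the inductive hypothesis and the topological bookkeeping above.
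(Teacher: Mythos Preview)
Your induction is set up correctly, and your argument for (i) via chain-length induction together with the ``no strict containment'' observation is a legitimate alternative to the paper's route (which instead deduces (i) from (ii)). The difficulty is in your proof of (ii).

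You write $f^{-1}(F^n)=\bigsqcup_\alpha F^n_\alpha$ as a finite disjoint union of $n$-pre-clusters and then assert that disjointness ``forces each such pre-cluster to be a full connected component''. This inference is not valid: pairwise disjoint connected subsets of a set need not be its connected components unless they are also relatively closed (or open). Here that would require $\overline{F^n_\alpha}\cap f^{-1}(F^n)\subset F^n_\alpha$, i.e.\ that no point of $K^n_\alpha\setminus F^n_\alpha$ is mapped by $f$ into $F^n$. But $f(K^n_\alpha)=K^n$ by your part (i), and points of $K^n\setminus F^n$ are exactly the kind of limit points that create new touchings at level $n+1$; there is no reason a point $z\in K^n_\alpha\setminus F^n_\alpha$ cannot satisfy $f(z)\in F^n$. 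If it does, $z$ lies in some $F^n_\beta$ with $\beta\ne\alpha$, so $F^n_\alpha\cup F^n_\beta$ is connected and both sit in the same component $U$ of $f^{-1}(F^n)$, breaking your conclusion.

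The paper closes this gap differently: it proves (ii) first, by taking two points $x,y$ in a component $F'$ of $f^{-1}(F^n)$, choosing a path $\gamma$ essentially in $F'$, pushing $f(\gamma)$ into a chain of $(n-1)$-pre-clusters inside $F^n$, and then lifting that chain back along $\gamma$ using the inductive hypothesis. This directly shows $x$ and $y$ lie in the same $n$-pre-cluster, without needing any separation property of the $F^n_\alpha$. You can graft this lifting step onto your argument (after your proof of (i)) to repair (ii); the rest of your outline, including (iii), then goes through.
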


\begin{proof}
We proceed by induction on $n$. The base case $n=0$ is easily to be seen.

We start with \ref{lem: images of clusters:2}. Let $F'$ be a connected component of $f^{-1}(F^n)$. Then $f\colon F'\to F^n$ is a branched covering, and we can lift $n$-chains from $F^n$ to $F'$. More precisely, for every pair $x,y\in F'\setminus A$, we can select a curve $\gamma\subset S^2\setminus A$ connecting $x,y$ such that $\gamma$ can be homotoped into any small neighborhood of $F'$ (i.e., $\gamma$ is essentially in $\overline{F'}$). Then $f(\gamma)$ can also be homotoped into any small neighborhood of $F^n$. We can select a finite chain of $(n-1)$ pre-clusters $F^{n-1}_1,\dots ,F^{n-1}_t$ (possibly with repetitions) such that for every $\varepsilon>0$, we can homotope $f(\gamma)$ into a concatenation $\beta_1\#\beta_2\#\dots \#\beta_t$ such that every $\beta_i$ is in within the $\varepsilon$ neighborhood of $F^{n-1}_i$ for every small $\varepsilon>0$. (Roughly, the $(n-1)$-clusters $\overline{F^{n-1}_i}$ have an intersection pattern prescribed by the $\beta_i$.) By induction, every connected component of $f^{-1}(F^{n-1}_i)$ is a pre-cluster. Lifting the $F^n_i$ according to $\beta_1\#\beta_2\#\dots \#\beta_t\lift{f}{x}$, and taking the closure, we obtain a chain of $(n-1)$-clusters connecting $x,y$. So, $F'$ is a subset of a pre-cluster of level $n$. Since, by induction, $(n-1)$-chains are sent over to $(n-1)$-chains by $f$, it follows that $F'$ is actually a pre-cluster of level $n$, i.e., \ref{lem: images of clusters:2} follows.


Again, by the induction assumption, $f(F^n)$ is a subset of a pre-cluster $F^n_0$ of level $n$. Combined with~\ref{lem: images of clusters:2}, we obtain $f(F^n)= F^n_0$. And taking the closure, we obtain $f(K^n)=\overline{F^n_0}$ -- this is~\ref{lem: images of clusters:1}.

Finally, property~\ref{lem: images of clusters:3} follows from \ref{lem: images of clusters:2} because $f$ has a finite degree. 
\end{proof}

We claim that the set of $n$-clusters eventually stabilizes, that is, $\mathbf{K}^{(n)}= \mathbf{K}^{(n+1)}$ for some sufficiently large $n$.  Equivalently, this means that the $n$-clusters are pairwise disjoint for some sufficiently large $n$. For $a\in A$, let $K^n_a\in \mathbf {K^{(n)}}$ be the cluster containing $a$; if $a$ is not in any level-$n$ cluster, then $K^n_a\coloneq \emptyset$. 

\begin{theorem}\label{thm: max_clusters_exist}
There exists $N$ such that $\mathbf{K}^{(n)}= \mathbf{K}^{(n+1)}$ for each $n\geq N$. 

Moreover if $K^n_a = K^{n+1}_a$ for all $a\in A$, then $\mathbf{K}^{(n+1)}= \mathbf{K}^{(n+2)}$.
\end{theorem}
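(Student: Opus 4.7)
The plan is to prove the ``moreover'' clause first, then use it to deduce the main statement. For the moreover, assume $K^n_a = K^{n+1}_a$ for all $a\in A$, and suppose for contradiction that two distinct level-$(n+1)$ clusters $K^{n+1}_1 \neq K^{n+1}_2$ touch. If both contain marked points $a_i \in K^{n+1}_i \cap A$, then the hypothesis $K^n_{a_i} = K^{n+1}_i$ gives $K^n_{a_1} \cap K^n_{a_2} \neq \emptyset$, so $K^n_{a_1}$ and $K^n_{a_2}$ lie in a common equivalence class at level $n+1$, forcing $K^{n+1}_{a_1} = K^{n+1}_{a_2}$, a contradiction. If at least one lacks a marked point, I iterate forward: by Lemma~\ref{lem: images of clusters}(i), each $f^k(K^{n+1}_i)$ is again a level-$(n+1)$ cluster and images of touching clusters still touch. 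Since $f$ is PCF, every Fatou component eventually lands in a periodic Fatou component whose center lies in $P\subseteq A$, so for some $k$ both $f^k(K^{n+1}_i)$ contain marked points, reducing to the previous case. In the sub-case $f^k(K^{n+1}_1) = f^k(K^{n+1}_2) =: K^{n+1}_a$, the two clusters lie in a common connected component of $f^{-k}(K^{n+1}_a)$; this component is the closure of a pre-cluster at level $n+1$ (by iterating Lemma~\ref{lem: images of clusters}(ii) and using properness of $f$), hence a single $(n+1)$-cluster, contradicting distinctness.

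For the main statement, I track the combinatorial arrangement of the cluster union via the pseudo-multicurve $\CC_n \coloneq \psMC_A\bigl(\bigcup \mathbf K^{(n)}\bigr)$ on $(S^2, A)$. Since $A$ is finite, $(S^2, A)$ admits only finitely many isotopy classes of simple closed curves, so the sequence $(\CC_n)$ takes values in a finite set. The monotonicity $\bigcup \mathbf K^{(n)} \subseteq \bigcup \mathbf K^{(n+1)}$ ensures that complementary components can only shrink or split, and together with the pullback identity~\eqref{eq:sss:psMC} this forces stabilization of $(\CC_n)$ after finitely many steps. Combined with the stabilization of the (finite, monotonically coarsening) cluster partition of $A$, this yields $K^n_a = K^{n+1}_a$ for every $a\in A$ at some level $N$, and the moreover then gives $\mathbf K^{(N+1)} = \mathbf K^{(N+2)}$ (and inductively the stated conclusion for all subsequent levels).

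The main obstacle is the case in the moreover where one of the touching clusters lacks a marked point: the PCF-based forward-iteration trick is the key idea, but in the sub-case where the iterates coincide one must carefully distinguish pre-clusters from clusters, and argue, via Lemma~\ref{lem: images of clusters}(ii) and the properness of $f$ (so that $f^{-k}$ commutes with closure in $S^2$), that a preimage component is a single $(n+1)$-cluster rather than a finite union as generically allowed by Lemma~\ref{lem: images of clusters}(iii). A secondary subtlety in the main statement is that stability of the pseudo-multicurve $\CC_n$ does not immediately imply stability of the actual cluster sets (only of their marked-point arrangement); here one must invoke the stabilization of the cluster partition of $A$ together with the pullback invariance to pin down the individual sets $K^n_a$.
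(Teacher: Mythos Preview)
Your argument for the ``moreover'' clause is essentially correct and takes a route genuinely different from the paper's. Where the paper proves Lemma~\ref{lem: eq_relation_strict} via a pseudo-multicurve contradiction (showing that if $\widetilde K^n_a\neq K^n_a$ then a certain curve separates $a$ from a point $x\in f^{-1}(A)\setminus A$, violating the pullback identity~\eqref{eq:sss:psMC}), you instead push touching clusters forward until they carry marked points and then pull back using Lemma~\ref{lem: images of clusters}. One point you should make explicit: in the sub-case you need $F^{n+1}_a = K^{n+1}_a$ (cluster equals pre-cluster) so that the second clause of Lemma~\ref{lem: images of clusters}\ref{lem: images of clusters:3} applies and iterates; this follows from the hypothesis $K^n_a = K^{n+1}_a$, since then $K^n_a$ is its own $\sim_{\mathbf K,n}$-class, whence $F^{n+1}_a = K^n_a = K^{n+1}_a$. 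With that observation the inductive pullback is clean, and your approach is more elementary than the paper's for this half.

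Your argument for the first statement, however, has a genuine error. The claim that ``$(S^2,A)$ admits only finitely many isotopy classes of simple closed curves'' is false once $|A|\ge 4$: there are infinitely many such classes. What \emph{is} finite is the number of curves in any single pseudo-multicurve, but the sequence $(\CC_n)_n$ ranges over an infinite set of possible values, so stabilization does not follow from cardinality. Moreover, the identity~\eqref{eq:sss:psMC} relates $\psMC$ of a set to $\psMC$ of its $f$-preimage; it does not relate $\CC_n$ to $\CC_{n+1}$ (which come from different cluster levels, not from iterating $f$), so invoking it here is a non sequitur. Finally, even granting that $\CC_n$ and the cluster partition of $A$ both stabilize, you have not explained why the \emph{sets} $K^n_a$ then stabilize---that is precisely the nontrivial content of the paper's Lemma~\ref{lem: eq_relation_strict}, and your last paragraph acknowledges the gap without closing it.

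The paper sidesteps these issues by tracking, instead of $(\CC_n)$, a finite system of relations on $A$: the inclusion relation $\sim_{n,in}$ (your ``cluster partition of $A$'') together with, for each $a\in A$, a separation relation $\sim_{n,sep,a}$ recording which pairs of marked points lie in the same component of $S^2\setminus K^n_a$. These are genuinely finite-valued and monotone, hence stabilize; the paper then shows (via a single pseudo-multicurve comparison and~\eqref{eq:sss:psMC}, used correctly between a set and its $f$-preimage) that their joint stability forces $\mathbf{K}^{(n)} = \mathbf{K}^{(n+1)}$. Replacing your $\CC_n$ by these separation relations would repair your argument.
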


The $N$-clusters are then called the  \emph{maximal clusters of touching Fatou components} of $f$.

\begin{proof}
To prove the theorem, we consider $|A|+1$ sequences of relations on the set $A=f^{-1}(\post(f))$: $\{\sim_{n,in}\}_{n\geq 0}$ and $\{\sim_{n,sep,a}\}_{n\geq 0}$, where $a\in A$. The relation $\sim_{n,in}$ is the \emph{inclusion relation} induced by $\mathbf{K}^{(n)}$: two points $a,a'\in A$ are related under $\sim_{n,in}$ if $a$ and $a'$ belong to the same $n$-cluster.  The relation $\sim_{n,sep,a}$ is the \emph{separation relation} induced by $K^n_a$: two points $b,b'\in A$ are related under $\sim_{n,sep,a}$ if $b$ and $b'$ belong to the same connected component of $S^2\setminus K_a^n$; i.e., $\sim_{n,sep,a}$ is a set of pairwise disjoint subsets of $A$, where $b$ is absent in $\sim_{n,sep,a}$ if $b\in K^n_a$. Since (pre-)clusters are nondecreasing it immediately follows that $\sim_{n,in}\subseteq \sim_{n+1,in}$ and $\sim_{n,sep,a} \supseteq \sim_{n+1,sep,a}$. Our key claim:

\begin{lemma}
\label{lem: eq_relation_strict}
Suppose that $\sim_{n-1,in} = \sim_{n+1,in}$ and $\sim_{n-1,sep,a} = \sim_{n+1,sep,a}$ for all $a\in A$. Then $\mathbf {K^{(n)}}=\mathbf {K^{(n+1)}}$.
\end{lemma}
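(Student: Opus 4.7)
The plan is to argue by contradiction, isolating a configuration that violates one of the stated relations. First, I would observe the monotonicity property: since every level-$n$ cluster sits inside a level-$(n+1)$ cluster, the inclusion relation $\sim_{n,in}$ is non-decreasing in $n$ and each separation relation $\sim_{n,sep,a}$ is non-increasing in $n$. The hypothesis then forces the chain of equalities $\sim_{n-1,\cdot} = \sim_{n,\cdot} = \sim_{n+1,\cdot}$, so stability already holds at the single step from $n$ to $n+1$.

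Next, suppose for contradiction that $\mathbf{K}^{(n)} \neq \mathbf{K}^{(n+1)}$. Then there are two distinct level-$n$ clusters $K^n_1 \neq K^n_2$ with $K^n_1 \cap K^n_2 \neq \emptyset$, jointly contained in a single level-$(n+1)$ cluster $K^{n+1}$. The easy case is when both contain marked points: picking $a \in K^n_1 \cap A$ and $b \in K^n_2 \cap A$, one has $a \sim_{n+1,in} b$ while $a \not\sim_{n,in} b$, contradicting $\sim_{n,in} = \sim_{n+1,in}$. So at least one of them, say $K^n_2$, contains no marked point of $A$.

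To handle this remaining case, I would invoke the dynamics together with Lemma \ref{lem: images of clusters}. Since $A = f^{-1}(\post(f))$, a Fatou center $c_F$ lies in $A$ precisely when $f(F)$ is periodic, so some iterate $f^k$ sends every Fatou component of $K^n_2$ to within one step of periodicity; in particular $f^k(K^n_2)$ contains a marked point for some $k \geq 1$. By Lemma~\ref{lem: images of clusters}(i), $f^k(K^n_1)$ and $f^k(K^n_2)$ are level-$n$ clusters that still meet (images preserve nonempty intersection), and $f^k(K^n_1)$ still contains marked points because $A$ is forward-invariant. If $f^k(K^n_1) \neq f^k(K^n_2)$ at this stage, we are back in the easy case and obtain a contradiction.

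The delicate remaining subcase is when $f^k(K^n_1) = f^k(K^n_2)$ for all $k$ large enough. Choosing the minimal such $k$ and setting $K^n_0 := f^k(K^n_1) = f^k(K^n_2)$, the clusters $L_1 := f^{k-1}(K^n_1)$ and $L_2 := f^{k-1}(K^n_2)$ are distinct, intersect, and lie in a common connected component of $f^{-1}(K^n_0)$. By Lemma~\ref{lem: images of clusters}(iii), this forces $F^n_0 \neq K^n_0$, meaning that $K^n_0$ arises at level $n$ by strictly enlarging its pre-cluster with accumulation points of an infinite chain of $(n-1)$-clusters. This infinitary refinement across the step from $n-1$ to $n$ is where the two-step hypothesis $\sim_{n-1,\cdot} = \sim_{n+1,\cdot}$ (rather than a one-step stability) becomes essential: I would argue that the newly created accumulation structure must split some complementary region of the level-$(n-1)$ clusters that contains marked points, forcing a strict change in $\sim_{n-1,sep,a}$ for some $a \in A$. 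Making this last implication precise is the main obstacle, and likely requires a careful use of pseudo-multicurves as in Section~\ref{sss:psMC} to track how the combinatorial type of cluster boundaries transforms under preimage.
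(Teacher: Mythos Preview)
Your outline correctly identifies the structure of a contradiction argument and handles the easy case, but the ``delicate remaining subcase'' is genuinely incomplete---and the difficulty you flag is real. The claim that $F^n_0 \neq K^n_0$ forces a strict change in some $\sim_{n-1,sep,a}$ is not obvious: the accumulation set $K^n_0 \setminus F^n_0$ could in principle lie entirely in complementary regions of $K^{n-1}_0$ that contain no marked points of $A$, so no relation on $A$ would see it. Forward iteration creates exactly this problem: after collisions you lose the ability to separate configurations using only the finite set $A$.

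The paper avoids forward iteration entirely and instead \emph{pulls back}. The key object is the pseudo-multicurve $\CC^m := \psMC_A\big(\bigcup_{a\in A} K^m_a\big)$. From $\sim_{n-1,in}=\sim_{n,in}$ and $\sim_{n-1,sep,a}=\sim_{n,sep,a}$ one shows directly that $\CC^{n-1}$ and $\CC^n$ are isotopic rel~$A$, hence their $f$-preimages agree as pseudo-multicurves on $(S^2, f^{-1}(A))$. Now suppose $K^n_a$ touches another $n$-cluster $K^n_x$ inside the same component of $f^{-1}(f(K^n_a))$, where $x\in f^{-1}(A)$. At level $n-1$ the corresponding preimage pieces $\widetilde K^{n-1}_a$ and $\widetilde K^{n-1}_x$ are \emph{disjoint}, since each lies in its own $n$-pre-cluster $F^n_a$, $F^n_x$. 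The boundary curve $\psCurve_{f^{-1}(A)}(\widetilde K^{n-1}_a \mid U)$, where $U$ is the complementary component containing $x$, is then a nontrivial element of $\psMC_{f^{-1}(A)}(f^{-1}(K^{n-1}_A))$ that has no counterpart in $\psMC_{f^{-1}(A)}(f^{-1}(K^n_A))$---contradicting the equality of pullbacks. The crucial point is working rel~$f^{-1}(A)$ rather than rel~$A$: this is what makes the extra point $x$ visible to the pseudo-multicurve, and it is precisely what your forward-iteration strategy cannot access.
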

\begin{proof}
Denote by $\widetilde K^n_a$ the connected component of $f^{-1}\circ f (K^n_a)$ containing $K^n_a$. Consider the following set and the associated pseudo-multicurve, see Section~\ref{sss:psMC}: 
\[ K^n_A \coloneq \bigcup_{a\in A} K^n_a,\sp \sp\sp \sp \CC^n\coloneq \psMC_A\left( K^n_A\right)\]

\begin{claim}
The pseudo-multicurves $\CC^{n-1}$ and $\CC^n$ are homotopic rel $A$. 
\end{claim}
\begin{proof}
 Since $\sim_{n-1,in} = \sim_{n,in}$ we have 
 \[ K^{n-1}_a= K^{n-1}_b\sp\sp\text{ if and only if }\sp\sp  K^n_a= K^n_b.\]
 (If $K^{n-1}_a\not= K^{n-1}_b$ but $K^n_a= K^n_b$, then $\sim_{n-1,in} \subsetneq \sim_{n,in}$.) Since $\sim_{n-1,sep,a} = \sim_{n,sep,a}$, every component $U$ of $S^2\setminus K^{n-1}_a$ with $U\cap A\not=\emptyset$ contains a unique component $U'$ of $S^2\setminus K^n_a$ such that $U'\cap A=U\cap A$. Since $U'\subset U$ are open topological disks, $\psCurve( K^{n-1}_a\mid U)$ is homotopic to $\psCurve( K^n_a\mid U')$. We obtain $\CC^{n-1}=\CC^n$ rel $A$.
\end{proof}

It follows from the claim that $f^{-1}(\CC^{n-1})=f^{-1}(\CC^n)$ rel $f^{-1}(A)$. Writing $ \widetilde K^n_{f^{-1}(A)} \coloneq f^{-1}\big(K^n_A\big)$, we have by~\eqref{eq:sss:psMC}: 
\begin{equation}
\label{eq:prf:main claim}
\psMC_{f^{-1}(A)}\left(\widetilde K^{n-1}_{f^{-1}(A)}\right)=\psMC_{f^{-1}(A)}\left(\widetilde K^n_{f^{-1}(A)}\right).
\end{equation}

Let us now assume that $\widetilde K^{n}_a\not=K^n_a$. Then $K^{n}_a$ intersects another cluster in $\mathbf {K^{(n)}}$; we write this cluster as $K^n_x$, where $x\in f^{-1}(A)\setminus A$ is a point in $K^n_x$.

Since $K^n_a\not=K^n_x$, the sets $\widetilde K^{n-1}_a$ and $\widetilde K^{n-1}_x$ are disjoint because they are in $F^{n}_a$ and $F^{n}_x$ respectively. Let $U$ be the connected component of $S^2\setminus \widetilde K^{n-1}_a$ containing $\widetilde K^{n-1}_x$. Then $\ell\coloneq \psCurve_{f^{-1}(A)}(\widetilde K^{n-1}_a\mid U)$ is a non-trivial curve because it separates $a$ and $x$. By construction, $\ell$ is in the left pseudo-multicurve of~\eqref{eq:prf:main claim} but not in the right. This is a contradiction. Therefore, $K^n_a=F^{n+1}_a=K^{n+1}_a=F^{n+2}_a=K^{n+2}_a$ for all $a\in A$ and Lemma~\ref{lem: images of clusters}\ref{lem: images of clusters:3} finishes the proof.
\end{proof}

The first part of Theorem \ref{thm: max_clusters_exist} now immediately follows from Lemma \ref{lem: eq_relation_strict} due to the finiteness of~$A$. The second part follows from Lemma~\ref{lem: images of clusters}\ref{lem: images of clusters:3}. 
\end{proof}

\section{Crochet algorithm}
\label{sec: existence}



We start by introducing the following technical definition, having its origins in the Crochet Algorithm (see Section \ref{ss: crochet_algo_intro}).

\begin{definition}[Pre-crochet multicurves]
\label{def:pre croch mult}
Let $f\colon (S^2, A, \CC)\selfmap$ be a B\"{o}ttcher expanding map such that each small map in the decomposition wrt.\ $\CC$ is either crochet or Sierpi\'{n}ski. Suppose that the set $I$ parametrizes the small spheres of $(S^2,A,\CC)$ and $f\colon I \selfmap$ provides the dynamics on the small spheres (see Section \ref{ss: dec and amal}). 

Let us denote by $I_\bullet \subset I$ the subset parametrizing all small spheres of $(S^2, A, \CC)$ induced by the small crochet maps. Then $I_\circ \coloneq I\setminus I_\bullet$ parametrizes the small spheres arising from Sierpi\'{n}ski maps. The small spheres in $I_\bullet$ and $I_\circ$ will be called (\emph{small}) \emph{crochet spheres} and \emph{Sierpi\'{n}ski spheres}, respectively.

The invariant multicurve $\CC$ is called \emph{pre-crochet} if there exists a partition 
\[I_\bullet = I^1_\bullet \sqcup \dots \sqcup I^n_\bullet\] into forward-invariant sets such that the following two conditions are satisfied:
\begin{enumerate}[label=\text{(\roman*)},font=\normalfont,leftmargin=*]
\item The invariant multicurve $\CC$ is generated by the multicurve $\{\partial S_z$, $z\in I_\bullet\}$ consisting of the boundary curves of all crochet spheres; see Section \ref{ss: inv_multicurves}. 
\item For every $k\in \{1,\dots, n\}$ and every periodic sphere $\widehat S$ in $I^k_\bullet$, the first return map $\widehat f\colon \widehat S \selfmap$ is \emph{$\CC^k$-vacant}, that is, it admits a weakly spanning $0$-entropy connected invariant graph that does not pass through the Fatou components induced by $\CC\,\setminus\, \CC^k$ in $\partial \widehat S$, where $\CC^k$ 
is the invariant multicurve generated by the boundaries of small crochet spheres in $I^1_\bullet\sqcup \dots \sqcup I^{k-1}_\bullet$. 
\end{enumerate}
\end{definition}


The goal of this section is to show that each B\"ottcher expanding map $f\colon (S^2,A)\selfmap$ with non-empty Fatou set posses a pre-crochet multicurve.  In fact, we will show that the Crochet Algorithm (see Section~\ref{ss: crochet_algo_intro}) constructs such a curve (after running the first three steps).

\subsection{Gluing of crochet maps}\label{ss: gluing_crochet}
We start by recording the following fact that follows from Theorem \ref{thm:Form amalg}.

\begin{proposition}\label{prop: gluing crochet}
Let $f\colon (S^2,A,\CC)\selfmap$ be a B\"ottcher expanding map, and $\CC$ be a pre-crochet multicurve. If all small maps in the decomposition of $f$ rel.\ $\CC$ are crochet and $\CC$ does not contain any bicycle, then $f$ is crochet. 

Furthermore, if each small map in the decomposition rel.\ $\CC$ is vacant with respect to a subset $V\subset A^\infty$, then $f$ is vacant to $V$ as well. 
\end{proposition}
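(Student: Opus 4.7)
The plan is to produce a connected forward-invariant zero-entropy graph $G\subset S^2$ containing $A$ by amalgamating the crochet graphs on the small spheres through the formal amalgam picture of Section~\ref{sec: geom amal}. The rough idea is that $f$ is obtained from its blown-up sphere part $f_\blowup$ and an annular part supported on annuli isotopic to $\CC$; since each small map is crochet, each small sphere already carries an $A_z$-spanning connected forward-invariant zero-entropy graph $G_z$, and since $\CC$ contains no bicycle, the discussion in Section~\ref{sss:glunig of S^n A^n} says that for every strongly connected component $\Sigma\subset\CC$ the non-escaping set $\Jul_\Sigma$ is a finite periodic cycle of simple closed curves. Hence it is safe to include $\CC$ itself in the candidate graph: it contributes only finitely many curves with zero-entropy dynamics.

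The construction then proceeds layer by layer following the partition $I_\bullet=I^1_\bullet\sqcup\cdots\sqcup I^n_\bullet$ guaranteed by Definition~\ref{def:pre croch mult}. Inductively, suppose a connected forward-invariant zero-entropy graph $G_{<k}$ has been assembled containing every $G_z$ with $z$ in the earlier layers, the relevant curves of $\CC^k$, and the connecting rays chosen so far. For each $z\in I^k_\bullet$, the $\CC^k$-vacancy of $G_z$ says that $G_z$ avoids the boundary Fatou components of $\widehat S_z$ corresponding to curves in $\CC\setminus\CC^k$, so $G_z$ needs to be attached to $G_{<k}$ only along curves of $\CC^k$. On each such boundary curve $\gamma\in\CC^k$ I would invoke Lemma~\ref{lem:bubble ray land} on both sides of $\gamma$ to produce periodic bubble rays landing at a common periodic point of $\gamma$ and add both rays to the graph; Lemma~\ref{lem:ext 0 entr graph} lets us enlarge each $G_z$ so as to absorb the new pre-periodic landing points while preserving zero entropy and the spanning property. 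In the vacant setting these attaching rays lie in Fatou components on the $\CC^k$-side of $\gamma$, hence outside the components associated with~$V$, so the vacancy condition is inherited.

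The main obstacle is keeping the resulting global graph finite and forward-invariant. Finiteness hinges on the no-bicycle hypothesis: every strongly connected component of $\CC$ is a unicycle, so every curve in $\CC$ is eventually periodic and admits, up to isotopy, a single preimage in $\CC$; consequently the forward orbits of the finitely many bubble rays introduced at each layer generate only finitely many new edges. Forward invariance is then secured by taking the forward orbit closure of the chosen rays together with the curves in $\CC$, and zero entropy follows from zero entropy on each $G_z$, the periodic (unicycle) structure of $\CC$, and the eventual periodicity of the attaching rays. After processing layer $n$, every point of $A$ lies on one of the glued $G_z$ or is the center of a Fatou component whose boundary meets the graph, yielding an $A$-spanning connected forward-invariant zero-entropy $G$ (weakly spanning with the required avoidance in the vacant case), which is exactly what it means for $f$ to be crochet (resp.\ $V$-vacant).
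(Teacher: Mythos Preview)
Your plan has a real gap at ``it is safe to include $\CC$ itself in the candidate graph: it contributes only finitely many curves with zero-entropy dynamics.'' The no-bicycle hypothesis guarantees that each strongly connected component of $\CC$ is a unicycle, so $\Jul_\Sigma$ is a finite cycle of simple closed curves; but it says nothing about the \emph{degree} of the first-return map on each curve. The unicycle condition constrains only the number of essential preimage components, not their covering degree: if $\gamma\in\CC$ is $p$-periodic and the unique essential $f^p$-preimage isotopic to $\gamma$ covers it with degree $d>1$, then $f^p$ restricted to the invariant realization of $\gamma$ is a $d$-fold circle cover with entropy $\log d>0$. The moment such a $\gamma$ enters $G$, zero entropy is lost, and this cannot be repaired by passing to an iterate. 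Your alternative of ``bubble rays landing at a common periodic point of $\gamma$'' is closer to what is needed, but Lemma~\ref{lem:bubble ray land} alone does not explain in what sense a point on $\gamma$ lies simultaneously in the clusters of the two adjacent small spheres; that identification only makes sense after applying the semiconjugacy $\tau$ from the formal amalgam, and it is governed by the pinching-cycle description of Theorem~\ref{thm:Form amalg}, not by graph-lifting on each side separately.

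The paper proceeds quite differently and never puts $\CC$ into any graph. After passing to an iterate so that all periodic curves and spheres are fixed, it peels off one \emph{anti-primitive} unicycle $\{\gamma\}$ at a time (one whose generated multicurve contains no other unicycle), reducing to two fixed spheres $S_1,S_2$ and applying induction to each. The inductive step (Lemma~\ref{lem:glunig crochet:induction}) carries the content: using the formal amalgam and $\tau$, one shows that if $f_1$ is vacant rel $\gamma$ then $\tau(\filled^{\text{cr}}_1)$ already contains the $\tau$-image of the boundary of the Fatou component induced by $\gamma$ in $\filled_2$, so every lift of that boundary inside $\tau(\filled_2)$ is glued to a lift of $\tau(\filled^{\text{cr}}_1)$. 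This ``tuning'' produces a single crochet cluster for $f$ containing $\tau(\filled^{\text{cr}}_1)\cup\tau(\filled_2)$ and separating the remaining marked points, with the required vacancy inherited. The layered partition $I_\bullet=I^1_\bullet\sqcup\cdots\sqcup I^n_\bullet$ is used only to verify that the induced multicurves on $S_1,S_2$ are again pre-crochet and that one of the two sides is vacant rel $\gamma$, not as the induction scheme itself.
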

\begin{proof} 
By Lemma~\ref{lem:crochet-iteration}, it is sufficient to prove that an iterate of $f$ is crochet. By passing to an iteration, let us assume that all periodic spheres of $f\colon (S^2,A,\CC)\selfmap$ have period one and all periodic curves in $\CC$ have period one (the latter can be achieved due to the no-bicycles assumption). Suppose that $I_\bullet=I_\bullet^1\sqcup\dots \sqcup I_\bullet^n$ is the partition of the set $I_\bullet$ parametrizing the small spheres of $(S^2,A,\CC)$ that satisfies the conditions in the definition of the pre-crochet multicurve. Below we will apply an induction on the number of small fixed spheres. The induction step is in described in the following lemma. 

\begin{lemma}
\label{lem:glunig crochet:induction}
Under the assumption of Proposition~\ref{prop: gluing crochet}, let us assume that 
\begin{itemize}
\item $\CC$ contains a unique periodic curve $\gamma$ which has period $1$ (i.e., all other curves are strictly preperiodic);
\item $f\colon (S^2,A,\CC)\selfmap$ has exactly two periodic spheres $S_1,S_2$; the period of each $S_i$ is $1$ (i.e., $\gamma$ is the common boundary of $S_1,S_2$);
\item $f_1$ is crochet and is vacant with respect to the Fatou components induced by $V_1\cup \{\gamma\}$, where $V_1$ is a subset of $A^\infty_{1}\coloneq A^\infty \cap \filled_{\widetilde S,1}$, see Section~\ref{sss:FA:n};
\item $f_2$ is crochet and is vacant with respect to the Fatou components induced by $V_2$, where $V_2$ is a subset of $A^\infty_{2}\coloneq A^\infty \cap \filled_{\widetilde S,2}$.
\end{itemize}

Then $f$ is crochet and is vacant with respect to the Fatou components generated by \[V\coloneq V_1\cup V_2\cup \big(A^\infty\setminus (A^\infty_{1}\cup A^\infty_{2})\big).\]
\end{lemma}

See Lemma~\ref{lem:A:relat f f_blowup} for how $A^\infty_1,A^\infty_2$ are related to $A$.
\begin{proof}
Let $f_\text{FA}\colon (S^2,A,\CC)\selfmap$ be a formal amalgam associated with $f\colon (S^2,A,\CC)\selfmap$, see Lemma~\ref{lem:FormAmalg}. We denote by $\tau$ the induced semiconjugacy from $f_\text{FA}$ to $f$. We will show that appropriate clusters of $f_1,f_2$ get glued and produce a required cluster of $f$.

Let $\filled^\text{cr}_1 \subset \filled_{\widetilde S,1}$ be the set of points that do not escape into the Fatou components induced by $V_1\cup \{\gamma\}$. By assumption, $\filled^\text{cr}_1$ is a (crochet) cluster: it supports a weakly spanning graph for $f_1$.

Let $\filled_2 \subset \filled_{\widetilde S,2}$ be the set of points that do not escape into the Fatou components induced by $V_2\cup \{\gamma\}$. Note that $\filled_2$ needs not to be a (crochet) cluster for $f_2$. Since $\gamma$ is a primitive unicycle, $\tau(\filled^\text{cr}_1)$ contains the $\tau$-image $\Theta$ of the boundary of the Fatou component induced by $\gamma$ in $\filled_{2}$. Therefore, every lift of $\Theta$ under $f\colon \tau(\filled_2)\selfmap$ is glued with an appropriate lift of $\tau(\filled^\text{cr}_1)$ -- this is the ``tuning'' of $\filled^\text{cr}_1$ and $\filled_2$. By the vacancy assumption on $f_2$, we obtain a (crochet) cluster for $f$ containing $\tau(\filled^\text{cr}_1)\cup \tau(\filled_2)$ and separating the remaining points in $A$; the remaining set contains $V$. Therefore, $f$ is vacant rel $V$. The lemma is proven.
\end{proof}

Let us finish the proof of Proposition~\ref{prop: gluing crochet}. Select an ``anti-primitive'' unicycle $\{\gamma\}\subset \CC$; i.e.~the multicurve $\CC_\gamma$ generated by $\gamma$ does not contain any other unicycle. Then the decomposition of $f\colon (S^2,A,\CC_\gamma)\selfmap$ has exactly two periodic spheres $S_1$, $S_2$; these spheres have period one. Let $f_i\colon \widehat S_i\selfmap$ be the associated small maps, and let $\CC_i\subset \widehat S_i$ be the multicurve induced by $\CC$. Then $\CC_i$ is a pre-crochet multicurve for $f_i$ (indeed, the disjoint union $I_\bullet=I_\bullet^1\sqcup\dots I^{n}_\bullet$ would provide the respective partition for the set of small spheres). Moreover, all small maps in $f_i\colon (\widehat S_i, \CC_i)\selfmap$ are $V_i$ vacant, where $V_i$ is induced by $V$. Finally, small sphere in either $f_1\colon (\widehat S_1, \CC_1)\selfmap$ or in $f_2\colon (\widehat S_2, \CC_2)\selfmap$ are vacant rel $\gamma$ because such a property holds for small spheres of $f\colon (S^2,A,\CC)\selfmap$ bordering $\gamma$. Assume that $\gamma$ is vacant for small maps of $f_1$. 

By induction assumption, $f_1$ is crochet and is vacant rel $V_1\cup \{\gamma\}$ and $f_2$ is crochet and is vacant rel $V_2$. Lemma~\ref{lem:glunig crochet:induction} finishes the proof.
\end{proof}
 
\hide{\begin{lemma}\label{lem: gluing crochet}
Let $f\colon (S^2,A,\CC)\selfmap$ be a B\"ottcher expanding map and $\PC$ be a primitive unicycle of $\CC$.  Also, let $\CC'\subset \CC$ be the invariant multicurve\footnote{ Note that $\CC'$ is empty, if $\CC$ has only one strongly connected component. }  generated by all the strongly connected components of $\CC$ except $\PC$. Suppose $\widehat S_1$ and $\widehat S_2$ are the two small spheres (in the decomposition wrt.\ $\CC$) with $\gamma\in \PC$ on the boundary.  Denote by $\widehat S$ the unique small sphere wrt.\ $\CC'$ that contains both $\widehat S_1$ and $\widehat S_2$. 
\begin{enumerate}[label=\text{(\roman*)},font=\normalfont,leftmargin=*]
\item If both spheres $\widehat S_1$ and $\widehat S_2$ are Sierpi\'{n}ski, then $\widehat S$ is Sierpi\'{n}ski sphere. 
\item Suppose both spheres $\widehat S_1$ and $\widehat S_2$ are crochet. If one of them is $\CC[\gamma]$-vacant, where $\CC[\gamma]$ is the invariant multicurve induced by $\{\gamma\}$, then $\widehat S$ is a crochet sphere.

\end{enumerate}
\end{lemma}}

Let $f\colon (S^2, A)\selfmap$ be a B\"{o}ttcher expanding map and $\CC$ be a pre-crochet multicurve. Suppose $\{\PC_k\}$ is the collection of all primitive unicycles in $\CC$ separating small crochet spheres of $(S^2, A, \CC)$. Consider the invariant multicurve $\CC'\subset \CC$ generated by all strongly connected components of $\CC$ except the primitive unicycles $\{\PC_k\}$, that is, glue together the clusters of small crochet spheres in $(S^2, A,\CC)$ separated by the unicycles $\PC_k$. We iterate this process until there are no more primitive crochet unicycles. We will call this procedure \emph{iterative elimination of all primitive crochet unicycles} from $\CC$.

\begin{definition}[Crochet multicurve]
\label{def:croch mult}
Let $f\colon (S^2, A)\selfmap$ be a B\"{o}ttcher expanding map and $\CC$ be a pre-crochet multicurve. Consider the invariant multicurve $\DD\subset \CC$ that is generated by the boundaries of Sierpi\'{n}ski small spheres of $(S^2,A,\CC)$ together with bicycles in $\CC$. 
Then the multicurve $\DD$ is obtained by the iterative elimination of all primitive crochet unicycles from $\CC$ and we call it a \emph{crochet multicurve}.

\end{definition}

By definition, a crochet multicurve does not have any primitive crochet unicycles. It would follow that each B\"{o}ttcher expanding map $f\colon (S^2, A, \CC)\selfmap$ with non-empty Fatou set has a unique crochet multicurve (up to isotopy), see Remark~\ref{rem: crochet_multicurve_unique}.

The next lemma follows from the definition and Proposition \ref{prop: gluing crochet}.

\begin{lemma}\label{lem:cro-mult-decomp}
Let $f\colon (S^2,A)\selfmap$ be a B\"{o}ttcher expanding map and $\DD$ be a crochet multicurve. Then each small map in the decomposition wrt.\ $\DD$ is either crochet or Sierpi\'{n}ski. Moreover, if $\DD$ is non-empty then either there is at least one Sierpi\'{n}ski small map in the decomposition or $\CC'$ has a primitive bicycle. 
\end{lemma}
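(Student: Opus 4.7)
The plan is to prove the two claims in order, in each case by returning to the pre-crochet multicurve $\CC$ from which $\DD$ is obtained and tracking what the iterative elimination of primitive crochet unicycles does to the small maps and to the primitive components of $\CC$.

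For the first assertion, I would argue by induction on the number of elimination steps. Before any elimination, Definition \ref{def:pre croch mult} ensures that every small map in the decomposition along $\CC$ is crochet or Sierpi\'{n}ski. A single elimination step removes a primitive crochet unicycle $\PC_k$, hence amalgamates only small crochet spheres (Sierpi\'{n}ski spheres are, by definition of a primitive crochet unicycle, never on the boundary of $\PC_k$). Locally, this amalgam is a B\"ottcher expanding map whose decomposition consists only of crochet small maps along the multicurve generated by the single orbit $\PC_k$, which is a unicycle and therefore contains no bicycle. Proposition \ref{prop: gluing crochet} then shows that the amalgamated small map is again crochet. The Sierpi\'{n}ski small maps and the remaining small crochet maps are left untouched throughout the iteration, so the conclusion propagates. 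Crucially, at the end of the elimination procedure the decomposition along $\DD$ still consists of crochet maps (on the amalgamated small spheres) and Sierpi\'{n}ski maps (untouched from $\CC$).

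For the second assertion, suppose $\DD$ is non-empty and no small map in the decomposition along $\DD$ is Sierpi\'{n}ski, so every small sphere is crochet. Consider a primitive component $\PC$ of $\DD$. If $\PC$ were a unicycle, then $\PC$ would separate only crochet small spheres (there are no Sierpi\'{n}ski ones), hence would be a primitive crochet unicycle. But by the very definition of $\DD$, the iterative elimination step was performed precisely to remove all primitive crochet unicycles, so $\DD$ contains none. This contradiction forces $\PC$ to be a bicycle, which is what was claimed (reading the symbol $\CC'$ in the statement as the multicurve $\DD$).

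The main delicate point is verifying that at each elimination step Proposition \ref{prop: gluing crochet} is genuinely applicable, i.e.~that the multicurve inside the single amalgamated small sphere can be presented as a pre-crochet multicurve with all small maps crochet and with no bicycle. The ``no bicycle'' and ``all small maps crochet'' conditions are automatic from the choice of a primitive crochet unicycle, but the pre-crochet structure has to be inherited from $\CC$ by restricting the data of the partition $I_\bullet = I^1_\bullet \sqcup \dots \sqcup I^n_\bullet$ and the associated vacancy conditions to the small spheres surrounding $\PC_k$. This is exactly the content of Lemma \ref{lem:glunig crochet:induction}, so the inductive step fits the framework of Proposition \ref{prop: gluing crochet} without extra work.
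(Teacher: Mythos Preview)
Your proposal is correct and takes essentially the same approach as the paper, which simply states that the lemma ``follows from the definition and Proposition~\ref{prop: gluing crochet}.'' Your inductive elimination argument unpacks exactly this: each elimination step glues crochet pieces along a unicycle-generated multicurve with no bicycle, so Proposition~\ref{prop: gluing crochet} applies, and the second assertion falls out of the fact that $\DD$ by construction contains no primitive crochet unicycles. A slightly more direct route would be to fix a single small sphere $\widehat S$ of the $\DD$-decomposition and apply Proposition~\ref{prop: gluing crochet} once to the restricted multicurve $\CC\cap\widehat S$ (which is pre-crochet, all-crochet, and bicycle-free), rather than globally peeling off one unicycle at a time; but this is a stylistic difference, not a substantive one.
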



\subsection{Iterative step} 

In the following,  let $f\colon (S^2,A)\selfmap$ be a B\"ottcher expanding map with a non-empty Fatou set, where $A$ contains the full preimage of the postcritical set. We refer to Section \ref{ss: clusters} for the ``cluster terminology''.

\hide{\begin{lemma}
Let $\mathbf{K}^N=\{K_i\}$ be the collection of maximal clusters of $f$ (given by Theorem \ref{thm: max_clusters_exist}). Consider a sufficiently small neighborhood $\widetilde K$ of $K=\bigcup_i K_i$.
Then  After removing peripheral and identifying homotopic curves, $\partial \widetilde K$ is an invariant multicurve $\CC_\NL^1=\MC(\partial )$.
\end{lemma}}



\begin{lemma}
\label{lem:iter step}
Let $\mathbf{K}^N=\{K_i\}$ be the collection of maximal clusters of $f$ (given by Theorem \ref{thm: max_clusters_exist}). 
Then $\CC\coloneq \MC(K)$ is an invariant multicurve, where $K=\bigcup_{i} K_i$ is the union of all the clusters.  

Assume the multicurve $\CC\not= \emptyset$. Consider a non-trivial periodic $K_i$, which means that $\MC(K_i)$ is non-empty. Then $K_i$ is within a periodic small sphere $S_i$ of $(S^2,A, \CC)$. The first return map on $\widehat{S_i}$ is crochet.

\end{lemma}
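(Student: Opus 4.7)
The plan is to establish the three conclusions of the lemma in turn. For the invariance of $\CC$, the key identity is $f^{-1}(K)=K$. By Lemma~\ref{lem: images of clusters}(i), $f$ maps each maximal cluster to a maximal cluster, so $f(K)\subseteq K$ and hence $K\subseteq f^{-1}(K)$. Conversely, maximality ($\mathbf{K}^{(N)}=\mathbf{K}^{(N+1)}$, from Theorem~\ref{thm: max_clusters_exist}) forces $F^N=K^N$ for every maximal cluster, so Lemma~\ref{lem: images of clusters}(iii) gives that every connected component of $f^{-1}(K_i)$ is itself an $N$-cluster; hence $f^{-1}(K)\subseteq K$. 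Combined with Lemma~\ref{lem:MC:lifting},
\[\MC_A(f^{-1}(\CC))=\MC_A(f^{-1}(\MC_A(K)))=\MC_A(f^{-1}(K))=\MC_A(K)=\CC.\]
For the containment of $K_i$ in a periodic small sphere, $K_i$ is connected and disjoint from $\CC$ (whose curves are realized strictly inside faces of $K$), so $K_i$ lies in a unique small sphere $S_i$ of $(S^2,A,\CC)$. If $p$ denotes the period of $K_i$ under $f$, then $f^p(S_i)$ is the unique small sphere containing $f^p(K_i)=K_i$, forcing $f^p(S_i)=S_i$.

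For the crochet assertion on the first return map $\widehat f=f^p|\widehat{S_i}$, the plan is to build a connected, $\widehat f$-invariant, zero-entropy graph in $\widehat{S_i}$ containing the entire marked set of $\widehat{S_i}$. First, apply Lemma~\ref{lem:graphs in clusters} to the cluster $K_i$ (viewed also as an $N$-cluster for $f^p$) and to the finite set $A\cap K_i$ of eventually periodic points, producing a connected, forward-$f^p$-invariant, zero-entropy graph $G_i\subset K_i$ with $A\cap K_i\subset G_i$. To absorb the remaining marked points of $\widehat{S_i}$---namely those arising from $\partial S_i$ and any $A$-points of $S_i$ outside $K_i$---I would argue that the hypothesis $\MC(K_i)\neq\emptyset$, combined with the maximality of $K_i$, forces $S_i$ to contain no other maximal cluster and every curve $\gamma\in\partial S_i$ to abut $K_i$ from the $S_i$-side. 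In the formal amalgam presentation (Lemma~\ref{lem:FormAmalg}), each such $\gamma$ collapses to a marked point $p_\gamma$ of $\widehat{S_i}$ that is the center of a ``phantom'' Fatou component of $\widehat f$ whose boundary meets $\partial K_i$. Rerunning the cluster construction for $\widehat f$ on $\widehat{S_i}$, the cluster generated by $K_i$ absorbs all these phantoms and thus contains the full marked set of $\widehat{S_i}$. A second application of Lemma~\ref{lem:graphs in clusters} to $\widehat f$ and this enlarged cluster, followed by Lemma~\ref{lem: weakly span ext} to upgrade from weakly to fully spanning if necessary, yields the desired crochet graph.

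The main obstacle is justifying that, under $\CC\neq\emptyset$ and $\MC(K_i)\neq\emptyset$, the small sphere $S_i$ contains no maximal cluster besides $K_i$ and every boundary curve $\gamma\in\partial S_i$ is adjacent to $K_i$ from the $S_i$-side. This requires a careful comparison of the face structures of $K_i$ and of $K=\bigcup_j K_j$: an extra cluster $K_j\subset S_i$ sitting in a face $U$ of $K_i$ should, in combination with the ``deep'' (non-cluster) marked points that make $\CC$ non-empty, produce a curve of $\CC$ between $K_i$ and $K_j$, contradicting $K_j\subset S_i$. Making this dichotomy precise is the delicate technical step in the proof.
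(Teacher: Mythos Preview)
Your invariance argument is correct and matches the paper's. For the crochet assertion, however, you are making it harder than necessary, and the ``main obstacle'' you isolate is both unnecessary and, as stated, false.

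The paper's route is much shorter: simply observe that every component of $\widehat{S_i}\setminus K_i$ contains at most one marked point of $\widehat{S_i}$ (the marking being by $A\cup\CC$). This is immediate from $\MC(K_i)\subset\CC$. Indeed, if $U$ is a face of $K_i$ in $S^2$ with $|U\cap A|\ge 2$, then $\Curve(K_i\mid U)$ is essential, hence lies in $\CC$, and is therefore a boundary curve of $S_i$; the part of $U$ remaining in $\widehat{S_i}$ is the collar between $K_i$ and that curve, carrying exactly the single $\CC$-marked point. If $|U\cap A|\le 1$, then $U$ contains no curve of $\CC$ at all (any curve in $U$ is peripheral), so $U\subset S_i$ and contributes at most one $A$-marked point. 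From this the paper invokes Theorem~\ref{thm:Form amalg} and Lemma~\ref{lem:crochet-iteration} to conclude that the first return map on $\widehat{S_i}$ is crochet. Your own endgame---Lemma~\ref{lem:graphs in clusters} for a graph in $K_i$, then Lemma~\ref{lem: weakly span ext} to upgrade from weakly spanning to spanning---also works once you have this face observation.

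Your claim that $S_i$ contains no other maximal cluster is not true in general: a maximal cluster $K_j$ can sit inside a face $U$ of $K_i$ with $|U\cap A|\le 1$. But then every curve of $\MC(K_j)$ is peripheral in $(S^2,A)$, so $K_j$ contributes nothing to $\CC$ and does not disturb the face count above. In particular, there is no need to rule such $K_j$ out; the ``delicate technical step'' disappears once you see that the relevant statement concerns marked points in faces of $K_i$, not the absence of other clusters. Likewise, your claim that every $\gamma\in\partial S_i$ abuts $K_i$ is automatic from the same analysis (each such $\gamma$ \emph{is} some $\Curve(K_i\mid U)$), so it need not be argued separately.
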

\begin{proof} Since $K\supset A\supset P_f$, $f\colon S^2\setminus f^{-1}(K)\to S^2\setminus K$ is a covering map.  Let $\CC=\MC(K)$.  Lemma \ref{lem: images of clusters} immediately implies that $\CC\subset f^{-1}(\CC)$, as usual, up to homotopy.  Conversely, $f^{-1}(\CC)\subset\CC$ after removal of all duplicate and peripheral curves.  Thus,  $\CC=\MC(K)$ is an invariant multicurve.

Clearly, each non-trivial periodic cluster $K_i$ is within a periodic small sphere $S_i$ of $(S^2,A, \CC)$. Since $\widehat{S_i}$ is marked by $A\cup \CC$, each complementary component of $\widehat{S_i}\setminus K_i$ contains exactly one marked point. Theorem \ref{thm:Form amalg} and Lemma \ref{lem:crochet-iteration} now imply that the first return map on $\widehat S_i$ is crochet.
\end{proof}

\begin{lemma}[Stopping criterion]\label{lem: trivial curve}
Let $\mathbf{K}^N=\{K_i\}$ be the collection of maximal clusters of $f$ and $\CC\coloneq \MC(\bigcup_i K_i)$ be the respective invariant multicurve, as in Lemma \ref{lem:iter step}. Then $\CC$ is empty if and only if $f$ is a crochet or a Sierpi\'{n}ski map.
\end{lemma}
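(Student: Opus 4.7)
The plan is to handle the two directions separately. The forward direction is immediate from Lemma~\ref{lem:crochet-iteration} and a direct computation. If $f$ is crochet then some maximal cluster equals $S^2$, so $\bigcup_i K_i = S^2$ and $\CC = \MC(S^2) = \emptyset$. If $f$ is Sierpi\'{n}ski, distinct Fatou components have pairwise disjoint closures, so no merging occurs in the iterative cluster construction: each maximal cluster is a single closed Fatou component $\overline{F_j}$, a Jordan disk whose complement is a single open disk $U$. Since $A = f^{-1}(\post(f))$, every Fatou component contains at most one marked point (namely its center, and only when the preperiod is at most one), so the curve $\Curve(\overline{F_j} \mid U)$ is peripheral and $\CC = \emptyset$.

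For the reverse direction, suppose $\CC = \emptyset$. If some maximal cluster equals $S^2$, Lemma~\ref{lem:crochet-iteration} yields that $f$ is crochet. Otherwise, the plan is to show $f$ is Sierpi\'{n}ski by verifying that Fatou components have pairwise disjoint closures; the Jordan disk property follows by a parallel argument applied to any hypothetical pinch point on $\partial F_j$, which would create an additional complementary region of $\overline{F_j}$ amenable to the same reasoning. Assume for contradiction that $\overline{F_1} \cap \overline{F_2} \neq \emptyset$ for distinct Fatou components $F_1, F_2$; then they lie in a common maximal cluster $K_i$. Iterating $f$ forward and using Lemma~\ref{lem: images of clusters}, I may replace $F_1, F_2$ by periodic Fatou components in their forward orbits, with distinct centers $c_1, c_2 \in \post \subset A$, so that $|A \cap K_i| \geq 2$. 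The hypothesis $\MC(K_i) = \emptyset$ then forces every complementary region $U$ of $K_i$ to contain at most one marked point, since the alternative $|A \cap (S^2 \setminus U)| \leq 1$ is excluded by $K_i \subset S^2 \setminus U$.

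Now I invoke Lemma~\ref{lem:graphs in clusters} on the set $S \subset K_i$ consisting of $A \cap K_i$ together with one preperiodic point chosen on the portion of $\partial K_i$ adjacent to each complementary region $U$; such preperiodic points are available by density of periodic orbits on the Julia set, and Lemma~\ref{lem:ext 0 entr graph} allows adjoining them to the resulting invariant graph while preserving zero entropy. The upshot is a connected $f$-invariant zero-entropy graph $G \subset K_i$ in which every face either lies inside a single Fatou component of $K_i$ (with at most its center marked) or contains exactly one complementary region $U$ of $K_i$ (hence at most one marked point by the previous paragraph). Thus $G$ is weakly $A$-spanning in the sense of Definition~\ref{def: weak-span}, so by Lemma~\ref{lem: weakly span ext} it extends to an $A$-spanning zero-entropy $f$-invariant graph, making $f$ crochet, which contradicts the assumption that no maximal cluster equals $S^2$. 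The main obstacle is the careful choice of the separating preperiodic points so that $G$ genuinely separates distinct complementary regions of $K_i$ into distinct faces while remaining forward invariant and zero entropy; this is the place where the expansion of $f$ on the Julia set and the successive application of Lemma~\ref{lem:ext 0 entr graph} are essential.
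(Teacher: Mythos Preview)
Your forward direction is correct. The reverse direction has two genuine gaps.

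\textbf{Gap in the reduction step.} You claim that if $\overline{F_1}\cap\overline{F_2}\neq\emptyset$ with $F_1\neq F_2$, then by iterating forward you may replace them by \emph{distinct periodic} Fatou components with marked centers. This fails when the forward orbits merge: it can happen that $f^{n_0}(F_1)=f^{n_0}(F_2)$ for some $n_0\ge 1$ while $f^n(F_1)\neq f^n(F_2)$ for $n<n_0$. In that case no forward iterate yields two distinct periodic components, and at the last moment before merging the centers of $F_1',F_2'$ lie in $A=f^{-1}(\post)$ only if the center of $f(F_1')$ is already postcritical---which is not guaranteed when $f(F_1')$ is strictly preperiodic and not critically centered. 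So the conclusion $|A\cap K_i|\ge 2$ is not established.

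\textbf{Gap in the graph construction.} Even granting $|A\cap K_i|\ge 2$, your weakly $A$-spanning graph is not actually built. Lemma~\ref{lem:graphs in clusters} gives a connected forward-invariant $0$-entropy graph containing any prescribed finite preperiodic set in $K_i$, but nothing prevents a single face of this graph from containing several complementary components $U$ of $K_i$ (hence several points of $A$). You acknowledge this as ``the main obstacle'' and appeal to ``careful choice of separating preperiodic points,'' but you do not carry this out; in particular it is not clear how to choose finitely many preperiodic points on $\partial K_i$ whose forward-invariant graph closure separates \emph{all} complementary regions of $K_i$ into distinct faces.

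\textbf{Comparison with the paper.} The paper avoids both issues by a shorter and structurally different argument. Rather than assuming non-Sierpi\'nski and building a spanning graph, the paper shows directly that if $\CC=\emptyset$ and there is more than one maximal cluster, then \emph{every} maximal cluster contains at most one marked point: if some $K_i$ had $\ge 2$ marked points, each complementary component $U$ would contain at most one point of $A$, and an internal ray from that point would link $U$ to $K_i$, contradicting the maximality of $K_i$. From ``each $K_i$ has $\le 1$ marked point'' one gets that there are no Levy arcs (or periodic self-arcs) between Fatou marked points, and the Sierpi\'nski conclusion then follows from the Levy-arc characterization in \cite{BD_Exp}. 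This simultaneously handles the disjoint-closures and Jordan-boundary conditions, which you treat separately and only sketch.
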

\begin{proof}

Let us suppose that the multicurve $\CC$ is empty. If $K$ consists of a unique cluster, then $f$ is crochet by Lemma \ref{lem:crochet-iteration}. Otherwise, let $K=\bigcup_{i\in I} K_i$. Suppose that some cluster $K_i$ contains at least two marked points. Since $\CC$ is empty, each connected component $U$ of $S^2\setminus K_i$ contains at most one point form $A$. Furthermore, since $\mathcal{F}_f\cap U \neq \emptyset$, $a$ must be in $\mathcal{F}_f$. Thus, there is a an internal ray joining $a$ and $K_i$ which would contradicts maximality of $K_i$.

Consequently, each cluster $K_i$ must have at most one marked point. Again,  there is no Levy arc between any two Fatou points in $A$.  Also, there is no periodic self-arc $\alpha$ at any Fatou point $a$ in $A$. Indeed, otherwise each component of $S^2\setminus \alpha$ contains a marked point $p$ and either $\CC$ is non-empty, or there is a Levy arc connecting $p$ and $a$ (providing a contradiction).  Thus, by the characterization from \cite{BD_Exp}, $f$ must be a Sierpi\'{n}ski map.





\end{proof}

\subsection{Crochet Algorithm}
\label{ss:iter_construction}
Consider a recursive procedure with the following step.

 \begin{itemize}
    \item Given a B\"ottcher expanding map $f\colon (S^2,A)\selfmap$ with $\mathcal{F}_f\neq\emptyset$, we extract the maximal clusters $K_i$ and the respective invariant multicurve $\CC\coloneq \MC(K)$, where $K=\bigcup_i K_i$, as in Lemma \ref{lem:iter step}. 
   \item If $\CC$ is non-empty, then we run the above step for the first return map of each periodic small sphere of $(S^2,A, \CC)$ that does not contain a cluster $K_i$.
   \item If $\CC$ is empty, we stop the recursive step. 
\end{itemize}


    

Since $A$ is finite, the above recursive procedure eventually stops (completing the first three steps of the Crochet Algorithm). Let $\widetilde{\CC}$ be the union of all the invariant curves constructed during the process. We define $\CC_\NL$ to be the set of representatives of the isotopy classes of curves in $\cup_{n\geq 0} f^{-n}(\widetilde{\CC})$, that is, $\CC_\NL$ is the invariant multicurve generated by $\widetilde{C}$.  The following lemma follows from the previous discussion.



\begin{proposition}\label{prop: pre-crochet-algo}
The multicurve $\CC_\NL$ constructed after running the first three steps of the Crochet Algorithm is a pre-crochet multicurve for $f\colon (S^2,A)\selfmap$.
\end{proposition}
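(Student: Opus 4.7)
The plan is to verify both conditions of Definition \ref{def:pre croch mult} using the recursive structure of the Crochet Algorithm. First I would define the partition $I_\bullet = I^1_\bullet \sqcup \ldots \sqcup I^n_\bullet$ by setting $I^k_\bullet$ to consist of those crochet small spheres on which the recursion of Section \ref{ss:iter_construction} terminates at depth exactly $k$. Since $A$ is finite and each iteration strictly refines the structure on the remaining non-terminal small spheres, only finitely many $I^k_\bullet$ are non-empty. Forward-invariance of each $I^k_\bullet$ follows because the recursion is applied separately within periodic cycles of small spheres from the preceding level, so the dynamics of $f$ on small spheres preserves the level assignment.

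Condition (i) would then follow directly from Lemma \ref{lem:iter step}: at each iteration the newly added invariant multicurve equals $\MC(\bigcup_i K_i)$, generated by the boundaries of the extracted maximal clusters, and these boundary curves are exactly the boundaries of the newly identified crochet spheres. Taking the union over all iterations, $\CC_\NL$ is generated by $\{\partial S_z : z \in I_\bullet\}$.

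For condition (ii), fix a periodic sphere $\widehat S \in I^k_\bullet$ and let $K_\star \subset \widehat S$ be the maximal cluster whose extraction produced $\widehat S$. The curves on $\partial \widehat S$ that do not lie in $\CC^k_\NL$ are precisely the level-$k$ cluster boundary curves $\MC(K_\star)$: earlier-level curves on $\partial \widehat S$ belong to $\CC^k_\NL$ by definition, and no later-level curves can touch $\partial \widehat S$ since the recursion terminates on $\widehat S$. By Lemma \ref{lem:graphs in clusters} applied with $S$ consisting of all (pre)periodic marked points of $A$ inside $K_\star$, there is a connected forward-invariant $0$-entropy graph $G \subset K_\star$ containing all these marked points within a single connected component. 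Each face of $G$ inside $K_\star$ then carries no marked point of $A$ in its interior, while each face of $G$ in $\widehat S \setminus K_\star$ collapses in $\widehat S$ to a single boundary marked point corresponding to a level-$k$ cluster boundary curve. Hence $G$ is weakly spanning with respect to the marked set of $\widehat S$. Moreover, since $G \subset K_\star$ lies on the cluster side of every level-$k$ boundary curve, it avoids the attracting basins of the corresponding boundary marked points --- and these basins are exactly the Fatou components of the first return map $\widehat f$ induced by $\CC \setminus \CC^k$ in $\partial \widehat S$. Therefore $\widehat f$ is $\CC^k$-vacant.

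The main obstacle will be the bookkeeping required to identify the level at which each curve of $\CC_\NL$ is introduced and to track the hierarchy of containments of crochet spheres within the small spheres of the preceding level. A related subtle point is verifying that, after passing from the ambient level-$(k-1)$ sphere to the smaller sphere $\widehat S$ by collapsing the level-$k$ curves, the cluster's graph $G$ genuinely inherits the weakly spanning property; this in turn relies on the fact that each ``hole'' of $K_\star$ contributes exactly one new boundary marked point to $\widehat S$, which is a direct consequence of the definition of $\MC(K_\star)$ and the fact that marked points of $A$ lying in such holes are reassigned to adjacent small spheres rather than to $\widehat S$ itself.
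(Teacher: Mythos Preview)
Your proposal is correct and follows essentially the same approach as the paper: partition $I_\bullet$ according to the depth of the recursive step in the algorithm, then verify the two conditions of Definition~\ref{def:pre croch mult}. The paper's own proof is considerably terser (it simply declares the verification ``straightforward''), whereas you spell out why the cluster $K_\star$ furnishes the required vacant weakly spanning graph; one small caution is that Lemma~\ref{lem:graphs in clusters} as stated does not literally assert $G\subset K_\star$, so you should either cite the construction behind it (bubble rays inside the cluster, Lemmas~\ref{lem:bubble ray land} and~\ref{lem:ext 0 entr graph}) or appeal directly to Lemma~\ref{lem:iter step} for the weakly spanning property.
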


\begin{proof}
By construction, $\CC_\NL$ is an invariant multicurve. Moreover, the first return map to each periodic small sphere of $(S^2,A, \CC_\NL)$ is either crochet or Sierpi\'{n}ski (Lemmas \ref{lem:iter step} and \ref{lem: trivial curve}). Suppose that $I_\bullet$ parametrizes all small crochet spheres of $(S^2, A, \CC_\NL)$ induced by the small crochet maps. Partition $I_\bullet$ according to the depth of the recursive step, that is, we write 
\[I_\bullet = I^1_\bullet \sqcup \dots I^n_\bullet,\]
where $I^k_\bullet$, $k\in\{1,\dots,n\}$, corresponds to the small crochet spheres induced by the small crochet maps obtained during the $k$th stage of the recursive procedure. It is now straightforward to check that the above partition satisfies the conditions of Definition \ref{def:pre croch mult}. The statement follows. 
\end{proof}

After the recursive construction of $\CC_\NL$ we do a reduction step that eliminates all primitive crochet unicycles, i.e., the primitive unicycles of $\CC_\NL$ with crochet maps on both sides. In fact, these may only appear at different stages of the recursive procedure (due to the maximality of extracted clusters at each stage). We iterate this reduction step until no more primitive crochet unicycles are left -- this is Step~4 of the Crochet Algorithm (see Section \ref{ss: crochet_algo_intro}).  The resulting multicurve $\CC_\cro$ would be a crochet multicurve by definition; it is generated by the boundaries of Sierpi\'{n}ski small spheres of $(S^2,A,\CC_\NL)$ together with the bicycles of $\CC_\NL$.  That is, we proved the following result. 

\begin{corollary}\label{cor: cro multicurve exists}
Let $f\colon (S^2,A)\selfmap$ be a B\"ottcher expanding map with non-empty Fatou set. Then the Crochet Algorithm produces a crochet multicurve $\CC_\cro$. 
\end{corollary}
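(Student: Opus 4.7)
The proof essentially follows from the material already developed in the section, so the plan is to assemble the pieces carefully.

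First, I would invoke Proposition~\ref{prop: pre-crochet-algo} directly: after executing Steps 1--3 of the Crochet Algorithm on $f$, the recursive cluster-extraction procedure produces an invariant multicurve $\CC_\NL$ which is \emph{pre-crochet} in the sense of Definition~\ref{def:pre croch mult}. In particular, every small map in the decomposition of $f$ along $\CC_\NL$ is either a crochet map or a Sierpi\'{n}ski map (by Lemmas~\ref{lem:iter step} and~\ref{lem: trivial curve}), and the partition $I_\bullet = I^1_\bullet \sqcup \dots \sqcup I^n_\bullet$ indexed by the depth of the recursion witnesses the pre-crochet property.

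Next, Step~4 of the algorithm performs the iterative elimination of all primitive crochet unicycles from $\CC_\NL$, i.e.\ it iteratively applies the gluing procedure from Section~\ref{ss: gluing_crochet} to collapse any primitive unicycle bounding two small crochet maps into a single small sphere. By Proposition~\ref{prop: gluing crochet} (together with the vacancy conditions guaranteed by the pre-crochet structure of $\CC_\NL$), each such gluing produces a genuinely crochet small map on the combined sphere, so after each reduction step we still have a decomposition into crochet and Sierpi\'{n}ski small maps. Since $\CC_\NL$ is finite, this elimination terminates after finitely many steps with an invariant multicurve $\CC_\cro \subset \CC_\NL$.

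Finally, I would verify that the resulting $\CC_\cro$ matches Definition~\ref{def:croch mult}: by construction it contains no primitive crochet unicycle, so the only primitive strongly connected components left are either bicycles or unicycles that separate at least one Sierpi\'{n}ski sphere from its neighbour; hence $\CC_\cro$ is generated by the boundaries of the Sierpi\'{n}ski small spheres of $(S^2,A,\CC_\NL)$ together with the bicycles in $\CC_\NL$, which is exactly the description of a crochet multicurve in Definition~\ref{def:croch mult}. The main conceptual point — and the one that would need the most care — is checking that the gluing preserves the crochet property at every reduction step, but this is precisely what Proposition~\ref{prop: gluing crochet} provides; all remaining verifications are bookkeeping about the partition $I_\bullet^k$ and the vacancy conditions it encodes.
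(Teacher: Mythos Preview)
Your proposal is correct and follows essentially the same route as the paper: the paper's argument is simply that Proposition~\ref{prop: pre-crochet-algo} gives a pre-crochet multicurve $\CC_\NL$, after which Step~4 (the iterative elimination of primitive crochet unicycles) yields $\CC_\cro$, which is a crochet multicurve directly by Definition~\ref{def:croch mult}. Your extra invocation of Proposition~\ref{prop: gluing crochet} is not strictly needed for the corollary itself (it is what underlies Lemma~\ref{lem:cro-mult-decomp}, showing the small maps along $\CC_\cro$ are still crochet or Sierpi\'{n}ski), but it is correct and does no harm.
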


The decomposition of $f\colon (S^2,A,\CC_\cro)\selfmap$ along the invariant multicurve $\CC_\cro$ is called the \emph{crochet decomposition} of $f$.  

\section{Cactoid maps} 
\label{sec: cactoid maps}

Let us recall that a \emph{cactoid} $X$ is a continuous monotone image of $S^2$. In general,  $X$ is a locally connected continuum composed of countably many spheres and segments pairwise intersecting in at most one point. We call a cactoid $X$ \emph{finite} if it is a finite CW-complex, i.e., if $X$ is composed of finitely many spheres and segments. The finite cactoids $X$ we work with will usually have a natural finite marking $A\subset X$ so that each component $S$ of $X\setminus A$ is either an open arc or a punctured sphere. We refer to the closure $\overline S$ as a \emph{small segment} of $(X,A)$ (or simply a small segment of $X$ if $A$ is understood) in the former case, and as a \emph{small sphere} in the latter case.

Consider a Thurston map $f\colon (S^2,A,\CC)\selfmap$.  
In this section, we discuss how to collapse $f\colon (S^2,A,\CC)\selfmap$ into a totally topologically expanding map $\bar f\colon X_\infty\selfmap$ on a cactoid $X_\infty$. More precisely, writing $\CC=\CC_\bullet \sqcup \CC_-$ and $I=I_\bullet \sqcup I_\circ$, where $I$ is an index set parametrizing small spheres of $(S^2,A,\CC)$, we will collapse curves in $\CC_\bullet$ into points,  annular neighborhoods of curves in $\CC_-$ into segments, small spheres of $I_\bullet$ into points, and small spheres of $I_\circ$ will project into small spheres of $X_\infty$. To obtain the expansion in the quotient, we will assume that small maps associated with $I_\circ$ are Sierpi\'{n}ski maps and $\CC_-$ is generated by bicycles. In the proof, we will construct a finite expanding model  $\bar f, \bar \iota \colon X_2 \rightrightarrows X_1$, and the desired map $\bar f\colon X_\infty\selfmap$ will be the inverse limit of $\bar f, \bar \iota \colon X_{n+1} \rightrightarrows X_n$.

\subsection{The quotient map $ (S^2,A,\CC)\to (X,\bar A)$} 
Consider a marked sphere $(S^2,A)$ with a multicurve $\CC$, and let $I$ be an index set parametrizing the small spheres of $(S^2,A,\CC)$. Fix decompositions  
\begin{equation}
\label{eq:coll data}
\CC=\CC_\bullet\sqcup \CC_-\sp\sp\text{ and }\sp\sp I=I_\bullet\sqcup I_\circ,
\end{equation}
which we call a \emph{collapsing data} for $(S^2,A,\CC)$. We will now describe a procedure collapsing $(S^2, A, \CC)$ into a finite marked cactoid induced by these decompositions.

For every $\gamma\in \CC$, let $\Gamma_\gamma$ be either $\gamma$ (i.e., a degenerate annulus) or a thickened closed annulus $\Gamma_\gamma$ around $\gamma$. In the second case, we foliate the annulus $\Gamma_\gamma$ by curves $\gamma_{t}, t\in [-1,1]$, isotopic to $\gamma$. If $\gamma=\Gamma_\gamma$, then we assume that $\gamma_t=\gamma$ for all $t$. We assume that
\begin{equation}
   \label{eq:gamma in Gamma}  \gamma\subsetneq \Gamma_\gamma\sp\sp\sp\text{ if }\sp \sp \gamma\in \CC_-. 
\end{equation}
Starting with Section~\ref{sss:MonotMapsCact}, we will require a stronger dynamically invariant condition~\eqref{eq:gamma in Gamma:2} instead of~\eqref{eq:gamma in Gamma}. 

Now we perform the following collapsing procedure on $S^2$:
\begin{enumerate}[label=\text{(\Alph*)},font=\normalfont,leftmargin=*]
\item\label{cond:collapse_A} for every $\gamma\in \CC_\bullet$, collapse $\Gamma_\gamma$ to a point; and
\item\label{cond:collapse_B} for every $\gamma\in \CC_-$, collapse $\Gamma_\gamma$ into a closed segment by collapsing every $\gamma_{t}$ (in the foliation of $\Gamma_\gamma$) into a point.
\end{enumerate}
After \ref{cond:collapse_A} and \ref{cond:collapse_B} we obtain a finite cactoid $X'$ whose small spheres and segments are naturally parametrized by $I=I_\bullet \sqcup I_\circ$ and $\CC_-$, respectively. Now we
\begin{enumerate}[label=\text{(\Alph*)},font=\normalfont,leftmargin=*]
\addtocounter{enumi}{2}
\item\label{cond:collapse_C} 
collapse every small sphere of $X'$ in $I_\bullet$ to a point.
\end{enumerate}
We denote by $X$ the resulting finite cactoid and by
\begin{equation}
\label{eq:quot map} \Pi_{\CC_\bullet, \CC_- , I_\bullet,I_\circ}\colon  (S^2,A,\CC) \to (X,\bar A)
\end{equation} the associated quotient map. Here, the pair $(X, \bar A)$ represents the cactoid $X$ marked by 
\[\bar A \coloneq \Pi_{\CC_\bullet, \CC_- , I_\bullet,I_\circ} \left(A\cup \CC_\bullet \cup \bigcup_{\gamma\in \CC}  \{\gamma_{-1} \cup \gamma_{1}\} \right).\]
We say that $\Pi_{\CC_\bullet, \CC_- , I_\bullet,I_\circ}$ and $(X,\bar A)$ are induced by the collapsing data \eqref{eq:coll data}.

The following observations are immediate from the construction: 
small spheres and segments of $(X, \bar A)$ are naturally parameterized by $I_\circ$ and $\CC_-$, respectively;
small spheres and segments intersect at marked points; every small segment connects two marked points in $\bar A$. 

The quotient cactoid $(X,\bar A)$ is unique in the following sense.

\begin{lemma}[Uniqueness of $(S^2,A,\CC)\to (X,\bar A)$]
\label{lem:uniq of X}
 Suppose 
\[\Pi_a\colon (S^2,A, \CC)\to (X_a,\bar A_a)\sp\sp\text{ and }\sp\sp \Pi_b\colon (S^2,A, \CC)\to (X_b,\bar A_b)\]
are two monotone maps realizing the collapsing data~\eqref{eq:coll data}.Then there is a homeomorphism $h\colon  (X_a,\bar A_a)\to  (X_b,\bar A_b)$ such that $h\circ \Pi_a$ and $\Pi_b$ are isotopic via a continuous path of monotone maps $p_t\colon (S^2,A,\CC)\to (X_b,\bar A_b)$ satisfying the above \ref{cond:collapse_A}, \ref{cond:collapse_B}, \ref{cond:collapse_C}.
\end{lemma}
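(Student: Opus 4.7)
The proof splits naturally into constructing $h$ and then the isotopy $p_t$. For the existence of $h$: both cactoids $(X_a,\bar A_a)$ and $(X_b,\bar A_b)$ have the same combinatorial type, prescribed by the collapsing data~\eqref{eq:coll data}. Namely, their small spheres are in canonical bijection with $I_\circ$ via $\Pi_a$ (resp.\ $\Pi_b$), their small segments with $\CC_-$, and their marked points with $A\sqcup\CC_\bullet\sqcup I_\bullet\sqcup\bigsqcup_{\gamma\in\CC_-}\{\gamma_{-1},\gamma_{1}\}$. After pre-composing $\Pi_a$ with a suitable homeomorphism of $S^2$ isotopic to the identity rel $A\cup\CC$, I may arrange that the foliations $(\gamma_t^a)_{t\in[-1,1]}$ and $(\gamma_t^b)_{t\in[-1,1]}$ agree on each $\Gamma_\gamma$ for $\gamma\in\CC_-$; this uses only that two foliations of a closed annulus by isotopic essential curves with matching boundary parametrization are isotopic. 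After this reduction, the two monotone quotient maps have identical point-fibres, so $h\coloneq\Pi_b\circ\Pi_a^{-1}\colon X_a\to X_b$ is a well-defined continuous bijection of compact Hausdorff spaces, hence a homeomorphism, and it carries $\bar A_a$ to $\bar A_b$.

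For the isotopy $p_t$ through monotone maps I would proceed in two stages. First, reverse the homeomorphism of $S^2$ from the previous paragraph through an isotopy rel $A\cup\CC$; this already yields a continuous path from $h\circ\Pi_a$ to a monotone map with fibres identical to those of $\Pi_b$. Second, to connect two monotone maps $S^2\to X_b$ with the same fibres, I invoke a uniform-limit-of-homeomorphisms argument analogous to the one in Section~\ref{sss:PullbackArg:S^2}: on each small sphere (indexed by $I_\circ$) or small segment (indexed by $\CC_-$) of $X_b$ the two maps restrict to monotone surjections with identical fibres, and by Moore's theorem the space of such surjections onto a sphere or an arc is connected. Gluing these local isotopies across the marked-point intersections preserves global continuity and yields the desired $p_t$, which respects \ref{cond:collapse_A}, \ref{cond:collapse_B}, \ref{cond:collapse_C} at every time by construction.

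\textbf{Main obstacle.} The most delicate step is the first stage of the isotopy: realizing the straightening $(\gamma_t^a)\leadsto(\gamma_t^b)$ on each $\Gamma_\gamma$ as a continuous path of monotone quotients that respects \ref{cond:collapse_A}, \ref{cond:collapse_B}, \ref{cond:collapse_C} at every intermediate time. Since the $\Gamma_\gamma$ are pairwise disjoint this can be done annulus by annulus without affecting collapsed small spheres indexed by $I_\bullet$ or the other $\Gamma_{\gamma'}$, but the bookkeeping at the boundary circles $\gamma_{\pm 1}$, which must project throughout the isotopy to the marked endpoints of the corresponding small segment of $X_b$, requires care. Once the foliation isotopies are anchored correctly at these boundaries, the remaining constructions are formal.
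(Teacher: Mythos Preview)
Your approach is essentially the paper's: both recognize that the two cactoids are combinatorially identical (small spheres, segments, and marked points are canonically indexed by $I_\circ$, $\CC_-$, and the collapsing data), and both realize the isotopy $p_t$ by an isotopy of the sphere carrying the fibre partition of $h\circ\Pi_a$ onto that of $\Pi_b$. The paper states this in one line (``there is a homotopy $p'_t$ of $(S^2,A)$ moving $[h\circ\Pi_a]^{-1}(x)$ into $\Pi_b^{-1}(x)$''), while you unpack it into the explicit annulus-by-annulus alignment of the foliations $(\gamma_t)$.

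One simplification: your ``second stage'' is unnecessary. With $h$ defined as $\Pi_b\circ(\Pi_a\circ\phi)^{-1}$ (after your pre-composition by $\phi$), you have $h\circ\Pi_a=\Pi_b\circ\phi^{-1}$ \emph{exactly}, not merely up to a self-homeomorphism of $X_b$. Running $\phi$ back to the identity through an ambient isotopy $\phi_t$ of $(S^2,A,\CC)$ already gives $p_t=\Pi_b\circ\phi_t^{-1}$ connecting $h\circ\Pi_a$ to $\Pi_b$; no appeal to Moore's theorem or connectedness of spaces of monotone surjections is needed. Your identified ``main obstacle'' (that $p_t$ respect \ref{cond:collapse_A}, \ref{cond:collapse_B}, \ref{cond:collapse_C} at every time) is then automatic, since $\Pi_b\circ\phi_t^{-1}$ collapses the annuli $\phi_t(\Gamma_\gamma)$ and small spheres $\phi_t(S_z)$, which are valid choices of the auxiliary data throughout.
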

\begin{proof}
   The cactoids $(X_a,\bar A_a)$ and $(X_b,\bar A_b)$ are parameterized by the collapsing data~\eqref{eq:coll data}. Therefore, there is a homeomorphism $h\colon  (X_a,\bar A_a)\to  (X_b,\bar A_b)$ respecting~\eqref{eq:coll data}. The fibers \[ [h\circ \Pi_a]^{-1}(x)\sp\sp\text{ and }\sp\sp \Pi^{-1}_b(x), \sp\sp x\in X_b\] are described in \ref{cond:collapse_A}, \ref{cond:collapse_B}, \ref{cond:collapse_C}; thus there is a homotopy of $p'_t$ of $(S^2,A)$ moving $[h\circ \Pi_a]^{-1}(x)$ into $\Pi^{-1}_b(x)$ for all $x$. The $p'_t$ induces a required homotopy $p_t$ of monotone maps.
\end{proof}

\subsection{Cactoid correspondences}
\label{ss:bar f}

Let $f\colon (S^2,A) \selfmap$ be a Thurston map and $\CC$ be a multicurve on $(S^2,A)$. Consider the covering 
\begin{equation}
\label{eq:preCactoid:f}
f\colon (S^2,f^{-1}(A),f^{-1}(\CC)) \to (S^2,A,\CC).
\end{equation}
We assume that the sets $I$ and $f^{-1}(I)$ parametrize the small spheres of $(S^2,A,\CC)$ and $(S^2,f^{-1}(A),f^{-1}(\CC))$, respectively. Given the collapsing data \eqref{eq:coll data}, let us set
\begin{equation}
\label{eq:coll data:lift}
f^{-1}(\CC_\bullet, \CC_- , I_\bullet , I_\circ)\coloneq (f^{-1}(\CC_\bullet), f^{-1}(\CC_-) , f^{-1}(I_\bullet),f^{-1}(I_\circ)).
\end{equation}
Clearly, \eqref{eq:coll data:lift} 
specifies a collapsing data for $(S^2,f^{-1}(A),f^{-1}(\CC))$.

The quotient map~\eqref{eq:quot map} induces a monotone equivalence relation $\sim$ on $(S^2,A,\CC)$.  Let $(X_1, \bar A_1)$ be the corresponding marked quotient cactoid.  We define $f^*(\sim)$ to be the (monotone) equivalence relation on $(S^2,f^{-1}(A),f^{-1}(\CC))$  whose equivalence classes are the connected components of the preimages of the equivalence classes of~$\sim$. Then $f^*(\sim)$ defines a quotient map
\begin{equation}
\label{eq:X_1 X_2}
\Pi_{f^{-1}(\CC_\bullet, \CC_- , I_\bullet , I_\circ)}  \colon (S^2,f^{-1}(A),f^{-1}(\CC))\to (X_2,\bar A_2),
\end{equation} 
induced by the collapsing data \eqref{eq:coll data:lift}. 
Furthermore, the map $f$ induces a branched covering $\bar f \colon (X_2,\bar A_2)\to (X_1,\bar A_1)$ satisfying the following commutative diagram
\begin{equation}
\label{eq:bar f}
{\begin{tikzpicture}
  \matrix (m) [matrix of math nodes,row sep=4em,column sep=7em,minimum width=2em]
  {
     (S^2,f^{-1}(A),f^{-1}(\CC)) &(S^2,A,\CC) \\
     (X_2,\bar A_2) & (X_1,\bar A_1) \\};
  \path[-stealth]
    (m-1-1) edge node [left] {$\Pi_{f^{-1}(\CC_\bullet, \CC_- , I_\bullet , I_\circ)} $} (m-2-1)
            edge  node [above] {$f$} (m-1-2)
    (m-2-1.east|-m-2-2) edge node [below] {$\bar f$}
          (m-2-2)
    (m-1-2) edge node [right] {$\Pi_{ \CC_\bullet, \CC_- , I_\bullet , I_\circ }$} (m-2-2);
\end{tikzpicture}}.
\end{equation}

The cactoid map $\bar f$ is unique in the following sense: the domain and target cactoids are unique in the sense of Lemma~\ref{lem:uniq of X} and an isotopy for $f$ induces an isotopy for $\bar f$.

We note that the map  $\bar f\colon  X_2\setminus \bar A_2 \to X_1\setminus \bar A_1$ is a covering, but it may have different degrees on different components of $X_2\setminus A_2$. Moreover, for every marked small sphere $(\bar S_z,\bar A_{z,2})$ in $(X_2,\bar A_2)$, we have a branched covering \[\bar f\mid (\bar S_z,\bar A_{z,2})\colon (\bar S_z,\bar A_{z,2})\to (\bar S_{f(z)},\bar A_{f(z),1}),\]
where $ (\bar S_{f(z)},\bar A_{f(z),1})$ is a marked small sphere of $(X_1,\bar A_1)$.

\subsubsection{Monotone maps between cactoids} 
\label{sss:MonotMapsCact}
 Consider now a Thurston map $f\colon (S^2,A,\CC)\selfmap$ and view it as a correspondence
\[f, i \colon (S^2,f^{-1}(A),f^{-1}(\CC)) \rightrightarrows (S^2,A,\CC)\] as in \eqref{eq:corr:f_i}, i.e., $f$ is a covering map (the same as the original map) and $i$ is a forgetful monotone map  (see Sections~\ref{ss:Not Conv} and \ref{ss: dec and amal} for the conventions). Below we will fix $f$ and isotope $i$ to a forgetful monotone map $\iota$ so that the new correspondence $f,\iota$ projects to a cactoid correspondence (Lemma~\ref{lem:bar iota}).

Let $\CC_\text{np}$ be the invariant submulticurve of $\CC$ generated by all its bicycles and all its non-principal unicycles. Instead of~\eqref{eq:gamma in Gamma}, we will from now on require
\begin{equation}
   \label{eq:gamma in Gamma:2}  \gamma\subsetneq \Gamma_\gamma\sp\sp\sp\text{ if and only if }\sp \sp \gamma\in \CC_\text{np}. 
\end{equation}
In this case, we say that we have \emph{dynamical} collapsing data \eqref{eq:coll data}.



 Recall that the index set $I$ parametrizes the small spheres of $(S^2,A,\CC)$. Following the notation in Section \ref{ss: dec and amal}, $f\circ i^*\colon I\selfmap$ describes the dynamics of small spheres of $(S^2,A,\CC)$. Let  $\Pi_2\coloneq\Pi_{f^{-1}(\CC_\bullet, \CC_- , I_\bullet , I_\circ)} $ and $\Pi_1\coloneq\Pi_{ \CC_\bullet, \CC_- , I_\bullet , I_\circ }$ be the quotient maps as in ~\eqref{eq:bar f}. Recall also that $I_\circ, \CC_-$ and $f^{-1}(I_\circ), f^{-1}(\CC_-)$ parametrize the small spheres and segments of the finite cactoids $X_1$ and $X_2$, respectively.

Let $\Gamma_\CC$ be the set of all curves $\gamma_t$ in $\Gamma_\gamma$,  $\gamma\in \CC$. For each $\gamma\in \CC$, let $\gamma^+,\gamma^-$ be the two curves in $f^{-1}(\Gamma_\CC)= \{f^{-1}(\gamma)\colon \gamma \in \Gamma_\CC\}$ such that the closed (possibly degenerate) annulus between $\gamma^+,\gamma^-$ contains all curves in $f^{-1}(\Gamma_\CC)$ that are isotopic to $\gamma$. Set $\bar A_2^{(1)}\coloneq \Pi_2(A) \cup \bigcup_{\gamma\in \CC} \Pi_2(\gamma^+ \cup \gamma^-).$






  
\begin{lemma}
\label{lem:bar iota}
Suppose we are given a dynamical collapsing data \eqref{eq:coll data} that satisfies 
\begin{equation}
\label{lem:bar iota:assump}
\CC_-\subset  f^{-1}(\CC_-) \sp\sp\text{ and }\sp\sp f\circ i^* (I_\circ)\subset I_\circ.
\end{equation}
Then the forgetful map $i\colon (S^2,f^{-1}(A),f^{-1}(\CC)) \to (S^2,A,\CC)$ is homotopic rel.\ $A$ to a forgetful monotone map $\iota\colon (S^2,f^{-1}(A))\to (S^2, A)$ so that the following holds:
\begin{enumerate}[label=\text{(\roman*)},font=\normalfont,leftmargin=*]

\item\label{lem:bar iota:assump:cond1}  $\iota$ projects to a forgetful monotone map $\bar \iota\colon (X_2, \bar A_2)\to (X_1, \bar A_1)$, that is, the following diagram commutes:

\begin{equation}
\label{eq:bar iota}
{\begin{tikzpicture}
  \matrix (m) [matrix of math nodes,row sep=4em,column sep=7em,minimum width=2em]
  {
     (S^2,f^{-1}(A),f^{-1}(\CC)) &(S^2,A,\CC) \\
     (X_2,\bar A_2) & (X_1,\bar A_1) \\};
  \path[-stealth]
    (m-1-1) edge node [left] {$\Pi_2 $} (m-2-1)
            edge  node [above] {$\iota $} (m-1-2)
    (m-2-1.east|-m-2-2) edge node [below] {$\bar \iota$}
          (m-2-2)
    (m-1-2) edge node [right] {$\Pi_1$} (m-2-2);
\end{tikzpicture}}
\end{equation}

\item\label{lem:bar iota:assump:cond2} for every small sphere $\bar S_{z,2}$ of $X_2$, $\bar \iota|\bar S_{z,2}$ is a homeomorphism if $\bar S_{z,2}$ is in $i^*(I_\circ)$; otherwise, it is constant.

\item\label{lem:bar iota:assump:cond3} for every small segment $T_{\gamma}$ of $X_2$ associated with $\gamma\in f^{-1}(\CC_-)$, $\bar \iota|T_{\gamma}$ is a homeomorphism if $\gamma$ is isotopic to a curve in $\CC_-$; otherwise, $\bar \iota|T_{\gamma}$ is constant.
\end{enumerate}

Moreover, the map $\bar \iota$ is unique up to isotopy rel.\ $\bar A_2^{(1)}$ (among maps satisfying the desired conditions).
\end{lemma}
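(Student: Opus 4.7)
The plan is to construct $\iota$ within the isotopy class of $i$ by prescribing its behavior piece-by-piece on the natural decomposition of $(S^2, f^{-1}(A), f^{-1}(\CC))$ into small spheres $\widehat S_{z'}$ (for $z'\in f^{-1}(I)$) and thickened annular neighborhoods $\Gamma_\gamma$ (for $\gamma \in f^{-1}(\CC)$), then verify commutativity of \eqref{eq:bar iota} and properties \ref{lem:bar iota:assump:cond2}--\ref{lem:bar iota:assump:cond3} fiber-by-fiber. The two dynamical assumptions translate, respectively, to $i^*(I_\circ) \subset f^{-1}(I_\circ)$ and $\MC(f^{-1}(\CC_-)) \supset \CC_-$; these align the cactoid structures on $X_1$ and $X_2$ so that the foliations and small-sphere partitions of $X_2$ are sent into compatible pieces of $X_1$, rather than being forced into inconsistent fibers.

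First, on each essential small sphere $\widehat S_{z'}$ with $z' = i^*(z)$, I let $\iota$ be a homeomorphism onto $S_z$ (after forgetting the extra marked points of $f^{-1}(A) \setminus A$); this agrees with $i$ up to isotopy rel $A$. The inclusion $i^*(I_\circ) \subset f^{-1}(I_\circ)$ guarantees that the essential spheres surviving in $X_2$ are precisely those whose $\iota$-images survive in $X_1$, yielding \ref{lem:bar iota:assump:cond2}. On non-essential spheres and essential spheres with $z \in I_\bullet$, $\iota$ collapses monotonically into a region that $\Pi_1$ already identifies to a point. Next, on each $\Gamma_\gamma$ with $\gamma$ isotopic to $\delta \in \CC$, I choose $\iota$ to respect the foliations, sending leaves $\gamma_t$ to leaves $\delta_s$; this again agrees with $i$, which merely squeezes $\Gamma_\gamma$ onto $\delta$ up to homotopy. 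When $\delta \in \CC_-$, the assumption $\CC_- \subset f^{-1}(\CC_-)$ combined with \eqref{eq:gamma in Gamma:2} ensures both $\Gamma_\gamma$ and $\Gamma_\delta$ are genuinely thickened, so $\bar\iota$ projects to a homeomorphism of segments, giving \ref{lem:bar iota:assump:cond3}. When $\delta \in \CC_\bullet$ or $\gamma$ is peripheral, $\iota$ contracts consistently with $\Pi_1$ collapsing the target to a point. Commutativity of \eqref{eq:bar iota} is then immediate from fiber inspection: each fiber of $\Pi_2$ (a collapsed annulus, a foliation leaf, or a collapsed small sphere) maps under $\iota$ into the corresponding fiber of $\Pi_1$.

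For uniqueness of $\bar\iota$ up to isotopy rel $\bar A_2^{(1)}$: by \ref{lem:bar iota:assump:cond2}--\ref{lem:bar iota:assump:cond3}, $\bar\iota$ restricts on each piece (small sphere or small segment) of $X_2$ as either a homeomorphism onto its image or a constant, with values on the marked boundary points lying in $\bar A_2^{(1)} = \Pi_2(A) \cup \bigcup_{\gamma \in \CC} \Pi_2(\gamma^+ \cup \gamma^-)$ determined by the forgetful identification and the cactoid dynamics. Between these pieces, two such choices differ by an isotopy rel $\bar A_2^{(1)}$, via a standard Alexander-trick argument on disks and intervals with marked endpoints. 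The main obstacle is the global consistency of the piecewise construction, specifically aligning the foliations of $\Gamma_\gamma$ across all isotopy classes simultaneously and ensuring that no $\CC_-$-annulus of $X_2$ is sent into a $\CC_\bullet$-point of $X_1$, nor any essential $I_\circ$-sphere into a collapsed region; the two dynamical assumptions are exactly what make this alignment possible.
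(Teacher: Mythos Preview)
Your proposal is correct and follows essentially the same strategy as the paper's proof, with one organizational difference worth noting. You build $\iota$ on $S^2$ piece-by-piece (on small spheres and annular pieces) and then verify it descends to $\bar\iota$ on the cactoids. The paper reverses this order: it first defines $\bar\iota$ directly on the cactoid $X_2$ (homeomorphism on the small spheres indexed by $i^*(I_\circ)$, then extended over segments using $\CC_-\subset f^{-1}(\CC_-)$), and only afterward \emph{lifts} $\bar\iota$ to $\iota$, using that the non-trivial fibers of $\Pi_1,\Pi_2$ are closed surfaces with Jordan boundaries.

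The paper's order is a bit cleaner precisely because it sidesteps the ``global consistency'' issue you flag in your last paragraph: once $\bar\iota$ is defined on the quotient, lifting is automatic and no alignment of foliations across pieces of $S^2$ needs to be negotiated by hand. Your approach trades that for a more concrete description of $\iota$ itself. Either way, the two dynamical hypotheses play exactly the role you identify, and your uniqueness argument via piecewise Alexander-type isotopies matches the paper's (terse) claim that the combinatorics determines $\bar\iota$ up to isotopy rel $\bar A_2^{(1)}$. One small wording issue: for essential spheres with $z\in I_\bullet$ you should not say $\iota$ ``collapses'' them --- $\iota$ is still a homeomorphism there; it is $\Pi_1$ that does the collapsing of the image.
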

%

\begin{proof}
The combinatorics uniquely determines the small spheres and segments of $X_2$ where $\bar \iota$ is a homeomorphism. This allows to define $\bar \iota$ and $\iota$ is then a lift of $\bar\iota$. 

More precisely, every small sphere $ \bar S_{z,1}$ of $X_1$ is identified with a unique component $S_{z,1}$ of $S^2\setminus \CC$.
Let $S_{i^*(z),2}$ be the unique component of $S^2\setminus f^{-1}(\CC)$ such that $S_{i^*(z),2}$ is homotopic to $S_{z,1}$ rel.\ $A$, see Section \ref{ss: dec and amal}.
Since $f\circ i^* (I_\circ)\subset I_\circ$, the component $S_{i^*(z),2}$ is identified with a unique small sphere $\bar S_{z,2}$ of $X_2$. Up to isotopy, this uniquely specifies a homeomorphism 
\begin{equation}
\label{eq:prf:lem:bar iota}
\bar \iota \colon (\bar S_{z,2},\bar A^{(1)}_2\cap \bar S_{z,2}) \to (\bar S_{z,1},\bar A_1\cap \bar S_{z,1}).
\end{equation}
Let $X'_2\subset X_2$ be the union of all small spheres $\bar S_{z,2}$ arising in~\eqref{eq:prf:lem:bar iota}. It follows from $\CC_-\subset  f^{-1}(\CC_-)$ that for every segment $T$ of $X_1$ there is at least one segment in $X_2$ with image in $T$. Therefore, $\bar \iota \colon X'_2\to X_1$ extends to a required map $\bar \iota \colon X_2\to X_1$.

Since non-trivial fibers of $\Pi_2=\Pi_{f^{-1}(\CC_\bullet, \CC_- , I_\bullet , I_\circ)} $ and $\Pi_1=\Pi_{ \CC_\bullet, \CC_- , I_\bullet , I_\circ }$
are closed surfaces with Jordan boundaries, $\bar \iota$ lifts to a required monotone map $\iota$. 
\end{proof}

Since $\bar \iota \circ  \Pi_2=\Pi_1\circ \iota $, and $\iota$ is homotopic to the identity rel.\ $A$, we have:

\begin{corollary}
\label{cor:Isot:Pi_1 Pi_0}
The monotone maps $\bar \iota\circ \Pi_2, \Pi_1\colon (S^2,A)\to (X_1,\bar A_1)$ are homotopic rel.\ $A$.
\end{corollary}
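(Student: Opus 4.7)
The plan is to read the corollary directly off the preceding lemma together with the definition of ``forgetful monotone map''.

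First, I would recall from the conventions in Section~\ref{ss:Not Conv} that a forgetful monotone map $\iota\colon (S^2,f^{-1}(A))\to (S^2,A)$ is, by definition, a continuous monotone map homotopic to $\id$ rel.\ $A$. Hence the $\iota$ produced by Lemma~\ref{lem:bar iota} comes equipped with a homotopy $H\colon S^2\times [0,1]\to S^2$ with $H_0=\iota$, $H_1=\id$, and $H_t|_A=\id_A$ for all $t\in[0,1]$.

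Next, I would use the commutativity of diagram~\eqref{eq:bar iota} established in Lemma~\ref{lem:bar iota}\ref{lem:bar iota:assump:cond1}, which gives the identity $\bar\iota\circ\Pi_2=\Pi_1\circ\iota$ of maps $(S^2,f^{-1}(A))\to (X_1,\bar A_1)$. Post-composing the homotopy $H$ with $\Pi_1$ yields a continuous family
\[ \Pi_1\circ H_t\colon (S^2,A)\to (X_1,\bar A_1),\qquad t\in[0,1],\]
of monotone maps (each $\Pi_1\circ H_t$ is monotone since both factors are). Since $H_t|_A=\id_A$ and $\Pi_1$ sends $A$ into $\bar A_1$, this family is constant on $A$, hence a homotopy rel.\ $A$.

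Finally, the endpoints identify: at $t=0$ we get $\Pi_1\circ\iota=\bar\iota\circ\Pi_2$ by the commutative diagram, and at $t=1$ we get $\Pi_1\circ\id=\Pi_1$. Thus $\bar\iota\circ\Pi_2\simeq_A \Pi_1$, which is exactly the statement of the corollary. There is no genuine obstacle here; the substance of the argument is entirely absorbed into Lemma~\ref{lem:bar iota} and the convention that ``forgetful monotone'' means ``isotopic to the identity rel.\ the smaller marked set''.
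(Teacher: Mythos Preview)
Your proof is correct and matches the paper's one-line argument exactly: the paper simply notes that $\bar\iota\circ\Pi_2=\Pi_1\circ\iota$ and that $\iota$ is homotopic to the identity rel.\ $A$, which is precisely the homotopy $\Pi_1\circ H_t$ you write down. The only superfluous remark is the parenthetical that each $\Pi_1\circ H_t$ is monotone---this is neither needed (the corollary asks only for a homotopy rel.\ $A$) nor guaranteed by the definition of a forgetful monotone map (the homotopy $H$ to the identity need not go through monotone maps).
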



\subsubsection{Iterating correspondences}\label{sss: iter_corres}
For a Thurston map $f\colon (S^2,A,\CC)\selfmap$ consider the backward iteration:
\begin{equation}
\label{eq:f:back iter}
 \dots\overset{f}{\longrightarrow}(S^2,f^{-2}(A),f^{-2}(\CC))\overset{f}{\longrightarrow}(S^2,f^{-1}(A),f^{-1}(\CC))\overset{f}{\longrightarrow} (S^2,A,\CC).
\end{equation}
Pulling back the forgetful map $\iota=\iota_1$ from Lemma~\ref{lem:bar iota}, we obtain the maps
\begin{equation}
\label{eq:iota:back iter}
 \dots\overset{\iota_3}{\longrightarrow}(S^2,f^{-2}(A),f^{-2}(\CC))\overset{\iota_2}{\longrightarrow}(S^2,f^{-1}(A),f^{-1}(\CC))\overset{\iota_1}{\longrightarrow} (S^2,A,\CC)
\end{equation}
with $f\circ \iota_{n+1}=\iota_n\circ f$ such that the pair $f,\iota_n$ induces via the monotone maps
\[ \Pi_n \colon (S^2,f^{-n+1}(A), f^{-n+1}(\CC)) \to (X_n,\bar A_n)\]
(iterated lifts of $\Pi_1$) the cactoid correspondence $\bar f,\bar \iota \colon ( X_{n+1},\bar A_{n+1})\rightrightarrows ( X_{n},\bar A_{n})$. We obtain the inverse system:
\begin{equation}
\label{eq:f iota:back iter}
\bar f,\bar \iota\colon  \dots \rightrightarrows (X_3,\bar A_3)\rightrightarrows (X_2,\bar A_2)\rightrightarrows (X_1,\bar A_1)
\end{equation}
and we denote by $\bar f\colon X_\infty\selfmap$ the inverse limit of~\eqref{eq:f iota:back iter} with respect to $\bar \iota$.

\subsection{Expanding cactoid correspondences}  We begin by extending the notion of topological expansion (Section \ref{subsec:top_exp}) to correspondences $\bar f,\bar \iota\colon (X_2,\bar A_2)\rightrightarrows (X_1,\bar A_1)$. 

Let $\UU=(U_j)_{j\in J}$ be a finite cover of $(X_1,\bar A_1)$ by connected sets. We denote by $\bar f^*(\UU)$ the finite cover of $(X_2,\bar A_2)$ consisting of connected components of $\bar f^{-1}(U_j)$ over all $j\in J$. Then \[\bar \iota\circ \bar f^*(\UU)\coloneq \{\bar \iota(U)\mid U\in \bar f^*(\UU)\}\]
is a cover of $X_1$.

We say that the correspondence $\bar f,\bar \iota\colon (X_2,\bar A_2)\rightrightarrows (X_1,\bar A_1)$ is \emph{topologically expanding} if there is an open cover $\UU=(U_s)_{s\in S}$ of $(X_1,\bar A_1)$ by connected open sets such that the maximal diameter of components of $\UU_n\coloneq (\bar \iota \circ \bar f^*)^n(\UU)$ tends to $0$ as $n\to \infty$. Note that sets in $\UU_n$ need not be open.

We will now specify the discussion to the case when $f\colon(S^2,A,\CC)\selfmap$ is a formal amalgam of a B\"ottcher expanding map $f_{\text{BE}}\colon(S^2,A,\CC)\selfmap$, see Lemma~\ref{lem:FormAmalg}. Let us recall that $f$ is the gluing of the map~\eqref{eq:sph map} on a union $\widetilde S_\blowup$ of finitely many spheres and the annular map~\eqref{eq:ann map}. Moreover, connected components of $\widetilde S_\blowup$ and $\AA$ form a backward-invariant partition for $f$. Below we will argue that, up to isotopy of $\iota$, all relevant maps respect the partition by $\widetilde S_\blowup,\AA$. Then we will construct semi-conjugacies from $f$ and $f_\text{BE}$ towards the limiting cactoid maps and describe the fibers of the semi-conjugacies.  


Condition~\eqref{eq:gamma in Gamma:2} allows us to assume that the annuli $\Gamma_\gamma$ in Steps~\ref{cond:collapse_A},~\ref{cond:collapse_B} coincide with the annuli in $\AA$. Therefore,
\[\Pi_1=\Pi_{\CC_\bullet, \CC_- , I_\bullet,I_\circ}\colon  (S^2,A,\CC) \to (X_1,\bar A_1) \sp\sp\sp\sp  \big(\text{see~\eqref{eq:quot map}}\big)\] 
respects the partition by $\widetilde S_\blowup,\AA$: for every sphere $S_i$ in $\widetilde S_\blowup$ (i.e., for every connected component of $\widetilde S_\blowup$) its image $\Pi_1(S_i)$ is either a singleton or a small sphere of $X_1$ and $\Pi_1(\AA)\subset \bar A_1\cup \{\text{segments of $X_1$}\}$.

Since the partition by $\widetilde S_\blowup,\AA$ is backward-invariant, $\Pi_2$ is the lift of $\Pi_1$, and $\bar \iota$ respects the partition of $X_2$ (Conditions~\ref{lem:bar iota:assump:cond2} and~\ref{lem:bar iota:assump:cond3}), we obtain the following piece-wise conditions:
 \begin{enumerate}[label=\text{(PW\arabic*)},font=\normalfont,leftmargin=*]
 \item\label{cond:PW:1}  for every small sphere $S_i$ in $\widetilde S_\blowup$ its image $\bar \iota\circ \Pi_2(S_i)$ is either a singleton or a small sphere of $X_1$;
 \item\label{cond:PW:2}  $\bar \iota\circ \Pi_2(\AA)\subset \bar A_1\cup \{\text{segments of $X_1$}\}$; and hence
 \item\label{cond:PW:3}  $\iota(\widetilde S_\blowup)\subset \widetilde S_\blowup$ and $\iota(\AA)\subset \AA$.
 \end{enumerate}

\begin{lemma}
\label{lem:bar iota:exp} For a formal amalgam $f\colon(S^2,A,\CC)\selfmap$ as above and under Assumption~\eqref{lem:bar iota:assump} of Lemma~\ref{lem:bar iota}, assume in addition that 
\begin{enumerate}[label=\text{(\Alph*)},font=\normalfont,leftmargin=*]
\item all small maps of $(S^2,A,\CC)$ parametrized by $ f\circ \iota^*\colon I_\circ\selfmap$ are Sierpi\'{n}ski;
\item $\CC_-$ is generated by its bicycles: if $\CC^\bi_-$ is the union of all bicycles in $\CC_-$, then $\CC_-\subset f^{-n}(\CC^\bi_-)$ for $n\gg 1$.
\end{enumerate}
Let $\bar f,\bar \iota\colon (X_2, \bar A_2) \rightrightarrows (X_1, \bar A_1)$ be the correspondence between finite cactoids constructed in Section \ref{ss:bar f}.
Then, by isotoping $\iota$, we can assume that this correspondence is topologically expanding and $\iota,\Pi_2,\bar\iota$ still satisfy Conditions~\ref{lem:bar iota:assump:cond1},\ref{lem:bar iota:assump:cond2},\ref{lem:bar iota:assump:cond3} from Lemma \ref{lem:bar iota} and piece-wise Conditions~\ref{cond:PW:1}~\ref{cond:PW:2},~\ref{cond:PW:3}.  
\end{lemma}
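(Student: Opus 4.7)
The plan is to construct the required isotopy of $\iota$ and the cover $\UU$ of $X_1$ piece-wise across the natural partition of $X_1$ into small Sierpi\'{n}ski spheres indexed by $I_\circ$, small segments indexed by $\CC_-$, and small neighborhoods of marked points in $\bar A_1$. Conditions~\ref{cond:PW:1},~\ref{cond:PW:2},~\ref{cond:PW:3} imply that any isotopy of $\iota$ preserving the partition $\widetilde S_\blowup \cup \AA$ automatically preserves Conditions~\ref{lem:bar iota:assump:cond1},~\ref{lem:bar iota:assump:cond2},~\ref{lem:bar iota:assump:cond3}, so we are free to modify $\iota$ on each component of $\widetilde S_\blowup$ and on each annulus of $\AA$ independently and then patch.

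On the sphere side, for each periodic small sphere $S_z$ of $\widetilde S_\blowup$ with $z\in I_\circ$, the first return map $\widehat f_z\colon \widehat S_z \selfmap$ is Sierpi\'{n}ski by assumption. Combining Proposition~\ref{thm:sierpinski} with the pullback argument of Section~\ref{sss:PullbackArg:S^2} applied to $\widehat f_z$ produces a totally topologically expanding dynamics on the sphere $\widehat S_z / \sim_{\Fat(\widehat f_z)}$, which we identify with the small sphere $\bar S_z \subset X_1$ and its dynamics $\bar f|\bar S_z$. Isotope $\iota$ along the $f$-orbit of $S_z$ so that $\bar\iota$ realizes this semi-conjugacy on $\bar S_z$; then the iterated pullback of a Lebesgue-number cover of $\bar S_z$ has mesh tending to zero on $\bar S_z$.

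On the segment side, the hypothesis $\CC_-\subset f^{-n}(\CC^\bi_-)$ reduces matters to curves in a primitive bicycle $\Sigma\subset \CC^\bi_-$, after which every other segment inherits contraction via iterated pullback by $\bar f$. For such $\Sigma$ and $\gamma\in\Sigma$, the definition of a bicycle yields an $N$ such that $f^{-N}(\gamma)\cap \AA_\gamma$ has at least two components isotopic to $\gamma$; correspondingly, $\bar f^{-N}(T_\gamma)$ contains at least two disjoint sub-segments in $X_2$ that $\bar\iota$ maps injectively onto strictly proper subintervals of $T_\gamma$. Isotoping the annular map~\eqref{eq:ann map} and hence $\iota|\AA$, we arrange that these subintervals together cover at most a uniform fraction $\lambda<1$ of the length of $T_\gamma$; a Perron--Frobenius argument on the finite bicycle then yields uniform exponential shrinkage of mesh on all segments of~$X_1$.

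Finally, the sphere-side and segment-side covers glue across the finitely many marked points of $\bar A_1$ where spheres and segments meet. The main technical obstacle is coordinating the two independent isotopies along the boundary circles $\gamma^\pm$ where $\AA$ meets $\widetilde S_\blowup$ so that $\bar\iota$ remains continuous: the annular-side isotopy must agree on $\Pi_2(\gamma^\pm)$ with the sphere-side prescription coming from the Sierpi\'{n}ski semi-conjugacy of the previous paragraph. We resolve this by first fixing the sphere-side isotopy, which pins down $\bar \iota$ on $\Pi_2(\gamma^\pm)$, and then selecting the annular-side isotopy within the (non-empty) subspace of maps realizing this prescribed boundary behavior; the two contraction rates then combine into uniform shrinkage of mesh on all of~$X_1$.
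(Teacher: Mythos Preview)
Your approach is essentially the same as the paper's: handle small spheres via the Sierpi\'{n}ski-collapse expansion and segments via the bicycle-induced (piecewise linear) expansion, then combine. The paper sidesteps your boundary-coordination ``obstacle'' by simply isotoping $\iota$ rel $\partial\widetilde S_\blowup = \partial\AA$ on both the sphere and annular sides (equivalently, isotoping $\bar\iota$ rel the marked points, since the boundary circles collapse to points under $\Pi_2$), so no matching across $\gamma^\pm$ is ever needed.
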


Observe that Assumptions~\eqref{lem:bar iota:assump} in Lemma~\ref{lem:bar iota} imply that every periodic cycle of $ f\circ \iota^*\colon I\selfmap$ is either in $I_\circ$ or in $I_\bullet$, so Condition (A) makes sense.

\begin{proof}
We have already shown how to satisfy Conditions~\ref{lem:bar iota:assump:cond1},\ref{lem:bar iota:assump:cond2},\ref{lem:bar iota:assump:cond3},~\ref{cond:PW:1}~\ref{cond:PW:2}, \ref{cond:PW:3}. It remains to isotope $\iota$ rel.\ $\partial \widetilde S_\blowup$ so that the induced correspondence is topologically expanding. 

By \cite{Meyer_Sierp, BD_Exp},  collapsing all Fatou components in a Sierpi\'{n}ski map  $g\colon(S^2, B)\selfmap$ results in a totally  expanding map $\bar g\colon(S^2, \bar B)$.  Assumption (A) implies that we can isotope $\bar \iota$ within small spheres so that $\bar f \circ \bar \iota^*$ is expanding on small spheres in $(X_1,\bar A_1)$. This defines isotopy of $\iota$ within $\widetilde S_\blowup$.

Let $\gamma$ be a curve in a primitive bicycle in $\CC_-$ and denote by $T_\gamma$ the corresponding segment in the cactoid $(X_1, \bar A_1)$.  
Assumptions of Lemma~\ref{lem:bar iota} imply that we can modify $\bar \iota$ by an isotopy so that $\bar f \circ \bar \iota^*$ is piecewise linear on each segment in the cactoid.  Assumption (B) implies that $\bar f \circ \bar \iota^*$ is expanding on each segment $T_\gamma$.  We then lift this isotopy of $\bar \iota$ to an isotopy of $\iota$ on $\AA$ rel $\partial \AA$.

Since $\bar f \circ \bar \iota^*$ is expanding on small spheres and segments, the pair $\bar f,\bar \iota$ defines a topologically expanding correspondence. 

\end{proof}



\subsubsection{A semi-conjugacy from $f\colon S^2\selfmap$ to $\bar f\colon X_\infty\selfmap$}
Below, we follow the notation from Section \ref{sss: iter_corres}.

\begin{lemma}[Pullback argument]
\label{lem:PullBackArg} Under the assumptions of Lemma~\ref{lem:bar iota:exp}, the limits
\[ \rho_m\coloneq \lim_{n\to \infty} \bar\iota^n\circ \Pi_{n+m}\colon S^2\to X_m\]
exist as monotone maps and they semi-conjugate $f$ to $(\bar f, \bar \iota)$:
\[ \rho_m \circ f= \bar f\circ \rho_{m+1}\sp\sp\text{ and }\sp\sp \sp \rho_m=\bar \iota\circ \rho_{m+1}.\] 

Moreover, $\rho_1$ is piece-wise  rel.\ $\widetilde S_\blowup, \AA$: it maps spheres of $\widetilde S_\blowup$ onto spheres or singletons of $X_1$ and we have $\rho_1(\AA)\subset \bar A_1\cup \{\text{segments of $X_1$}\}$.
\end{lemma}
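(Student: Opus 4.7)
The plan is to adapt the pullback argument of Section~\ref{sss:PullbackArg:S^2} to the cactoid correspondence $\bar f, \bar \iota \colon X_{k+1} \rightrightarrows X_k$, using the topological expansion established in Lemma~\ref{lem:bar iota:exp} as the engine of convergence. Writing $\psi_{n,m} \coloneq \bar \iota^n \circ \Pi_{n+m} \colon S^2 \to X_m$, the goal is to show that $(\psi_{n,m})_n$ is uniformly Cauchy in a way controlled by the covers $\UU_n \coloneq (\bar \iota \circ \bar f^*)^n(\UU)$, so that $\rho_m = \lim_n \psi_{n,m}$ exists, is continuous, and is monotone.

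The first step is to produce the comparison isotopies. Applying Corollary~\ref{cor:Isot:Pi_1 Pi_0} at every level of the backward tower~\eqref{eq:f:back iter} yields, for each $k$, a homotopy $H_k$ in $X_k$ rel.\ $f^{-(k-1)}(A)$ between $\Pi_k$ and $\bar \iota \circ \Pi_{k+1}$. Using that $\bar f \colon X_{k+1} \to X_k$ is a branched covering and that $\bar f \circ \bar \iota = \bar \iota \circ \bar f$ (the projection of $f \circ \iota_{n+1} = \iota_n \circ f$ from Section~\ref{sss: iter_corres}), I would choose each $H_{k+1}$ as a $\bar f$-lift of $H_k$. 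The discrepancy between $\psi_{n+1,m}$ and $\psi_{n,m}$ is then the push-forward $\bar \iota^n(H_{n+m})$, an isotopy in $X_m$ whose tracks are $\bar \iota^n$-images of $\bar f^{-n}$-pullbacks of the tracks of $H_m$. Because $\mesh(\UU_n) \to 0$ by Lemma~\ref{lem:bar iota:exp}, a geometric-series bound patterned on~\eqref{eq:topol expans} produces a uniform constant $T$ controlling the combinatorial length of all relevant tracks simultaneously, so $(\psi_{n,m})_n$ is uniformly Cauchy and $\rho_m$ exists as a continuous map.

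Monotonicity of each $\psi_{n,m}$ is immediate (preimages of points under monotone maps are connected, and $\bar \iota$ has connected fibers so that $\Pi_{n+m}^{-1}(\bar \iota^{-n}(\cdot))$ remains connected), and monotonicity of $\rho_m$ follows from the controlled-track convergence, exactly as in the sphere pullback argument of Section~\ref{sss:PullbackArg:S^2}. The semi-conjugacy $\rho_m \circ f = \bar f \circ \rho_{m+1}$ then drops out of the diagram~\eqref{eq:bar f} after passing to the limit, using $\bar f \circ \bar \iota = \bar \iota \circ \bar f$; the compatibility $\rho_m = \bar \iota \circ \rho_{m+1}$ is the index shift $\bar \iota \circ \lim_n \bar \iota^n \circ \Pi_{n+m+1} = \lim_k \bar \iota^k \circ \Pi_{k+m}$. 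The piece-wise description of $\rho_1$ is inherited from Conditions~\ref{cond:PW:1},~\ref{cond:PW:2},~\ref{cond:PW:3}, which are preserved by every $\psi_{n,1}$ and hence by the uniform limit. The main obstacle is the second step: bounding the tracks of $\bar \iota^n(H_{n+m})$ in $X_m$ via the expansion of $\bar f, \bar \iota$ requires careful bookkeeping of how iterated $\bar f$-pullback and iterated $\bar \iota$-push-forward interact across the distinct finite cactoids of the tower, and of how the cover $\UU$ on $X_1$ and its iterates $\UU_n$ genuinely control the pushed-down tracks on each level.
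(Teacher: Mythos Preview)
Your proposal is correct and follows essentially the same pullback argument as the paper's proof. The paper is slightly more direct in one respect: it chooses the base homotopy $h_1$ between $\Pi_1$ and $\bar\iota\circ\Pi_2$ so as to respect the partition by $\widetilde S_\blowup,\AA$ from the start (using Conditions~\ref{lem:bar iota:assump:cond1}--\ref{lem:bar iota:assump:cond3} and~\ref{cond:PW:1}--\ref{cond:PW:3}) and then lifts via $f$ on the domain together with $\bar f$ on the target, so that every lifted $h_n$ is piece-wise by construction and the last claim about $\rho_1$ is immediate rather than recovered after the limit.
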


\begin{proof}
The existence of $\rho_m$ follows from the Pullback Argument, see Section \ref{sss:PullbackArg:S^2}. Namely, by Corollary~\ref{cor:Isot:Pi_1 Pi_0}, we have a homotopy $h_1$ rel.\ $A$ between $\bar\iota\circ \Pi_2$ and $\Pi_1$. Moreover, we can assume that $h_1$ respects the partition by $\widetilde S_\blowup, \AA$ because of Conditions~\ref{lem:bar iota:assump:cond1},\ref{lem:bar iota:assump:cond2},\ref{lem:bar iota:assump:cond3},~\ref{cond:PW:1}~\ref{cond:PW:2}, \ref{cond:PW:3}. Since $f\colon  (S^2,f^{-n-1}(\AA),f^{-n-1}(\CC))\to (S^2,f^{-n}(\AA),f^{-n}(\CC))$ is a covering map, we can lift $h_1$ into a homotopy $h_n$ between $\bar\iota\circ \Pi_{n+1}$ and $\Pi_n$. By the expansion of $\bar f, \bar \iota$, the tracks of $h_{n}$ decrease exponentially fast; i.e., $\rho_m$ is a well-defined monotone map as a limit of monotone maps. Clearly, $\rho_1$ is piece-wise rel.\ $\widetilde S_\blowup, \AA$ as a limit of piece-wise maps.
\end{proof}

As a corollary, we obtain that $\rho_n$ induce a semi-conjugacy $\rho_\infty\coloneq \lim_n \rho_n \colon S^2\to X_\infty$ from $f\colon S^2\selfmap$ to $\bar f\colon X_\infty\selfmap$:

\begin{equation}
\label{eq:dfn:cact map}
{\begin{tikzpicture}
  \matrix (m) [matrix of math nodes,row sep=4em,column sep=7em,minimum width=2em]
  {
     S^2 &S^2 \\
     X_\infty & X_\infty \\};
  \path[-stealth]
    (m-1-1) edge node [left] {$\rho_\infty $} (m-2-1)
            edge  node [above] {$f $} (m-1-2)
    (m-2-1.east|-m-2-2) edge node [above] {$\bar f$}
          (m-2-2)
    (m-1-2) edge node [right] {$\rho_\infty$} (m-2-2);
\end{tikzpicture}}
\end{equation}






%


Recall from Section~\ref{ss:FA} that $\filled_{\widetilde S}$ and $\Jul_{\AA}$ denote the non-escaping sets of $f|\widetilde S_\blowup$ and $f|\AA$, respectively. Moreover, $\filled_{\widetilde S,i}$ denotes the component of $\filled_{\widetilde S}$ on the sphere indexed by $i$. And $\Jul_{\AA_\Sigma}$ denotes the set of points in $\Jul_\AA$ with orbits in $\AA_\Sigma$, where $\Sigma$ is a strongly connected component of $\CC$.
\begin{lemma}\label{lem: fiber_descr}
Under the assumptions of Lemma~\ref{lem:bar iota:exp}, we have
\begin{enumerate}[label=\text{(\Roman*)},font=\normalfont,leftmargin=*]
\item\label{lem: fiber_descr:1} for every $i\in I_\circ$, the map $\rho_\infty \mid \filled_{\widetilde S, i}$ collapses exactly all the Fatou components of $\filled_{\widetilde S, i}$ into a sphere of $X_1$ (i.e.,~there are no extra identifications);
\item\label{lem: fiber_descr:2} for every bicycle $\Sigma\subset \CC_-$, the map $\rho_\infty\mid \Jul_{\AA_\Sigma}$ collapses every circle into a point so that buried circles in $\Jul_{\AA_\Sigma}$ are collapsed into different points.
\end{enumerate}
\end{lemma}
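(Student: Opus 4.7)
The overall strategy is to use the piece-wise structure of $\rho_1$ provided by Lemma~\ref{lem:PullBackArg} together with the topological expansion of the cactoid correspondence $\bar f,\bar\iota$ from Lemma~\ref{lem:bar iota:exp}, and to analyze the fibers of $\rho_\infty$ separately over the small sphere pieces of $X_\infty$ coming from $I_\circ$ and over the segment pieces coming from bicycles in $\CC_-$. The key observation is that $\rho_\infty$ is the inverse-limit monotone map, so its fibers are nested intersections of fibers of $\bar\iota^n\circ \Pi_{n+1}$; topological expansion then forces these intersections to be as coarse as possible, i.e., the non-trivial fibers are exactly those dictated by the construction of $\Pi_1$.

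For part~\ref{lem: fiber_descr:1}, fix $i\in I_\circ$. Replacing $f$ by an iterate, we may assume $i$ is fixed under $f\circ\iota^*$, so the first return map $\widehat f_i\colon \widehat S_i\selfmap$ is Sierpi\'nski. By the construction in Lemma~\ref{lem:bar iota:exp}, $\bar f\circ\bar\iota^*$ on the small sphere $\bar S_{i,1}\subset X_1$ is the totally topologically expanding sphere map obtained from $\widehat f_i$ by collapsing every Fatou component; the existence of this collapse is guaranteed by~\cite{Meyer_Sierp, BD_Exp}. Every Fatou component of $\filled_{\widetilde S,i}$ is therefore collapsed already at some finite pullback level, and the uniform convergence of $\rho_m$ shows that $\rho_\infty\,|\,\filled_{\widetilde S,i}$ collapses each Fatou component of $\filled_{\widetilde S,i}$ to a point. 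No further identifications occur: if $x,y\in\filled_{\widetilde S,i}$ lie in distinct closed Fatou components, then their images under $\bar\iota^n\circ\Pi_{n+1}$ eventually separate on the expanding sphere $\bar S_{i,1}$, so $\rho_\infty(x)\neq \rho_\infty(y)$.

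For part~\ref{lem: fiber_descr:2}, fix a bicycle $\Sigma\subset \CC_-$ containing (after iteration) a fixed curve $\gamma$. The segment $T_\gamma\subset X_1$ is obtained by collapsing every leaf $\gamma_t$ of $\Gamma_\gamma$ to a point, so $\rho_1$ collapses each circle in $\Jul_{\AA_\Sigma}\cap \AA_\gamma$ to a single point of $T_\gamma$. Because $\Sigma$ is a bicycle and $\CC_-$ is generated by bicycles, the number of segments in $X_n$ mapping onto $T_\gamma$ under $\bar\iota^{n-1}$ grows exponentially, while the expansion of $\bar f\circ\bar\iota^*$ on segments forces the mesh of the induced partitions of $T_\gamma$ to tend to zero; hence the fiber of $X_\infty\to X_1$ over a generic point of $T_\gamma$ is a Cantor set, matching the Cantor bouquet description of $\Jul_{\AA_\Sigma}$ from Section~\ref{sss:glunig of S^n A^n}. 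Each circle $\beta\in \Jul_{\AA_\Sigma}$ has a well-defined nested sequence of sub-annuli $\AA^n_{\delta_\beta(n)}\ni \beta$, projecting under $\rho_n$ to a nested sequence of sub-segments of $T_\gamma$; two distinct buried circles $\beta_1,\beta_2$ have $\delta_{\beta_1}(n)\neq \delta_{\beta_2}(n)$ at some finite level $n$ (this is the essence of being buried, via Section~\ref{sss:glunig of S^n A^n}), and the segment-level expansion then places $\rho_\infty(\beta_1)$ and $\rho_\infty(\beta_2)$ in distinct fibers of the inverse limit.

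The main obstacle will be the bookkeeping needed to identify the image of a buried circle with a well-defined point in the inverse-limit segment in a way compatible with the expansion. Concretely, one must check that the foliation leaves $\{\gamma_t\}$ chosen in Section~\ref{ss:bar f} can be propagated equivariantly to all iterated lifts~\eqref{eq:coll data:lift} so that the nested annuli $\AA^n_{\delta_\beta(n)}$ project precisely onto nested sub-segments under $\Pi_n$, and that distinct buried circles necessarily acquire distinct such symbolic addresses. Both points follow from the dynamical normalization~\eqref{eq:gamma in Gamma:2} and the segment expansion of Lemma~\ref{lem:bar iota:exp}, but keeping track of the sub-segment addresses for strictly preperiodic Fatou components and for non-principal sub-bicycles is the delicate step.
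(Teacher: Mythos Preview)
Your proposal is correct and follows essentially the same strategy as the paper: exploit the piece-wise structure of $\rho_1$ from Lemma~\ref{lem:PullBackArg} and analyze the sphere pieces (Sierpi\'nski) and the annular pieces (bicycles) separately via expansion.

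The paper's argument is, however, considerably shorter than yours, and the reason is worth noting. Rather than passing to $\rho_\infty$, arguing with nested fibers of $\bar\iota^{\,n}\circ\Pi_{n+1}$, and tracking symbolic addresses of buried circles, the paper observes that because $\rho_1$ is piece-wise rel.\ $\widetilde S_\blowup,\AA$, its restriction to each piece is \emph{literally} the output of the pullback argument run on that piece's own dynamics. On a sphere $S_i$ with $i\in I_\circ$ this is the semi-conjugacy from the Sierpi\'nski map to its totally expanding sphere quotient, which is already known (\cite{Meyer_Sierp,BD_Exp}) to collapse exactly the Fatou components and nothing more; on $\AA_\Sigma$ it is the semi-conjugacy from the expanding annular map to the segment dynamics, and Condition~\ref{lem:bar iota:assump:cond3} of Lemma~\ref{lem:bar iota} immediately gives the claimed behavior on buried circles. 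Your inverse-limit bookkeeping and the ``main obstacle'' paragraph are thus unnecessary: once one recognizes $\rho_1\!\mid\! S_i$ and $\rho_1\!\mid\!\AA_\Sigma$ as the standard quotients for those autonomous subsystems, both parts follow in one line each.
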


\begin{proof} Recall from Lemma~\ref{lem:PullBackArg} that $\rho_1$ is piece-wise rel.\ $\widetilde S_\blowup, \AA$. Since $\rho_{1}\mid \filled_{\widetilde S,i}$ is obtained by running the pullback argument on $\Pi_1\mid S_i$, where $S_i$ is the component of $\widetilde S_\blowup$ indexed by $i$, the map $\rho_1$ collapses exactly the Fatou components of $\filled_{\widetilde S,i}$ for $i\in I_\circ$ -- this is \ref{lem: fiber_descr:1}. Similarly, since $\rho_1\mid \Jul_{\AA_\Sigma}$ is obtained by running the pullback argument on $\Pi_1\mid \AA_\Sigma$, it follows from the properties of the cactoid correspondence on segments (see~\ref{lem:bar iota:assump:cond3}) that the limiting map~$\rho_1$ satisfies \ref{lem: fiber_descr:2}. 
\end{proof}

\subsubsection{A semi-conjugacy from $f_{\text{BE}}\colon S^2\selfmap$ to $\bar f\colon X_\infty\selfmap$} \label{ss:fibers_in_quotient} Let $\tau\colon S^2\to S^2$ be a continuous monotone map providing a semi-conjugacy from $f$ to $f_{\text{BE}}$, see Section~\ref{sss:PullbackArg:S^2}. By Proposition~\ref{prop:hom equiv}, equivalence classes $\tau^{-1}(z)$ consist of homotopy equivalent points. Since $\bar f\colon X_\infty \to X_\infty$ is expanding, $\rho_\infty$ is constant on homotopy equivalent points, and thus we have the induced semi-conjugacy from $f_{\text{BE}}$ to $\bar f$:
\begin{equation}
\label{ed:dfn:pi:BE} \pi_{\bar f} \coloneq \rho_\infty\circ  \tau^{-1} \colon S^2\to X_\infty ,\sp\sp\sp \pi_{\bar f} \circ f_{\text{BE}}= \bar f \circ \pi_{\bar f}.
\end{equation}
We also have the induced semi-conjugacies on cactoid correspondences:
\[\pi_{\bar f, m}\coloneq  [X_\infty \to X_m]\circ \pi_{\bar f},\sp\sp  \pi_{\bar f,m} \circ f_{\text{BE}}= \bar f \circ \pi_{\bar f, m+1}, \sp\sp  \pi_{\bar f,m}= \iota \circ \pi_{\bar f,m+1}.\]

\section{Proofs of the main results}

\label{sec: cannonicity}




We are now ready to prove the main results of this paper stated in the Introduction. We start with Theorem \ref{thm: max exp quotient}.

Let $f\colon (S^2,A)\selfmap$ be a B\"ottcher expanding map with $\Fat(f)\not=\emptyset$, and $\CC_\cro$ be the crochet multicurve constructed by the Crochet Algorithm (see Section \ref{ss:iter_construction}). We will assume that $I$ parametrizes the small spheres of $(S^2,A,\CC_\cro)$ and $f\colon I \selfmap$ provides the dynamics on the small spheres. Set
\begin{itemize}
\item $\CC_-$ to be the invariant  sub-multicurve of $\CC_\cro$ generated by all its bicycles;
\item $\CC_\bullet\coloneq \CC_\cro\setminus \CC_-$;
\item $I_\circ$ to be the full orbit of small spheres corresponding to Sierpi\'{n}ski small maps; 
\item  $I_\bullet$ to be the full orbit of small spheres corresponding to small crochet maps.
\end{itemize} 
By Section \ref{ss:fibers_in_quotient}, the collapsing data $\CC_\cro = \CC_\bullet\sqcup \CC_-$ and $I=I_\bullet\sqcup I_\circ$ induces 
a totally topologically expanding map $\bar f\colon X_f\selfmap $ on a cactoid $X_f (= X_\infty)$ together with the semi-conjugacy $\pi_{\bar f}$ from $f$ to $\bar f$. 
Recall also that $\sim_{\Fat(f)}$ denotes the smallest closed equivalence relation on $S^2$ generated by identifying all points in every Fatou component of $f$. 

\smallskip

\begin{theorem}
\label{thm:MEQ}
Let $f\colon (S^2,A)\selfmap$ be a B\"ottcher expanding  map with $\Fat(f)\not=\emptyset$, and $\CC_\cro$ be the crochet multicurve constructed by the Crochet Algorithm (Corollary~\ref{cor: cro multicurve exists}). Suppose that $\bar f\colon X_f\selfmap$ is the induced cactoid map 
together with the semi-conjugacy $\pi_{\bar f} \colon S^2\to X_f$ as above. Then 
\begin{equation}
\label{eq:thm:MEQ}
\pi_{\bar f}  (x)=\pi_{\bar f} (y)\sp\sp\text{ iff }\sp x\sim_{\Fat(f)} y; \sp\sp \text{ i.e., }\sp\sp X_f\cong S^2/\sim_{\Fat(f)}.
\end{equation}
Moreover, $\pi_{\bar f} $ is the maximal expanding quotient: every  semi-conjugacy $\pi_g\colon S^2\to Y$ from $f$ to a totally topologically expanding map $g\colon Y\selfmap$ factorizes through $\pi_{\bar f} :$

\begin{center}
\begin{tikzcd}[row sep=tiny]
                         & S^2\arrow{dl}{\pi_{\overline f}}  \arrow{dd}{\pi_{g}}\arrow[loop right]{l}{f} \\
  X_f \arrow[loop left]{l}{\overline{f}}   \arrow{dr}&              \\
  &Y \arrow[loop right]{l}{g}
\end{tikzcd}
\end{center} 
\end{theorem}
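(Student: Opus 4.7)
The plan is to verify the fiber identification~\eqref{eq:thm:MEQ} first, and then deduce the universality as a relatively formal consequence. By Lemma~\ref{lem:FormAmalg} we may replace $f$ with an isotopic formal amalgam along $\CC_\cro$, so that the framework of Section~\ref{sec: cactoid maps} applies. The chosen collapsing data $(\CC_\bullet,\CC_-,I_\bullet,I_\circ)$ satisfies the hypotheses of Lemma~\ref{lem:bar iota:exp}: the small maps in $I_\circ$ are Sierpi\'nski by Lemma~\ref{lem:cro-mult-decomp}, and $\CC_-$ is generated by bicycles by construction of $\CC_\cro$. Hence $\bar f,\bar\iota$ is a topologically expanding cactoid correspondence and $\pi_{\bar f}$ is defined via~\eqref{ed:dfn:pi:BE}.

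\textbf{Identifying the fibers of $\pi_{\bar f}$.} First I would analyse the restriction of $\pi_{\bar f}$ to the ``skeleton'' $\bigcup_{n\ge 0} f^{-n}(\widetilde S_\blowup\cup\AA)$ using three ingredients. Lemma~\ref{lem: fiber_descr}\ref{lem: fiber_descr:1} shows that on each Sierpi\'nski piece $\filled_{\widetilde S,i}$ with $i\in I_\circ$ the map $\pi_{\bar f}$ collapses exactly the Fatou components and nothing more. Lemma~\ref{lem: fiber_descr}\ref{lem: fiber_descr:2} shows that on each bicycle $\Sigma\subset\CC_-$ the buried circles in $\Jul_{\AA_\Sigma}$ go to distinct points of the segment in $X_f$ associated with $\Sigma$. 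For the crochet pieces $i\in I_\bullet$ I would iterate Theorem~\ref{thm:Form amalg} inside the small map on $\widehat S_i$ (using the amalgam structure of Section~\ref{sss:FA:n} at the appropriate sublevel of the Crochet Algorithm from Section~\ref{ss:iter_construction}), together with Lemma~\ref{lem:iter step} which says that this small map is crochet, to conclude that the entire cluster of touching Fatou components in $\widehat S_i$ collapses to a single point via pinching cycles of internal rays. Finally, on the complement of the skeleton, Lemma~\ref{lem:TotDisc} says each remaining fibre is either a singleton or a single Fatou closure, and the latter is already accounted for above.

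To match this with $\sim_{\Fat(f)}$, I would note that the equivalence classes of $\sim_{\Fat(f)}$ are precisely the Hausdorff-closures of maximal chains of touching Fatou components. By the construction of $\CC_\cro$ from the maximal clusters of Section~\ref{ss: clusters}, two Fatou components $F,F'$ lie in a common crochet cluster iff their closures can be joined by a chain of touching Fatou components that is not separated by any invariant multicurve ultimately giving a Sierpi\'nski small map on one side; this is exactly the combinatorial content of Theorem~\ref{thm: max_clusters_exist}. Consequently the fibres of $\pi_{\bar f}$ computed above coincide with the $\sim_{\Fat(f)}$-classes, proving~\eqref{eq:thm:MEQ}.

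\textbf{Universality.} Given any semi-conjugacy $\pi_g\colon S^2\to Y$ from $f$ to a totally topologically expanding map $g\colon Y\selfmap$, I would argue that $\pi_g$ must collapse every Fatou component $F$ of $f$ to a point. Indeed, let $\UU$ be a finite open cover of $Y$ witnessing total topological expansion. Pick a periodic iterate $f^{kp}(F)=F$ with centre $c$. Then $\pi_g(F)$ is a connected set whose forward $g^{kp}$-orbit is constant and equal to the connected set $\pi_g(F)$ itself; therefore the $n$-fold pull-back of $\pi_g(F)$ under $g^{kp}$ contains $\pi_g(F)$, so $\pi_g(F)$ has combinatorial length in $g^{*nkp}(\UU)$ bounded independently of $n$, and hence diameter tending to $0$. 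Thus $\pi_g(F)$ is a singleton. By continuity of $\pi_g$ and closedness of the kernel equivalence relation, $\pi_g$ factors through the smallest closed equivalence relation $\sim_{\Fat(f)}$ generated by collapsing Fatou components, i.e.~through $X_f=S^2/\sim_{\Fat(f)}$, yielding the desired commutative diagram.

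\textbf{Main obstacle.} I expect the delicate step to be the iteration of Theorem~\ref{thm:Form amalg} through the recursive levels of the Crochet Algorithm, to show that a whole maximal crochet cluster (which in general is a fractal union of closures of touching Fatou components possibly living inside nested small crochet sub-amalgams) collapses to a single point of $X_f$, while simultaneously ensuring that no extra identifications propagate into adjacent Sierpi\'nski pieces or across buried bicycle circles. The bookkeeping of the pieces $\filled_{\widetilde S^n,i}$ and $\Jul_{\AA^n}$ at all levels and the fact that the pinching-cycle identifications are exactly the tuning identifications of the crochet decomposition is where the substance of the argument lies.
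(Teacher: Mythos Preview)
Your outline follows the paper's strategy closely and identifies the right ingredients (Lemmas~\ref{lem: fiber_descr}, \ref{lem:TotDisc}, Theorem~\ref{thm:Form amalg}, the recursive structure of Section~\ref{ss:iter_construction}). Two points, however, are genuine gaps rather than details to fill in.

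\textbf{Universality.} The sentence ``the $n$-fold pull-back of $\pi_g(F)$ under $g^{kp}$ contains $\pi_g(F)$, so $\pi_g(F)$ has combinatorial length in $g^{*nkp}(\UU)$ bounded independently of $n$'' does not follow. Forward invariance $g^{kp}(\pi_g(F))=\pi_g(F)$ gives $\pi_g(F)\subset g^{-nkp}(\pi_g(F))$, but each $g^{-nkp}(U_j)$ may have many components meeting $\pi_g(F)$, so no uniform bound on the covering number is available. The paper instead uses the \emph{attracting} nature of the centre: fix one $U_0\in\UU$ containing $\pi_g(c_F)$, let $\widetilde U_n$ be the component of $f^{-nk}(\pi_g^{-1}(U_0))$ through $c_F$; then $\bigcup_n\widetilde U_n\supset F$ while $\diam\pi_g(\widetilde U_n)\to 0$, forcing $\pi_g(F)$ to be a point.

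\textbf{Matching fibres with $\sim_{\Fat(f)}$.} The assertion that ``equivalence classes of $\sim_{\Fat(f)}$ are precisely the Hausdorff-closures of maximal chains of touching Fatou components'' is the heart of the matter and is not correct as stated. A single closure of a chain, or even a maximal cluster in the sense of Section~\ref{ss: clusters}, need not be a full $\sim_{\Fat(f)}$-class once you are inside a small sphere at depth $k\ge 2$ of the algorithm: there the relevant ``Fatou components'' are those of the small map, whose boundaries are \emph{not} Fatou boundaries of $f$ but images of curves in $\CC^{k-1}$. The paper therefore does not attempt to describe $\sim_{\Fat(f)}$-classes directly. Instead it proves the two inclusions asymmetrically: $\sim_{\Fat(f)}\subset\,\sim_{\bar f}$ comes from applying the universality claim to $\bar f$ itself; for $\sim_{\bar f}\subset\,\sim_{\Fat(f)}$ one shows by induction on the depth $k$ of the recursion that $\sim_{\Fat(f)}$ collapses every small crochet Julia set of $(S^2,A,\CC^k)$ (the point being that the boundary of each ``new'' Fatou hole at level $k$ is already a single $\sim_{\Fat(f)}$-class by the inductive hypothesis), and then runs the separation argument you flagged as the obstacle (the paper's five-item Claim) to see that distinct such collapsed sets land at distinct points of $X_\infty$. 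Your ``iterate Theorem~\ref{thm:Form amalg} inside the small map'' is exactly this induction, but it has to be formulated at the level of $\sim_{\Fat(f)}$, not at the level of $\pi_{\bar f}$-fibres.
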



Clearly, Theorem \ref{thm:MEQ} establishes Theorem \ref{thm: max exp quotient} from the Introduction.

\begin{proof} Let $\sim_{\bar f}$ be the equivalence relation on $S^2$ induced by $\pi_{\bar f}$.  We need to show that $\sim_{\bar f} \, = \, \sim_{\Fat(f)}$.

\begin{claim}
If $\pi_g\colon S^2\to Y$ is a semi-conjugacy from $f$ to a totally topologically expanding map $g\colon Y\selfmap$ then $\pi_g$ must collapse each Fatou component of $f$.  In particular, $\sim_{\bar f}\,  \supset \,\sim_{\Fat(f)}$.
\end{claim}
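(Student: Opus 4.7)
The plan is to combine the uniform attraction of each Fatou component of $f$ towards its associated periodic critical cycle with the topological expansion of $g$. First I would reduce to the periodic case: if $F$ is an arbitrary Fatou component of $f$ and $f^k(F)\subset F_0$ for some periodic Fatou component $F_0$, then $\pi_g(F)$ is connected (continuous image of a connected set) and is contained in $g^{-k}(\pi_g(F_0))$; since $g^k$ has finite fibers, once $\pi_g(F_0)$ is shown to be a single point, $\pi_g(F)$ will be a connected subset of a finite set and hence a point itself. So it suffices to treat a periodic Fatou component.

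Fix a periodic Fatou component $F$ of period $p$ centered at some $a\in A^\infty$. The B\"ottcher normalization of $f$ at $a$ gives a conjugacy of $f^p|_F$ with $z\mapsto z^d$ on $\disk$ through the B\"ottcher coordinate $\psi_F$, and hence $f^{pn}|_K\to a$ uniformly on every compact $K\subset F$ (such a $K$ corresponds under $\psi_F^{-1}$ to a set of radius at most some $\rho<1$, and $z^{d^n}$ has modulus at most $\rho^{d^n}\to 0$). I would then pick any $x,y\in F$, connect them by a path $\gamma\colon[0,1]\to F$, and push forward: using $\pi_g\circ f=g\circ\pi_g$, the composition $g^{pn}\circ(\pi_g\circ\gamma)=\pi_g\circ f^{pn}\circ\gamma$ converges uniformly in $Y$ to the constant path at $\pi_g(a)$.

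The expansion of $g$ now collapses distances. Let $\UU$ be a finite open cover of $Y$ witnessing topological expansion of $g$, so $\mesh(g^{*n}(\UU))\to 0$, and pick $U\in\UU$ together with a connected component $U_0$ of $U$ containing $\pi_g(a)$. By the uniform convergence above, for all sufficiently large $n$ the entire image $g^{pn}(\pi_g(\gamma([0,1])))$ lies in $U_0\subset U$. Hence $\pi_g\circ\gamma$ is contained in a single connected component $V_n$ of $g^{-pn}(U)$ (since its image is connected), and $V_n$ contains both $\pi_g(x)$ and $\pi_g(y)$. Since $\diam(V_n)\le\mesh(g^{*pn}(\UU))\to 0$, we conclude $\pi_g(x)=\pi_g(y)$, proving that $\pi_g$ is constant on $F$.

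The ``in particular'' statement then follows from continuity of $\pi_g$: the relation $\{(x,y)\colon\pi_g(x)=\pi_g(y)\}$ is closed in $S^2\times S^2$, so it is a closed equivalence relation identifying all points within each Fatou component, and hence contains the smallest such relation, namely $\sim_{\Fat(f)}$. Applied to $\pi_g=\pi_{\bar f}$ this gives $\sim_{\bar f}\,\supset\,\sim_{\Fat(f)}$. The only step requiring care is the connectedness argument that keeps $\pi_g\circ\gamma$ inside a single component of $g^{-pn}(U)$; beyond that, the proof uses only continuity of $\pi_g$, the B\"ottcher normalization at the attracting cycle, and the mesh-shrinking definition of topological expansion.
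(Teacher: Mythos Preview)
Your argument is essentially the same as the paper's: both exploit that orbits in a periodic Fatou component converge to the center, and then use the mesh-shrinking of $g^{*n}(\UU)$ to trap $\pi_g(\gamma)$ in sets of vanishing diameter. The paper phrases it via pullbacks of a neighborhood of $\pi_g(c_F)$ along $f$, you phrase it via pushforwards along $g$; the content is the same.

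One small gap: in your reduction to the periodic case you assert that $g^k$ has finite fibers, but nothing in the hypotheses guarantees this---$g$ is only assumed to be a totally topologically expanding continuous self-map of a compact metrizable space, not a finite branched covering. What you actually need (and what does follow from expansion) is that fibers of $g^k$ are \emph{totally disconnected}: if $C$ is a connected component of $g^{-k}(y)$ then $g^{n}(C)$ is a single point for all $n\ge k$, so $C$ lies in a single element of $g^{*n}(\UU)$ for every $n$, forcing $\diam C=0$. Since $\pi_g(F)$ is connected, this still gives $\pi_g(F)$ a point. Alternatively you can bypass the reduction entirely by running your main argument directly on a preperiodic $F$: for any compact path $\gamma\subset F$, the iterates $f^n\circ\gamma$ eventually enter a periodic Fatou component and then converge uniformly to the attracting cycle, so the rest of your argument applies verbatim.
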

\begin{proof}
Indeed, it is sufficient to show this for each periodic Fatou component $F$.  Let $\alpha$ be a periodic internal ray in $F$, and suppose that $\pi_g(\alpha)$ has at least two points.  Let $U_0$ be a (small) open set that covers $\pi_g(c_F)$ where $c_F$ is the center of $F$.  Let $\widetilde U_0$ the component of $\pi_g^{-1}(U_0)$ that covers $c_F$, and $\widetilde U_n$ be the component of $f^{-nk}(\widetilde U_0)$ that covers $c_F$ (here $k=\per(c_F)$ is the period of $c_F$).  Note that $\bigcup_n \widetilde U_n$ covers $\operatorname{int}(\alpha)$,  but the diameter of $\pi_g( \widetilde U_n)$ tends to $0$, since $g$ is expanding.  Consequently, $\pi_g(\alpha)$ is a singleton, and $\pi_g$ must collapse (the closure of) $F$ to a point.  
\end{proof}

Let us show the other inclusion: $\sim_{\bar f}\,  \subset \,\sim_{\Fat(f)}$. Let $f_\text{FA}\colon (S^2,A, \CC_\cro)\selfmap$ be the formal amalgam for $f\colon (S^2,A, \CC_\cro)\selfmap$ used in the construction of $\pi_{\bar f}$, see Section~\ref{ss:fibers_in_quotient} and note that the maps $f_\text{FA},f$ were denoted there by $f, f_\text{BE}$, respectively. We denote by $f_\text{FA}\colon (S^2,f^{-n}(A), f^{-n}(\CC_\cro))\selfmap$ the iteration of $f_\text{FA}$ and by $\filled_{\widetilde S^n}, \Jul_{\AA^n}$ the associated sphere and annuli non-escaping sets, see Sections~\ref{sss:FA:n} and~\ref{sss:glunig of S^n A^n}. We also denote by $\rho=\rho_\infty$ the semi-conjugacy from $f_\text{FA}$ to $\bar f$ and by $\tau$ the semi-conjugacy from $f_\text{FA}$ to $f$ so that $\pi_{\bar f}=\rho\circ \tau^{-1}$.

First, we claim that $\sim_{\Fat(f)}$ collapses crochet components in $\KC_{\widetilde S^n}$ to points, as well as connected components of $ \Jul_{\AA^{n}}$. Since $\CC_\cro$ is generated by the boundaries of crochet components, it is sufficient to show the claim about the crochet components.

Let us apply the Crochet Algorithm to $f$.  Suppose $\CC^k$ is the invariant multicurve in $(S^2, A)$ obtained after the $k$-th iteration of the recursive Step 3 of the algorithm. Let $S^1_i$ be a small crochet sphere wrt.\  $\CC^1$ containing a maximal cluster $K^1_i$. Note that $K^1_i$ may be recognized as the set of points in the crochet sphere $S^1_i$ that do not escape to the Fatou components corresponding to the curves in $\CC^1$.  By definition of $\sim_{\Fat(f)}$,  $K^1_i/\sim_{\Fat(f)}$
is a singleton, and thus $\Jul(S^1_i)/\sim_{\Fat(f)}$ is a singleton as well, where $\Jul(S^1_i)$ is the small Julia set associated with $S^1_i$.
The same is true for every component of $f^{-n}(K^1_i)$, $n\geq 1$.  


Consider now the multicurve $\CC^2\supset \CC^1$ obtained after the second iteration of the algorithm.  Suppose $S^2_i$ is a small crochet sphere wrt.\ $\CC^2$ that does not appear after the first iterate.   Then $S^2_i \subset S^1_j$ for a small sphere wrt.\ $\CC^1$.  The sphere $S^2_i$ corresponds to a maximal cluster $K^2_i$ in $S^1_j$. Note that this cluster is constructed using Fatou components of $f$ and the Fatou components corresponding to the curves in $\CC^1$.  Note that $\sim_{\Fat(f)}$ collapses the boundary of each such Fatou component.  Consequently,  $\sim_{\Fat(f)}$ collapses the small Julia set $\Jul(S^2_i)$.  By induction,  $\sim_{\Fat(f)}$ collapses each crochet component wrt.\ $\CC^k$.

By the following Claim (combined with Lemma~\ref{lem:TotDisc}), the fibers of  $\sim_{\Fat(f)}$  have disjoint images in $X_\infty$. This completes the proof of Theorem \ref{thm:MEQ}.

\begin{claim}
For all $n\ge 0$, we have the following properties:
\begin{enumerate}[label=\text{(\arabic*)},font=\normalfont,leftmargin=*]
\item\label{main proof:cond:1} If $\beta$ is a buried curve in $\Jul_{\AA^n}$, then $\rho(\beta)$ is disjoint from $\rho (\filled_{\widetilde S^n}\cup (\Jul_{\AA^n}\setminus \beta))$.
\item\label{main proof:cond:2} Two non-neighboring disjoint components $\filled_{\widetilde S^n, i},\filled_{\widetilde S^n, j}$ of $\filled_{\widetilde S^n}$ have disjoint images $\rho(\filled_{\widetilde S^n, i}),\rho(\filled_{\widetilde S^n, j})$ unless $\filled_{\widetilde S^n, i},\filled_{\widetilde S^n, j}$ have a common crochet neighbor $\filled_{\widetilde S^n,k}$; in the exceptional case, $\rho(\filled_{\widetilde S^n, i}),\rho(\filled_{\widetilde S^n, j})$ are small spheres with a single common point  $\rho(\filled_{\widetilde S^n,k})$.

\item\label{main proof:cond:3} For a crochet component $\filled_{\widetilde S^n, i}$ of $\filled_{\widetilde S^n}$, its image $\rho(\filled_{\widetilde S^n, i})$ is disjoint from
\[ \rho\big(\Jul_{\AA^n}\setminus \{\text{neighbors of $\filled_{\widetilde S^n,i}$}\}\big).\]
\item\label{main proof:cond:4} For a Sierpi\'{n}ski  component $\filled_{\widetilde S^n, i}$ of $\filled_{\widetilde S^n}$, its image $\rho(\filled_{\widetilde S^n, i})$ is disjoint from
\[ \rho\big(\Jul_{\AA^n}\setminus \{\text{neighbors of neighbors of $\filled_{\widetilde S^n,i}$}\}\big).\]

\item\label{main proof:cond:5} The image $\displaystyle \rho \left(S^2\setminus \bigcup_{n\ge 0}  \big(\filled_{\widetilde S^n} \cup \Jul_{\AA^n}\big) \right)$ is totally disconnected and is disjoint from $\displaystyle \rho \left( \bigcup_{n\ge 0}  \tau\big(\filled_{\widetilde S^n} \cup \Jul_{\AA^n}\big) \right)$.
 \end{enumerate}
\end{claim}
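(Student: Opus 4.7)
The plan is to derive parts (1)--(5) from the explicit fiber description of $\rho_\infty$ at level $0$ (Lemma~\ref{lem:PullBackArg} together with Lemma~\ref{lem: fiber_descr}) and then transport this description to every level $n$ by the iteration framework of Section~\ref{sss:FA:n}. Concretely, $f$ viewed at level $n$ as a formal amalgam on $(S^2,f^{-n}(A),f^{-n}(\CC_\cro))$ admits its own pullback semi-conjugacy onto $X_n$, which agrees with $\rho_\infty$ after projecting $X_\infty \to X_n$. Consequently, $\rho_\infty$ is piecewise rel.\ $\widetilde S^n_\blowup$ and $\AA^n$, and Lemma~\ref{lem: fiber_descr} applied at level $n$ pins down the fibers of $\rho_\infty$ on each $\filled_{\widetilde S^n,i}$ and on each circle of $\Jul_{\AA^n}$.

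For part~(1), a buried curve $\beta\subset \Jul_{\AA^n}$ is collapsed by $\rho_\infty$ to a single point by Lemma~\ref{lem: fiber_descr}(II). If one had $\rho_\infty(\beta) = \rho_\infty(y)$ for some $y \notin \beta$, then $\beta$ and $y$ would be strongly homotopy equivalent under $\bar f$ via Proposition~\ref{prop:hom equiv}. Running the pinching-cycle analysis from the proof of Theorem~\ref{thm:Form amalg} on such a family of curves (put into minimal position with $\partial \AA^m$ and shrunk by expansion), the equivalence must be realized by a concrete pinching cycle of internal rays, whose first leg is an internal ray in a Fatou component bordering $\beta$ and meeting some $\filled_{\widetilde S^m,k}$; this contradicts burial of $\beta$. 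Distinct buried curves are separated by the same argument combined with the ``different points'' statement in Lemma~\ref{lem: fiber_descr}(II).

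Parts (2)--(4) follow from the same pinching-cycle mechanism, bookkeeping the additional $\rho$-collapsings relative to $\tau$: every crochet component $\filled_{\widetilde S^n,i}$ projects to a single point of $X_\infty$; every bicycle circle projects to a single point (interior to a segment); and every primitive unicycle annulus on the boundary of a crochet cluster is absorbed into the point representing that cluster. Therefore two non-neighboring disjoint components $\filled_{\widetilde S^n,i}$, $\filled_{\widetilde S^n,j}$ can be glued by $\rho_\infty$ only through a pinching chain that crosses a common neighboring crochet cluster $\filled_{\widetilde S^n,k}$, and in that case they share exactly the point $\rho_\infty(\filled_{\widetilde S^n,k})$---this is (2). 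Any pinching cycle connecting $\filled_{\widetilde S^n,i}$ to a circle $\beta \subset \Jul_{\AA^n}$ exhibits $\beta$ as either a neighbor of $\filled_{\widetilde S^n,i}$ (giving (3) when $i$ is crochet) or, if $\filled_{\widetilde S^n,i}$ is Sierpi\'nski, as a neighbor of a neighbor via a crochet connector which collapses to a point on the small sphere $\rho_\infty(\filled_{\widetilde S^n,i})$---giving (4). Part (5) is a direct adaptation of the proof of Lemma~\ref{lem:TotDisc}: a point $x$ outside $\bigcup_n(\filled_{\widetilde S^n}\cup \Jul_{\AA^n})$ lies in a nested intersection of Fatou components deepening around $A_\indet$-type cycles; expansion of $\bar f$ forces $\rho_\infty$ to send each such component either to a singleton (when no attracting cycle is met) or to a Jordan disk inside a cactoid sphere, and in both cases these images are disjoint from $\rho_\infty(\bigcup_n(\filled_{\widetilde S^n}\cup \Jul_{\AA^n}))$.

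The main obstacle is the bookkeeping of the two distinct ``extra'' identifications performed by $\rho_\infty$ beyond those of $\tau$: collapsing crochet clusters to points and collapsing each bicycle circle to a point. The dichotomy ``neighbor'' versus ``neighbor of a neighbor'' in parts (3) and (4) records precisely when these extra identifications can create coincidences, and verifying it uniformly over every strongly connected component of $\CC_-$---and stably under passing to higher levels $n$---is where the argument requires the most care.
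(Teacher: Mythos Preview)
The central gap is that you invoke Proposition~\ref{prop:hom equiv} and the pinching-cycle analysis of Theorem~\ref{thm:Form amalg} to control the fibers of $\rho_\infty$, but both of those tools describe the fibers of $\tau$ --- the semi-conjugacy to the B\"ottcher expanding \emph{sphere} map --- not the fibers of $\rho$, whose target is a cactoid. Proposition~\ref{prop:hom equiv} is stated and proved for a B\"ottcher expanding target on $(S^2,A)$; it gives no characterization of when two points have the same image under an expanding cactoid quotient, and there is no obvious notion of ``homotopy equivalence rel $A$'' on a cactoid to play that role. Likewise, the pinching-cycle description in Theorem~\ref{thm:Form amalg} characterizes exactly $\tau$-identifications, whereas the fibers of $\rho$ are strictly larger: entire crochet clusters and all bicycle circles collapse. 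Your phrase ``bookkeeping the additional $\rho$-collapsings relative to $\tau$'' is exactly the content of the Claim, so at that step the argument becomes circular.

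The paper sidesteps this by arguing topologically rather than through a fiber characterization. It uses a structural feature of $\CC_\cro$: after eliminating primitive crochet unicycles, every primitive unicycle borders a Sierpi\'nski sphere, so iterated preimages of Sierpi\'nski spheres and bicycle annuli are \emph{dense} in the sense that they separate any two of the objects in (1)--(4) that are claimed to have disjoint $\rho$-images. By Lemma~\ref{lem: fiber_descr}, such separators have substantial $\rho$-images --- small spheres of $X_\infty$ for Sierpi\'nski components, Cantor sets in segments for bicycles --- and since $\rho$ is monotone, objects on opposite sides of a separator land in distinct complementary components of its image in the cactoid. This yields (1)--(4) directly, and (5) follows from them. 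No description of $\rho^{-1}(x)$ across pieces is ever needed.
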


\begin{proof}
Since every primitive unicycle is a neighbor to at least one Sierpi\'{n}ski sphere, Sierpi\'{n}ski spheres and bicycles are dense in the following sense: for every $n\ge 0$ iterated preimages of bicycles and Sierpi\'{n}ski spheres separate 
\begin{itemize}
\item every buried curve $\beta$ in $\Jul_{\AA^n}$ from every other curve in $\Jul_{\AA^n}$ and every component of $\filled_{\widetilde S^n}$;
\item non-neighboring disjoint components $\filled_{\widetilde S^n, i},\filled_{\widetilde S^n, j}$ of $\filled_{\widetilde S^n}$ from~\ref{main proof:cond:2} unless it is an exceptional case;
\item a crochet component $\filled_{\widetilde S^n, i}$ from $\Jul_{\AA^n}\setminus \{\text{neighbors of $\filled_{\widetilde S^n,i}$}\}$;
\item a Sierpi\'{n}ski  component $\filled_{\widetilde S^n, i}$ from $\Jul_{\AA^n}\setminus \{\text{neighbors of neighbors of $\filled_{\widetilde S^n,i}$}\}$.
\end{itemize}
Since the $\rho$-images of Sierpi\'{n}ski  components are small spheres in $X_\infty$ and the \linebreak {$\rho$-images} of bicycles contain Cantor sets in $X_\infty\setminus \{\text{small spheres of }X_\infty\}$ (Lemma~\ref{lem: fiber_descr}), we obtain the separation properties~\ref{main proof:cond:1}, \ref{main proof:cond:2}, \ref{main proof:cond:3}, \ref{main proof:cond:4}; these properties also imply~\ref{main proof:cond:5}.
\end{proof} 
 \end{proof}

\begin{rem}\label{rem: crochet_multicurve_unique} Let $\CC$ be a pre-crochet multicurve for $f$, and suppose that $\DD$ is the associated crochet multicurve (obtained by the iterative elimination of all primitive crochet unicycles from $\CC$; see Section \ref{ss: gluing_crochet}). Similarly to the proof of Theorem~\ref{thm:MEQ}, we may show that the collapsing data determined by $\DD$ generates the cactoid $S^2/\sim_{\Fat(f)}$, which implies that $\DD$ coincides with $\CC_\cro$ (up to isotopy), i.e., the crochet multicurve for $f$ is uniquely defined.  
\end{rem}

We now present several results that provide topological characterizations of Sierpi\'{n}ski maps,  crochet maps,  and \emph{Sierpi\'{n}ski-free maps} (i.e., B\"{o}ttcher expanding maps with non-empty Fatou sets and without  Sierpi\'{n}ski small maps in every decomposition).  First, we recall from the Introduction the following characterizations of Sierpi\'{n}ski maps (Proposition \ref{thm:sierpinski}).


\begin{proposition}
\label{prop:sierpinski}
Let $f\colon(S^2,A)\selfmap$ be a B\"ottcher expanding map with $\Fat(f)\not=\emptyset$. Then the following are equivalent:
\begin{enumerate}[label=\text{(\roman*)},font=\normalfont,leftmargin=*]
\item $f$ is a Sierpi\'{n}ski map, i.e., $\Jul(f)$ is homeomorphic to the standard Sierpi\'{n}ski carpet;
\item $S^2/\sim_{\Fat(f)}$ is a sphere and $\sim_{\Fat(f)}$ is trivial on $A$;
\item every connected periodic zero-entropy graph is homotopically trivial rel.  $A$. 
\end{enumerate}
\end{proposition}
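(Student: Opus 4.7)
I plan to prove the cycle (i)$\Rightarrow$(ii)$\Rightarrow$(i) first and then close the loop via (i)$\Leftrightarrow$(iii). The three ingredients explicitly cited are exactly the tools the argument needs: Moore's theorem supplies the forward direction via the quotient, Whyburn's topological characterization of the standard Sierpi\'nski carpet supplies the reverse, and the clustering machinery of Sections~\ref{sec:clusters}--\ref{sec: existence} together with [BD\_Exp, Section 4.5] supplies the bridge between topology of $\Jul(f)$ and the graph-theoretic statement (iii).

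\emph{(i)$\Rightarrow$(ii).} Assume $\Jul(f)$ is a Sierpi\'nski carpet. Then Fatou components are Jordan disks with pairwise disjoint closures whose diameters tend to $0$ (by expansion). The non-trivial equivalence classes of $\sim_{\Fat(f)}$ are then exactly the closed disks $\overline F$, each cellular, so Moore's theorem gives $S^2/\sim_{\Fat(f)} \cong S^2$. Disjointness of the closures ensures that marked points in different Fatou components map to different quotient points, and marked points in $\Jul(f)$ are not identified with anything else, so $\sim_{\Fat(f)}$ is trivial on $A$.

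\emph{(ii)$\Rightarrow$(i).} The key observation is that the $\sim_{\Fat(f)}$-equivalence class of a point in any maximal cluster (Theorem~\ref{thm: max_clusters_exist}) is the whole cluster: indeed, each maximal cluster is closed and a union of Fatou closures, so it is one $\sim_{\Fat(f)}$-class by minimality of the closed relation; and distinct maximal clusters are disjoint, so the coarsening stops there. If two Fatou components had touching closures, the maximal cluster $K$ through the touching point would be non-trivial, and Lemma~\ref{lem:graphs in clusters} would furnish a connected $0$-entropy forward-invariant graph in $K$ meeting two points of $A$ — these two marked points would then be identified in the quotient, contradicting triviality of $\sim_{\Fat(f)}|_A$ (the remaining case, $K=S^2$, would make $f$ crochet by Lemma~\ref{lem:crochet-iteration}, so the quotient would be a single point, not a sphere). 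Hence Fatou closures are pairwise disjoint. That each such closure is a Jordan domain is [BD\_Exp, Section 4.5] for B\"ottcher expanding maps, and expansion also forces diameters to shrink to $0$. Whyburn's characterization now identifies $\Jul(f)$ with the standard carpet.

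\emph{(i)$\Leftrightarrow$(iii).} Assuming (i), a connected periodic $0$-entropy graph $G$ generates a cluster $K_G$ (Section~\ref{ss: clusters}) which, by disjointness of Fatou closures, contains at most one Fatou component and hence at most one point of $A$. So $G$ sits inside a Jordan disk meeting $A$ in at most one point, which is exactly the condition of being homotopically trivial rel $A$. Conversely, if every connected periodic $0$-entropy graph is trivial rel $A$, then by Lemma~\ref{lem:graphs in clusters} no maximal cluster can contain two marked points; Lemma~\ref{lem:iter step} then gives $\MC(K)=\emptyset$ and Lemma~\ref{lem: trivial curve} leaves only the alternatives \emph{crochet} or \emph{Sierpi\'nski}. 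The crochet alternative is excluded by (iii), because a crochet map admits an $A$-spanning connected $0$-entropy graph which is not trivial rel $A$ whenever $|A|\geq 2$.

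\emph{Main obstacle.} The delicate step is the first half of (ii)$\Rightarrow$(i): correctly identifying $\sim_{\Fat(f)}$-classes with maximal clusters. Once that identification is in place, everything else is essentially bookkeeping around Lemma~\ref{lem:graphs in clusters} and the cited characterizations. I expect the identification itself to be clean, since the clusters from Section~\ref{sec:clusters} are already closed, disjoint, and forward-invariant, so they furnish a closed equivalence relation identifying Fatou components, which by minimality must agree with $\sim_{\Fat(f)}$; but writing this carefully (especially in the presence of preperiodic, non-attracting Fatou components and the limit behaviour near accumulation points of small Fatou components) will require a short argument of its own.
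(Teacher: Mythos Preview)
Your overall shape is reasonable and you correctly identify Moore and Whyburn as the tools for the (i)$\Leftrightarrow$(ii) core. But two of your implications have genuine gaps.

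\medskip

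\textbf{Gap in (i)$\Rightarrow$(iii).} You write that a connected periodic $0$-entropy graph $G$ ``generates a cluster $K_G$ which, by disjointness of Fatou closures, contains at most one Fatou component.'' This is a non-sequitur. The cluster $K_G$ of Section~\ref{sec:clusters} is built by adjoining to $G$ all Fatou components meeting the iterated preimages of $G$; nothing prevents $G$ from crossing the carpet from one Fatou component to another even when their closures are disjoint. The disjointness of Fatou closures constrains the \emph{maximal clusters} of Section~\ref{ss: clusters} (each is then a single $\overline F$), but it does not force an arbitrary $0$-entropy graph to sit inside one of them. What is actually needed here is the Levy-arc characterization of Sierpi\'nski maps from \cite[Section~4.5]{BD_Exp}: a B\"ottcher expanding map is Sierpi\'nski iff there are no periodic arcs (or self-arcs) joining Fatou marked points. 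That is precisely the bridge between (i) and (iii), and it is why the paper's one-line proof cites that reference rather than the cluster machinery.

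\medskip

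\textbf{Gap in (ii)$\Rightarrow$(i).} Even granting your ``main obstacle'' (that $\sim_{\Fat(f)}$-classes coincide with maximal clusters), the next step fails: a non-trivial maximal cluster need not contain two points of $A$. If two touching Fatou components eventually merge under iteration, the periodic cluster in their forward orbit may contain a single periodic Fatou component (hence a single point of $A^\infty$) together with infinitely many preperiodic ones whose centers are not in $A$. So Lemma~\ref{lem:graphs in clusters} cannot be invoked as you do. The honest route is again through \cite[Section~4.5]{BD_Exp}: touching Fatou closures (or non-Jordan boundaries) produce periodic Levy arcs at Fatou points of $A$, and those arcs are exactly the non-trivial $0$-entropy graphs violating (iii); then (iii)$\Rightarrow$(i) closes the loop. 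Alternatively, since the proposition is placed after Theorem~\ref{thm:MEQ}, one may simply observe that if $S^2/\sim_{\Fat(f)}$ is a sphere with $A$ embedded, then the cactoid $X_f$ has a single small sphere and no segments, forcing $\CC_{\cro}=\emptyset$, whence $f$ is Sierpi\'nski by Lemma~\ref{lem: trivial curve}.

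\medskip

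A minor third point: in your (iii)$\Rightarrow$(i), ``no cluster contains two marked points'' does not by itself give $\MC(K)=\emptyset$; a cluster with $\le 1$ marked point can still separate $A$. You need to also rule out separation, which again follows once you know the relevant $0$-entropy graphs (the $G_i^{(n)}$) are trivial rel $A$.
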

\begin{proof}
 Follows easily follows from Whyburn's characterization \cite{Why}, Moore's theorem \cite{Moore,WhyBook}, and \cite[Section~4.5]{BD_Exp}.
\end{proof}

The next result characterizes crochet maps (Theorem \ref{thm:crochet_inro}).



\begin{theorem}
\label{thm:crochet}
Let $f\colon(S^2,A)\selfmap$  be a B\"ottcher expanding map with $\Fat(f)\not=\emptyset$. Then the following are equivalent:
\begin{enumerate}[label=\text{(\roman*)},font=\normalfont,leftmargin=*]
\item $f$ is a crochet map, that is,  there is a connected forward-invariant zero-entropy graph $G$ containing $A$; 
\item $S^2/\sim_{\Fat(f)}$ is a singleton;
\item every two points in $A$ may be connected by a path $\alpha$ with $\alpha\cap \Jul(f)$ being countable. 
\end{enumerate}
\end{theorem}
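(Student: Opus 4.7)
The plan is to establish the three-way equivalence via (i) $\Leftrightarrow$ (ii) using Theorem \ref{thm:MEQ}, (i) $\Rightarrow$ (iii) via the graph $G$, and (iii) $\Rightarrow$ (i) by ruling out non-trivial structure in $\CC_\cro$ using the topological invariants of the Julia set.

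For (i) $\Rightarrow$ (ii): if $f$ is crochet, Lemma \ref{lem:crochet-iteration} gives a maximal cluster equal to $S^2$, so the Crochet Algorithm terminates with $\CC_\cro = \emptyset$ and a single crochet small sphere; this small sphere collapses to a point in the cactoid construction, and Theorem \ref{thm:MEQ} yields $X_f = S^2/\sim_{\Fat(f)} = \{\text{pt}\}$. Conversely, if $X_f$ is a point, Theorem \ref{thm:MEQ} combined with Lemma \ref{lem:cro-mult-decomp} forces $\CC_\cro = \emptyset$ with the unique small map in $I_\bullet$, i.e.\ crochet; the Sierpi\'{n}ski alternative is ruled out by Proposition \ref{prop:sierpinski}, which would give a spherical quotient instead of a point.

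For (i) $\Rightarrow$ (iii): take a path $\alpha$ in $G$ between $a, b \in A$. The finitely many edges of $G$ inside Fatou components are internal rays, contributing only their landing points to $\alpha \cap \Jul(f)$; any edge of $G$ that happens to lie in $\Jul(f)$ can be rerouted through a countable chain of touching Fatou components along the single maximal cluster $S^2$ (which exists by (i) $\Leftrightarrow$ (ii)), keeping $\alpha \cap \Jul(f)$ countable.

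For (iii) $\Rightarrow$ (i), I would argue by contrapositive. Suppose $f$ is not crochet. By Lemma \ref{lem:cro-mult-decomp} either $f$ is Sierpi\'{n}ski, or $\CC_\cro$ is non-empty and contains a bicycle or bounds a Sierpi\'{n}ski small sphere. If $\CC_\cro$ contains a bicycle $\Sigma$, the associated set $\Jul_\Sigma \subset \Jul(f)$ is a Cantor bouquet of simple closed curves (Section \ref{sss:glunig of S^n A^n}) separating $A$-points on its two sides (which exist since bicycle curves are essential); by the intermediate value theorem applied to the transverse Cantor coordinate, any connecting path meets uncountably many circles in $\Jul_\Sigma$, so $\alpha \cap \Jul(f)$ is uncountable. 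If $\CC_\cro$ bounds a Sierpi\'{n}ski small sphere (or $f$ itself is Sierpi\'{n}ski), an interior $A$-point exists since periodic Fatou centers of the first-return map lie in $A$; any path from such a point to the outside must either traverse a non-trivial arc in the Sierpi\'{n}ski carpet or pass between Fatou components with pairwise disjoint closures via non-degenerate Julia arcs, again giving $\alpha \cap \Jul(f)$ uncountable. Both cases contradict (iii).

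The main obstacle will be rigorously verifying these topological intersection claims: the IVT argument for the Cantor bouquet requires confirming that the transverse coordinate indeed takes every Cantor-set value on any traversing path, while the Sierpi\'{n}ski case requires using the disjoint-closure property to force non-degenerate Julia arcs between consecutive visited Fatou components (ruling out a path that touches $\Jul(f)$ only countably while still crossing the carpet).
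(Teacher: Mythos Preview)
Your overall strategy matches the paper's: prove (i)$\Rightarrow$(ii),(iii) directly, then $\neg$(i)$\Rightarrow\neg$(ii),$\neg$(iii) via the Sierpi\'{n}ski/bicycle dichotomy from Lemma~\ref{lem:cro-mult-decomp}. There is, however, a genuine gap in your $\neg$(i)$\Rightarrow\neg$(iii) argument. You write ``$\Jul_\Sigma \subset \Jul(f)$'', but $\Jul_\Sigma$ is defined in Section~\ref{sss:glunig of S^n A^n} as the non-escaping set of the annular piece of the \emph{formal amalgam} $f_{\text{FA}}$, not as a subset of the sphere carrying the B\"ottcher expanding map $f$. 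The same conflation occurs for the Sierpi\'{n}ski small Julia set. What is missing is the semiconjugacy $\tau$ from $f_{\text{FA}}$ to $f$ and, crucially, Theorem~\ref{thm:Form amalg}: that theorem guarantees $\tau$ is injective on buried curves of $\Jul_\Sigma$ and embeds $\intr(\Jul(\widehat f))$ into $\Jul(f)$. Without it you cannot conclude that the Cantor bouquet or the carpet persists as an uncountable separating set inside the actual $\Jul(f)$; a priori $\tau$ could collapse these sets. The paper's proof invokes Theorem~\ref{thm:Form amalg} at exactly this point before running the separation argument, and your acknowledged ``main obstacle'' (the IVT/crossing claims) is in fact downstream of this missing step.

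Two smaller points. Your (i)$\Rightarrow$(iii) is needlessly roundabout: under the convention of Section~\ref{sec:clusters} the graph $G$ meets Fatou components along internal rays, so a path in $G$ between two points of $A$ meets $\Jul(f)$ only at ray landing points and graph vertices, which is already countable; no rerouting through chains of Fatou components is required. Your (i)$\Leftrightarrow$(ii) via Theorem~\ref{thm:MEQ} is valid but heavier than necessary; the paper proves (i)$\Rightarrow$(ii) directly by noting that $K\coloneq G\cup\bigcup_{a\in A^\infty}F_a$ and all its preimages are connected and collapse under $\sim_{\Fat(f)}$, and then uses expansion to shrink the complementary pieces.
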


\begin{proof}

(i)$\Rightarrow$(ii),\ (iii) Let $G$ be a connected forward-invariant $0$-entropy graph spanning the set of marked points $A$.  Recall that $G$ is composed of periodic and preperiodic internal rays, thus (iii) follows.  For each point $a\in A^\infty$, let $F_a$ be the Fatou component of $f$ centered at $a$.  
Consider the connected set $K\coloneq G\cup \bigcup_{a\in A^\infty} F_a$.  
Note that the preimage $f^{-n}(K)$ is also connected for each $n\geq 0$. By definition of $\sim_{\Fat(f)}$,  all points in $K$, and thus in $f^{-n}(K)$,  are equivalent to each other.  Now let $V$ be any complementary component of $K$.  By expansion,  the diameter of each component of $f^{-n}(V)$ tends to $0$ as $n\to \infty$.  Since the equivalence relation $\sim_{\Fat(f)}$ is closed,  the quotient $S^2/\sim_{\Fat(f)}$ is a singleton, and (ii) follows. 


$\neg$(i)$\Rightarrow$ $\neg$(ii),$\neg$(iii) Suppose that $f$ is not crochet, that is, it does not admit a connected forward-invariant $0$-entropy graph spanning $A$.  Let us apply the Crochet Algorithm to $f$ producing a crochet multicurve $\CC_\cro$. By Lemma \ref{lem:cro-mult-decomp}, there is either at least one Sierpi\'{n}ski small map in the decomposition wrt.\ $\CC_\cro$ or at least one primitive bicycle in $\CC_\cro$. 
Consequently,  the quotient cactoid $S^2/\sim_{\Fat(f)}$ contains a small sphere or a segment,  and $\neg$(ii) follows.  
Let $\tau\colon S^2\to S^2$ be a continuous monotone map proving a semiconjugacy from a formal amalgam $f_{FA}$ induced by small maps wrt.\  $\CC_\cro$ to $f$ (Section \ref{sss:PullbackArg:S^2}). 



Suppose that the decomposition of $f$ along $\CC_\cro$ contains a Sierpi\'{n}ski small map $\widehat f\colon \widehat S \selfmap$.  
Then the Julia set $\Jul(f)$ contains the image $\tau(\Jul(\widehat f))$. In fact $\tau$ embeds $\intr(\Jul(\widehat f))$ into $\Jul(f)$ by Theorem \ref{thm:Form amalg}. Suppose $a_1,a_2\in A$ are two marked points in distinct components of $S^2\setminus  \tau(\Jul(\widehat f))$. Then any path $\alpha$ connecting $a_1$ and $a_2$ intersects $\Jul(f)$ in uncountably many points,  and $\neg$(iii) follows.

Suppose that the multicurve $\CC_\cro$ contains a primitive bicycle $\Sigma$.  Following the discussion in Section \ref{sss:glunig of S^n A^n}, let $\AA_\Sigma\coloneq  \bigcup_{\gamma\in \Sigma} \AA_\gamma$ be the union of annuli that are homotopic to curves in $\Sigma$ in the construction of the formal amalgam $f_{FA}$. The non-escaping set $\Jul_\Sigma$ of $f_{FA}\colon \AA_\Sigma\cap f^{-1}(\AA_\Sigma)\to \AA_\Sigma$ is a direct product between a Cantor set and $\mathbb{S}^1$. By Theorem \ref{thm:Form amalg}, the set $\Jul_\Sigma$ contains uncountably many buried curves that are sent by $\tau$ injectively into $\Jul(f)$. Choose now any two marked points $a_1,a_2\in A$ on different sides of a curve $\gamma\in\Sigma$. Then any path $\alpha$ connecting $a_1$ and $a_2$ must meet $\tau(\Jul_\Sigma) \subset \Jul(f)$ in uncountably many points,  thus $\neg$(iii) follows.

\end{proof}

Next we observe the following separation property for crochet maps.

\begin{lemma}
\label{lem:crochet-separating-set}
Let $f\colon(S^2,A)\selfmap$ be a crochet map. Then there exits a countable set $Z\in \Jul(f)$ that separates every two distinct points $x,y\in \Jul(f)$, that is, $x$ and $y$ lie in different components of $\Jul(f)\setminus (Z\setminus \{x,y\}) $.
\end{lemma}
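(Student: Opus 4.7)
Since $f$ is crochet, Theorem~\ref{thm:crochet} supplies a connected forward-invariant zero-entropy graph $G\supset A$. We take $Z$ to be the set of eventually periodic points of $f$ lying in $\Jul(f)$; this is a countable set, dense in $\Jul(f)$ (as preperiodic points of an expanding map are dense in its Julia set), and for every Fatou component $F$ of $f$ the intersection $Z\cap\partial F$ contains the countable dense subset $\psi_F\big(\{e^{2\pi iq}:q\in\mathbb{Q}\}\big)$ of rational-angle B\"ottcher landing points.

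\textbf{Separation.} Given distinct $x,y\in \Jul(f)$, set
\[K_n\coloneq f^{-n}(G)\cup \bigcup_{c_F\in f^{-n}(G)}\overline{F}\,,\]
where the union runs over Fatou components $F$ whose center $c_F$ lies in $f^{-n}(G)$. By the argument in the proof of Theorem~\ref{thm:crochet}(i)$\Rightarrow$(ii), each $K_n$ is a closed connected subset of $S^2$ with $f^{-1}(K_n)=K_{n+1}$, and by expansion the diameters of the components of $S^2\setminus K_n$ tend to zero as $n\to\infty$. We choose $n$ so large that the component $U_x$ of $S^2\setminus K_n$ with $x\in\overline{U_x}$ satisfies $\diam(U_x)<\frac12\, d(x,y)$; then $y\notin\overline{U_x}$. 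The boundary $\partial U_x$ is a Jordan curve in $K_n\cap \Jul(f)$ obtained by concatenating finitely many subarcs of Fatou boundaries $\partial F$ (for $F\subset K_n$) and finitely many subarcs of $f^{-n}(G)\cap \Jul(f)$, glued at corner vertices of $f^{-n}(G)$ which are eventually periodic and hence belong to $Z$.

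\textbf{Disconnection.} Upon deleting $Z\setminus\{x,y\}$ from $\Jul(f)$, every subarc constituting $\partial U_x$ is cut into a totally disconnected remnant: on each Fatou-boundary subarc the dense countable subset $Z\cap\partial F$ disappears; on each graph subarc the dense set of eventually periodic vertices disappears; and every gluing corner is removed. Hence $\partial U_x\setminus(Z\setminus\{x,y\})$ is totally disconnected. Since $\partial U_x$ is a Jordan curve separating $S^2$ into two open disks with $x\in\overline{U_x}$ on one side and $y$ strictly on the other, a standard plane-topology argument applied to $\Jul(f)\subset S^2$ yields that $x$ and $y$ lie in distinct components of $\Jul(f)\setminus(Z\setminus\{x,y\})$.

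\textbf{Main obstacle.} The delicate step will be this final plane-topology argument: any connected continuum in $\Jul(f)$ containing both $x$ and $y$ must cross $\partial U_x$ at some point distinct from $x$ and $y$, and every such crossing point lies in the relative interior of one of the finitely many boundary subarcs; but the totally disconnected remnant of each subarc after deletion of $Z$ cannot support such a continuum, a contradiction. A minor additional point is the boundary case where $x$ or $y$ itself lies on $K_n$ for all large $n$ (i.e., on some Fatou boundary or on the Julia part of the graph), which is handled by observing that such a point already belongs to $Z$ and is therefore retained in $\partial U_x\setminus(Z\setminus\{x,y\})$.
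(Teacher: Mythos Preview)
Your choice of $Z$ (the eventually periodic points in $\Jul(f)$) is in fact a legitimate separating set, since it contains the set the paper uses. However, your argument that it separates is flawed. In your ``Disconnection'' and ``Main obstacle'' steps you show only that $\partial U_x\setminus(Z\setminus\{x,y\})$ is totally disconnected, and then assert that a continuum $C\subset \Jul(f)\setminus(Z\setminus\{x,y\})$ joining $x$ to $y$ ``cannot be supported'' by such a remnant. But $C$ is not contained in $\partial U_x$; it merely meets $\partial U_x$, and there is nothing preventing $C$ from crossing the curve at a point (or at uncountably many points) of the totally disconnected remnant. Density of $Z$ in $\partial U_x\cap\Jul(f)$ is not enough: what you actually need is $\partial U_x\cap\Jul(f)\subset Z\cup\{x,y\}$, so that removing $Z\setminus\{x,y\}$ removes the \emph{entire} separating set. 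Your $\partial U_x$ contains whole Fatou-boundary arcs, which are uncountable subsets of $\Jul(f)$ not contained in $Z$, so this inclusion fails. (A secondary issue: $\partial U_x$ need not be a Jordan curve, since Fatou boundaries are not Jordan in general.)

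The paper avoids this by a different, simpler mechanism. It takes $Z\coloneq\Jul(f)\cap\bigcup_{n\ge 0}f^{-n}(G)$. Because $G$ is composed of internal rays, $G\cap\Jul(f)$ is the finite set of their landing points, so $Z$ is countable. For the separation, one uses the faces of $G_n\coloneq f^{-n}(G)$ directly: let $N_x$ be the union of faces $W$ of $G_n$ with $x\in\overline W$. Then $\partial N_x\subset G_n$, so $\partial N_x\cap\Jul(f)$ is a finite subset of $Z$, and removing it genuinely disconnects $x$ from $y$ in $\Jul(f)$. The crucial difference from your approach is that the separating boundary meets $\Jul(f)$ only in finitely many points, all already in $Z$; no density argument is needed.
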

\begin{proof}
 Let $G$ be a connected forward-invariant $0$-entropy graph spanning the set of marked points $A$.  As in the proof of Theorem \ref{thm:crochet}, we set $K\coloneq G\cup \bigcup_{a\in A^\infty} F_a$, where $F_a$ is the Fatou component of $f$ centered at $a\in A^\infty$.

Suppose now that $x,y$ are two distinct points in $\Jul(f)$.  Choose $n$ so that 
$$\max\{\diam(V^n):\text{$V^n$ is a component of $f^{-n}(S^2\setminus K)$}\}  \leq \operatorname{dist}(x,y)/3.$$
Set $G_n\coloneq f^{-n}(G)$, and let $N_x$ be the union of all components $W^n$ of $S^2\setminus G_n$ with $x\in \overline{W^n}$. Then $y\notin N_x$ and $(\overline{N_x}\cap G_n)\cap \Jul(f)$ is a countable set separating $x$ and $y$.

It follows that the countable set $Z\coloneq \Jul(f)\cap \bigcup_n G_n$ separates any two points in the Julia set of $f$.
\end{proof}

We also note the following property of \emph{iterated monodromy groups} of crochet maps. (We assume that the reader is familiar with the relevant terminology; see, e.g., \cite{Nekra}.)

\begin{lemma}
\label{lem:crochet-maps-imgs}
Let $f\colon(S^2,A)\selfmap$ be a crochet map. Then the iterated monodromy group of $f$ is generated by a polynomial growth automaton with respect to some groupoid basis.
\end{lemma}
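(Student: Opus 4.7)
The plan is to exploit the connected forward-invariant zero-entropy graph $G \supset A$ (the defining feature of a crochet map) as a generalized Hubbard tree, mimicking the construction that produces bounded (hence polynomial growth) automata for IMGs of subhyperbolic polynomials and Newton maps.

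First, I would enlarge $G$ using Lemma \ref{lem:ext 0 entr graph} so that $G$ additionally contains all periodic points of $A \cap \Fat(f)$ and (preperiodic) internal rays to them. After this enlargement each complementary component of $G$ in $S^2$ is a topological disk containing at most one marked point, and the induced map $f|G$ is still a $0$-entropy graph map. Using one base vertex per complementary disk, the edges of $G$ give a generating set for the fundamental groupoid of $(S^2,A)$, and the preimages of these vertices under $f^n$ organize naturally into a rooted tree of degree $\deg(f)$, on which the IMG acts.

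Second, I would compute the wreath recursion in this basis. An edge $e \subset G$ lifts to an edge-path in $f^{-1}(G)$ that, after applying the forgetful identification $f^{-1}(G) \to G$ induced by the monotone map $i$ of \eqref{eq:corr:f_i}, becomes a word in the generators. This gives the state transitions of a Mealy automaton $\mathcal{A}$ whose states are indexed by the edges of $G$ (together with a trivial state). The zero-entropy assumption on $f|G$ is precisely the statement that the word-length of the combinatorial $n$-fold lift of a fixed edge grows subexponentially; in conjunction with the finite branching along vertices of $G$, this can be sharpened to polynomial growth by considering the linear automaton on edges induced by $f|G$, whose transition matrix has spectral radius $1$ and hence Jordan blocks contributing polynomial growth in the iterates.

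Third, I would translate this polynomial word-length bound into a polynomial activity bound for $\mathcal{A}$ in the sense of Sidki. Specifically, the activity $\alpha_n(q)$ at level $n$ of a state $q$ counts the number of sheets at depth $n$ on which $q$ acts nontrivially; unwinding the wreath recursion, $\alpha_n(q)$ is precisely the number of edge-segments in the $n$-fold lift of the edge corresponding to $q$ that carry nontrivial holonomy, which is bounded by a polynomial in $n$ by the previous step. The main obstacle is making rigorous the equivalence between topological zero entropy of $f|G$ and polynomial growth of these combinatorial lifts in the presence of critical points on $G$: one must verify that no hidden exponential branching is introduced when an edge of $G$ passes through a critical point, which requires a careful analysis of the induced graph map at such vertices, but is essentially a bookkeeping refinement of the standard zero-entropy-implies-polynomial-complexity argument for graph maps.
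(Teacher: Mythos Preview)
Your approach is essentially the same as the paper's: place one basepoint in each face of the spanning $0$-entropy graph $G$, take as groupoid generators short paths $g_e$ dual to the edges of $G$ (crossing $G$ exactly once, transversally through $e$), choose connecting paths to preimage basepoints that avoid $G$, and then read off the automaton; the $0$-entropy of $f|G$ is what forces polynomial activity growth. The paper's proof is a terse paragraph that asserts the last implication without the spectral-radius/Jordan-block justification you supply, so your write-up is in fact more detailed on the key point.

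Two minor remarks. Your enlargement step is unnecessary: by the definition of a crochet map the graph $G$ already contains all of $A$, so every complementary face is automatically a disk with no marked points, and there is nothing to add via Lemma~\ref{lem:ext 0 entr graph}. Second, your worry about critical points on $G$ introducing hidden exponential branching is well placed but is absorbed by the same $0$-entropy hypothesis: the relevant transition matrix is exactly the edge-incidence matrix of the Markov graph map $f|G$, whose spectral radius governs topological entropy, so $0$-entropy is equivalent to spectral radius $\le 1$ and hence to polynomial growth of iterated edge-preimage counts regardless of local degrees at vertices. The paper simply takes this equivalence for granted.
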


\begin{proof}
Let $G$ be a connected forward-invariant $0$-entropy graph spanning the set of marked points $A$, and suppose that $E$ and $W$ denote the sets of edges and faces of $G$, respectively. First, for each face $U$ of $G$ we choose a basepoint $t_U\in U$. Then, we choose the connecting paths in the following way: each basepoint $t_U$ is connected to each preimage in $f^{-1}(\{t_U\colon U\in W\})\cap U$ by a path that does not intersect the graph $G$. Finally, we pick a groupoid basis. Let $e$ be an edge of $G$ on the boundary of two faces $U,U'$ ($U=U'$ if $e$ is on the boundary of a unique face). Choose a path $g_e$ that connects the basepoints $t_U$ and $t_{U'}$ and intersects the graph $G$ exactly once in $\intr(e)$. Let $\GC$ be the groupoid generated by $\{g_e\colon e\in E\}$ (which acts on the iterated preimages of the basepoints). Since $f|G$ has $0$-entropy, the action of $\GC$ is described by an automaton of polynomial activity growth. The statement follows. 
\end{proof}

The following result provides equivalent characterizations of Sierpi\'{n}ski-free maps (Theorem \ref{thm: sierp-free-intro}).


\begin{theorem}\label{thm:sierpinski-free}
Let $f\colon(S^2,A)\selfmap$ be a B\"ottcher expanding map with $\Fat(f)\not=\emptyset$. 
Then the following are equivalent:
\begin{enumerate}[label=\text{(\roman*)},font=\normalfont,leftmargin=*]
\item $f$ is Sierpi\'{n}ski-free, i.e., the decomposition of $f$ with respect to every invariant multicurve $\CC$ does not produce a Sierpi\'{n}ski small map;
\item none of the small maps in the decomposition of $f$ along the crochet multicurve $\CC_\cro$ is a Sierpi\'{n}ski map;
\item $S^2/{\sim_{\Fat(f)}}$ is a dendrite.
\end{enumerate}
\end{theorem}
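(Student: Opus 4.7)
The plan is to establish the cycle $(i)\Rightarrow(ii)\Rightarrow(iii)\Rightarrow(i)$, relying on the already-stated Theorems \ref{thm:cro decomp}, \ref{thm:Sierp bicycle}, and \ref{thm: max exp quotient}.

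The implication $(i)\Rightarrow(ii)$ is immediate, since $\CC_\cro$ is one particular invariant multicurve of $f$.

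For $(ii)\Rightarrow(iii)$, I would argue as follows. Assume every small map in the decomposition of $f$ along $\CC_\cro$ is crochet. By Theorem \ref{thm:cro decomp}\,(ii)--(iii), each small crochet Julia set projects to a single point in $S^2/\!\sim_{\Fat(f)}$, and distinct crochet small Julia sets project to distinct points. Because there are no Sierpi\'{n}ski small maps, no small sphere of $S^2/\!\sim_{\Fat(f)}$ is produced by clause (i) of Theorem \ref{thm:cro decomp}. By Theorem \ref{thm: max exp quotient} together with the cactoid construction in Section \ref{sec: cactoid maps}, $S^2/\!\sim_{\Fat(f)}$ is then a cactoid whose constituent pieces are only points (images of small crochet Julia sets) and arcs (images of the bicycles making up $\CC_-\subset\CC_\cro$), with any two such pieces meeting in at most one point. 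Such a continuum is locally connected and contains no simple closed curve, hence is a dendrite.

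For the main direction $(iii)\Rightarrow(i)$, I would argue by contrapositive. Suppose some invariant multicurve $\CC$ gives a Sierpi\'{n}ski small map $\widehat f\colon \widehat S\selfmap$ in the decomposition of $f$ along $\CC$. By Theorem \ref{thm:Sierp bicycle}, maximal Sierpi\'{n}ski small maps of $f$ are well defined, and their boundary curves generate a sub-multicurve $\CC_\Sie\subseteq\CC_\cro$. Hence $\widehat f$ is contained in a maximal Sierpi\'{n}ski small map $\widehat F$ that appears in the decomposition of $f$ along $\CC_\cro$. By Theorem \ref{thm:cro decomp}\,(i), the small Julia set of $\widehat F$ projects onto an entire small sphere in $S^2/\!\sim_{\Fat(f)}$. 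Since a dendrite contains no topological $2$-sphere, this contradicts (iii).

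The main technical point to verify is that the cactoid $S^2/\!\sim_{\Fat(f)}$ truly has no simple closed curve once the small spheres are removed; this relies on the fact, built into the cactoid construction of Section \ref{sec: cactoid maps}, that any two small segments (or a segment and a sphere) intersect in at most one point, so the segments coming from the bicycles of $\CC_-$ glue to the crochet points only in a tree-like fashion. Everything else is a direct appeal to the Theorems cited above.
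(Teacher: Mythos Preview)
Your proof is correct and the implications $(i)\Rightarrow(ii)\Rightarrow(iii)$ match the paper's argument essentially verbatim. The difference is in $(iii)\Rightarrow(i)$.

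The paper does \emph{not} route through Theorems~\ref{thm:cro decomp} and~\ref{thm:Sierp bicycle}. Instead, given an arbitrary invariant multicurve $\DD$ producing a Sierpi\'{n}ski small map $\widehat f\colon\widehat S\selfmap$, it applies the cactoid construction of Section~\ref{sec: cactoid maps} directly to $\DD$ with collapsing data $\DD_-=\emptyset$, $\DD_\bullet=\DD$, and $J_\circ$ the orbit of $\widehat S$. This yields a totally expanding quotient $g\colon Y\selfmap$ where $Y$ contains a small sphere. The universal property of Theorem~\ref{thm: max exp quotient} then forces a surjection $S^2/{\sim_{\Fat(f)}}\to Y$, which is impossible if the domain is a dendrite. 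Your approach instead invokes Theorem~\ref{thm:Sierp bicycle} to push the Sierpi\'{n}ski piece into the crochet decomposition itself, and then reads off the sphere in the quotient from Theorem~\ref{thm:cro decomp}\,(i).

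Both arguments are valid; yours is shorter once Theorems~\ref{thm:cro decomp} and~\ref{thm:Sierp bicycle} are available, but note that in the paper's linear ordering those results (as Lemma~\ref{lem: max_sierp_bicycles} and Theorem~\ref{thm:cro decomp can}) are established \emph{after} Theorem~\ref{thm:sierpinski-free}. There is no genuine circularity, since their proofs rely only on Theorem~\ref{thm:MEQ} and the Crochet Algorithm, but if you present your argument you should either reorder or point this out. The paper's approach has the advantage of being self-contained at this stage, using only Theorem~\ref{thm:MEQ} and the Section~\ref{sec: cactoid maps} machinery.
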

\begin{proof}
(i)$\Rightarrow$(ii) This is immediate.

(ii)$\Rightarrow$(iii) By Theorem \ref{thm:MEQ}, the cactoid $S^2/{\sim_{\Fat(f)}}$ coincides with the quotient $X_f$ of $S^2$ under the semi-conjugacy $\pi_{\bar f}$ induced by the respective collapsing data $\CC_\cro = \CC_\bullet\sqcup \CC_-$ and $I=I_\bullet\sqcup I_\circ$. Since $I_\circ=\emptyset$ by (i), it follows that $X_f$ is the inverse limit of the dendroid cactoids $X_n$ (see Section \ref{sec: cactoid maps}), which implies (ii).

(iii)$\Rightarrow$(i) Suppose there is an invariant multicurve $\DD$ with a small Sierpi\'{n}ski map $\widehat f \colon \widehat S\selfmap$ in the associated decomposition. Let $J$ be the index set parametrizing the small spheres of $(S^2,A,\DD)$, and $J_\circ\subset J$ be the orbit of small spheres induced by the small map $\widehat f \colon  \widehat S\selfmap$. 
Set $J_\bullet \coloneq J\setminus J_\circ$, $\DD_\bullet \coloneq \DD$, and $\DD_- \coloneq \emptyset$. By the discussion in Section \ref{sec: cactoid maps}, the collapsing data $\DD= \DD_\bullet\sqcup \DD_-$ and $J=J_\bullet\sqcup J_\circ$  induces 
a totally topologically expanding map $g \colon Y\selfmap $ on a cactoid $Y$ together with the semi-conjugacy $\pi_{g}\colon S^2\to Y$ from $f$ to $g$. By Theorem \ref{thm:MEQ}, $\pi_g$ should factor through $\pi_{\bar f}\colon S^2\to X_f=S^2/_{\sim \Fat(f)}$. This is clearly not possible since $X_f$ is a dendrite while $Y$ contains small spheres. This finishes the proof.
\end{proof}

Let $\CC_\cro$ be the crochet multicurve constructed by the Crochet Algorithm. Our goal now is to provide an alternative characterization for $\CC_\cro$ given by~Theorem~\ref{thm:Sierp bicycle}. We may naturally subdivide $\CC_\cro$ into two (possibly, empty) submulticurves $\CC_\Sie$ and $\CC_\bi$ in the following way: $\CC_\Sie$ is the sub-multicurve generated by the boundaries of all small Sieripi\'{n}ski spheres of $(S^2,A,\CC_\cro)$; and $\CC_\bi\coloneq\CC_\cro\setminus \CC_\Sie$. Note that all primitive components of $\CC_\bi$ are bicycles.


\begin{lemma}\label{lem: max_sierp_bicycles}
Let $f\colon(S^2,A)\selfmap$ be a B\"ottcher expanding map with $\Fat(f)\not=\emptyset$ and $\DD$ be an invariant multicurve. Then the following are true:
\begin{enumerate}[label=\text{(\roman*)},font=\normalfont,leftmargin=*]
\item If $\widetilde S$ is a Sierpi\'{n}ski sphere wrt.\ $\DD$, then $\widetilde S \subset S$ (up to homotopy) for a Sierpi\'{n}ski small sphere $ S$ wrt.\  $\CC_\cro$. 
\item If $\Sigma\subset \DD$ is a bicycle, then $\Sigma\subset \CC_\cro$ or $\Sigma$ is inside a Sierpi\'{n}ski small sphere wrt. $\CC_\cro$ (up to homotopy).
\end{enumerate}
\end{lemma}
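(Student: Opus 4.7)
The plan is to exploit the universality of the maximal expanding quotient $\pi_{\bar f}\colon S^2 \to X_f = S^2/{\sim_{\Fat(f)}}$ from Theorem~\ref{thm:MEQ}. For each part I will build from $\DD$ an intermediate expanding cactoid quotient $\pi_g\colon S^2 \to Y$ of $f$, and then transfer information back from $Y$ (whose small spheres and segments encode the Sierpi\'{n}ski spheres and bicycles of $\DD$) to $X_f$ (whose small spheres and segments encode those of $\CC_\cro$) via the forced factorization $\pi_g = h\circ \pi_{\bar f}$ for some monotone continuous $h\colon X_f\to Y$.

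For part (i), I will form the dynamical collapsing data for $(S^2,A,\DD)$ by declaring $J_\circ$ to be the forward orbit of $\widetilde S$, $J_\bullet$ the remaining small spheres, $\DD_-$ the submulticurve of $\DD$ generated by its bicycles, and $\DD_\bullet\coloneq \DD\setminus \DD_-$. Both hypotheses (A) and (B) of Lemma~\ref{lem:bar iota:exp} are then satisfied: the small map on $J_\circ$ is Sierpi\'{n}ski by assumption, and $\DD_-$ is bicycle-generated by construction. Lemma~\ref{lem:PullBackArg} supplies the required $\pi_g\colon S^2 \to Y$, with $\pi_g(\widetilde S)$ equal to a small sphere $S_Y$ of $Y$. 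Factoring $\pi_g = h\circ \pi_{\bar f}$, I will argue that $h^{-1}(S_Y)$ must contain a small sphere of $X_f$, since a monotone continuous image of a continuum with no essential $2$-dimensional piece cannot be a $2$-sphere. By the construction of $X_f$ via the collapsing data for $\CC_\cro$, such a small sphere of $X_f$ corresponds to a Sierpi\'{n}ski small sphere $S$ wrt.\ $\CC_\cro$. Matching boundary multicurves through the two quotients then yields $\widetilde S\subset S$ up to homotopy.

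For part (ii), I will reuse the same collapsing data, now noting that $\Sigma\subset \DD_-$ by construction. The quotient $\pi_g$ projects annuli isotopic to curves of $\Sigma$ onto segments of $Y$, and by Lemma~\ref{lem: fiber_descr}\ref{lem: fiber_descr:2} distinct buried circles of the Cantor bouquet $\Jul_\Sigma$ land at distinct points of these segments. Consequently, the projected set $Z\coloneq \pi_{\bar f}(\text{annuli isotopic to curves of }\Sigma)$ in $X_f$ must satisfy one of two possibilities under $h$: either $Z$ meets a small segment of $X_f$ nontrivially, in which case by the structure of $X_f$ that segment comes from a bicycle in $\CC_\cro$ and $\Sigma$ is isotopic into $\CC_\cro$; or $Z$ lies inside a single small sphere $\bar S$ of $X_f$, in which case $\bar S$ corresponds to a Sierpi\'{n}ski small sphere $S$ wrt.\ $\CC_\cro$ and $\Sigma$ sits inside $S$ up to homotopy.

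The main technical obstacle is a structural rigidity statement for monotone continuous surjections $h\colon X_f\to Y$ between (inverse-limit) cactoids of the type constructed in Section~\ref{sec: cactoid maps}: the preimage of a small sphere of $Y$ contains a small sphere of $X_f$, and the preimage of the interior of a small segment of $Y$ either contains a segment of $X_f$ or is contained in a single small sphere of $X_f$. Both claims ought to follow from Moore's theorem and the one-dimensionality of segments, applied component-by-component, but should be recorded as an auxiliary lemma. A secondary but delicate point is to upgrade the set-theoretic containment of images to the claimed homotopy-class containment, which should be handled by tracking boundary multicurves using Definition~\ref{def:croch mult}.
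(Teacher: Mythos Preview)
Your approach—leveraging the universality of the maximal expanding quotient from Theorem~\ref{thm:MEQ}—is genuinely different from the paper's. The paper argues directly from the Crochet Algorithm: at each recursive step the algorithm produces $0$-entropy graphs spanning the crochet clusters, and a Sierpi\'{n}ski small Julia set (resp.\ a bicycle) cannot be essentially crossed by such a graph, because a carpet meets any essential arc in uncountably many Julia points while a $0$-entropy graph meets $\Jul(f)$ only in a countable set (resp.\ bicycle curves replicate, which is incompatible with $0$ entropy). Hence $\widetilde S$ and $\Sigma$ remain on the non-crochet side at every stage and end up inside a Sierpi\'{n}ski piece of $\CC_\cro$ (or, for $\Sigma$, possibly in $\CC_\cro$ itself). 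The entire argument is three sentences.

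Your construction of $Y$ and of the factorization $h\colon X_f\to Y$ is correct, and the claim that $h^{-1}(S_Y)$ contains a small sphere of $X_f$ is fine (a monotone image of a dendrite is a dendrite, hence never a $2$-sphere). But the step you flag as ``secondary but delicate'' is in fact the crux, and it is not established. Knowing $\bar S\subset h^{-1}(S_Y)$ yields only $\pi_{\bar f}^{-1}(\bar S)\subset \pi_g^{-1}(S_Y)$, and both preimages contain far more than the regions $S$ and $\widetilde S$: every small sphere and annulus collapsed to a marked point of $S_Y$ (resp.\ of $\bar S$) sits in those fibers. No containment between $S$ and $\widetilde S$ follows. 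What you would actually need is that $\pi_{\bar f}(\Jul_{\widetilde S})$ lies in a \emph{single} small sphere of $X_f$; equivalently, that no fiber of $\sim_{\Fat(f)}$ separates the carpet $\Jul_{\widetilde S}$ into pieces lying over different small spheres of $X_f$. Proving this amounts to controlling how the crochet clusters of $\CC_\cro$ meet $\Jul_{\widetilde S}$—exactly what the paper's $0$-entropy observation handles in one stroke. The same gap appears in part~(ii): from ``$Z$ meets a small segment of $X_f$'' you cannot conclude that $\Sigma$ is isotopic into $\CC_\cro$ without an additional argument relating cactoid segments back to homotopy classes of curves on $(S^2,A)$. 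Your proposed auxiliary lemma about monotone maps between cactoids, even if carefully stated and proved, does not by itself bridge this gap.
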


\begin{proof}
Let us construct $\CC_\cro$ using the Crochet Algorithm, see Section \ref{ss:iter_construction}. Then, on each step of the recursive construction of the pre-crochet multicurve, the small Sierpi\'{n}ski Julia set (of $\widetilde S$) or bicycles (of $\DD$) cannot cross the  $0$-entropy graphs constructed during this step.  Consequently, the sphere $\widetilde S$ is inside  Sierpi\'{n}ski spheres wrt.\ $\CC_\cro$. Similarly, every curve $\gamma\in \Sigma$ is either in $\CC_\cro$ or inside a Sierpi\'{n}ski small sphere wrt. $\CC_\cro$.
\end{proof}






The lemma above immediately implies Theorem \ref{thm:Sierp bicycle} from the Introduction. We also record the following easy corollary.


\begin{corollary}Let $f$ be a B\"ottcher expanding map with non-empty Fatou set.  Then each small sphere $\widehat f\colon \widehat S\selfmap$ in the decomposition of $f$ along $\CC_\Sie$ is Sierpi\'{n}ski-free.  Furthermore,  the multicurve $\widehat \CC_\bi\coloneq \CC_\bi \cap \widehat S$ is the crochet multicurve for $\widehat f$.  
\end{corollary}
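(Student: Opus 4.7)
The plan is to analyze the decomposition of $f$ along $\CC_\Sie$ using the structural characterizations of $\CC_\cro$ together with Lemma \ref{lem: max_sierp_bicycles}. By construction, $\CC_\Sie\subset\CC_\cro$ is generated by the boundaries of the maximal Sierpi\'{n}ski small spheres wrt $\CC_\cro$, so the small spheres in the decomposition along $\CC_\Sie$ split into two types: (i) the maximal Sierpi\'{n}ski spheres themselves (for which the conclusions hold in a degenerate sense, since $\CC_\bi\cap\widehat S=\emptyset$ and the crochet multicurve of a Sierpi\'{n}ski map is empty), and (ii) \emph{complementary} small spheres $\widehat S$ that meet no maximal Sierpi\'{n}ski sphere wrt $\CC_\cro$ in their interior. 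The substantive content is to treat type~(ii).

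For the first assertion, I would argue by contradiction: suppose $\widehat f\colon\widehat S\selfmap$ is a type-(ii) small sphere that admits an invariant multicurve $\DD$ producing a Sierpi\'{n}ski small sphere $\widetilde S$. Using that $\widehat f=f^{\per(\widehat S)}$, I would promote $\DD$ to an $f$-invariant multicurve $\DD^\sharp\supset\CC_\Sie$ on $S^2$ by taking iterated $f$-preimages of $\DD\cup\CC_\Sie$ and passing to isotopy classes. Then $\widetilde S$ is still a Sierpi\'{n}ski small sphere of $f$ wrt $\DD^\sharp$. Lemma \ref{lem: max_sierp_bicycles}(i) applied to $\DD^\sharp$ would force $\widetilde S$ to lie inside a maximal Sierpi\'{n}ski small sphere of $f$ wrt $\CC_\cro$, contradicting $\widehat S$ being of type~(ii). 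Hence $\widehat f$ is Sierpi\'{n}ski-free.

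For the second assertion, Sierpi\'{n}ski-freeness of $\widehat f$ combined with Definition \ref{def:croch mult} and Lemma \ref{lem:cro-mult-decomp} tells us that the crochet multicurve of $\widehat f$ is generated by all bicycles of $\widehat f$. I would then verify the two inclusions with $\widehat\CC_\bi=\CC_\bi\cap\widehat S$. The forward inclusion uses Theorem \ref{thm:Sierp bicycle}: every primitive strongly connected component of $\CC_\bi$ is a bicycle of $f$, and those contained in $\widehat S$ remain bicycles under the first-return iterate $\widehat f$. For the reverse inclusion, any bicycle $\Sigma$ of $\widehat f$ is automatically a bicycle of $f$; then Lemma \ref{lem: max_sierp_bicycles}(ii) gives $\Sigma\subset\CC_\cro$ or $\Sigma$ sits inside a maximal Sierpi\'{n}ski sphere wrt $\CC_\cro$. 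The second case is excluded since $\widehat S$ is of type~(ii); the remaining curves lie in $\CC_\cro\cap\widehat S$ and are not boundaries of maximal Sierpi\'{n}ski spheres, hence lie in $\CC_\bi\cap\widehat S$.

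The main obstacle will be the bookkeeping in the extension step $\DD\leadsto\DD^\sharp$ and verifying that the bicycle property (both strong connectivity and curve replication) transfers correctly between the global map $f$ and its first-return iterate $\widehat f$ on $\widehat S$. Once these invariance and pullback compatibilities are nailed down, both conclusions follow almost mechanically from Lemma \ref{lem: max_sierp_bicycles} and Theorem \ref{thm:Sierp bicycle}.
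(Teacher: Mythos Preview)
Your proposal is correct and follows essentially the same route as the paper: both use Lemma~\ref{lem: max_sierp_bicycles} to rule out hidden Sierpi\'{n}ski pieces inside $\widehat S$, and then invoke Theorem~\ref{thm:Sierp bicycle} to identify the crochet multicurve of the Sierpi\'{n}ski-free $\widehat f$ with the bicycle-generated multicurve $\CC_\bi\cap\widehat S$. The paper's proof is a three-line sketch that leaves the promotion step $\DD\leadsto\DD^\sharp$ and the transfer of the bicycle property between $f$ and $\widehat f$ entirely implicit; your version spells these out, which is the right thing to do and exactly the bookkeeping you flagged as the main obstacle.
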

\begin{proof}
Indeed,   $\widehat f\colon \widehat S\selfmap$ cannot contain a Sierpi\'{n}ski map by Lemma~\ref{lem: max_sierp_bicycles}.  Thus, $\widehat f$ is  Sierpi\'{n}ski-free and, by  Theorem \ref{thm:Sierp bicycle},  the decomposing curve for $\widehat f$ must be generated by bicycles,  that is,  $\widehat \CC_\bi\coloneq \CC_\bi \cap \widehat S$. The statement follows. 
\end{proof}

Finally, we prove the following result characterizing the crochet multicurve $\CC_\cro$ (Theorem \ref{thm:cro decomp}).

Let $f\colon(S^2,A)\selfmap$ be a B\"ottcher expanding map with an invariant multicurve $\DD$. Suppose that all maps in the decomposition of $f$ along $\DD$ are Sierpi\'{n}ski or crochet. Let $f_{FA}$ be the formal amalgam for $f\colon(S^2,A, \DD)\selfmap$ and $\tau$ be the semiconjugacy from $f_{FA}$ to $f$. Let $\KC_{\widetilde S,i}$ be the component of the non-escaping set $\KC_{\widetilde S}$ associated with a periodic small sphere $S_i$ of $(S^2,A,\DD)$. The set $\tau(\KC_{\widetilde S,i})\cap \Jul(f)$ is called a \emph{small Julia set} of the first return map to $S_i$.

\begin{theorem}
\label{thm:cro decomp can}
Let $f\colon(S^2,A)\selfmap$ be a B\"ottcher expanding map with $\Fat(f)\not=\emptyset$.  There is a unique canonical invariant multicurve $\CC_\cro$ whose small maps are Sierpi\'{n}ski and crochet maps such that for the quotient map $\pi_{\overline{f}}\colon S^2\to S^2/{\sim_{\Fat(f)}}$ the following are true:
\begin{enumerate}[label=\text{(\roman*)},font=\normalfont,leftmargin=*]
\item  small Julia sets of Sierpi\'{n}ski maps project onto spheres;
\item small Julia sets of crochet maps project to points;
\item  different crochet Julia sets project to different points in $S^2/{\sim_{\Fat(f)}}$.
\end{enumerate}
\end{theorem}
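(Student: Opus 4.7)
The plan is to derive existence of $\CC_\cro$ from the Crochet Algorithm (Section~\ref{ss:iter_construction}) and then to read off properties (i)--(iii) from Theorem~\ref{thm:MEQ}. Specifically, Corollary~\ref{cor: cro multicurve exists} and Lemma~\ref{lem:cro-mult-decomp} produce an invariant multicurve $\CC_\cro$ all of whose small maps are Sierpi\'{n}ski or crochet, and Theorem~\ref{thm:MEQ} identifies $S^2/\sim_{\Fat(f)}$ with the cactoid $X_f$ built from the collapsing data of $\CC_\cro$. Property (i) then follows directly from Lemma~\ref{lem: fiber_descr}(I), which says that the semi-conjugacy $\rho$ collapses exactly the Fatou components of each Sierpi\'{n}ski non-escaping component $\KC_{\widetilde S,i}$ ($i\in I_\circ$) onto a sphere of $X_f$. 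Property (ii) is part of the proof of Theorem~\ref{thm:MEQ}: each crochet component $\KC_{\widetilde S,i}$ ($i\in I_\bullet$) is shown there to be $\sim_{\Fat(f)}$-equivalent to a single point. For property (iii) the plan is to invoke the separation statements (1)--(5) of the Claim inside that same proof: two distinct crochet components are either directly separated in $X_f$, or separated by a bicycle whose $\rho$-image is a Cantor set of distinct points (Lemma~\ref{lem: fiber_descr}(II)); the crucial input here is that $\CC_\cro$ contains no primitive crochet unicycles, a property built into the algorithm by Step~4 via Proposition~\ref{prop: gluing crochet}.

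For uniqueness, let $\DD$ be another invariant multicurve satisfying (i)--(iii); the goal is to show $\DD=\CC_\cro$ up to isotopy. First I would rule out primitive crochet unicycles in $\DD$: were $\gamma$ such a unicycle, Proposition~\ref{prop: gluing crochet} would amalgamate the two adjacent small crochet maps into a single crochet map, and Theorem~\ref{thm:crochet} would force its whole Julia set to $\sim_{\Fat(f)}$-collapse to a single point, so both neighboring small crochet Julia sets would project to the same point of $X_f$, violating (iii). Hence $\DD$ is generated by the boundaries of its Sierpi\'{n}ski small spheres together with its bicycles.

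The key step of uniqueness is matching the Sierpi\'{n}ski small spheres of $\DD$ with those of $\CC_\cro$. The plan is to exploit the following observation about any $\CC_\cro$-Sierpi\'{n}ski small sphere $\hat S$: distinct Fatou components have pairwise disjoint closures, so $\pi_{\bar f}$ collapses each Fatou component together with its boundary to a single point, giving $\pi_{\bar f}(\hat S)=\pi_{\bar f}(J(\hat S))=\bar S$ a sphere of $X_f$, and $\pi_{\bar f}|J(\hat S)$ is injective away from Fatou-boundary points. By Lemma~\ref{lem: max_sierp_bicycles}, every $\DD$-Sierpi\'{n}ski sphere $\hat S_\DD$ lies inside some such $\hat S$. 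If $\hat S_\DD\subsetneq\hat S$, picking any non-Fatou-boundary Julia point $x\in J(\hat S)\setminus J(\hat S_\DD)$ yields $\pi_{\bar f}(x)\in\bar S\setminus \pi_{\bar f}(J(\hat S_\DD))$, so $\pi_{\bar f}(J(\hat S_\DD))$ is a proper subset of $\bar S$ and hence not a sphere of $X_f$, contradicting (i) for $\DD$. Therefore $\hat S_\DD=\hat S$. The same proper-subset argument rules out any $\DD$-crochet small map lying inside $\hat S$ via (ii).

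The main obstacle I anticipate is to show that $\DD$ misses no curve of $\CC_\cro$. The plan is a case analysis: a missing Sierpi\'{n}ski-boundary unicycle of $\CC_\cro$ would merge two $\CC_\cro$-small maps into a $\DD$-small map whose Julia set is a union of carpets (or a carpet glued to a crochet Julia set) along a simple closed curve; such a union has local cut points and therefore cannot be a Sierpi\'{n}ski carpet, and it also fails to admit an $A$-spanning $0$-entropy graph, so by Theorem~\ref{thm:crochet} the merged map is neither Sierpi\'{n}ski nor crochet, contradicting the hypothesis on $\DD$. Invariance of $\DD$ forces a missing bicycle curve to mean the entire bicycle is missing, and the merged Julia set then contains a Cantor bouquet of circles coming from the vanished bicycle, again failing both characterizations. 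Combined with Lemma~\ref{lem: max_sierp_bicycles}---which, together with the Sierpi\'{n}ski matching just established, places every $\DD$-bicycle into $\CC_\cro$---this forces $\DD=\CC_\cro$. The most delicate step will be the rigorous verification of the ``carpet-glued-along-a-curve is neither a carpet nor crochet'' claim and its bicycle analogue.
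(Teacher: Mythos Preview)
Your existence argument is essentially the paper's: both invoke the Crochet Algorithm (Corollary~\ref{cor: cro multicurve exists}) and then read off (i)--(iii) from the identification $X_f\cong S^2/{\sim_{\Fat(f)}}$ in Theorem~\ref{thm:MEQ} together with Lemma~\ref{lem: fiber_descr} and the Claim inside that proof.

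For uniqueness the overall strategy also agrees: first match the Sierpi\'{n}ski small spheres of $\DD$ with those of $\CC_\cro$ via maximality (Lemma~\ref{lem: max_sierp_bicycles}) and property~(i), thereby reducing to the Sierpi\'{n}ski-free situation. The difference is in how you finish. You plan a structural case analysis on ``missing'' curves of $\CC_\cro$, arguing that a merged small map would be neither a carpet nor crochet; you yourself flag this as the delicate step. The paper bypasses this entirely: once $\DD_\Sie=\CC_\Sie$, it uses (ii) and (iii) directly against the known fiber structure of $\pi_{\bar f}$. Each $\DD$-crochet small Julia set projects to a single point of $X_f$ by~(ii); the only $\pi_{\bar f}$-fibers rich enough to contain a non-degenerate small Julia set in the Sierpi\'{n}ski-free region are precisely the $\CC_\cro$-crochet small Julia sets (the other fibers are single buried curves or points). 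Hence every $\DD$-crochet Julia set sits inside a $\CC_\cro$-crochet Julia set, and (iii) prevents two distinct ones from landing in the same $\CC_\cro$-piece, forcing $\DD=\CC_\cro$. This is shorter and avoids your topological lemma about glued carpets. Your preliminary step ruling out primitive crochet unicycles in $\DD$ via~(iii) is correct and is implicit in the paper's argument (it is exactly what prevents two $\DD$-crochet pieces from projecting to the same point), but you need not invoke Proposition~\ref{prop: gluing crochet} for it: the touching of the two small Julia sets along the $\tau$-image of the unicycle (Theorem~\ref{thm:Form amalg}) already forces their $\pi_{\bar f}$-images to coincide.
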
  

\begin{proof}
The multicurve $\CC_\cro$ constructed by the Crochet Algorithm satisfies the required properties (by the construction of the cactoid $X_{\bar f}\cong S^2/_{\sim\Fat(f)}$). 

Conversely, let $\DD$ be any invariant multicurve such that each small in the decomposition of $f$ along $\DD$ is either Sierpi\'{n}ski or crochet and it satisfies Conditions (i), (ii), (iii) of the theorem. Condition (i) implies that $\DD_\Sie = \CC_\Sie$ (i.e., Sierpi\'{n}ski spheres are maximal). So, we reduced the statement to the Sierpi\'{n}ski-free case. Condition (ii) and (iii) imply that the crochet Julia sets of $\DD$ are within the crochet Julia set of $\CC_\cro$ and thus $\DD$ is isotopic to $\CC_\cro$.
\end{proof}

   \newcommand{\etalchar}[1]{$^{#1}$}


\end{document}